\renewcommand{\subsection}[1]{\stepcounter{subsection} \vskip 4mm\noindent {\bf \S\thesubsection\  #1\nolinebreak.}\addcontentsline{toc}{subsection}{\thesubsection\ #1}}
\newtheorem{theorem}{Theorem}[section]
\newtheorem{lm}{Lemma}[section]
\newtheorem{prop}{Proposition}[section]
\newtheorem{de}{Definition}[section]
\newtheorem{co}{Corollary}[section]
\newtheorem{con}{Conjecture}[section]
\newenvironment{proof}{\noindent\textbf{Proof:} } {\hfill $\Box$\\ }
\newenvironment{remark}{\noindent\textbf{Remark:} } {\hfill $\Box$\\ }
\DeclareMathAlphabet{\mathsc}{OT1}{pplx}{m}{sc}
\def\k{{K\"{a}hler}}
\begin{document}
\hbadness=10000
\title{{\bf Convergence   of Calabi-Yau manifolds  }}
\author{Wei-Dong Ruan \thanks{Partially supported by the SRC Program of Korea Science and Engineering Foundation (KOSEF) gran  funded by the Korea government(MOST) (No. R11-2007-035-02002-0).}\\\small KAIST, Daejeon, Korea\\\small E-mail address: ruan@math.kaist.ac.kr \and Yuguang Zhang \thanks{Partially supported by the SRC program of the Korea Science and Engineering
 Foundation (KOSEF) grant funded by the Korean government (MEST) (No. R11-2007-035-02002-0), and
 by the National Natural Science Foundation of China 10771143.  }\\\small KAIST, Daejeon, Korea\\\small and\\\small Capital Normal University, Beijing, China\\\small E-mail address: yuguangzhang76@yahoo.com}
\maketitle

\begin{abstract}
In this paper, we study the convergence of Calabi-Yau manifolds under K\"{a}hler degeneration to orbifold singularities and complex degeneration to canonical singularities (including the conifold singularities), and the collapsing of a family of Calabi-Yau manifolds.
\end{abstract}

\tableofcontents

\setcounter{equation}{0}\section{Introduction}
A Calabi-Yau $n$-manifold  is a  complex  projective manifold  $M$ of complex  dimension  $n$ with trivial canonical bundle $\mathcal{K}_{M}$.
 The study of  Calabi-Yau manifolds  is  important in both mathematics and physics (c.f. \cite{Y4}).
  On a Calabi-Yau manifold, the set $\mathbb{K}_{M} $ of K\"{a}hler classes forms
  an open cone of $H^{1,1}(M, \mathbb{R})$, which is called  K\"{a}hler cone.  By Yau's  theorem
   on  the  Calabi  conjecture (\cite{Ya1}), for
    any K\"{a}hler class $\alpha\in H^{1,1}(M, \mathbb{R})$, there exists a
     unique Ricci-flat K\"{a}hler metric $g$ on $M$ with  K\"{a}hler form
     $\omega\in\alpha$.
      A natural question is to study  how a family
       of Calabi-Yau manifolds $ (M_k, g_k, \omega_k)$ with  Ricci-flat K\"{a}hler metrics and the same underlying differential manifold $M$ converges.  There are several motivations to study this question:

\begin{itemize}\label{00} \item[(i)]  On a compact Calabi-Yau
   manifold, Yau's
  theorem shows the existence of Ricci-flat K\"{a}hler metrics. However, very few of them can be written down explicitly, except for  some very special  cases,  such as the  flat torus.  It is desirable to improve our knowledge of Ricci-flat K\"{a}hler metrics on a compact Calabi-Yau manifold, for example what the manifold with these metrics looks
   like. Understanding the convergence of Calabi-Yau metrics will help us to achieve this understanding.
 \item[(ii)] In mirror symmetry,  SYZ conjecture [\cite{SYZ}]
 predicts that there is a special Lagrangian fibration on a Calabi-Yau
   manifold if it is close enough to the  large complex limit.   In
   \cite{GW2} and \cite{KS}, this conjecture was  refined by using
   the Gromov-Hausdorff convergence of a family of Ricci-flat K\"{a}hler
   metrics.
    \item[(iii)] The conifold transition (or more general geometric transition) provides  a way to connect Calabi-Yau
     threefolds with different topology in algebraic geometry  (c.f. \cite{Ro}).  Furthermore,
    it was conjectured by physicists  that this process is continuous in the space
    of all  Ricci-flat K\"{a}hler threefolds  in  \cite{Ca}. Therefore it is important and interesting to study how Calabi-Yau metrics change in this process.
 \end{itemize}

  Let $\mathfrak{M}_{M} $ denote  the space of Ricci-flat  Calabi-Yau $n$-manifolds with the same underly differential manifold   $M$. By Yau's theorem, there are two natural parameters on $\mathfrak{M}_{M} $: one is the complex structure,  and  the other is the K\"{a}hler class.  It is studied in algebraic geometry how a family of Calabi-Yau $n$-manifolds degenerates when their complex structures approach the boundary of the space of complex structures (respectively their K\"{a}hler classes approach the boundary
 of K\"{a}hler cone while fixing a complex structure).  Usually, a family of Calabi-Yau  manifolds  degenerate into  a singular projective variety in some suitable sense. In  \cite{CT},  \cite{CCT} and \cite{C}, the convergence  of  Ricci-flat K\"{a}hler manifolds  in the  Gromov-Hausdorff topology was studied without any assumptions on complex structures and K\"{a}hler classes. It is shown that  the limits are path metric spaces
 in this case.
  A natural question is, if  we know how a family of Calabi-Yau
   manifolds degenerates in the algebraic geometry sense, what can we say about their
 convergence    in the  Gromov-Hausdorff topology? Of course,   more knowledge about the limit is  expected. For example, what is the relationship
 between  the singular projective variety obtained
 from the degeneration in algebraic geometry and  the metric space obtained from the  Gromov-Hausdorff convergence?\\

 For K3 surfaces,  this question was studied in \cite{An1},    \cite{Kob1} and \cite{GW2}. If $(N, g)$  is a Ricci-flat
    K3 orbifold, it was shown in \cite{Kob1}
     that   there is a family of Ricci-flat  K\"{a}hler metrics $g_{k} $ on the crepent resolution $M$ of $N$ such that
       $(M, g_{k}) $ converges to $(N, g)$.  Then, by using the  hyper-K\"{a}hler rotation, \cite{Kob1}
        proved  that a family of  Ricci-flat  K\"{a}hler K3 surfaces   $(M_{k}, g_{k}) $ converges to $(N, g)$,
  where $M_{k}$ are obtained by a smoothing of  $N$, i.e. there is a complex $3$-manifold $ \mathcal{M}$, and a holomorphic map $\pi: \mathcal{M} \rightarrow \Delta\subset \mathbb{C}$ such that $N=\pi^{-1}(0)$ and $M_{k}=\pi^{-1}(t_{k}) $ for a family $\{t_{k}\} \subset \Delta $ with $t_{k}\rightarrow 0$. In this paper, we generalize these results to higher dimensional Calabi-Yau manifolds.\\

A Calabi-Yau $n$-variety is a normal Gorenstein projective variety $N$ of dimensional $n$
admitting only  canonical singularities, such that the dualizing sheaf $\mathcal{K}_N$ of $N$ is trivial,
(i.e. $\mathcal{K}_{N} \simeq \mathcal{O}_{N}$,) and $H^{2}(N,
\mathcal{O}_{N})=\{0\}$.  $(M, \pi)$ is called a resolution of $N$,
if $M$ is a compact  complex $n$-manifold, and $\pi : M\rightarrow N$ is a bi-rational proper
 morphism such that $\pi: M\backslash \pi^{-1}(S)\rightarrow N\backslash S$ is bi-holomorphic, where $S$ is the singular set of $N$.
 The resolution is called crepant if $\pi^* \mathcal{K}_N = \mathcal{K}_M$, i.e.  $M$ is a compact  Calabi-Yau $n$-manifold in our case.  There are analogous notions of K\"{a}hler metrics,  K\"{a}hler forms, smooth  K\"{a}hler forms and holomorphic volume forms on $N$ (see Section 2  for details).  If $\mathcal{PH}_{N}$ denotes the sheaf of pluri-harmonic functions on $N$, any K\"{a}hler form $\omega$ represents a class $[\omega]$ in $H^{1}(N, \mathcal{PH}_{N})$ (c.f. Section 5.2 in  \cite{EGZ}).   In  \cite{EGZ}, it is proved that, for any $\alpha\in H^{1}(N, \mathcal{PH}_{N})$ which can be represented by a smooth K\"{a}hler form, there is a unique Ricci-flat K\"{a}hler metric $g$ with K\"{a}hler form $\omega \in \alpha$. If $N$ admits a crepant  resolution $(M, \pi)$,  and $\alpha_{k}\in H^{1,1}(M, \mathbb{R})$  is  a family of K\"{a}hler classes such that $\lim_{k\rightarrow \infty}  \alpha_{k}=\pi^{*}\alpha$, in  \cite{To} it is proved that $g_{k}$ converges to  $\pi^{*}g$ in the $C^{\infty}$-sense on any compact subset of  $M\backslash \pi^{-1}(S)$ when $k\rightarrow \infty$,  where $g_{k}$ is the unique Ricci-flat K\"{a}hler metric with K\"{a}hler form $\omega_{k}\in \alpha_{k}$.  The first goal of the present paper is to  study the convergence of  $(M , g_{k})$  in the  Gromov-Hausdorff topology.\\

 \begin{theorem}\label{t1} Let $N$ be a  Calabi-Yau $n$-variety which admits
 a crepant  resolution $(M, \pi)$,  $\alpha \in H^{1}(N,
 \mathcal{PH}_{N})$ be a class  represented by a smooth K\"{a}hler form on $N$,   and $g$ be  the unique  singular Ricci-flat
 K\"{a}hler metric with K\"{a}hler form $\omega\in \alpha$.  Assume that the path metric structure of  $(N\backslash S, g)$  extends    to a path metric structure $d_{N}$ on
 $N$ such that the Hausdorff dimension of $S$
 satisfies $\dim_{\mathcal{H}}S\leq 2n-4$,  where $S$ is   the singular set  of $N$,  and
 $N\backslash S$ is geodesic convex in $(N, d_{N})$,  i.e.  for any $x, y\in
 N\backslash S$, there is a minimal geodesic $\gamma \subset N\backslash S$ connecting $x$ and $y$
  satisfying ${\rm length}_{g}(\gamma)=d_{N}(x,y)$. If $g_{k}$ is a family
  of Ricci-flat
 K\"{a}hler metrics on $M$ with K\"{a}hler forms $\omega_{k}$ such
 that $[\omega_{k}]\rightarrow \pi^{*}\alpha$ in $H^{1,1}(M, \mathbb{R})$  when
 $k\rightarrow \infty$, then
$$\lim_{k\rightarrow \infty}d_{GH}((M,g_{k}), (N, d_{N}))=0,
$$ where $d_{GH}$ denotes the Gromov-Hausdorff distance.
\end{theorem}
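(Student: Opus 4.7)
The strategy is to show that the resolution map $\pi:M\to N$ itself furnishes a Gromov--Hausdorff $\epsilon_k$-approximation with $\epsilon_k\to 0$. Since $\pi$ is surjective, $d_N$-density of the image is automatic, so the problem reduces to the distortion estimate
\[
\sup_{x,y\in M}\bigl|d_{g_k}(x,y)-d_N(\pi(x),\pi(y))\bigr|\;\longrightarrow\;0\quad\text{as }k\to\infty.
\]

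Two ingredients drive the argument. First, the cited theorem of Tosatti gives $g_k\to\pi^{*}g$ in $C^{\infty}$ on every compact subset of $M\setminus E$, where $E:=\pi^{-1}(S)$; in particular, for any compact $\widetilde K\subset N\setminus S$, the restriction $\pi|_{\pi^{-1}(\widetilde K)}$ is $\delta$-close to an isometry from $(\pi^{-1}(\widetilde K),g_k)$ onto $(\widetilde K,g)$ for $k$ large. Second, the geodesic convexity of $(N\setminus S,g)$ together with $\dim_{\mathcal H}S\le 2n-4$ implies that for every $\delta>0$ one can find a compact $\widetilde K\subset N\setminus S$ that is $\delta$-dense in $(N,d_N)$ and such that any two of its points are joined by a near-minimising $d_N$-curve lying in $\widetilde K$.

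The technical heart of the argument is the \emph{quantitative collapse of $E$ in $(M,g_k)$}: for every $\delta>0$ there exist $\epsilon=\epsilon(\delta)>0$ and $k_0=k_0(\delta)$ such that every point of $V_\epsilon:=\pi^{-1}(U_\epsilon)$ lies within $g_k$-distance $\delta$ of $M\setminus V_\epsilon$ for all $k\ge k_0$, where $U_\epsilon$ is the open $\epsilon$-tubular neighbourhood of $S$. This is expected because $[\omega_k]\to\pi^{*}\alpha$ and $\pi^{*}\alpha$ vanishes on the fibres of $\pi|_E$; quantitatively, the weak convergence of measures $\omega_k^{n}\to(\pi^{*}\omega)^{n}$ combined with the Hausdorff-dimension hypothesis on $S$ forces $\mathrm{Vol}_{g_k}(V_\epsilon)\to 0$, first in $k$ and then in $\epsilon$. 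Converting this volume decay into distance/thickness decay uses Ricci-flatness via Bishop--Gromov volume comparison together with a uniform $L^{\infty}$ bound on the \k\ potentials via Yau's and Kolodziej's methods, yielding a uniform diameter bound on $(M,g_k)$.

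With the collapse established, both halves of the distortion estimate follow. For $d_{g_k}(x,y)\le d_N(\pi(x),\pi(y))+o(1)$: pick a near-minimising $d_N$-curve $\gamma\subset N\setminus U_\epsilon$ from $\pi(x)$ to $\pi(y)$, lift it to $\tilde\gamma\subset M\setminus V_\epsilon$, use Tosatti's convergence to match $g_k$- and $\pi^{*}g$-lengths on a compact containing $\tilde\gamma$, and close endpoint gaps of size $\le\delta$ via the collapse. For $d_{g_k}(x,y)\ge d_N(\pi(x),\pi(y))-o(1)$: a $g_k$-minimising geodesic $\gamma_k$ from $x$ to $y$ projects to a curve $\pi(\gamma_k)$ in $N$ joining $\pi(x)$ and $\pi(y)$, whose $g$-length is dominated by the $g_k$-length of $\gamma_k$ up to $o(1)$ --- on the portion outside $V_\epsilon$ by Tosatti, and on the portion inside $V_\epsilon$ by the collapse combined with the degeneracy of $\pi^{*}g$ along $E$. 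The main obstacle is the quantitative collapse step, where Tosatti's local $C^{\infty}$ convergence gives no direct information and the Hausdorff-dimension and Ricci-flatness hypotheses must be exploited together.
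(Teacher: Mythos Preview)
Your approach is genuinely different from the paper's, and the direction you identify as ``the main obstacle'' really is one: the argument for the lower bound $d_{g_k}(x,y)\ge d_N(\pi(x),\pi(y))-o(1)$ has a gap that your sketch does not close.

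The collapse step you outline is fine: since $\omega_k^n$ equals a fixed Calabi--Yau volume form up to a bounded constant, ${\rm Vol}_{g_k}(V_\epsilon)\to 0$ uniformly in $k$ as $\epsilon\to 0$, and combined with the non-collapsing bound ${\rm Vol}_{g_k}(B(p,r))\ge\kappa r^{2n}$ (from the diameter bound plus Bishop--Gromov) one gets that $V_\epsilon$ contains no $g_k$-ball of radius $\delta$. But this only says every \emph{point} of $V_\epsilon$ is $\delta$-close to the complement; it says nothing about the $g_k$-length of $\gamma_k\cap V_\epsilon$ or the $g$-length of $\pi(\gamma_k\cap V_\epsilon)$. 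A minimal $g_k$-geodesic may enter and exit $V_\epsilon$ arbitrarily many times, and the appeal to ``degeneracy of $\pi^*g$ along $E$'' does not help: on $V_\epsilon\setminus E$ the form $\pi^*\omega$ is nondegenerate and you have no uniform comparison with $\omega_k$ there. Tosatti's global Schwarz-lemma estimate $\pi^*\omega\le C\,\omega_k$ gives only the multiplicative bound $d_N(\pi(x),\pi(y))\le C^{1/2}d_{g_k}(x,y)$, not the additive $o(1)$ you need. (There is a parallel, milder issue with the upper bound: geodesic convexity places the minimising $\gamma$ in $N\setminus S$, not in any fixed $N\setminus U_\epsilon$, so the lift need not lie in a compact on which Tosatti's convergence is uniform over all $x,y$.)

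The paper sidesteps exactly this difficulty. Rather than proving directly that $\pi$ is an $\epsilon$-approximation, it first uses Gromov precompactness (via a diameter bound from the estimate ${\rm diam}_{g_k}(M)\le 32n+C(\int_M\omega_k\wedge\omega^{n-1})^n$) to extract a limit $(Y,d_Y)$, and then identifies $Y$ with $(N,d_N)$. The identification runs as follows: the local $C^\infty$ convergence gives a local isometry $f:N\setminus S\hookrightarrow Y$; Cheeger--Colding volume convergence forces $f$ to extend to a continuous surjection $\tilde f:N\to Y$; the inequality $d_Y\le d_N$ follows because any minimal geodesic $\gamma\subset N\setminus S$ satisfies ${\rm length}_{d_Y}(f(\gamma))={\rm length}_g(\gamma)$; and the reverse inequality $d_Y\ge d_N$ uses Cheeger--Colding's structure result (Theorem~3.7 of \cite{CC2}) that in the \emph{limit space} $Y$, for almost every pair of points a minimal geodesic avoids any set of zero $\mathcal{H}^{2n-1}$-measure, in particular $S_Y=Y\setminus f(N\setminus S)$. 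This last input is precisely what replaces your attempt to control $\gamma_k\cap V_\epsilon$: by working in $Y$ one gets geodesics that avoid the bad set for free, and then compares lengths on the good set where the local isometry applies.
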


As application we use the  above theorem on calabi-Yau orbifolds. A {\sf\em projective $n$-orbifold} is a normal  projective   $n$-variety
with only quotient singularities, i.e. for any singular point $p$,
there is a neighborhood $U_{p}$ of $p$, a neighborhood $V$ of $0\in
\mathbb{C}^{n}$, and a finite group $\Gamma_{p}\subset GL(n,
\mathbb{C})$ such that $U_{p}$ is bi-holomorphic to $V/\Gamma_{p}$.  A Calabi-Yau $n$-orbifold
is a  projective  orbifold  $N$ of dimension   $n$ with the following properties:
$H^{2}(N, \mathcal{O}_{N})=\{0\}$,  $N$ admits orbifold K\"{a}hler
 metrics, all of the orbifold groups are finite subgroups of $SU(n) $, and   the canonical bundle $\mathcal{K}_{N}$ of $N$
 is  trivial. A Calabi-Yau orbifold $N$  is  a Calabi-Yau variety in the
 above sense (see Section 2  for details).
  By the same arguments   as Yau's
 proof  of the  Calabi  conjecture,  for any K\"{a}hler class $\alpha\in
 H^{1,1}(N, \mathbb{R})$ on a Calabi-Yau orbifold $N$,  there exists a unique orbifold  Ricci-flat K\"{a}hler metric $g$
 on $N$ with   K\"{a}hler form  $\omega\in\alpha$ (\cite{Ya1} and
 \cite{Kob}).
 In  \cite{Lu1}, it is
 proved that there exists a family of  Ricci-flat K\"{a}hler metrics $g_{k}$ on $\bar{M}$
 such that $\{(\bar{M}, g_{k})\}$ converges to $(T^{6}/\mathbb{Z}_{3}, h)$ in the
 Gromov-Hausdorff topology, where
 $T^{6}=\mathbb{C}^{3}/(\mathbb{Z}^{3}+\sqrt{-1}\mathbb{Z}^{3})$,
 $h$ is the flat metric on $T^{6}/\mathbb{Z}_{3}$, and $\bar{M}$ is  a crepant  resolution
 of $T^{6}/\mathbb{Z}_{3}$.  For general case,  as a corollary of
 Theorem 1.1, we obtain:

 \begin{co}\label{t2} Let $N$ be a compact
Calabi-Yau
  $n$-orbifold, which  admits a  crepant resolution $(M,
\pi)$, and   $g$ be  a Ricci-flat K\"{a}hler metric
 on $N$  with K\"{a}hler form $\omega$. If  $g_{k}$ is  a family of Ricci-flat
K\"{a}hler metrics on $M$ with K\"{a}hler forms $\omega_{k}$
such that K\"{a}hler classes  $[\omega_{k}]$ converge  to
$\pi^{*} [\omega]$ in $H^{1,1} (M, \mathbb{R})$ as
$k\rightarrow \infty$, then
$$\lim_{k\rightarrow \infty}d_{GH}((M,g_{k}), (N, g))=0,
$$ where $d_{GH}$ denotes the Gromov-Hausdorff distance.
\end{co}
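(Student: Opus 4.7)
The strategy is to verify the hypotheses of Theorem \ref{t1} for the compact Calabi-Yau $n$-orbifold $N$ with singular set $S$, and then apply that theorem. Specifically, one needs: (i) $N$ is a Calabi-Yau $n$-variety; (ii) $[\omega]$ is represented by a smooth K\"{a}hler form on $N$; (iii) the orbifold Ricci-flat K\"{a}hler metric $g$ extends to a path metric $d_{N}$ on $N$ with $\dim_{\mathcal{H}} S \leq 2n-4$; and (iv) $N \setminus S$ is geodesically convex in $(N, d_{N})$.

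Items (i)--(iii) follow from the orbifold setup of Section 2. For (i), each quotient singularity $\mathbb{C}^{n}/\Gamma_{p}$ with $\Gamma_{p} \subset SU(n)$ is Gorenstein and canonical, while $\mathcal{K}_{N} \simeq \mathcal{O}_{N}$ and $H^{2}(N, \mathcal{O}_{N}) = 0$ are part of the orbifold Calabi-Yau hypothesis. For (ii), an orbifold K\"{a}hler form is locally $\sqrt{-1}\,\partial\bar\partial\phi$ with $\phi$ a smooth $\Gamma_{p}$-invariant potential, hence smooth in the sense of Section 2 and determining a class in $H^{1}(N, \mathcal{PH}_{N})$. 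For (iii), $g$ induces a Riemannian length metric on $N \setminus S$ that extends chart by chart to a path metric $d_{N}$ on $N$ via the $\Gamma_{p}$-invariant Riemannian metric on each uniformizing chart. Since any non-trivial $\gamma \in \Gamma_{p} \subset SU(n)$ has $\det\gamma = 1$, the eigenvalue $1$ of $\gamma$ has multiplicity at most $n-2$, so $\mathrm{Fix}(\gamma)$ has complex codimension $\geq 2$ and real codimension $\geq 4$; consequently $\dim_{\mathcal{H}} S \leq 2n-4$.

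The delicate step is (iv). Given $x, y \in N \setminus S$, a length-minimizing path $\gamma$ from $x$ to $y$ exists in $(N, d_{N})$ by Arzel\`a--Ascoli together with compactness of $N$; the content is to produce such a minimizer inside $N \setminus S$. The plan is local: near any $p \in S \cap \gamma$, lift a short subarc of $\gamma$ to a geodesic segment in the uniformizing chart $V$ at $p$. If this segment passes through a point fixed by some non-trivial $\gamma_{0} \in \Gamma_{p}$, one replaces its endpoints by suitable $\Gamma_{p}$-translates so that the resulting segment in $V$ avoids $\mathrm{Fix}(\gamma_{0})$ while still projecting to a path of the same minimal length in $V/\Gamma_{p}$; the real codimension $\geq 4$ of the fixed loci provides the room needed to do this. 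Patching these modifications across a finite orbifold-chart cover of the image of $\gamma$ produces a minimizer in $N \setminus S$. The main obstacle is carrying out this patching rigorously and verifying that the modification is length-preserving; once (i)--(iv) are in hand, Theorem \ref{t1} applied with $\alpha = [\omega]$ immediately yields $\lim_{k\to\infty} d_{GH}((M, g_{k}), (N, g)) = 0$.
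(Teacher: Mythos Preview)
Your overall strategy---verify the hypotheses of Theorem \ref{t1} and apply it---is exactly the paper's, and your treatment of (i), (iii), and the codimension bound is fine. But step (ii) contains a genuine error that the paper goes out of its way to flag and repair.

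You assert that an orbifold K\"{a}hler form is ``smooth in the sense of Section 2'' because its local potential is a smooth $\Gamma_p$-invariant function on the uniformizing chart $V$. That is not what ``smooth'' means for a K\"{a}hler form on a variety: one needs the potential to extend to a smooth function in a local embedding $V/\Gamma_p \hookrightarrow \mathbb{C}^m$, and a smooth $\Gamma_p$-invariant function on $V$ typically does \emph{not} so extend. (For $\mathbb{Z}_2$ acting on $\mathbb{C}^2$ by $z\mapsto -z$, the flat potential $|z_1|^2+|z_2|^2$ becomes $|u|+|v|$ in the coordinates $u=z_1^2$, $v=z_2^2$, $w=z_1z_2$, which is not smooth at the origin.) The paper states this explicitly just before Lemma \ref{203}: ``an orbifold K\"{a}hler form is not smooth in the sense of smooth K\"{a}hler forms on projective varieties.'' The fix is Lemma \ref{203}, which produces a \emph{different} representative $\omega_0\in[\omega]$ built from pullbacks of Fubini--Study forms; that $\omega_0$ is smooth in the variety sense and feeds into Theorem \ref{t1}.

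There is a second, related gap you do not address: Theorem \ref{t1} concerns the unique singular Ricci-flat metric supplied by Theorem \ref{201} (EGZ) in the class of a smooth K\"{a}hler form, whereas the $g$ in Corollary \ref{t2} is the \emph{orbifold} Ricci-flat metric coming from the orbifold Calabi--Yau theorem. You must identify the two. The paper does this via the uniqueness part of the EGZ theorem: once $\omega_0$ is in hand, the orbifold Ricci-flat form $\omega$ satisfies $\omega=\omega_0+\sqrt{-1}\partial\bar\partial\varphi_0$ with $\varphi_0$ continuous, hence coincides with the EGZ solution. Without this identification, Theorem \ref{t1} would only give convergence to \emph{some} Ricci-flat metric on $N$, not to the given $g$.

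Finally, for (iv) your sketch is reasonable but unnecessary: geodesic convexity of the smooth locus in a Riemannian orbifold is a known fact that the paper simply cites (Borzellino \cite{Bo}).
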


This  shows that we can find   Ricci-flat K\"{a}hler metrics $g_{k}$ on $M$ such that the shape of these Ricci-flat manifolds $(M, g_{k})$ look like the Ricci-flat orbifold $(N, g)$ as close as we want.\\

 The second goal is to study the convergence of Calabi-Yau manifolds obtained from a smoothing of a Calabi-Yau variety. Let $M_{0}$ be a normal projective Calabi-Yau  $n$-variety. Assume that $M_{0}$ admits a {\sf\em smoothing}  $\pi: \mathcal{M}\rightarrow \Delta$ in
$\mathbb{CP}^{N}$ over the unit disc $\Delta=\{t\in \mathbb{C}||t|<1 \}$, i.e. $\mathcal{M} \subset \mathbb{CP}^{N}\times \Delta$ is an irreducible closed subvariety,  $\pi$ is the restriction of the projection from $\mathbb{CP}^{N}\times \Delta$ to $\Delta$, $M_{0}=\pi^{-1}(0)$, and for $t\neq 0$, $M_{t}=\pi^{-1}(t)$ is a smooth projective $n$-manifold, where $\pi^{-1} (t)$ for $t\in \Delta$ denote the scheme theoretical fibres. We also assume that the dualizing sheaf $\mathcal{K}_{\mathcal{M}} \cong \mathcal{O}_{\mathcal{M}}$. Let $\Omega = \Omega_{\mathcal{M}}$ denote the corresponding trivializing section of $\mathcal{K}_{\mathcal{M}}$. By the adjunction formula (c.f. \cite{GH}), we have $\mathcal{K}_{M_{t}} =\mathcal{K}_{\mathcal{M}} \otimes [M_{t}]|_{M_{t}} \cong \mathcal{O}_{M_{t}}$. The corresponding trivializing section can be expressed locally as $\Omega_t = \Omega_{M_t} = (\imath_{\frac{\partial}{\partial t}} \Omega)|_{M_t}$. For any $t\neq 0$,  $M_{t}$ is a projective  $n$-manifold with trivial canonical bundle $\mathcal{K}_{M_{t}}$. $\Omega$ and $\Omega_t$ define the volume forms
\[
d\mu = d\mu_{\mathcal{M}} = (-1)^{\frac{(n+1)^2}{2}} \Omega \wedge \overline{\Omega} \mbox{ and } d\mu_t = d\mu_{M_t} = (-1)^{\frac{n^2}{2}} \Omega_t \wedge \overline{\Omega_t}
\]
on $\mathcal{M}$ and $M_t$. In particular, we use $\Omega_{\mathbb{C}^n}$ to denote the standard Calabi-Yau form
 on $\mathbb{C}^n$ with the corresponding volume form $d\mu_{\mathbb{C}^n} = (-1)^{\frac{n^2}{2}} \Omega_{\mathbb{C}^n} \wedge \overline{\Omega}_{\mathbb{C}^n}$.

In our discussion, we would need the technical condition that $\mathcal{M}$ is locally homogeneous, which would include the case that $\mathcal{M}$ is smooth or with isolated homogeneous singularities (see \S3.3 for details). We believe, all our results should still be true with this technical condition removed.

Roughly speaking, we say $(\mathcal{M}, \pi)$ is {\sf\em locally quasi-homogeneous}, if for any $p\in M_0$, there exist an open neighborhood $U \subset \mathcal{M}$ with a local embedding $(U, p) \rightarrow (\mathbb{C}^m, 0)$, and a weight vector $w= (w_1, \cdots, w_m)$, where $w_i$ are positive integers, such that $(U, \pi|_U)$ is $w$-homogeneous under the standard $\mathbb{C}^*$-action on $\mathbb{C}^m$ of weight $w$. In particular, $(\mathcal{M}, \pi)$ is locally homogeneous if all $w_i=1$. For technical reason, our precise definition would require slightly stronger condition on $U$ (see \S3.3 for details).

$t= \pi(z)$ can be viewed as a holomorphic function on $\mathcal{M}$. The standard K\"{a}hler metric on $\mathbb{CP}^{N}\times \Delta$ restricts to a K\"{a}hler metric on $\mathcal{M}$. $V= -\frac{\nabla |t|}{|\nabla |t||^2}$ defines a horizontal vector field on $\mathcal{M} \setminus M_0$ such that $\pi_*V$ is the inward radial unit vector field on $\Delta$. $V$ generates a family $\phi_{t,a}: M_t \rightarrow M_{at}$ for $a\in (0, 1]$ of symplectomorphisms. It is straightforward to see that $\phi_{t,a}$ can be extended to $\phi_{t,0}: M_t \rightarrow M_0$ that is symplectomorphism over $M_0\setminus S$. This construction gives us a smooth embedding $F: (M_0\setminus S) \times \Delta \rightarrow \mathcal{M}$, $F(x,t) = F_t(x) := \phi_{t,0}^{-1} (x)$ for $x\in M_0\setminus S$ and $t\in \Delta$. (For our discussion, we would not need the symplectic property of $F$.)

By \cite{EGZ}, for any smooth K\"{a}hler form $\omega_{0}$ on $M_{0}$, there is a unique singular Ricci-flat K\"{a}hler metric $\tilde{g}_{0} $ on $M_{0}$ with
K\"{a}hler form $\tilde{\omega}_{0}$ such that $\tilde{\omega}_{0}\in [\omega_{0}]\in H^{1}(M_{0}, \mathcal{PH}_{M_{0}})$. Furthermore, $\tilde{g}_{0} $ is a smooth Ricci-flat K\"{a}hler metric on $M_{0}\backslash S$.\\

\begin{con}
\label{t0}
Let $M_{0}$ be a projective Calabi-Yau  $n$-variety, and  $S$
 be the  singular points of $M_{0}$. Assume that $M_{0}$ admits a smoothing $\pi: \mathcal{M}\rightarrow \Delta$ in $\mathbb{CP}^{N}$
  over the unit disc $\Delta\subset \mathbb{C}$ such that the dualizing sheaf $\mathcal{K}_{\mathcal{M}} $ of $\mathcal{M}$ is trivial.
   For any smooth  K\"{a}hler form $\omega$ on $\mathcal{M}$ and any $t\in
\Delta\backslash \{0\}$, let $\tilde{g}_{t}$ be the unique Ricci-flat K\"{a}hler metric on $M_{t}=\pi^{-1}(t)$ with its K\"{a}hler form $\tilde{\omega}_{t}\in [\omega|_{M_{t}}]\in
H^{1,1}(M_{t}, \mathbb{R})$.   Then for any sequence $\{t_{k}\}\subset \Delta$ with $t_{k}\rightarrow 0$  such that, for any smooth embedding $F:M_{0}\backslash S\times \Delta \rightarrow  \mathcal{M}$ satisfying  that  $F(M_{0}\backslash S\times
 \{t\})\subset M_{t}$ and $F|_{M_{0}\backslash S\times
 \{0\}}={\rm Id}: M_{0}\backslash S \rightarrow M_{0}\backslash S$ is the identity map,   we have      $$F|_{M_{0}\backslash S\times
 \{t_{k}\}}^{*}\tilde{g}_{t_{k}} \rightarrow \tilde{g}_{0}, \ \ \ \ {\rm and } \ \ \ F|_{M_{0}\backslash S\times
 \{t_{k}\}}^{*}\tilde{\omega}_{t_{k}} \rightarrow \tilde{\omega}_{0}$$
 in the $C^{\infty}$-sense on  any compact subset $K \subset
M_{0}\backslash S$,  where $\tilde{g}_{0}$ is the unique singular
Ricci-flat K\"{a}hler metric on $M_{0}$ with K\"{a}hler form $\tilde{\omega}_{0}\in [\omega|_{M_{0}}] \in H^{1}(M_{0}, \mathcal{PH}_{M_{0}})$. Furthermore, the diameters of $(M_{t_{k}}, \tilde{g}_{t_{k}})$ have a  uniformly upper bound, i.e.
$$ {\rm diam}_{\tilde{g}_{t_{k}}}(M_{t_{k}})\leq \bar{C},   $$ for a constant $\bar{C}>0$  independent of $k$.
\end{con}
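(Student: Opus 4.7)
The plan is to reduce the convergence statement to uniform estimates for a parametrized family of complex \ma\ equations and to uniqueness of the singular Ricci-flat metric on $M_0$. On each fiber $M_t$ with $t\ne 0$, write the Ricci-flat \k\ form as $\tilde\omega_t = \omega|_{M_t} + \sqrt{-1}\,\partial\bar\partial\varphi_t$, normalized by $\sup_{M_t}\varphi_t = 0$. The Calabi-Yau equation becomes
\[
(\omega|_{M_t} + \sqrt{-1}\,\partial\bar\partial\varphi_t)^n \;=\; c_t\, \Omega_t\wedge\overline{\Omega_t},\qquad c_t = \frac{\int_{M_t}\omega^n|_{M_t}}{\int_{M_t}\Omega_t\wedge\overline{\Omega_t}}.
\]
The first task is to show that $c_t$ stays bounded as $t\to 0$. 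Using the local quasi-homogeneous structure of $\mathcal{M}$ near each $p\in S$, one chooses $w$-homogeneous local coordinates and computes $\Omega_t\wedge\overline{\Omega_t}$ explicitly from $\Omega = d\pi\wedge \Omega_t + \cdots$; the explicit model shows that $\int_{M_t}\Omega_t\wedge\overline{\Omega_t}$ is bounded and depends continuously on $t$, and that, with respect to a smooth measure coming from $\omega$, its density lies in $L^p$ for some $p>1$ uniformly in $t$.

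The next step is a uniform $L^\infty$ bound for $\varphi_t$ via the Kolodziej/Eyssidieux-Guedj-Zeriahi estimate \cite{EGZ}, which applies because the \k\ classes $[\omega|_{M_t}]$ vary continuously and the right-hand side has a uniform $L^p$ bound by Step 1. With $\|\varphi_t\|_{L^\infty}\le C$ in hand, pull back via the embedding $F$ constructed from the horizontal vector field $V$ described just before the conjecture: on any compact $K\subset M_0\setminus S$, $F_t^*\omega|_{M_t}$, $F_t^*(\Omega_t\wedge\overline{\Omega_t})$ and $F_t^*c_t$ all converge smoothly, so Yau's second order estimate together with Evans-Krylov and bootstrapping yield uniform $C^k(K)$ bounds on $F_t^*\varphi_t$. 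Any subsequential limit $\varphi_\infty$ solves the singular Calabi-Yau equation on $M_0$ in the class $[\omega|_{M_0}]$, and so by uniqueness of the Ricci-flat \k\ potential in \cite{EGZ} coincides with the potential of $\tilde\omega_0$; hence the full sequence converges, which gives the first two assertions of the conjecture for any admissible $F$ (different choices of $F$ differ by a diffeomorphism of $M_0\setminus S$, and $C^\infty$ convergence on compacts is preserved).

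The main obstacle is the uniform diameter bound, which does not follow from the above local analysis. The approach I would take is to combine an interior pseudo-distance estimate on $M_0\setminus S$ with a controlled extension across the vanishing cycle. Fix a small $\delta>0$ and an exhaustion $K_\delta\subset M_0\setminus S$ whose complement has $\omega_0^n$-measure less than $\delta$; by the smooth convergence of Step 2 the $\tilde g_{t_k}$-diameter of $F_{t_k}(K_\delta)$ is bounded by $\mathrm{diam}_{\tilde g_0}(K_\delta)+o(1)$, which is finite since $(N,d_N)$-type extension arguments (applied here to $M_0$) give that the singular Ricci-flat metric on $M_0\setminus S$ has bounded diameter, and is certainly bounded on any compact subset. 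For the complementary thin neighborhood of $S$ inside $M_{t_k}$, the ingredients needed are a uniform Sobolev inequality for $\tilde g_{t_k}$ and a uniform volume upper bound, which together with $\mathrm{Ric}(\tilde g_{t_k})=0$ yield, via Moser iteration as in \cite{CCT}, a uniform bound on the distance from any point to the large set $F_{t_k}(K_\delta)$. The hard part is establishing the uniform Sobolev constant near the shrinking vanishing cycles; this is where the locally quasi-homogeneous hypothesis on $\mathcal{M}$ is essential, as it provides explicit local models on which the Sobolev constant can be computed and compared, and this is precisely the point that prevents the statement from being a theorem in full generality at this stage.
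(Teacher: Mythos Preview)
First, note that the statement you are attempting to prove is a \emph{conjecture} in the paper: it is not proved in full generality, only under the additional condition~(\ref{1.1}) (Theorem~\ref{t1.1}) or under local (quasi-)homogeneity hypotheses (Theorems~\ref{t3} and~\ref{t6}). So a complete proof is not expected, and your proposal implicitly smuggles in exactly such extra hypotheses.

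The genuine gap is in your $C^0$ estimate, and you have misplaced the obstruction. In Step~1 you assert a uniform-in-$t$ $L^p$ bound on the density $\Omega_t\wedge\overline{\Omega_t}/\omega_t^n$ by invoking ``the local quasi-homogeneous structure of $\mathcal{M}$'', but (a) no such structure is assumed in the conjecture, and (b) even with that hypothesis, this uniform integrability is precisely the technical condition~(\ref{1.1}) that the paper isolates as the key obstacle and can only verify under homogeneity assumptions (see \S3.3, Theorem~\ref{cy2} and Proposition~\ref{cy9}). Furthermore, the Ko\l odziej/EGZ $C^0$ estimate is formulated on a fixed manifold; making it uniform over the varying family $\{M_t\}$ requires the local argument of Proposition~\ref{5.7}, which passes through branched coverings $\mathfrak{p}_\alpha: U_\alpha\cap M_t\to B_1\subset\mathbb{C}^n$, local Bedford--Taylor capacities, and the uniform $L^1$ bound on $\varphi_t$ from the first-eigenvalue estimate (Lemma~\ref{t3.6}). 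Your one-line invocation skips all of this.

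Conversely, the difficulty you flag at the end is not one. The paper obtains a uniform Sobolev constant on $(M_t,g_t)$ directly from Li--Tian's heat kernel bound (Proposition~\ref{t3.500}), with no homogeneity needed. And once the $C^0$ bound $\|\varphi_t\|_{L^\infty}\leq C$ is in hand, the diameter bound is easy and does not require Moser iteration for $\tilde g_t$: Yau's Schwarz lemma (Lemma~\ref{t5.4}) gives a global lower bound $\tilde g_t\geq C'g_t$, hence a uniform volume lower bound for a unit $\tilde g_t$-ball centered away from $S$, and then Paun's lemma (Lemma~\ref{t3.03}) yields ${\rm diam}_{\tilde g_t}(M_t)\leq\bar C$ as in the last paragraph of Theorem~\ref{t5.5}. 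So the scheme is: uniform $C^0$ $\Rightarrow$ Schwarz lemma $\Rightarrow$ higher-order local estimates and diameter bound, with the first step being the only genuine obstacle.
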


We will prove this conjecture under a technical condition (related to the log canonical threshold) on the smoothing that we believe is always satisfied for the smoothing considered in conjecture \ref{t0}. We are able to verify this condition under quite general circumstances, therefore proving the conjecture in these cases. We say a smoothing $\pi: \mathcal{M} \rightarrow \Delta$ satisfies {\sf\em condition (\ref{1.1})} for $\Lambda \subset \Delta$ if for any $x_0 \in M_0$, there exist $r, c_1, C_1>0$ and a holomorphic map $\mathfrak{p}: U= B_r(x_0, \mathcal{M}) \rightarrow B_1(0) \subset \mathbb{C}^n$ that restricts to a finite branched covering $\mathfrak{p}: M_t \cap U \rightarrow B_1(0)$ for all $t\in \Delta$, and
\begin{equation}
\label{1.1}
\int_{U \cap M_t} |f|^{-2c_1} (-1)^{\frac{n^2}{2}}\Omega_t\wedge
\overline{\Omega}_t \leq C_1, \mbox{ where } f \Omega_t = \mathfrak{p}^* \Omega_{\mathbb{C}^n} \mbox{ for } t\in \Lambda.\\
\end{equation}

\begin{theorem}\label{t1.1}
The conjecture \ref{t0} is true if we assume that the smoothing $\pi: \mathcal{M} \rightarrow \Delta$ satisfies condition (\ref{1.1}) for $\Lambda = \Delta$.
\end{theorem}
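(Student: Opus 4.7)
The plan is to reduce the theorem to a family of complex Monge-Amp\`ere equations on the smooth fibres $M_t$, derive $t$-uniform a priori estimates using condition (\ref{1.1}), and then bootstrap to smooth convergence by classical methods. Writing $\tilde{\omega}_t = \omega_t + \sqrt{-1}\partial\bar{\partial}\varphi_t$ with $\omega_t := \omega|_{M_t}$, Ricci-flatness and the normalization $[\tilde{\omega}_t] = [\omega_t]$ translate into
\begin{equation*}
(\omega_t + \sqrt{-1}\partial\bar{\partial}\varphi_t)^n = c_t\, d\mu_t,\qquad c_t := \int_{M_t}\omega_t^n\Big/\int_{M_t}d\mu_t,
\end{equation*}
with $c_t \to c_0 > 0$ as $t \to 0$ by $\mathcal{K}_{\mathcal{M}}\cong\mathcal{O}_{\mathcal{M}}$ and projectivity. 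At $t = 0$ this is exactly the singular Monge-Amp\`ere equation whose unique bounded solution is $\tilde{\varphi}_0$ by \cite{EGZ}.

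The central and most delicate step is a uniform $L^\infty$ bound on the normalized potential $\varphi_t$ (normalized by $\sup_{M_t}\varphi_t = 0$), independent of $t \in \Delta$. This is precisely where condition (\ref{1.1}) enters, since as $t \to 0$ the density $d\mu_t/\omega_t^n$ can blow up near $S$ even though it is smooth on each fixed $M_t$. Via a family version of Kolodziej's pluripotential estimate, it suffices to produce a $t$-independent $p > 1$ and $C > 0$ with $\|d\mu_t/\omega_t^n\|_{L^p(\omega_t^n)} \leq C$. Away from $S$ the density is smooth in $(z,t)$ and the bound is trivial. Near $x_0 \in S$ we use the chart $\mathfrak{p}: U \to B_1(0)$ of (\ref{1.1}): the relation $f\Omega_t = \mathfrak{p}^*\Omega_{\mathbb{C}^n}$ gives $d\mu_t = |f|^{-2}\mathfrak{p}^* d\mu_{\mathbb{C}^n}$, and since $|D\mathfrak{p}|$ is uniformly bounded on $\mathcal{M}$ we obtain $\mathfrak{p}^* d\mu_{\mathbb{C}^n} \leq C_*\,\omega_t^n$ uniformly in $t$; hence $\rho := d\mu_t/\omega_t^n \leq C_*|f|^{-2}$. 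Using the identity $\rho^{1+c_1}\omega_t^n = \rho^{c_1}\, d\mu_t$,
\begin{equation*}
\int_{U\cap M_t}\rho^{1+c_1}\omega_t^n \leq C_*^{c_1}\int_{U\cap M_t}|f|^{-2c_1}\,d\mu_t \leq C_*^{c_1} C_1.
\end{equation*}
Summing over a finite cover of $M_0$ by such charts yields the $L^{1+c_1}$ bound, and Kolodziej's theorem then yields $\|\varphi_t\|_{L^\infty(M_t)} \leq C$ uniformly in $t$.

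With the uniform $C^0$ bound in hand, smooth convergence on each compact $K \Subset M_0 \setminus S$ is classical. Pulling back by $F_t$, the background data $F_t^*\omega_t$ and $F_t^*(d\mu_t)$ converge smoothly to $\omega_0$ and $d\mu_0$ on $K$, and the equation is uniformly non-degenerate there; Yau's $C^2$ estimate together with Evans--Krylov and Schauder theory upgrade the $L^\infty$ bound on $F_t^*\varphi_t$ to uniform $C^{k,\alpha}$ bounds on any $K' \Subset K$. Arzel\`a--Ascoli combined with the uniqueness statement in \cite{EGZ} then forces full $C^\infty$ convergence $F_t^*\tilde{g}_t \to \tilde{g}_0$ on compact subsets of $M_0 \setminus S$. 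The uniform diameter bound $\mathrm{diam}_{\tilde{g}_t}(M_t) \leq \bar{C}$ follows from the $L^\infty$ bound on $\varphi_t$, Ricci-flatness, and the uniform $L^{1+c_1}$ density control, by standard Yang-type arguments in the pluripotential analysis of degenerating Calabi-Yau families.
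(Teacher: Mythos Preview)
Your derivation of the local $L^{1+c_1}$-bound on the density $\rho = d\mu_t/\omega_t^n$ near each singular point is correct and is a genuinely different (and cleaner) route than the paper's. The paper never computes an $L^p$ norm of $\rho$; instead, Lemma~\ref{5.2} pushes the measure $d\mu_t$ forward through the branched cover $\mathfrak{p}$ and bounds $\int_K d\mu_t$ directly by ${\rm Cap_{BT}}(K,V)/h({\rm Cap_{BT}}(K,V)^{-1/n})$, feeding this into the Kolodziej iteration (Lemma~\ref{2.1}). Your identity $\rho^{1+c_1}\omega_t^n = \rho^{c_1}\,d\mu_t$ together with $\rho \leq C_*|f|^{-2}$ (which follows exactly as you say, since $\mathfrak{p}$ extends holomorphically past the singularities by normality) turns condition~(\ref{1.1}) into the same information in a more packaged form.

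The gap is in the sentence ``Via a family version of Kolodziej's pluripotential estimate''. The constant in Kolodziej's global $L^\infty$ bound depends on more than $\|\rho\|_{L^p}$ and the volume: in the capacity approach one needs a uniform $L^1$ (or mean-value) bound on $\varphi_t$, and this is \emph{not} automatic as $t\to 0$ because the background $(M_t,\omega_t)$ is degenerating. The paper handles this in Proposition~\ref{5.7} by first establishing $\int_{M_t}\varphi_t\,\omega_t^n \geq -C$ via the uniform first-eigenvalue lower bound (Proposition~\ref{t3.5} and Lemma~\ref{t3.6}), then comparing local and global capacities (Lemma~\ref{5.3}, Proposition~\ref{t2.3.3}), and only then running the iteration. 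Your write-up needs either this ingredient or an explicit citation of a uniform estimate whose hypotheses you verify.

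The diameter paragraph is also too vague. ``Yang-type arguments'' is not a reference, and the diameter bound does not follow from the $L^\infty$-bound and Ricci-flatness alone. The paper's argument (end of the proof of Theorem~\ref{t5.5}) uses the Chern--Lu/Schwarz inequality (Lemma~\ref{t5.4}) to get $\tilde g_t \geq C' g_t$ globally and $\tilde g_t \leq C_K g_t$ on compact sets away from $S$; this yields a uniform lower bound on ${\rm Vol}_{\tilde g_t}(B_{\tilde g_t}(p_t,1))$, after which Lemma~\ref{t3.03} (Paun's volume-doubling lemma) gives the diameter bound. You should either reproduce this or point to it.
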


\begin{remark}
For any specific example, it is usually fairly straightforward to construct $\mathfrak{p}$ and compute the explicit integral in (\ref{1.1}) to verify the condition (\ref{1.1}). (For example, the verification of the condition (\ref{1.1}) is a rather simple exercise in the conifold case.) One may even attempt to use computer to make such verification. Therefore, Theorem \ref{t1.1} can be adequately employed in proving the conjecture \ref{t0} for any specific smoothing. The difficulty lies in the verification of the condition (\ref{1.1}) in full generality, especially when $\mathcal{M}$ is singular.
\end{remark}

In general, we can prove a slightly weaker version of conjecture \ref{t0}.\\

\begin{theorem}\label{t3} The conjecture \ref{t0} is true if we assume that $\mathcal{M}$ is locally homogeneous (including when $\mathcal{M}$ is smooth) and replace ``for any sequence $\{t_{k}\}\subset \mathbb{C}$" by ``there exists a sequence $\{t_{k}\}\subset \mathbb{C}$".
\end{theorem}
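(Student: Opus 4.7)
The plan is to derive Theorem \ref{t3} from Theorem \ref{t1.1} by verifying, under the local homogeneity hypothesis, that condition (\ref{1.1}) holds for some $\Lambda=\{t_k\}\subset\Delta$ with $t_k\to 0$. Since (\ref{1.1}) is a local condition at each $x_0\in M_0$, compactness of $M_0$ reduces the task to verifying it on finitely many neighborhoods and extracting a common subsequence by a diagonal argument.

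Fix $x_0\in M_0$. The local homogeneity furnishes a (conical) neighborhood $U$ of $x_0$ and a local embedding $\iota:U\hookrightarrow(\mathbb{C}^m,0)$, invariant under the standard scalar $\mathbb{C}^\ast$-action $\phi_\lambda(z)=\lambda z$, with $\pi|_U$ corresponding to a weight-$1$ coordinate, which after a linear change of variables may be assumed to be $z_m$. I would take $\mathfrak{p}=L\circ\iota$, where $L:\mathbb{C}^m\to\mathbb{C}^n$ is a generic linear projection omitting $z_m$; since $\dim M_0=n$, the restriction of $\mathfrak{p}$ to $M_0\cap U$ is a finite branched cover near the origin for generic $L$, and by upper semi-continuity of fiber dimension (after further shrinking of $U$) this extends to a finite branched covering $\mathfrak{p}:M_t\cap U\to B_1(0)$ for all $t\in\Delta$. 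Because $\mathfrak{p}$, $\pi$, and the $\mathbb{C}^\ast$-action are all weight-$1$ equivariant, $\phi_\lambda$ identifies $(M_t\cap U,\mathfrak{p})$ equivariantly with $(M_{\lambda t}\cap\phi_\lambda(U),\mathfrak{p})$ over $\mathbb{C}^n$.

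For the integral bound, $\mathcal{K}_{\mathcal{M}}\cong\mathcal{O}_{\mathcal{M}}$ together with $\mathbb{C}^\ast$-invariance forces the local trivializing section $\Omega$ on $\iota(U)$ to be weight-homogeneous of some integer weight $\alpha$; tracking the interior product with $\partial/\partial t$ and the pullback by $\mathfrak{p}$ yields $f_{\lambda t}\circ\phi_\lambda=\lambda^{n-\alpha+1}f_t$ together with $\phi_\lambda^\ast d\mu_{\lambda t}=|\lambda|^{2(\alpha-1)}d\mu_t$, so pulling the integral back via $\phi_\lambda$ gives
\[
\int_{M_{\lambda t}\cap U}|f|^{-2c_1}\,d\mu_{\lambda t}=|\lambda|^{2(\alpha-1)-2(n-\alpha+1)c_1}\int_{M_t\cap\lambda^{-1}U}|f|^{-2c_1}\,d\mu_t.
\]
Choosing $c_1>0$ small enough that it sits below the log canonical threshold of $f$ on a fixed reference fiber and also makes the $|\lambda|$-exponent non-negative, uniformly across the finitely many neighborhoods covering $M_0$, produces a uniform upper bound along any sequence $t_k\to 0$; the diagonal argument then yields a single sequence $\{t_k\}$ for which condition (\ref{1.1}) holds globally, and Theorem \ref{t1.1} applies along this sequence.

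The main obstacle is the scaling computation, especially showing that the weight $\alpha$ of $\Omega$ satisfies $\alpha\geq 1$ (otherwise no positive $c_1$ can make the exponent non-negative) and that a single $c_1$ works uniformly over the finite cover; both should follow from the Gorenstein/canonical nature of the singularities of $\mathcal{M}$ together with compactness of $M_0$. A secondary technical point is the integrability of $|f|^{-2c_1}$ on the reference fiber, which requires controlling how $f$ vanishes along the branch locus of $\mathfrak{p}$. The passage to a subsequence (rather than obtaining convergence for every $t$, as in Conjecture \ref{t0}) reflects that the weight-$1$ scaling controls the integral only along $\mathbb{C}^\ast$-orbits rescaling to $0$, and does not automatically give a uniform log canonical threshold estimate over all of $\Delta$ --- this gap is precisely what condition (\ref{1.1}) in Theorem \ref{t1.1} is designed to close.
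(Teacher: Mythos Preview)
Your proposal has a genuine gap at the very first step: you assume that in the local homogeneous chart one may take $\pi|_U$ to be a weight-$1$ coordinate $z_m$. This is \emph{not} part of the hypothesis of Theorem \ref{t3}. Local homogeneity of $\mathcal{M}$ (as defined in \S3.3) only says that a neighborhood of $p$ in $\mathcal{M}$ splits as $(U\cap M^{(i)})\times U_i$ with $U_i$ a homogeneous cone; it says nothing about $\pi$. Indeed the case ``$\mathcal{M}$ smooth'' is explicitly included: there the local model is $\mathbb{C}^{n+1}$ and $\pi$ is an arbitrary holomorphic function with a critical point at $0$, with no reason to be linear. The stronger assumption you are using is exactly condition (1.2), which is the hypothesis of Theorem \ref{t6}, not of Theorem \ref{t3}.

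Even granting that $\pi$ were homogeneous, your scaling identity
\[
\int_{M_{\lambda t}\cap U}|f|^{-2c_1}\,d\mu_{\lambda t}=|\lambda|^{2(\alpha-1)-2(n-\alpha+1)c_1}\int_{M_t\cap\lambda^{-1}U}|f|^{-2c_1}\,d\mu_t
\]
does not give a bound: as $\lambda\to 0$ the domain $\lambda^{-1}U$ exhausts the ambient cone, so the right-hand integral is over an unbounded region of the affine model and there is no a priori reason for it to stay finite. The paper's Proposition \ref{cy9} uses scaling in the \emph{opposite} direction (obtaining a lower bound $I_t\geq (|t|/r)^a I_r$), and then combines it with an already-established tube bound $\int_{M_{\Delta_\rho}}|\psi|^{-2\epsilon}d\mu\leq C\rho^2$; scaling alone never produces an upper bound on a single fibre.

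The paper proceeds differently. The key input is Theorem \ref{cy2}: using a locally finite covering of $M_{\Delta_\rho}\setminus M_0$ by balls $B_{2r_i}(p_i)$ with $r_i=\mathrm{dist}(p_i,M_0)$ (Lemma \ref{cy8}, Corollary \ref{cy7}) together with a projective-compactness argument for the rescaled families of functions, one obtains the tube estimate $\int_{U_\alpha\cap M_{\Delta_\sigma}}|f_\alpha|^{-2c}d\mu\leq C_2|\Delta_\sigma|$. Then Lemma \ref{5.8} is a Chebyshev/Fubini step: writing the tube integral as $\int_{\Delta_\sigma}\bigl(\int_{U_\alpha\cap M_t}|f_\alpha|^{-2c}d\mu_t\bigr)d\mu_{\mathbb C}$, one concludes that the set $\Lambda_{c,C_1}$ where the fibre integral exceeds $C_1$ satisfies $|\Delta_\sigma\setminus\Lambda|\leq (C_2/C_1)|\Delta_\sigma|$, so $0$ is an accumulation point of $\Lambda$. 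Theorem \ref{t5.5} (the version of Theorem \ref{t1.1} phrased for an arbitrary closed $\Lambda$) then gives the convergence along any sequence $t_k\to 0$ in $\Lambda$. Your final paragraph correctly identifies \emph{why} only a subsequence is obtained, but your mechanism for producing it is not the right one.
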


If we further assume that $\pi$ possesses some local homogeneous property, the stronger version of the conjecture \ref{t0} can be proved. We say $(\mathcal{M}, \pi)$ satisfies the {\sf\em condition (1.2)} if either (i) $\mathcal{M}$ and $\pi$ are locally homogeneous, or (ii) $\mathcal{M}$ is smooth and $\pi$ is locally quasi-homogeneous.\\

\begin{theorem}
\label{t6}
The conjecture \ref{t0} is true if $(\mathcal{M}, \pi)$ satisfies the condition (1.2).
\end{theorem}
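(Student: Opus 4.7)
The strategy is to reduce Theorem \ref{t6} to Theorem \ref{t1.1} by verifying condition (\ref{1.1}) for $\Lambda=\Delta$ under hypothesis (1.2); once (\ref{1.1}) holds, both the $C^\infty$ convergence and the uniform diameter bound follow immediately from Theorem \ref{t1.1}.

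Fix $x_0\in M_0$ and use condition (1.2) to choose a (quasi-)homogeneous local embedding $(U,x_0)\hookrightarrow(\mathbb{C}^m,0)$ with weight vector $w=(w_1,\ldots,w_m)$ and $\pi$ weighted-homogeneous of some degree $d>0$. Select $n$ of the ambient coordinates $z_{i_1},\ldots,z_{i_n}$ which, together with $\pi$, form a local system of parameters on $\mathcal{M}$, and set $\mathfrak{p}=(z_{i_1},\ldots,z_{i_n}):U\to B_1(0)\subset\mathbb{C}^n$ after a suitable rescaling. By weighted homogeneity and properness on each fibre, $\mathfrak{p}:M_t\cap U\to B_1(0)$ is then a finite branched cover for every $t\in\Delta$. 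In case (i) this can be arranged $\mathbb{C}^*$-equivariantly because $\mathcal{M}$ itself is a local cone, while in case (ii) the smoothness of $\mathcal{M}$ ensures that $(\pi,z_{i_1},\ldots,z_{i_n})$ is an honest holomorphic coordinate system on $U$.

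The next ingredient is a weight bookkeeping. As a trivialising section of $\mathcal{K}_{\mathcal{M}}$, the form $\Omega$ is an eigenvector of the $\mathbb{C}^*$-action with some integer weight $W_\Omega$, so $\Omega_t=\iota_{\partial/\partial t}\Omega|_{M_t}$ has weight $W_\Omega-d$, and the function $f$ defined by $f\,\Omega_t=\mathfrak{p}^*\Omega_{\mathbb{C}^n}$ has weight $\nu=w_{i_1}+\cdots+w_{i_n}-W_\Omega+d$. The action $\lambda\cdot z=(\lambda^{w_1}z_1,\ldots,\lambda^{w_m}z_m)$ maps $M_t\cap U$ into $M_{\lambda^d t}\cap(\lambda\cdot U)$; performing the corresponding change of variables in
\[
I(t):=\int_{U\cap M_t}|f|^{-2c_1}\,(-1)^{n^2/2}\,\Omega_t\wedge\overline{\Omega}_t
\]
extracts an explicit power of $|\lambda|$, determined by $W_\Omega$, $d$, the $w_j$ and $c_1\nu$, multiplying the analogous integral on the scaled slice. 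Choosing $\lambda$ so that $|\lambda^d t|$ is a fixed small constant and taking $c_1>0$ small enough that this exponent is non-negative reduces the bound on $I(t)$ to the bound on the integral over a single reference slice $M_{t_0}\cap U$. The latter is finite because $|f|^{-2c_1}$ is integrable against $d\mu_{t_0}$ for $c_1$ smaller than the log canonical threshold associated to the branched cover $\mathfrak{p}:M_{t_0}\cap U\to B_1(0)$. A finite open cover of the compact variety $M_0$ then produces a uniform constant $C_1$, verifying (\ref{1.1}).

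The main obstacle lies in case (ii) with non-uniform weights: the scaled slices are no longer diffeomorphic copies of each other, and one must arrange the choice of $\mathfrak{p}$ so that the resulting exponent of $|\lambda|$ is non-negative for some positive $c_1$. This is where the theorem requires $\mathcal{M}$ to be smooth in the quasi-homogeneous case; in the fully homogeneous case (i) the diagonal $\mathbb{C}^*$-action makes the scaling relation entirely transparent and the integrability bound descends directly from that on a single central-fibre model.
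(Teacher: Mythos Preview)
Your overall strategy---reduce to Theorem~\ref{t1.1} by verifying condition~(\ref{1.1}) for $\Lambda=\Delta$---is exactly the paper's route, and the construction of the branched cover $\mathfrak{p}$ is fine. The gap is in the scaling step. The $\mathbb{C}^*$-action $z\mapsto\lambda\cdot z$ sends $M_t\cap U$ to $M_{\lambda^d t}\cap(\lambda\cdot U)$, not to $M_{\lambda^d t}\cap U$; the domain moves with $\lambda$. Concretely, if you fix $t_0$ and set $\lambda=(t/t_0)^{1/d}$ with $|t|\to 0$, the change of variables gives
\[
I(t)=|\lambda|^{\text{(power)}}\int_{M_{t_0}\cap\,\lambda^{-1}U}|f|^{-2c_1}\,d\mu_{t_0},
\]
and $\lambda^{-1}U$ is a ball of radius $\sim|\lambda|^{-1}\to\infty$. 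The integral of $|f|^{-2c_1}d\mu_{t_0}$ over the entire (non-compact) local fibre diverges---already in the conifold model one checks $\int_{\mathbb{C}^n}|t_0-\sum z_i^2|^{-(1+c_1)}\,d\mu_{\mathbb{C}^n}=\infty$---so the reduction to ``a single reference slice $M_{t_0}\cap U$'' is not valid as stated. The favourable power of $|\lambda|$ and the growth of the integral over expanding balls exactly compete, and controlling this competition is precisely the content one cannot obtain from scaling alone.

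The paper resolves this in two steps. First, Theorem~\ref{cy2} (via the covering Lemma~\ref{cy8} and the projective compactness Lemmas~\ref{cy3}--\ref{cy6}, which is where the local homogeneity of $\mathcal{M}$ enters) yields the \emph{integrated} bound $\int_{M_{\Delta_\rho}}|f_\alpha|^{-2c}\,d\mu\leq C\rho^2$; this is a genuine estimate, not a consequence of scaling. Only then does the (quasi-)homogeneity of $\pi$ enter, through Proposition~\ref{cy9}, whose scaling argument runs in the \emph{opposite} direction from yours: it produces a lower bound $I_t\geq(|t|/r)^a I_r$ and combines it with the integrated bound to force $I_r\leq C$. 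So your outline is missing the non-trivial half of the argument, namely the uniform-in-$\rho$ estimate of Theorem~\ref{cy2}.
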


\begin{remark}
It would be clear from our proof that our method also applies to more singular $\mathcal{M}$, (especially when $\mathcal{M}$ is locally quasi-homogeneous, where the condition (1.2) becomes ``$\mathcal{M}$ and $\pi$ are locally quasi-homogeneous"). To demonstrate our method more clearly and avoid unnecessary complications, we would restrict ourself to the case when $\mathcal{M}$ is locally homogeneous (including $\mathcal{M}$ being smooth) in this paper.
\end{remark}

Now, we consider Calabi-Yau varieties   with  ``generic" singularities --- the ordinary double points. Let $M_{0}$ be a projective $n$-variety with only finite many ordinary double points $S=\{p_{\alpha}\} $ as singular points, i.e. for any $ p_{\alpha}\in S$, the singularity of $M_{0}$
is given by
$$ \{z^{2}_{1}+\cdots +z^{2}_{n+1}=0\}\subset \mathbb{C}^{n+1}.$$
Note that ordinary double points  are not orbifold singularities
when $n\geq 3$. We call $M_{0}$ a  Calabi-Yau $n$-conifold, if
$M_{0}$ is a Calabi-Yau $n$-variety.  Assume that the Calabi-Yau $n$-conifold $M_{0}$
 admits a crepant  resolution $(\hat{M},
\hat{\pi})$, and there is a smoothing of $M_{0}$ to a Calabi-Yau manifold $M$.
 The process of going from $\hat{M}$ to $M$ is called conifold transition.
 Conifolds and conifold transition appear in the literature frequently both in  mathematics and in physics
  (c.f. \cite{Ro} \cite{Ti}).
 In  mathematics,  it is related to the famous Reid's fantasy, which
conjectured that all of Calabi-Yau threefolds are connected to  each
other in some sense, and form a huge connected  web (c.f. \cite{Re}
\cite{Ro}). Furthermore, in physics, the conifold transition
provides a way to connect topologically distinct space-times  in
string theory (c.f. \cite{Ca} \cite{AGM}  \cite{CGH} \cite{GH2}
\cite{Ro}). In \cite{Ca}, it is conjectured  that there exists a
family of Ricci-flat K\"{a}hler metrics $\hat{g}_{s}$, $s\in (0,1)$,
on $\hat{M}$, and  a family of Ricci-flat K\"{a}hler metrics
$g_{s}$, $s\in (0,1)$, on $M$, which correspond to different complex
structures, satisfying that $\{( \hat{M}, \hat{g}_{s})\}$ and $\{(M,
g_{s})\}$ converge to the same limit in a suitable sense (for
example,  the Gromov-Hausdorff topology), when $s\rightarrow 0$.
This conjecture was  verified in \cite{Ca} by assuming $M_{0}$
 is the standard non-compact quadric  cone, i.e. $M_{0}=\{(z_{1}, \cdots , z_{4})\in \mathbb{C}^{4}|z^{2}_{1}+\cdots +z^{2}_{4}=0\}$.
In the  compact case, it is implied  by  \cite{To} that there exists a family of Ricci-flat K\"{a}hler metrics   $\hat{g}_{s}$ on $\hat{M}$  converging  to a Ricci-flat K\"{a}hler metric $g$ on any compact subset of the smooth part of $M_{0}$.  The next result will show the convergence of $g_{s}$ on $M$. Actually, since the  conifold singularity  is isolated homogeneous singularity, it is a corollary of theorem \ref{t6}. We will also provide a direct proof of this result in section 5.\\

\begin{co}\label{t4}  Let $M_{0}$ be a projective Calabi-Yau  $n$-conifold, then the conjecture \ref{t0} is true.\\
\end{co}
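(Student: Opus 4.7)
The plan is to reduce Corollary~\ref{t4} to Theorem~\ref{t1.1} by checking condition~(\ref{1.1}) for the conifold smoothing uniformly in $t\in\Delta$. (One may also invoke Theorem~\ref{t6} directly, since the local model of a conifold smoothing is a smooth hypersurface on which $\pi$ is weighted homogeneous of degree~$2$ for weights $(1,\ldots,1)$, placing $(\mathcal{M},\pi)$ in case~(ii) of condition~(1.2); but we follow the direct route promised in \S5.)

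At a smooth point $x_0\in M_0\setminus S$ the verification is trivial: in a chart where $\pi$ is a coordinate projection, one takes $\mathfrak{p}$ to be another coordinate projection and $f$ is a nowhere-vanishing unit. At an ordinary double point $p_\alpha\in S$, I use the standard smooth local model: a neighborhood $U\subset\mathcal{M}$ of $p_\alpha$ is biholomorphic to an open subset of the smooth hypersurface
$$
\widetilde U \;=\; \bigl\{(z_1,\ldots,z_{n+1},t)\in\mathbb{C}^{n+1}\times\Delta : z_1^2+\cdots+z_{n+1}^2 = t\bigr\},
$$
with $\pi$ corresponding to the $t$-projection, and take $\mathfrak{p}:U\to B_1(0)\subset\mathbb{C}^n$ to be $(z_1,\ldots,z_{n+1},t)\mapsto (z_1,\ldots,z_n)$ (after a harmless rescaling). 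On each fiber $M_t\cap U$ this is a $2$-to-$1$ holomorphic cover ramified over the smooth hypersurface $\{z_1^2+\cdots+z_n^2 = t\}$. The Poincar\'e residue of $\frac{dz_1\wedge\cdots\wedge dz_{n+1}\wedge dt}{z_1^2+\cdots+z_{n+1}^2 - t}$ identifies $\Omega_t$ with $(2z_{n+1})^{-1}\,dz_1\wedge\cdots\wedge dz_n$ on $M_t\cap U$, so that $f = 2z_{n+1}$ and $|z_{n+1}|^2 = |t - z_1^2 - \cdots - z_n^2|$. Pushing down via the double cover then reduces~(\ref{1.1}) (up to constants independent of $t$) to
$$
\int_{B_1(0)} \bigl|t - z_1^2 - \cdots - z_n^2\bigr|^{-(c_1+1)}\, dV_{\mathbb{C}^n} \;\leq\; C_1 \qquad\text{for every } t\in\Delta.
$$

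For $n\geq 2$ the log canonical threshold of $q(z)=z_1^2+\cdots+z_n^2$ at the origin equals $1$, so the $t=0$ integral is finite for any $c_1\in(0,1)$; and for $t\neq 0$ the zero set $\{q=t\}$ is a smooth quadric, so the integrability holds in the same range. The real obstacle is to promote this pointwise finiteness to a \emph{uniform} bound in $t$. I would handle this by splitting $B_1(0)$ into the region $\{|z|\leq R\sqrt{|t|}\}$, on which the substitution $z=\sqrt{t}\,w$ exploits the homogeneity $q(\lambda z)=\lambda^2 q(z)$ and absorbs the $t$-dependence into a finite integral over a fixed domain (with a favorable prefactor $|t|^{\,n-c_1-1}$), and its complement, which one further decomposes into a region where $|q(z)|\gtrsim |z|^2$ (so the integrand is $\lesssim |z|^{-2(c_1+1)}$, integrable since $c_1+1 < n$) and a tubular neighborhood of the null cone $\{q=0\}\setminus\{0\}$, where $q$ is submersive and Fubini together with $c_1+1 < 2$ delivers the bound. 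Alternatively one may appeal to openness/uniform integrability of complex singularity exponents under holomorphic perturbation (of the Demailly--Koll\'ar/Berndtsson type) applied to the family $f_t(z)=t-q(z)$. Once uniformity is in hand, Theorem~\ref{t1.1} completes the proof.
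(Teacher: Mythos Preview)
Your proposal is correct and follows essentially the same route as the paper: the same local model, the same projection $\mathfrak{p}$, the same identification $f=2z_{n+1}$, and the same reduction of condition~(\ref{1.1}) to a uniform bound on $\int_{B_1}|t-q(z)|^{-(1+c_1)}\,dV$. The only difference is cosmetic: the paper first rescales the entire integral by $z\mapsto z/\sqrt{|t|}$ and then splits $B_{1/\sqrt{|t|}}$ into a fixed ball $B_R$ and sectors $D_i=\{n|z_i|\geq|z|\}$ on which an explicit change of variables $(z_i,z_i')\mapsto(z_0,z_i')$ yields the bound, whereas you split $B_1$ first into $\{|z|\le R\sqrt{|t|}\}$ (handled by the same rescaling) and its complement, treating the latter geometrically via the dichotomy $|q|\gtrsim|z|^2$ versus a tube around the null cone. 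These are the same estimate organized two ways; your alternative appeal to Theorem~\ref{t6} via condition~(1.2)(ii) is also exactly what the paper notes.
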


We have an analogy  of Theorem \ref{t1}.\\

\begin{co}\label{t5}  Let $M_{0}$ be a projective Calabi-Yau  $n$-variety, and  $S $
 be the  singular points of $M_{0}$. Assume that $M_{0}$ admits a
 smoothing
$\pi: \mathcal{M}\rightarrow \Delta$ in $\mathbb{CP}^{N}$ over the
unit disc $\Delta\subset \mathbb{C}$ such that the canonical bundle
$\mathcal{K}_{\mathcal{M}} $ of $\mathcal{M}$ is trivial. For any
smooth  K\"{a}hler form $\omega$ on $\mathcal{M}$ and any $t\in
\Delta\backslash \{0\}$, let $\tilde{g}_{t}$ be the unique
Ricci-flat K\"{a}hler metric on $M_{t}=\pi^{-1}(t)$ with its
K\"{a}hler form $\tilde{\omega}_{t}\in [\omega|_{M_{t}}]\in
H^{1,1}(M_{t}, \mathbb{R})$, and $\tilde{g}_{0}$ is the unique
singular Ricci-flat K\"{a}hler metric on $M_{0}$ with K\"{a}hler
form
 $\tilde{\omega}_{0}\in
[\omega|_{M_{0}}]\in H^{1}(M_{0},
 \mathcal{PH}_{M_{0}})$.
  Assume that the path metric structure of  $(M_{0}\backslash S, \tilde{g}_{0})$  extends    to a path metric structure $d_{M_{0}}$ on
 $M_{0}$ such that the Hausdorff dimension of $S$
 satisfies $\dim_{\mathcal{H}}S\leq 2n-4$,  and
 $M_{0}\backslash S$ is geodesic convex in $(M_{0}, d_{M_{0}})$,  i.e.  for any $x, y\in
 M_{0}\backslash S$, there is a minimal geodesic $\gamma \subset M_{0}\backslash S$ connecting $x$ and $y$
  satisfying ${\rm length}_{\tilde{g}_{0}}(\gamma)=d_{M_{0}}(x,y)$.
  Then there exists a sequence $\{t_{k}\}\subset \mathbb{C}$ with $t_{k}\rightarrow 0$  such that
   $$\lim_{k\rightarrow \infty}d_{GH}((M_{t_{k}},g_{t_{k}}), (M_{0}, d_{M_{0}}))=0.
$$   Furthermore, it holds for any sequence $\{t_{k}\}\subset \mathbb{C}$ with $t_{k}\rightarrow 0$,  if $M_{0}$ is a  Calabi-Yau  conifold.\\
\end{co}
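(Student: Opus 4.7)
The plan is to combine the smooth convergence obtained earlier with a Gromov--Hausdorff approximation argument patterned on the proof of Theorem \ref{t1}. First, I would invoke Theorem \ref{t3} applied to the smoothing $\pi:\mathcal{M}\to\Delta$ to extract a sequence $\{t_k\}\subset\Delta$ with $t_k\to 0$ such that $F|_{M_0\setminus S\times\{t_k\}}^{*}\tilde g_{t_k}\to\tilde g_0$ in $C^\infty$ on every compact $K\subset M_0\setminus S$, and such that $\operatorname{diam}_{\tilde g_{t_k}}(M_{t_k})\le\bar C$ uniformly. When $M_0$ is a Calabi--Yau conifold, the conifold singularities are isolated homogeneous, so Corollary \ref{t4} (equivalently the conifold case of Theorem \ref{t6}) upgrades this to hold for \emph{any} sequence $t_k\to 0$, which is precisely the stronger second conclusion.

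Next, for any $\varepsilon>0$, the hypothesis $\dim_{\mathcal H}S\le 2n-4$ allows me to pick an open neighborhood $U_\varepsilon\supset S$ in $(M_0,d_{M_0})$ with $(2n-2)$-dimensional Hausdorff content as small as I wish, so that $K_\varepsilon:=M_0\setminus U_\varepsilon$ is a compact subset of $M_0\setminus S$. The geodesic convexity of $M_0\setminus S$ in $(M_0,d_{M_0})$ combined with the Hausdorff estimate on $S$ guarantees, by the same argument used in Theorem \ref{t1}, that $K_\varepsilon$ can be chosen to be $\varepsilon$-dense in $(M_0,d_{M_0})$ and that the $d_{M_0}$-distance between any two points of $K_\varepsilon$ is realized, up to an $O(\varepsilon)$ error, by curves contained in $K_\varepsilon$. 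I then select a finite $\varepsilon$-net $\{x_j\}_{j=1}^N\subset K_\varepsilon$ that is $2\varepsilon$-dense in $(M_0,d_{M_0})$.

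Third, the $C^\infty_{\text{loc}}$-convergence $F_{t_k}^{*}\tilde g_{t_k}\to\tilde g_0$ on $K_\varepsilon$ implies that lengths, and hence intrinsic distances inside the embedded image $F_{t_k}(K_\varepsilon)$ with its restricted $\tilde g_{t_k}$-path metric, converge uniformly on pairs $(x_i,x_j)$. Combined with the previous step, this gives $|d_{\tilde g_{t_k}}(F_{t_k}(x_i),F_{t_k}(x_j))-d_{M_0}(x_i,x_j)|=O(\varepsilon)$ for $k$ large. It remains to show that $\{F_{t_k}(x_j)\}$ is also $O(\varepsilon)$-dense in $(M_{t_k},\tilde g_{t_k})$: here I use that $\tilde\omega_{t_k}^n=c_k(-1)^{n^2/2}\Omega_{M_{t_k}}\wedge\overline{\Omega}_{M_{t_k}}$ with $c_k$ bounded, together with the uniform diameter bound from Theorem \ref{t3}, to run a Bishop--Gromov type volume comparison: any putative point of $M_{t_k}$ at $\tilde g_{t_k}$-distance $>C\varepsilon$ from every $F_{t_k}(x_j)$ would force a definite amount of volume to concentrate near $F_{t_k}^{-1}(U_\varepsilon)$, contradicting the smallness of the Hausdorff content of $U_\varepsilon$ combined with the volume-form convergence.

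Putting these three steps together yields $d_{GH}((M_{t_k},\tilde g_{t_k}),(M_0,d_{M_0}))\to 0$, proving the first assertion; the conifold addendum follows by replacing Theorem \ref{t3} by Corollary \ref{t4} throughout. The main obstacle is the density step of the third paragraph: one must rule out long thin ``fingers'' of $M_{t_k}$ collapsing toward $S$ that would escape the $F_{t_k}$-image of $K_\varepsilon$. The uniform diameter bound furnished by Theorem \ref{t3} together with the Hausdorff-dimension bound $\dim_{\mathcal H}S\le 2n-4$ and geodesic convexity of $M_0\setminus S$ are exactly the inputs that force such fingers to carry negligible extrinsic diameter, paralleling the corresponding step in Theorem \ref{t1}.
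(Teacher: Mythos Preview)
Your overall plan is sound in spirit and uses the same ingredients as the paper (Theorem \ref{t3} for the sequence and $C^\infty$-convergence, Corollary \ref{t4} for the conifold addendum, uniform diameter plus Bishop--Gromov for non-collapsing), but there is a real gap in your third step.  You correctly obtain the upper bound $d_{\tilde g_{t_k}}(F_{t_k}(x_i),F_{t_k}(x_j))\le d_{M_0}(x_i,x_j)+O(\varepsilon)$ from convergence of lengths of curves in $K_\varepsilon$, and your volume argument for density of $\{F_{t_k}(x_j)\}$ in $M_{t_k}$ is essentially right.  What you do not address is the \emph{lower bound} $d_{\tilde g_{t_k}}(F_{t_k}(x_i),F_{t_k}(x_j))\ge d_{M_0}(x_i,x_j)-O(\varepsilon)$: a minimal $\tilde g_{t_k}$-geodesic in $M_{t_k}$ joining $F_{t_k}(x_i)$ to $F_{t_k}(x_j)$ may well pass through $M_{t_k}\setminus F_{t_k}(K_\varepsilon)$, and density alone (or smallness of the volume of that region) does not prevent a genuine shortcut there.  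Ruling out such shortcuts is precisely the hard part, and your sketch does not supply an argument for it.

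The paper avoids this difficulty by not constructing $\varepsilon$-approximations directly.  Instead, it simply verifies the four hypotheses of the general convergence result Theorem \ref{t4.1}: Ricci-flatness and the non-collapsing bound ${\rm Vol}_{\tilde g_t}(B_{\tilde g_t}(r))\ge Cr^{2n}$ (from Bishop--Gromov and the diameter bound (\ref{6.29})), the volume convergence ${\rm Vol}_{\tilde g_t}(M_t)\to {\rm Vol}_{\tilde g_0}(M_0\setminus S)=\mathcal{H}^{2n}(M_0)$, the assumed geodesic convexity and Hausdorff-dimension bound on $S$, and the $C^\infty$-convergence from Theorem \ref{t3} (resp.\ Corollary \ref{t4}).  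Theorem \ref{t4.1} then does all the work; its proof passes to a Gromov--Hausdorff sublimit $(Y,d_Y)$, uses Cheeger--Colding volume convergence (Theorem \ref{t4.05}) for surjectivity, and --- this is the point you are missing --- invokes the Cheeger--Colding segment inequality (Theorem \ref{t4.06}) to show that almost every pair of points in $Y\setminus S_Y$ is joined by a minimal geodesic avoiding $S_Y$, which is exactly what yields the distance lower bound.  If you want to salvage your direct approach, you must either import that Cheeger--Colding input or, more simply, just cite Theorem \ref{t4.1} after checking its hypotheses as the paper does.
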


Finally, we apply Corollary  \ref{t2} to study the collapsing of
Calabi-Yau manifolds. For constructing mirror manifolds, the
famous SYZ conjecture says
 that there is a special lagrangian fibration on a Calabi-Yau
   manifold if it closes  to the  large complex limit enough  (c.f.
   \cite{SYZ}).    In   \cite{GW1}, special lagrangian fibrations are
   constructed on some  Calabi-Yau threefolds of Borcea-Voisin type  with
   degenerated Ricci-flat K\"{a}hler metrics.   In
   \cite{GW2} and \cite{KS}, this conjecture was  refined to the
   following form: Let $M_{0}$ be a projective   $n$-variety (actually always  reducible in this case), and
    $\pi: \mathcal{M}\rightarrow \Delta$ be  a
 smoothing in $\mathbb{CP}^{N}$
 over the
unit disc $\Delta\subset \mathbb{C}$ such that the canonical bundle
$\mathcal{K}_{\mathcal{M}} $ of $\mathcal{M}$ is trivial. For any
smooth  K\"{a}hler form $\omega$ on $\mathcal{M}$ and any $t\in
\Delta\backslash \{0\}$, let $\tilde{g}_{t}$ be the unique
Ricci-flat K\"{a}hler metric on $M_{t}=\pi^{-1}(t)$ with its
K\"{a}hler form $\tilde{\omega}_{t}\in [\omega|_{M_{t}}]\in
H^{1,1}(M_{t}, \mathbb{R})$, and  $\bar{g}_{t}={\rm
diam}_{\tilde{g}_{t}}^{-2}(M)\tilde{g}_{t}$.   If $0\in \Delta$ is a
large complex limit point of the deformation moduli of $M_{t}$, then
$(M_{t}, \bar{g}_{t})$
   converges to a compact  metric space $(B, d_{B})$ when $t\rightarrow 0$, where $B$ is
   homeomorphic to $S^{n}$, and   $d_{B}$ is induced by a
   Riemannian metric $g_{B}$ on $B\backslash \Pi$ with a set  $\Pi  \subset
   B$ of codimension $2$. Furthermore, $B\backslash \Pi$ admits an affine manifold structure, and
    $g_{B}$ is a Monge-Amp\`{e}re metric  on $B\backslash \Pi$ (see \cite{KS} for the definitions).  This conjecture was proved for elliptic K3 surface with only $I_{1}$
    singular fibers in \cite{GW2}. It is interesting to construct some examples of Ricci-flat  Calabi-Yau manifolds of higher dimension, which
    collapse  to  metric spaces of half dimension.

   Let $X$ be a  K3 surface, which admits a  holomorphic
involution $ \iota_{1}$ such that  $ \iota_{1}^{*}\Omega=-\Omega$
for any holomorphic 2-form $\Omega$,
 $T^{2}=\mathbb{C}/(\mathbb{Z}+\sqrt{-1}\mathbb{Z})$, and $ \iota_{2}$ be the   holomorphic
involution on $T^{2}$ given by $z\mapsto -z$.  Then
$(\iota_{1},\iota_{2})$ induces a holomorphic
 $\mathbb{Z}_{2} $-action on $X\times  T^{2}$, and $X\times  T^{2}/\langle (\iota_{1},\iota_{2}) \rangle $ is a Calabi-Yau orbifold.
  If $M$ is a crepant resolution of $X\times  T^{2}/\langle (\iota_{1},\iota_{2}) \rangle $,  $M $ is called  a Calabi-Yau
   manifold  of Borcea-Voisin type (cf.  \cite{GW1}). Combining Corollary  \ref{t2} and \cite{GW2}, we obtain:

 \vskip 3mm

    \begin{theorem}\label{t6.1} There is a  family $\{(M_{k}, g_{k})\} $ of   Calabi-Yau
   3-manifolds   with Ricci-flat K\"{a}hler metrics such that $ M_{k}$ are  homeomorphic to  a Calabi-Yau manifold   $M$ of Borcea-Voisin type,  and
    $$ \lim_{k\rightarrow \infty} d_{GH}((M_{k}, g_{k}), (B, d_{B}))= 0, $$
  where    $(B, d_{B})$ is a  compact metric
space, and $B$   is homeomorphic to $S^{3}$. Furthermore,  $d_{B}$
is induced by a Riemannian  metric $g_{B}$ on $B\backslash \Pi$,
where $\Pi \subset B$ is a graph.
\end{theorem}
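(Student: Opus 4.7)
The plan is to combine Corollary~\ref{t2} with the elliptic K3 collapsing theorem of \cite{GW2} via a diagonal argument, and then to identify the limit topologically. I first choose the K3 surface $X$ to admit both the non-symplectic involution $\iota_{1}$ (so that a crepant resolution $M$ of $N := X\times T^{2}/\langle(\iota_{1},\iota_{2})\rangle$ is a Calabi--Yau $3$-fold of Borcea--Voisin type) and an elliptic fibration $f: X \to \mathbb{CP}^{1}$ with only $I_{1}$ singular fibers that is compatible with $\iota_{1}$ (so that $\iota_{1}$ descends to an involution $\bar\iota_{1}$ on $\mathbb{CP}^{1}$). By \cite{GW2} there is a sequence of Ricci-flat K\"{a}hler metrics $h_{\ell}$ on $X$, with K\"{a}hler classes approaching the large-complex-structure limit, such that, after normalization by diameter, $(X, \bar h_{\ell})$ converges in the Gromov--Hausdorff topology to $(\mathbb{CP}^{1}, d_{X,\infty})$, where $d_{X,\infty}$ is induced by a Monge--Amp\`ere metric on $\mathbb{CP}^{1}\setminus \Pi_{X}$ for a finite set $\Pi_{X}$. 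By symmetrizing the K\"{a}hler classes under $\iota_{1}$ and invoking uniqueness in Yau's theorem, each $h_{\ell}$ may be taken $\iota_{1}$-invariant.

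Next, I equip $T^{2}$ with a family of flat metrics $g_{T^{2},\ell}$, scaled anisotropically so that one $S^{1}$-factor stays of unit size while the orthogonal $S^{1}$-factor shrinks at the same rate as the elliptic fibers of $(X, \bar h_{\ell})$. The product $\bar h_{\ell}\oplus g_{T^{2},\ell}$ on $X\times T^{2}$ is $(\iota_{1},\iota_{2})$-invariant and Ricci-flat K\"{a}hler, hence descends to a Ricci-flat K\"{a}hler orbifold metric $g_{N,\ell}$ on $N$. A direct product-collapsing analysis, combining $(X, \bar h_{\ell}) \to (\mathbb{CP}^{1}, d_{X,\infty})$ with the collapse $(T^{2}, g_{T^{2},\ell}) \to S^{1}$, then shows $(N, g_{N,\ell}) \to (B, d_{B})$ in the Gromov--Hausdorff topology, where $B$ is homeomorphic to the quotient $(\mathbb{CP}^{1}\times S^{1})/\mathbb{Z}_{2}$ (for the diagonal action of $(\bar\iota_{1},\iota_{2})$), and $d_{B}$ is induced by a smooth Riemannian metric $g_{B}$ on $B\setminus \Pi$, with $\Pi$ the graph obtained from the image of $\Pi_{X}\times S^{1}$ together with the image of the fixed loci of $(\bar\iota_{1},\iota_{2})$.

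Finally, I apply Corollary~\ref{t2} to the crepant resolution $\pi: M\to N$: for each $\ell$, it supplies a sequence of Ricci-flat K\"{a}hler metrics $\{g_{M,\ell,j}\}_{j\ge 1}$ on $M$ with K\"{a}hler classes converging to $\pi^{*}[\omega_{N,\ell}]$, satisfying $(M, g_{M,\ell,j})\to (N, g_{N,\ell})$ in the Gromov--Hausdorff topology as $j\to \infty$. Choosing $j(\ell)$ so that $d_{GH}((M, g_{M,\ell,j(\ell)}), (N, g_{N,\ell}))<1/\ell$ and setting $M_{k}:=M$, $g_{k}:=g_{M,k,j(k)}$, the triangle inequality yields $(M_{k}, g_{k}) \to (B, d_{B})$. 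The main obstacle is the topological identification $B\cong S^{3}$ and the verification that $\Pi$ is a graph: this requires a careful local analysis of the fixed-point configuration of $(\bar\iota_{1},\iota_{2})$ on $\mathbb{CP}^{1}\times S^{1}$, matched with the explicit description of the affine singular stratum of the Monge--Amp\`ere metric from \cite{GW2}. A secondary technical point is the existence of a K3 surface simultaneously carrying the required non-symplectic involution and compatible $I_{1}$-only elliptic fibration, which follows from Nikulin's classification combined with the Borcea--Voisin construction.
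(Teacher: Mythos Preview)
Your overall architecture --- collapse an elliptic K3 via \cite{GW2}, take a product with a shrinking $T^2$, pass to the $\mathbb{Z}_2$-quotient, and then invoke Corollary~\ref{t2} with a diagonal argument --- matches the paper. The gap is in the compatibility you assume between the involution and the fibration, and it is fatal for the conclusion $B\cong S^3$.

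You take $\iota_1$ \emph{holomorphic} on $X$ (as required for the Borcea--Voisin quotient) and ask that it cover an involution $\bar\iota_1$ of the base of a \emph{holomorphic} elliptic fibration $f:X\to\mathbb{CP}^1$. Then $\bar\iota_1$ is a holomorphic involution of $\mathbb{CP}^1$, hence either the identity or conjugate to $z\mapsto -z$ with two isolated fixed points. In the first case $B=(\mathbb{CP}^1\times S^1)/\mathbb{Z}_2\cong S^2\times[0,1]$, which is not $S^3$. In the second case the diagonal action of $(\bar\iota_1,\iota_2)$ on $S^2\times S^1$ has exactly four isolated fixed points, at each of which the linearized action is $-\mathrm{id}$ on $\mathbb{R}^3$; the quotient has four points modeled on a cone over $\mathbb{RP}^2$ and is not a topological manifold (indeed $\chi(B)=2\neq 0=\chi(S^3)$). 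So the ``main obstacle'' you flag is not merely a matter of careful local analysis --- with your setup $B\cong S^3$ simply cannot hold.

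The paper avoids this by a hyper-\k\ rotation. It builds an explicit Weierstrass elliptic K3 $(X_\tau,I)$ with only $I_1$ fibers on which $\iota_1$ is \emph{anti}-holomorphic; the induced map on the base $\mathbb{CP}^1$ is then complex conjugation, whose fixed set is a circle, and it is this reflection (not a rotation) that makes $B=(\mathbb{CP}^1\times S^1)/\mathbb{Z}_2\cong S^3$ by \cite{GW1}. The collapsing Ricci-flat metrics $\hat g_k$ of \cite{GW2} are produced for the $I$-structure. One then rotates to new complex structures $J_k$ (depending on $k$) for which the \emph{same} Riemannian metric $\hat g_k$ is still \k, and for which $\iota_1$ becomes a \emph{holomorphic} non-symplectic involution; only now does one form the Borcea--Voisin orbifold $(X_\tau\times T^2_k)/\langle\iota\rangle$ and its crepant resolution $M_k$. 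The fibration used for the collapse is holomorphic for $I$ but not for $J_k$, which is harmless since Gromov--Hausdorff convergence is a purely metric statement. In short, the hyper-\k\ rotation is not a technical convenience but the mechanism that reconciles ``$\iota_1$ holomorphic for Borcea--Voisin'' with ``$\bar\iota_1$ anti-holomorphic on $\mathbb{CP}^1$ for $B\cong S^3$''; your proposal, by working with a single complex structure throughout, cannot achieve both.
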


 The organization of the paper is as follows: In \S2, we  review some notions and results, which will be
used in this paper. In \S3, some  priori  estimates will be
obtained.   In \S4, we prove Theorem \ref{t1} and Corollary
\ref{t2}. In \S5, we prove Theorems \ref{t1.1}, \ref{t3}, \ref{t6}
and Corollaries \ref{t4}, \ref{t5}.
Finally, in \S6, we prove Theorem \ref{t6.1}. \\

\section{Preliminary}
\setcounter{equation}{0}
In this section, we review some notions and results, which will be used in this paper.

\subsection{Gromov-Hausdorff convergence}
In  \cite{G1}, Gromov introduced the notion of  Gromov-Hausdorff convergence,  which
provides a frame to study  families  of Riemannian manifolds.

\begin{de}[\cite{Fu}]
\label{2001}
For two  compact metric spaces  $(X, d_{X})$ and $(Y, d_{Y})$, a map $\psi:
 X\rightarrow Y$ is called an $\epsilon$-approximation if $Y\subset \{y\in Y| d_{Y}(y, \psi(X))<\epsilon\}$, and
 \[
 |d_{X}(x_{1}, x_{2})-d_{Y}(\psi(x_{1}), \psi(x_{2}))|<\epsilon
 \]
 for any $x_{1}$ and $ x_{2}\in X$.  The number
 \[
 d_{GH}((X, d_{X}),(Y,
 d_{Y}))=\inf \left\{\epsilon\left|\begin{array}{c} {\rm There \ are} \  \epsilon-{\rm approximations } \\ \psi:
 X \rightarrow Y,  \  {\rm and} \ \phi:Y \rightarrow  X\end{array} \right.\right\}
 \]
is called Gromov-Hausdorff distance between $(X, d_{X})$ and $(Y, d_{Y})$ (c.f.  \cite{G1}  \cite{Fu}).  The Gromov-Hausdorff distance induces a topology, the so called Gromov-Hausdorff topology,  on the space of all isometric classes of compact metric spaces. We say that  a family of compact  metric spaces  $(X_{k}, d_{X_{k}})$ convergence to a  compact metric space   $(Y, d_{Y})$ in the Gromov-Hausdorff sense, if $$\lim_{k\rightarrow\infty} d_{GH}((X_{k}, d_{X_{k}}), (Y, d_{Y}))=0. $$
\end{de}

 Let $(Y, d_{Y})$ be a compact
 metric space.  If  $\gamma: [0,1]\rightarrow Y$ is  a Lipschitz  curve, define the
   length of $\gamma$ by
\[
   {\rm length}_{d_{Y}}(\gamma)=\sup \left\{\left.\sum_{j=1}^m
   d_{Y}(\gamma(s_{j-1}), \gamma(s_{j})) \right| \mbox{ for any }
   0=s_{0}\leq \cdots \leq s_{m} =
   1\right\},
\]
    (c.f.  Chapter 1 of   \cite{G1}).  A metric space  $(Y, d_{Y})$  is a path metric space if the
 distance between each pair of points equals the infimum of the
 lengths of Lipschitz  curves joining the points (c.f.
 \cite{G1}), i.e. $$d_{Y}(y_{1},y_{2})= \inf \{{\rm length}_{d_{Y}}(\gamma)|\gamma  \  {\rm is \ a \ Lipschitz \ curve \ with} \ y_{1}=\gamma (0), \  y_{2}=\gamma (1) \}.  $$
  Clearly Riemannian manifolds are path metric spaces. In
  \cite{G1}, it is proved that a complete  metric space $(Y, d_{Y})$  is a path metric space
  if there is a family of compact path metric spaces $(X_{k},
  d_{X_{k}})$ converging to $(Y, d_{Y})$  in the Gromov-Hausdorff
  sense.  Hence we obtain a completion of the space of all compact Riemannian
  manifolds in the space of compact path metric spaces.  The following is the  famous
  Gromov pre-compactness theorem:

   \begin{theorem}[\cite{G1}]
   \label{2002}
   Let $(M_{k}, g_{k})$ be a
   family of compact Riemannian
  manifolds such that Ricci curvatures ${\rm Ric}(g_{k})\geq -C $, and
  diameters ${\rm diam}_{g_{k}}(M_{k}) \leq C'$ where $C$ and $C'$ are
  constants in-dependent of $k$. Then, a subsequence of $(M_{k},
  g_{k})$ converges to a compact path metric space $(Y, d_{Y})$  in the Gromov-Hausdorff
  sense.
  \end{theorem}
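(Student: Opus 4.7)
The plan is to combine the elliptic K3 collapsing result of Gross--Wilson (\cite{GW2}) with a $T^{2}$-shrinking to build Ricci-flat orbifold K\"ahler metrics on the Borcea--Voisin Calabi--Yau $3$-orbifold $N=(X\times T^{2})/\langle(\iota_{1},\iota_{2})\rangle$ that collapse to a half-dimensional limit, and then invoke Corollary \ref{t2} to transfer these metrics to the crepant resolution $M$.

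First, I would choose an elliptic K3 surface $X\to \mathbb{P}^{1}$ with $24$ $I_{1}$ singular fibers admitting a non-symplectic involution $\iota_{1}$ compatible with the elliptic fibration. By \cite{GW2} there exists a family of Ricci-flat K\"ahler metrics $\tilde{g}_{k}^{X}$ on $X$ with $(X,\tilde{g}_{k}^{X})\to(B_{K3},d_{K3})$ in Gromov--Hausdorff, where $B_{K3}\cong S^{2}$ and $d_{K3}$ comes from a Monge--Amp\`ere metric on $B_{K3}\setminus\Pi_{K3}$, with $\Pi_{K3}$ the $24$ critical values of the fibration. Uniqueness of Ricci-flat K\"ahler metrics in each $\iota_{1}$-invariant K\"ahler class (equivalently, $\iota_{1}$-averaging) lets us take each $\tilde{g}_{k}^{X}$ to be $\iota_{1}$-invariant. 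Simultaneously, pick complex structures $\tau_{k}\in\mathbb{H}$ with $\mathrm{Im}(\tau_{k})\to\infty$ and flat K\"ahler metrics $h_{k}$ on $T^{2}_{\tau_{k}}=\mathbb{C}/(\mathbb{Z}+\tau_{k}\mathbb{Z})$ collapsing to a circle $(S^{1},d_{S^{1}})$, with $\iota_{2}:z\mapsto -z$ descending to a reflection on $S^{1}$.

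The product $\tilde{g}_{k}^{X}\oplus h_{k}$ on $X\times T^{2}_{\tau_{k}}$ is a Ricci-flat K\"ahler metric invariant under $\langle(\iota_{1},\iota_{2})\rangle$, hence descends to a Ricci-flat orbifold K\"ahler metric $\bar{g}_{k}$ on the Calabi--Yau $3$-orbifold $N_{k}=(X\times T^{2}_{\tau_{k}})/\langle(\iota_{1},\iota_{2})\rangle$. Commuting Gromov--Hausdorff limits with products and finite-group quotients, $(N_{k},\bar{g}_{k})\to(B,d_{B})$, where $d_{B}$ is induced by the product Monge--Amp\`ere metric from the K3 factor and the $S^{1}$-metric from the elongated tori modded out by the diagonal $\mathbb{Z}_{2}$-action. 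The singular set $\Pi\subset B$ comprises the images of $\Pi_{K3}\times S^{1}$ together with the ramification locus of the $\mathbb{Z}_{2}$-action. The topological identification $B\cong S^{3}$ and the graph structure of $\Pi$ would then be extracted from the special Lagrangian fibration description of Borcea--Voisin Calabi--Yau threefolds in \cite{GW1}, which provides the topological picture of the collapse base.

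Finally, for each $k$ the Borcea--Voisin manifold $M$ is a crepant resolution $\pi_{k}:M\to N_{k}$. Corollary \ref{t2} supplies, for each $k$, Ricci-flat K\"ahler metrics $g_{k,m}$ on $M$ with $[\omega_{k,m}]\to \pi_{k}^{*}[\bar{\omega}_{k}]$ and $(M,g_{k,m})\to(N_{k},\bar{g}_{k})$ in Gromov--Hausdorff as $m\to\infty$. A standard diagonal selection $g_{k}:=g_{k,m(k)}$ with $d_{GH}((M,g_{k}),(N_{k},\bar{g}_{k}))<1/k$ combined with the triangle inequality yields $(M,g_{k})\to(B,d_{B})$. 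The main obstacle is the topological identification $B\cong S^{3}$: the naive product-quotient $(S^{2}\times S^{1})/\mathbb{Z}_{2}$ carries cone-like singularities at the four fixed points of the diagonal action (where the $\mathbb{Z}_{2}$ acts as $-I_{3}$ on the tangent space), so one must use the finer SLAG-fibration description from \cite{GW1}, together with a careful analysis of how the joint K3/torus collapse interacts with these fixed points, to identify the underlying topological space with $S^{3}$ and the singular locus with a graph.
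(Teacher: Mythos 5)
Your proposal does not address the statement you were asked to prove. The statement in question is Theorem \ref{2002}, Gromov's pre-compactness theorem: a family of compact Riemannian manifolds with a uniform lower Ricci bound and a uniform upper diameter bound has a subsequence converging, in the Gromov--Hausdorff topology, to a compact path metric space. This is a classical result quoted from \cite{G1}; the paper does not give a proof of it, and the standard argument proceeds via packing estimates derived from Bishop--Gromov volume comparison to establish total boundedness of the family in the Gromov--Hausdorff metric, followed by completeness of the space of compact metric spaces and the fact that the limit of path metric spaces is a path metric space.

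What you have written is instead a proof sketch for Theorem \ref{t6.1}, the collapse of Borcea--Voisin Calabi--Yau threefolds to a three-sphere. That argument bears a family resemblance to the paper's own proof of Theorem \ref{t6.1} (both use \cite{GW2} for the elliptic K3 collapse, both invoke Corollary \ref{t2} together with a diagonal selection), though the paper furthermore uses a specific Weierstrass presentation of $X_{\tau}$ and a hyper-K\"ahler rotation to pass from K\"ahler degeneration to complex degeneration, and handles the $\iota_{2}$-equivariance of the limit through the explicit Hausdorff approximations $\Psi|_{X_{\tau}}$ and $\sigma$. But none of that is responsive to Theorem \ref{2002}. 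As submitted, the proposal is a proof of the wrong statement and therefore contains no argument for the claim actually at issue: there is no packing or $\epsilon$-net construction, no appeal to volume comparison, and no discussion of why the Gromov--Hausdorff limit is a compact path metric space.
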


The Gromov-Hausdorff convergence of compact Riemannian
  manifolds under stronger curvature assumptions  was studied by
  various authors (c.f. \cite{G1} \cite{An2} \cite{Fu} \cite{GW}).
  For example, if  $(M_{k}, g_{k})$ is a family compact Riemannian
  manifolds with uniform  bounded sectional curvatures, uniform lower bound of
  volumes and uniform upper bound of diameters,   the  famous
  Cheeger-Gromov convergence  theorem says  that a subsequence of  $(M_{k}, g_{k})$ converges to a $C^{1,\alpha}$-Riemannian
  manifold in the $C^{1,\alpha}$-sense.
  The analogous convergence  of K\"{a}hler manifolds was studied in
  \cite{Ru1}.

  Let $(Y, d_{Y})$ be a compact path metric space.    For a closed subset $S_{Y}\subset Y$,  an integer
 $l>0$ and a $\eta >0$,  set $$\mathcal{H}^{l}_{\eta}(S_{Y})=\inf_{\{B_{d_{Y}}(p_{i},
 r_{i})\}} \varpi_{l}\sum_{i}  r_{i}^{l}, $$ where $ \{B_{d_{Y}}(p_{i},
 r_{i})\}$ is  a collection of countable metric balls such that $\bigcup_{i}B_{d_{Y}}(p_{i},
 r_{i})\supset S_{Y} $, $r_{i} < \eta$,  and $\varpi_{l}$ is the
 volume of the unit ball in $  \mathbb{R}^{l}$.  Define the
 $l$-dimensional Hausdorff measure of $S_{Y} $ by $$
 \mathcal{H}^{l}(S_{Y})=\lim_{\eta\rightarrow
 0}\mathcal{H}^{l}_{\eta}(S_{Y}).$$ The Hausdorff dimension $\dim_{\mathcal{H}}
 S_{Y}$ of $S_{Y}$ is the non-negative  number such that  $
 \mathcal{H}^{l}(S_{Y})= \infty$  for $l<\dim_{\mathcal{H}}
 S_{Y}$, and  $
 \mathcal{H}^{l}(S_{Y})=0$  for $\dim_{\mathcal{H}}
 S_{Y}<l$ (c.f. \cite{C}).

 Now let's consider compact  Ricci-flat  K\"{a}hler manifolds.
  The Gromov pre-compactness theorem  shows that a   family of
  compact Ricci-flat  K\"{a}hler manifolds with a  uniform upper bound of  diameters  converges to a compact path metric
  space by passing to a subsequence. The structure of the limit space was studied
  in \cite{CC1},   \cite{CT} and
  \cite{CCT}.

 \begin{theorem}[\cite{C}  \cite{CCT}]
 \label{2003}
 Let $(M_{k}, g_{k})$ be a
   family of compact  Ricci-flat  K\"{a}hler $n$-manifolds, and  $(Y,
   d_{Y})$ be a compact path metric space such that $$\lim_{k\rightarrow\infty} d_{GH}((M_{k}, g_{k}), (Y, d_{Y}))=0.
   $$ If $${\rm Vol}_{ g_{k}}( M_{k})\geq C_{1}>0, \ \ {\rm and } \ \ \int_{M_{k}}c_{2}(M_{k})\wedge \omega_{k}^{n-1}
   \leq C_{2},$$ for  constants $C_{1}$ and  $C_{2}$
   independent of $k$, where  $ c_{2}(M_{k})$ is the second Chern-class of $M_{k}$, and
    $\omega_{k}$ is the K\"{a}hler form of  $ g_{k}$,   there is a closed subset $S\subset Y$ with
   Hausdorff dimension $\dim_{\mathcal{H}}S\leq 2n-4 $ such that $Y\backslash S
   $ is a Ricci-flat  K\"{a}hler $n$-manifold. Furthermore, off a subset of $S$ with $(2n-4)$-dimensional Hausdorff
measure zero, $S$ has only  orbifold type singularities
$\mathbb{C}^{n-2}\times \mathbb{C}^{2}/\Gamma$, where $\Gamma$ is a
finite subgroup of $ SU(2)$.
 \end{theorem}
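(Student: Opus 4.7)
The plan is to combine the elliptic K3 collapsing theorem of Gross-Wilson from \cite{GW2} with the orbifold resolution approximation result, Corollary \ref{t2}, via the Borcea-Voisin product-quotient construction, and then extract a diagonal sequence. First, choose an elliptic K3 surface $X \to \mathbb{P}^1$ with exactly $24$ singular fibers of type $I_1$, equipped with a non-symplectic holomorphic involution $\iota_1$ compatible with the elliptic fibration and satisfying $\iota_1^*\Omega = -\Omega$. With $T^2 = \mathbb{C}/(\mathbb{Z}+\sqrt{-1}\mathbb{Z})$ and $\iota_2(z)=-z$, form the Calabi-Yau orbifold $N = X \times T^2/\langle (\iota_1,\iota_2)\rangle$, and let $(\bar M,\pi)$ be its crepant resolution, a smooth Calabi-Yau $3$-fold of Borcea-Voisin type; the manifolds $M_k$ in the statement will all be diffeomorphic to $\bar M$.

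Next, I would produce a family of collapsing orbifold Ricci-flat K\"ahler metrics on $N$. By \cite{GW2}, there exists a family $g_{X,s}$ of $\iota_1$-invariant Ricci-flat K\"ahler metrics on $X$ whose diameter-one rescalings $\bar g_{X,s} = {\rm diam}^{-2}(g_{X,s})\, g_{X,s}$ converge in the Gromov-Hausdorff sense to $(\mathbb{P}^1, d_{\mathbb{P}^1})$, where $d_{\mathbb{P}^1}$ is induced by a Monge-Amp\`ere metric on $\mathbb{P}^1$ which is smooth away from the $24$ discriminant points. In parallel, take $\iota_2$-invariant flat metrics $g_{T^2, s}$ on $T^2$ scaled so that $T^2$ collapses to the circle $S^1$ along the real direction and consequently $T^2/\iota_2$ collapses to the interval $S^1/\iota_2$. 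The product metrics $g_{X,s}\times g_{T^2,s}$ are Ricci-flat K\"ahler and $\mathbb{Z}_2$-invariant, so they descend to orbifold Ricci-flat K\"ahler metrics $h_s$ on $N$ (unique in their K\"ahler class by the orbifold Calabi-Yau theorem of Yau-Kobayashi). The Gromov-Hausdorff limit of $(N, h_s)$ is then the quotient $B' := (\mathbb{P}^1 \times S^1)/\mathbb{Z}_2$ with the quotient path metric $d_{B'}$, whose smooth part carries an induced Monge-Amp\`ere metric $g_{B'}$.

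Having secured the orbifold collapse, I invoke Corollary \ref{t2}: for each $s$ there exists a family of Ricci-flat K\"ahler metrics $\tilde g_{s,k}$ on $\bar M$ with K\"ahler classes approaching $\pi^*[h_s]$ such that $(\bar M, \tilde g_{s,k}) \to (N, h_s)$ in the Gromov-Hausdorff sense as $k \to \infty$. A diagonal choice $g_k := \tilde g_{s_k, k(s_k)}$ with $s_k \to 0$ and $k(s_k)$ chosen large enough yields
\[
d_{GH}\!\left((\bar M, g_k),(B', d_{B'})\right) \leq d_{GH}\!\left((\bar M, \tilde g_{s_k, k(s_k)}),(N, h_{s_k})\right) + d_{GH}\!\left((N, h_{s_k}),(B', d_{B'})\right) \to 0.
\]
Setting $(B, d_B) := (B', d_{B'})$ gives the desired convergence, with the smooth part of $B'$ inheriting the Riemannian metric $g_B$.

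The principal obstacle is the topological identification $B \cong S^3$ together with a description of the singular graph $\Pi\subset B$. The set $\Pi$ must be assembled from three ingredients: the images of the $24$ discriminant points of the elliptic fibration on $\mathbb{P}^1$ (producing vertices in $\Pi$), the images of the fixed locus of the diagonal $\mathbb{Z}_2$-action on $\mathbb{P}^1\times S^1$ (lying over the two endpoints of $S^1/\iota_2$ and the $\iota_1$-fixed set on $\mathbb{P}^1$), and the edges recording how the $\mathbb{Z}_2$-action identifies these pieces. To ensure that the resulting quotient is homeomorphic to $S^3$ and that $\Pi$ is a $1$-complex of codimension $2$, a careful choice of $\iota_1$ must be made using the Nikulin classification so that the induced action of $\iota_1$ on the base $\mathbb{P}^1$ is compatible with the $I_1$-configuration; verifying this, and checking that the K\"ahler classes in Corollary \ref{t2} can be selected to converge to $\pi^*[h_s]$ uniformly in the collapsing parameter, is where the bulk of the work lies.
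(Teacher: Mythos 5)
Your proposal does not address the stated theorem at all. The statement you were asked to prove is Theorem~2.3, a structure theorem about \emph{non-collapsed} Gromov--Hausdorff limits of Ricci-flat K\"ahler $n$-manifolds: under a uniform volume lower bound ${\rm Vol}_{g_k}(M_k)\geq C_1>0$ and a bound on $\int_{M_k}c_2\wedge\omega_k^{n-1}$, the limit $Y$ contains a closed singular set $S$ of Hausdorff dimension $\leq 2n-4$ with $Y\setminus S$ a Ricci-flat K\"ahler manifold and, off a set of $(2n-4)$-dimensional measure zero, $S$ consisting of orbifold singularities $\mathbb{C}^{n-2}\times\mathbb{C}^2/\Gamma$. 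This is an $\varepsilon$-regularity/codimension theorem in the style of Cheeger--Colding--Tian, cited in the paper from \cite{C} and \cite{CCT}, and the paper does not supply a proof of it. Its proof would require hard analytic inputs entirely absent from your write-up: Cheeger--Colding splitting and volume convergence, the $\varepsilon$-regularity for the curvature $L^2$-norm governed by the Chern--Gauss--Bonnet integrand $c_2\wedge\omega^{n-1}$, a covering/dimension-reduction argument producing the stratification of $S$, and the identification of the tangent cones at generic points of $S$ with $\mathbb{C}^{n-2}\times\mathbb{C}^2/\Gamma$.

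What you wrote instead is essentially the proof of Theorem~\ref{t6.1}, the Borcea--Voisin collapsing result in \S6: a construction combining the Gross--Wilson K3 collapse, the Yau--Kobayashi orbifold Calabi--Yau theorem, Corollary~\ref{t2}, and a diagonal extraction to produce a family of Calabi--Yau threefolds collapsing to $B\cong S^3$ with a singular graph $\Pi$. That is a \emph{constructive existence} result for a collapsing sequence, which is the opposite regime of the non-collapsed structure theorem you were asked about (indeed your collapse violates the hypothesis ${\rm Vol}_{g_k}(M_k)\geq C_1>0$ once you renormalize the diameters). None of the ingredients you use bear on the singular-set bound $\dim_{\mathcal{H}}S\leq 2n-4$, the Ricci-flat K\"ahler structure on $Y\setminus S$, or the orbifold description of generic points of $S$. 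You need to go back to the statement: either cite \cite{C} and \cite{CCT} as the paper does, or, if a proof is actually required, reproduce the Cheeger--Colding--Tian analysis of the regular/singular decomposition of the limit.
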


If $M_{k}$ are K3 surfaces in the above theorem,  \cite{An1} shows  that $Y$ is a K3 orbifold.
However, if $\dim_{\mathbb{C}}M_{k}\geq 3$, we do not know
 whether $Y$ is an  analytic  variety or not.

\subsection{Calabi-Yau  variety}
Let $N$ be a normal  projective variety of dimension $n$, which  is Cohen-Macaulay,  and $\mathcal{K}_{N}$ be the canonical sheaf of
 $N$.  All  varieties considered in this paper  are  normal and
 Cohen-Macaulay.
  We call
  $ N$  Gorenstein if   $\mathcal{K}_{N}$ is a rank one locally free
  sheaf.   We say that  $N$ has only canonical
 singularities, if  $ N$ is  Gorenstein, and,  for any resolution $\pi: M\rightarrow N$,
 $$\mathcal{K}_{M}=\pi^{*}\mathcal{K}_{N}+\sum a_{D}D,  \quad a_{D}\geq
 0,$$ where $D$ are exceptional divisors.
   A {\em\sf Calabi-Yau
$n$-variety} is a normal Gorenstein  variety $N$ of dimensional $n $
satisfying that $N$ admits only  canonical singularities, the
dualizing sheaf of $N$ is trivial, i.e.
$\mathcal{K}_{N} \simeq \mathcal{O}_{N}$, and $H^{2}(N,
\mathcal{O}_{N})=\{0\}$.  We call $(M, \pi)$ a crepant  resolution
of $N$, if
 $M$ is a compact  Calabi-Yau $n$-manifold, and $\pi :
 M\rightarrow N$ is a resolution, i.e. a   bi-rational proper  morphism  satisfying that
 $\pi: M\backslash \pi^{-1}(S)\rightarrow N\backslash S$ is
 bi-holomorphic, where $S$ is the singular set of $N$.  From the
 definition, the dualizing sheaf $\mathcal{K}_{N}$ of a Calabi-Yau
$n$-variety $N$ has a global generator $\Omega$, which is a
holomorphic volume form on $N\backslash S$ in the usual sense.  If
$(M, \pi)$ is a resolution of $N$, $\pi^{*}\Omega$ is holomorphic on
$M$. Furthermore, $\pi^{*}\Omega$ is nowhere vanishing,  if $(M,
\pi)$ is crepant. See \cite{MP} for more material  of singularities
and Calabi-Yau varieties.

\begin{prop}
\label{2.3}
Let $N \subset \mathbb{C}^m$ be an irreducible Calabi-Yau $n$-variety with the holomorphic volume form $\Omega$, and $\psi$ is a non-trivial holomorphic function on $N$. Assume $N \cap B_R$ is a closed subvariety in $B_R$. Then for any $R'<R$, there exists $\epsilon,C>0$ such that
\[
\int_{N \cap B_{R'}} \frac{d\mu}{|\psi|^{2\epsilon}} \leq C \mbox{ where } d\mu = (-1)^{\frac{n^2}{2}} \Omega \wedge \bar{\Omega}.
\]
\end{prop}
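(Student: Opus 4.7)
The plan is to reduce the integral to a finite sum of explicit monomial integrals via a log resolution, exploiting the canonical singularity condition to ensure favorable vanishing orders. First, pick $R''$ with $R'<R''<R$, so that $K = N\cap \overline{B_{R'}}$ is compact and contained in $N\cap B_{R''}$. By Hironaka's theorem applied in this quasi-projective setting, I would take a proper birational morphism $\pi: \tilde{M}\to N\cap B_{R''}$ with $\tilde{M}$ smooth, biholomorphic over the complement of $S\cup Z$ (where $S$ is the singular locus of $N$ and $Z = \{\psi=0\}$), and such that $E := \pi^{-1}(S \cup Z)$ is a simple normal crossing divisor on $\tilde{M}$.

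Next, using that $N$ has canonical singularities with trivializing section $\Omega$ of $\mathcal{K}_N$, the defining identity $\mathcal{K}_{\tilde M} = \pi^{*}\mathcal{K}_{N} + \sum_D a_D D$ with $a_D \geq 0$ implies that $\pi^{*}\Omega$ extends as a holomorphic $n$-form on $\tilde M$. In local coordinates $(z_1,\ldots,z_n)$ on $\tilde M$ adapted to $E$, one can write
\[
\pi^{*}\Omega = u\, z_1^{a_1}\cdots z_n^{a_n}\, dz_1 \wedge\cdots\wedge dz_n, \qquad \pi^{*}\psi = v\, z_1^{b_1}\cdots z_n^{b_n},
\]
where $u, v$ are holomorphic and nowhere vanishing, and $a_i, b_i$ are nonnegative integers (irreducibility of $N$ and non-triviality of $\psi$ ensure $\pi^{*}\psi \not\equiv 0$ on any component of $E$). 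Hence
\[
\pi^{*}\!\left(\frac{d\mu}{|\psi|^{2\epsilon}}\right) = \frac{|u|^{2}}{|v|^{2\epsilon}}\prod_{i=1}^{n} |z_i|^{2(a_i - \epsilon b_i)}\, d\mu_{\mathbb{C}^n},
\]
which is locally integrable as soon as $\epsilon b_i < a_i + 1$ for every $i$, a condition satisfied for any sufficiently small $\epsilon > 0$.

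To conclude, since $\pi$ is proper and $K$ is compact, $\pi^{-1}(K)$ is compact in $\tilde M$ and can be covered by finitely many such adapted charts. Choosing $\epsilon > 0$ smaller than the minimum admissible threshold over this finite cover, and applying the change of variables formula through $\pi$, yields the desired uniform estimate $\int_{N\cap B_{R'}} |\psi|^{-2\epsilon}\, d\mu \leq C$. The main obstacle, beyond invoking Hironaka's theorem in the non-compact analytic setting, is the algebraic input $a_D \geq 0$ supplied by the canonical singularities hypothesis; without it the monomial integral could diverge for every $\epsilon > 0$. Once this is granted, the remaining analysis is a standard SNC-divisor monomial computation.
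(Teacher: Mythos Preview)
Your argument is correct and follows essentially the same route as the paper: pass to a log resolution with simple normal crossing exceptional locus, use the canonical singularity hypothesis to get holomorphicity of $\pi^{*}\Omega$, write $\pi^{*}\psi$ in monomial form, and conclude by a standard local integrability computation together with compactness of $\pi^{-1}(N\cap\overline{B_{R'}})$. The paper's proof is slightly terser (it simply absorbs the vanishing of $\pi^{*}\Omega$ into a holomorphic numerator $|f(z)|^{2}$ rather than tracking the discrepancy exponents $a_i$), but the content is identical. One small wording issue: the phrase ``$\pi^{*}\psi\not\equiv 0$ on any component of $E$'' is not what you want (it \emph{does} vanish on components over $Z$); what you need, and what irreducibility actually gives, is that $\pi^{*}\psi\not\equiv 0$ on $\tilde{M}$, so its zero divisor is supported in the SNC locus $E$ and the monomial expression with nowhere-vanishing $v$ is valid.
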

\begin{proof}
Since $N$ admits only canonical singularities, there exists a resolution $\pi: M \rightarrow N$ with normal crossing exceptional divisors such that $\pi^*\Omega$ is holomorphic and in local coordinate $\pi^* \psi(z) = z_1^{k_1} \cdots z_n^{k_n} g(z)$ with $g(z)$ nowhere zero in the local neighborhood. Then locally, there is a holomorphic function $f(z)$ such that $\displaystyle \frac{\pi^*d\mu}{|\pi^*\psi|^{2\epsilon}} = \frac{|f(z)|^2 |dzd\bar{z}|}{|z_1|^{2\epsilon k_1} \cdots |z_n|^{2\epsilon k_n}}$, whose integral converges in the local neighborhood when $\epsilon>0$ is small. By compactness of $\pi^{-1}(N \cap B_{R'})$, finitely many such local neighborhoods would cover $\pi^{-1}(N \cap B_{R'})$. Hence for $\epsilon>0$ small enough, there exists $C>0$ such that

\[
\int_{N \cap B_{R'}} \frac{d\mu}{|\psi|^{2\epsilon}} = \int_{\pi^{-1}(N \cap B_{R'})} \frac{\pi^*d\mu}{|\pi^*\psi|^{2\epsilon}} \leq C.
\]
\end{proof}

Let $N$ be a normal  projective $n$-variety with singular set $S$.
  For any $p\in S$ and a  small neighborhood $U_{p}\subset N$ of $p$,
a pluri-subharmonic function $v$ (resp. strongly pluri-subharmonic,
and pluri-harmonic) on $U_{p}$ is an upper semi-continuous function
with value in $\mathbb{R}\cup \{-\infty\}$, which is not locally
$-\infty$, and extends to a pluri-subharmonic function $ \tilde{v}$
(resp. strongly pluri-subharmonic, and pluri-harmonic) in some local
embedding $U_{p} \hookrightarrow \mathbb{C}^{m}$.  We call $v$
smooth if and only if $
 \tilde{v}$ is smooth. A continuous function $v$ is pluri-subharmonic if and only if the restriction of  $v$ to $U_{p}\backslash S $ is so  \cite{FN}.
     A  K\"{a}hler form $\omega$ (resp.   its K\"{a}hler
metric $g$) is a smooth K\"{a}hler form $\omega$ in the usual sense
on the smooth part $N\backslash S $ of $N$, and, for any singular
point $p\in S$, there is a neighborhood $U_{p}$, and a continuous
strongly  pluri-subharmonic function $v$ on
 $U_{p}$ such that
 $\omega=\sqrt{-1}\partial\overline{\partial}v$ on $U_{p}\bigcap N\backslash S
 $. We call $\omega$ (resp.    $g$) smooth if $ v$ is
 smooth in the  above sense. Otherwise, we call $\omega$ a singular K\"{a}hler form. The following property of smooth \k~forms on normal analytic variety is standard, although we could not find its precise statement in the literature.\\
 \begin{prop}
 \label{2.2}
 For any two smooth \k~metrics $g_1,g_2$ on a normal analytic variety $M$, and $p\in M$, there exists a neighborhood $U$ of $p$ such that $g_1$ is quasi-isometric to $g_2$ on $U$.
\end{prop}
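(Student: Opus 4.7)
The plan is to reduce the statement to the classical fact that two smooth, strongly positive $(1,1)$-forms on a compact set in $\mathbb{C}^m$ are uniformly equivalent, by exploiting the definition of a smooth K\"ahler form on a normal analytic variety that is given just above the proposition.

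First I would fix a local embedding $\iota\colon U\hookrightarrow V\subset\mathbb{C}^m$ of a neighborhood $U$ of $p$ in $M$. By definition, after shrinking $U$, each K\"ahler form $\omega_i$ ($i=1,2$) admits a continuous strongly plurisubharmonic local potential $v_i$ on $U$ whose ambient extension $\tilde v_i$ is a smooth strongly plurisubharmonic function on $V$. Since the sheaf of smooth functions on the normal variety $M$ does not depend on the choice of local embedding, I may arrange $\tilde v_1$ and $\tilde v_2$ to live on the same ambient chart $V$. The corresponding ambient K\"ahler forms $\tilde\omega_i=\sqrt{-1}\,\partial\overline{\partial}\tilde v_i$ are then smooth positive $(1,1)$-forms on $V$.

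Next, I would choose a relatively compact subneighborhood $K\Subset V$ of $\iota(p)$. Because each $\tilde v_i$ is smooth and strictly plurisubharmonic, the complex Hessian matrix $\bigl(\partial^2\tilde v_i/\partial z^\alpha\partial\bar z^\beta\bigr)$ is continuous and positive definite on $K$. A standard compactness argument then yields constants $0<c<C$ such that
\[
c\,\tilde\omega_1\ \leq\ \tilde\omega_2\ \leq\ C\,\tilde\omega_1
\]
as Hermitian $(1,1)$-forms at every point of $K$. Pulling back by $\iota$ and restricting to the smooth locus $\iota^{-1}(K)\cap(M\setminus S)$, where $g_1$ and $g_2$ are genuine Riemannian metrics, gives $c\,\omega_1\leq\omega_2\leq C\,\omega_1$, i.e.\ the required quasi-isometry on a neighborhood of $p$.

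The argument is essentially a direct unwinding of definitions together with a routine continuity/compactness estimate, so there is no genuinely difficult step. The only point needing care, and the only thing I would bother to spell out, is that one may arrange both potentials to be smooth with respect to a common local embedding into $\mathbb{C}^m$; this is precisely the fact that ``smoothness'' of a function on a normal analytic variety is intrinsic to the variety, and independent of the embedding used to define it.
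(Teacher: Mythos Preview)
Your argument has a genuine gap at the step where you claim you ``may arrange $\tilde v_1$ and $\tilde v_2$ to live on the same ambient chart $V$'' as smooth \emph{strongly} plurisubharmonic functions. The fact you invoke --- that the sheaf of smooth functions on a normal variety is independent of the local embedding --- gives you smooth extensions of $v_1,v_2$ to a common $V$, but it does \emph{not} guarantee that these extensions are strongly plurisubharmonic on $V$. Concretely, if $\tilde v_1$ is strongly psh in an embedding $i_1\colon U\hookrightarrow\mathbb{C}^{m_1}$ and you transport it to a second embedding $i_2\colon U\hookrightarrow\mathbb{C}^{m_2}$ via a holomorphic extension $F$ of $i_1$ (so that the new extension is $\tilde v_1\circ F$), the pulled-back form $F^*\tilde\omega_1$ is only \emph{semi}-positive on $\mathbb{C}^{m_2}$, degenerate wherever $dF$ drops rank. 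Without strict positivity of both $\tilde\omega_i$ on the ambient $V$, your compactness argument on $K$ fails: as smooth points $q\in M$ approach $p$ the tangent spaces $T_qM\subset\mathbb{C}^m$ can move wildly, and bounding the ratio of two forms that are positive only along $T_qM$ (not on all of $\mathbb{C}^m$) requires exactly the kind of uniform control you are trying to prove.

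The paper's proof handles precisely this point. It does not try to put both potentials in one embedding. Instead it keeps each $\tilde\omega_k$ in its own embedding $i_k\colon (M,p)\hookrightarrow(\mathbb{C}^{m_k},0)$, uses normality to extend the holomorphic map $i_2$ to a holomorphic $F\colon B_1\subset\mathbb{C}^{m_1}\to\mathbb{C}^{m_2}$ with $i_2=F\circ i_1$, and then compares the semi-positive form $F^*\tilde\omega_2$ against the genuinely positive $\tilde\omega_1$ on a compact ball, yielding $\omega_2\le C_1\omega_1$ after pulling back by $i_1$. The reverse inequality comes from swapping the roles of $1$ and $2$. In other words, the one-sided comparison ``semi-positive $\le C\cdot$ positive'' is what compactness actually gives you, and the two-sided estimate is obtained by symmetry rather than by a single common embedding.
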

\begin{proof}
For $k=1,2$, let $\omega_k$ be the \k~form of $g_k$. Since $\omega_k$ is smooth on $M$, there exists local embedding $i_k: (M,p) \hookrightarrow (\mathbb{C}^{m_k},0)$ such that $\omega_k = i_k^* \tilde{\omega}_k$ on $M$, where  $\tilde{\omega}_k$ is a smooth \k~form on $\mathbb{C}^{m_k}$. Since $M$ is normal, by results in \S7 of chapter II of \cite{De}, there exists $B_1 = B_{r_1}(0,\mathbb{C}^{m_1})$ such that the holomorphic map $i_2$ can be extend to a holomorphic map $F: (B_1,0) \rightarrow (\mathbb{C}^{m_2}, 0)$. Namely, $i_2 = F \circ i_1$. Then there exists $C_1>0$ such that $F^* \tilde{\omega}_2 \leq C_1 \tilde{\omega}_1$ on $B_1$, and $\omega_2 = i_2^* \tilde{\omega}_2 = i_1^* \circ F^* \tilde{\omega}_2 \leq C_1 i_1^*\tilde{\omega}_1 = C_1\omega_1$ on $i_1^{-1}(B_1) \subset M$. Similarly, $\omega_1 \leq C_2 \omega_2$ on $i_2^{-1}(B_2) \subset M$. Let $U := i_1^{-1}(B_1)  \cap i_2^{-1}(B_2)$. Then $C_2^{-1} \omega_1 \leq \omega_2 \leq C_1 \omega_1$ on $U$.
\end{proof}

If $\mathcal{PH}_{N}$ denotes the sheaf of pluri-harmonic functions on $N$, any K\"{a}hler form $\omega$ represents a class $[\omega]$ in $H^{1}(N,\mathcal{PH}_{N})$ (c.f. Section 5.2 in  \cite{EGZ}). Note that $H^{1}(N, \mathcal{PH}_{N})\cong H^{1,1}(N,\mathbb{R})$ if $N$ is a smooth variety.  We call a class $\alpha \in H^{1}(N,\mathcal{PH}_{N})$ a K\"{a}hler class if  $\alpha$ can be represented by a K\"{a}hler form.  A  K\"{a}hler form
 $\omega$ on a  Calabi-Yau variety $N$ is called  Ricci-flat if the  restriction of $\omega$ to the smooth part $N\backslash S
 $ is Ricci-flat.

 If $M$ is a compact Calabi-Yau manifold,    Yau's
  theorem   on  the  Calabi  conjecture (\cite{Ya1}) says that, for any K\"{a}hler class $\alpha\in
 H^{1,1}(M, \mathbb{R})$, there exists a unique Ricci-flat K\"{a}hler
 form $\omega$ representing  $\alpha$.  In (\cite{EGZ}),   Yau's
  theorem  was generalized to singular   Calabi-Yau
  varieties.

  \begin{theorem}[\cite{EGZ}]
  \label{201}
  Let $N$ be a  Calabi-Yau
  $n$-variety, $S$ be the singular set of $N$,
   and  $\omega_{0}$ be a smooth  K\"{a}hler form on $N$.  Then there is a unique Ricci-flat  K\"{a}hler form  $\omega$ with continuous potential function such that
  $\omega\in [\omega_{0}] \in H^{1}(N,
 \mathcal{PH}_{N})$, i.e. there is a unique continuous  function
 $\varphi$ on $N$ such that $\omega=\omega_{0}+ \sqrt{-1}\partial
 \overline{\partial} \varphi$ is a  K\"{a}hler form satisfying $$(\omega_{0}+ \sqrt{-1}\partial
 \overline{\partial} \varphi)^{n}=\frac{(-1)^{\frac{n^2}{2}}}{\mathcal{V}}\Omega \wedge
 \overline{\Omega}, \ \ \    \  \sup_{N} \varphi =0 ,
 $$ on the smooth part $N\backslash S$, where $\mathcal{V}=(\int_{N}\omega_{0}^{n})^{-1}\int_{N\backslash S}(-1)^{\frac{n^2}{2}}\Omega \wedge \overline{\Omega} $.
   \end{theorem}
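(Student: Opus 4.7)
The plan is to reduce Theorem \ref{201} to a degenerate complex \ma~equation on a smooth resolution and solve it by regularization plus pluripotential theory. Pick any resolution $\pi: M\rightarrow N$ with normal crossing exceptional divisor. Because $N$ has only canonical Gorenstein singularities and $\mathcal{K}_N\simeq \mathcal{O}_N$, the pullback $\pi^*\Omega$ extends to a holomorphic section of $\mathcal{K}_M$ whose zero divisor is supported in the exceptional locus $E = \pi^{-1}(S)$. Setting $\tilde{\omega}_0 := \pi^*\omega_0$, which is a smooth semi-positive $(1,1)$-form that is strictly positive off $E$, and fixing any auxiliary \k~form $\omega_M$ on $M$, the equation
$$(\omega_0 + \sqrt{-1}\partial\overline{\partial}\varphi)^n = \mathcal{V}^{-1} (-1)^{n^2/2} \Omega\wedge\overline{\Omega}$$
pulls back to an equation of the form $(\tilde\omega_0 + \sqrt{-1}\partial\overline{\partial}\tilde\varphi)^n = F \omega_M^n$, where, by Proposition \ref{2.3} applied in charts adapted to the canonical singularities, the density $F$ lies in $L^p(M,\omega_M^n)$ for some $p>1$.

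To solve this degenerate equation, consider the perturbed family $\omega_\epsilon := \tilde\omega_0 + \epsilon \omega_M$, which are genuine \k~forms for $\epsilon>0$. Pick smooth densities $F_\epsilon \rightarrow F$ in $L^p$ and normalizing constants $c_\epsilon \rightarrow 1$ so that $c_\epsilon F_\epsilon \omega_M^n$ has the same total mass as $\omega_\epsilon^n$. Yau's theorem then produces $\varphi_\epsilon \in C^\infty(M)$ solving
$$(\omega_\epsilon + \sqrt{-1}\partial\overline{\partial}\varphi_\epsilon)^n = c_\epsilon F_\epsilon \omega_M^n, \qquad \sup_M \varphi_\epsilon = 0.$$
The key step, which I expect to be the main obstacle, is establishing a uniform $L^\infty$ bound on $\varphi_\epsilon$ independent of $\epsilon$; the usual Aubin--Yau Moser iteration degenerates because $\omega_\epsilon \rightarrow \tilde\omega_0$ loses positivity along $E$. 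The remedy is a Ko\l{}odziej-type capacity argument in the nef-and-big class $[\tilde\omega_0]$: the uniform $L^p$ bound with $p>1$ on $F_\epsilon$ gives control of the \ma~capacity of the sublevel sets $\{\varphi_\epsilon < -s\}$, leading to $\|\varphi_\epsilon\|_{L^\infty(M)} \leq C$ independent of $\epsilon$.

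With the uniform $C^0$ bound in hand, Yau's interior second-order estimate applied on compact subsets of $M\setminus E$, where $\omega_\epsilon$ is uniformly \k, yields smooth subsequential convergence of $\varphi_\epsilon$ to a smooth Ricci-flat potential off $E$; at the same time, a stability estimate for bounded $\tilde\omega_0$-plurisubharmonic functions upgrades the limit $\tilde\varphi$ to a continuous function on all of $M$. Because $\tilde\omega_0$ restricts to zero on each connected fiber of $\pi$, the maximum principle forces $\tilde\varphi$ to be constant on each such fiber, so $\tilde\varphi$ descends to a continuous function $\varphi$ on $N$; the resulting $\omega = \omega_0 + \sqrt{-1}\partial\overline{\partial}\varphi$ is a \k~form on $N$ solving the stated equation on $N\setminus S$ with $\sup_N \varphi = 0$. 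Uniqueness follows from the standard comparison principle for bounded $\omega_0$-plurisubharmonic functions on $M$, applied to the difference of two candidate potentials. The hardest and most delicate ingredient is the uniform $L^\infty$ estimate, where pluripotential theory (rather than PDE iteration) is essential because the reference form degenerates in the limit.
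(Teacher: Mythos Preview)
The paper does not prove Theorem~\ref{201}; it is quoted as a preliminary result from \cite{EGZ} (Eyssidieux--Guedj--Zeriahi), and no argument is given in the present paper. Your sketch is, in outline, the strategy of \cite{EGZ}: pass to a resolution where the equation becomes a degenerate complex \ma~equation with a semi-positive big reference form, obtain a uniform $C^0$ bound via a Ko\l{}odziej-type capacity argument, and then combine interior higher-order estimates with a stability/uniqueness theorem for bounded quasi-plurisubharmonic solutions. So your proposal matches the cited source rather than anything done in this paper.

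One correction worth making: you appeal to Proposition~\ref{2.3} for the $L^p$ integrability of the density $F$, but that proposition is about the finiteness of $\int_N |\psi|^{-2\epsilon} d\mu$ on the singular variety, not about the pulled-back volume form on the resolution. On $M$ the situation is actually better than you state: since $N$ has canonical singularities, $\pi^*\Omega$ extends to a holomorphic $n$-form on $M$ (this is the meaning of $\mathcal{K}_M = \pi^*\mathcal{K}_N + \sum a_D D$ with $a_D\geq 0$), so $F = (-1)^{n^2/2}\pi^*\Omega\wedge\overline{\pi^*\Omega}/\omega_M^n$ is smooth and bounded on all of $M$, not merely in $L^p$. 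The degeneracy sits entirely in the reference form $\tilde\omega_0 = \pi^*\omega_0$, which is only semi-positive along $E$; that is precisely the situation handled in \cite{EGZ} (and restated in this paper as Theorem~\ref{t5.06}). Your identification of the uniform $L^\infty$ estimate as the crux is correct, and the descent step (constancy on fibers) and uniqueness are exactly as in \cite{EGZ}.
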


  In \cite{ST},  singular Ricci-flat K\"{a}hler  metrics were constructed  on projective  manifolds of Kodaira
dimension $0$.  If the Calabi-Yau
  variety $N$ admits a crepant resolution $(M, \pi)$,  and $\omega_{0}$ is  a smooth  K\"{a}hler form on
  $N$,  $\pi^{*}\omega_{0} $ is a smooth  semi-positive $(1,1)$-form on
  $M$, and the class  $\pi^{*}[\omega_{0}]\in H^{1,1}(M, \mathbb{R}) $ is
  big and semi-ample.  The following convergence theorem was proved
  in \cite{To}.

 \begin{theorem}[\cite{To}]
 \label{202}
 Let $N$ be a  Calabi-Yau $n$-variety, $S$ be the singular set of $N$, and $ \alpha\in H^{1}(N,\mathcal{PH}_{N})$ be a class represented by a smooth  K\"{a}hler
 form.  Assume that  $N$ admits a crepant resolution $(M, \pi)$, and
 $\alpha_{t}$, $t\in (0,1]$, is  a family of K\"{a}hler classes on
 $M$ such that $\lim_{t\rightarrow 0}\alpha_{t}=\pi^{*}\alpha$ in $ H^{1,1}(M,
 \mathbb{R})$. Then $\omega_{t}$,  $t\in (0,1]$, $C^{\infty}$-converges to
 $\pi^{*}\omega$ on any compact subset of $ M\backslash
 \pi^{-1}(S)$, when $t\rightarrow 0$,   where  $\omega_{t}$ are Ricci-flat K\"{a}hler
 forms with $\omega_{t}\in \alpha_{t} $, and $ \omega$ is the unique Ricci-flat  K\"{a}hler
 form representing $\alpha$.
 \end{theorem}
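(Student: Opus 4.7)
The plan is to realize each $\omega_t$ via a complex Monge-Amp\`ere equation on the fixed manifold $M$ and combine a uniform $C^0$-bound on the potential with interior higher-order estimates on compact subsets of $M\setminus\pi^{-1}(S)$. Fix a smooth K\"ahler form $\omega_0$ on $N$ with $[\omega_0]=\alpha$ and set $\chi:=\pi^*\omega_0$, which is a smooth semi-positive $(1,1)$-form on $M$ that is strictly positive (hence K\"ahler) on $M\setminus\pi^{-1}(S)$. Pick a family of smooth K\"ahler forms $\chi_t$ on $M$ with $[\chi_t]=\alpha_t$ converging smoothly to $\chi$. By Yau's theorem, $\omega_t=\chi_t+\sqrt{-1}\partial\bar\partial\varphi_t$ with $\sup_M\varphi_t=0$, and $\varphi_t$ solves
\[
(\chi_t+\sqrt{-1}\partial\bar\partial\varphi_t)^n = c_t\,(-1)^{n^2/2}\pi^*\Omega\wedge\overline{\pi^*\Omega},
\]
where $c_t\to c_0>0$. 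Since $\pi$ is crepant, $\pi^*\Omega$ is a nowhere vanishing holomorphic $n$-form on $M$, so the right-hand side is a smooth strictly positive volume form, and its density relative to any fixed smooth reference volume form is uniformly bounded above.

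The first step is a uniform $C^0$-estimate $\|\varphi_t\|_{L^\infty(M)}\le C$ independent of $t$. This is the main pluripotential input and follows from Kolodziej's $L^\infty$-bound for the complex Monge-Amp\`ere equation, extended to the degenerating reference setting by Eyssidieux-Guedj-Zeriahi: the right-hand side lies in $L^p$ with respect to a fixed smooth volume form for some $p>1$ uniformly in $t$ (the relevant integrability is exactly of the type in Proposition \ref{2.3}), and the classes $[\chi_t]$ are uniformly big and nef as they converge to the big and semi-ample class $\pi^*\alpha$.

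Next, on any compact set $K\Subset K'\Subset M\setminus\pi^{-1}(S)$, $\chi$ is a genuine smooth K\"ahler metric with bounded geometry on $K'$, and $\chi_t\to\chi$ smoothly there. I apply the Chern-Lu / Yau second-order computation: the maximum principle for the auxiliary function $e^{-A\varphi_t}\operatorname{tr}_{\chi_t}\omega_t$ on $K'$ (with a cutoff supported in $K'$), combined with the uniform $C^0$-bound on $\varphi_t$ and the bounded bisectional curvature of $\chi$ on $K'$, yields $\operatorname{tr}_{\chi_t}\omega_t\le C_K$ on $K$. Together with the Monge-Amp\`ere equation and the uniform positive lower bound of the right-hand side on $K$, this gives $c_K\chi\le\omega_t\le C_K\chi$ on $K$, so the Monge-Amp\`ere equation becomes uniformly elliptic on $K$. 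Evans-Krylov (or Calabi's $C^3$-estimate) and standard Schauder bootstrapping then produce uniform $C^{k,\alpha}$-bounds on any smaller compact set, for every $k$. A diagonal subsequence argument extracts a $C^\infty$ subsequential limit $\omega_\infty$ on $M\setminus\pi^{-1}(S)$ that is Ricci-flat K\"ahler and, after pushing down, cohomologous to $\omega$ in $H^{1}(N,\mathcal{PH}_N)$; the uniqueness part of the EGZ theorem (Theorem \ref{201}) forces $\omega_\infty=\pi^*\omega$, so that the full family converges.

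The hard part is the second-order estimate. The reference form $\chi=\pi^*\omega_0$ is only semi-positive on $M$; both its inverse and its curvature tensor blow up as one approaches $\pi^{-1}(S)$, so the constants $C_K$, $c_K$ inevitably depend on $K$ and degenerate near the exceptional set. The delicate point is to carry out the maximum principle for $e^{-A\varphi_t}\operatorname{tr}_{\chi_t}\omega_t$ in a way that stays inside a neighborhood $K'$ on which $\chi$ is genuinely K\"ahler with controlled geometry, using cutoffs and boundary barriers built from the uniform $C^0$-control of $\varphi_t$ to absorb the boundary terms. This localization is precisely what prevents the argument from giving anything on $\pi^{-1}(S)$ itself, and it is the technical heart of the proof.
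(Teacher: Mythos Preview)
The paper does not itself prove this theorem; it is quoted from \cite{To}. Your $C^0$-step via Kolodziej/EGZ and the final uniqueness argument are correct, but the second-order estimate has a genuine gap.

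You try to bound ${\rm tr}_{\chi_t}\omega_t$ by a localized maximum principle on $K'$, invoking ``cutoffs and boundary barriers built from the uniform $C^0$-control of $\varphi_t$.'' This is the Yau $C^2$-direction, which needs a \emph{lower} bound on the bisectional curvature of the reference metric --- precisely the quantity that degenerates near $\pi^{-1}(S)$ --- and for complex Monge--Amp\`ere there is no general interior $C^2$-estimate that circumvents this using only $C^0$ control of the potential; your barrier description does not correspond to an established technique. The actual argument (in \cite{To}, and carried out in this paper for the parallel smoothing situation in the proof of Lemma~\ref{t5.4}) runs the Chern--Lu/Schwarz inequality in the other direction: one bounds ${\rm tr}_{\omega_t}\chi$, which needs only an \emph{upper} bound on the holomorphic bisectional curvature of $\chi=\pi^*\omega_0$. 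Since $\omega_0$ locally extends to a smooth K\"ahler metric on an ambient $\mathbb{C}^m$, that upper bound holds uniformly on all of $N\setminus S$, hence on $M\setminus\pi^{-1}(S)$. The global maximum principle on the compact manifold $M$ applied to $\log{\rm tr}_{\omega_t}\chi-A\varphi_t$ then gives $\chi\le C\omega_t$ everywhere, with no cutoffs needed. The local upper bound $\omega_t\le C_K\chi$ on compact $K\subset\subset M\setminus\pi^{-1}(S)$ follows at once from the Monge--Amp\`ere equation $\omega_t^n\le C'_K\chi^n$ there combined with $\chi\le C\omega_t$ and elementary linear algebra. So the ``hard part'' you isolate dissolves once the correct direction of the Schwarz lemma is used.
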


A projective    $n$-orbifold is a normal  projective   $n$-variety
with only quotient singularities, i.e. for any singular point $p$,
there is a neighborhood $U_{p}$ of $p$, a neighborhood $V$ of $0\in
\mathbb{C}^{n}$, and a finite group $\Gamma_{p}\subset GL(n,
\mathbb{C})$ such that $U_{p}$ is bi-holomorphic to $V/\Gamma_{p}$.
We call $\Gamma_{p}$ the orbifold group of $p$. Projective orbifolds
 are   Cohen-Macaulay (c.f.  \cite{CK}).   An orbifold K\"{a}hler form $\omega$
  (resp.   the corresponding orbifold   K\"{a}hler metric $g$ ) on a projective  orbifold  $N$ is a K\"{a}hler
 form  on the smooth part of $N$, and, on any neighborhood $U_{p}$ of a singularity point
 $p$, $\omega$ is identified with a $\Gamma_{p}$-invariant K\"{a}hler
 form   on $V$ by the quotient map.  Orbifolds share many of the good properties of
manifolds. For example, De Rham cohomology  and Dolbeault cohomology
are well-defined on orbifolds,  and have most of   usual properties
on manifolds (c.f. \cite{Ba}  \cite{CK} \cite{Sa}).  An orbifold
K\"{a}hler form $\omega$ defines a $(1,1)$-class $ [\omega]$ in  $
H^{1,1}(N, \mathbb{R})$.    We call a $(1,1)$-class $\alpha$ a
K\"{a}hler class if it is represented  by  an   orbifold K\"{a}hler
form, and call the set  $\mathbb{K}_{N} $ of such classes  the
K\"{a}hler cone of $N$, which is an open cone in $ H^{1,1}(N,
\mathbb{R})$.
  Another important fact is
that an orbifold K\"{a}hler metric $g$ on an orbifold induces a path
metric space structure $d_{g}$  on $N$ (c.f. \cite{Bo}).  However,
an orbifold K\"{a}hler form  is not smooth in the  sense of smooth
K\"{a}hler
 forms on   projective   varieties.  On the other hand, a smooth K\"{a}hler
 form in the  sense of smooth K\"{a}hler
 forms on   projective   varieties is only a semi-positive
 $(1,1)$-form in the orbifold sense, but not an  orbifold
K\"{a}hler form.

 \begin{lm}\label{203}  Let $N$ be a projective    $n$-orbifold with $H^{2}(N,
 \mathcal{O}_{N}) =\{0\}$,
 and $\alpha\in H^{1,1}(N,
 \mathbb{R})$ be a class represented by an  orbifold
K\"{a}hler form.  Then  $\alpha$ can be represented by a
semi-positive  orbifold  $(1,1)$-form $\omega_{0}$, which is  a
smooth K\"{a}hler
 form in the  sense of smooth K\"{a}hler
 forms on   projective   varieties.
  \end{lm}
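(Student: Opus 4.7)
The plan is to decompose $\alpha$ as a convex combination of rational classes inside $\mathbb{K}_{N}$, realize each such class by pulling back Fubini--Study via an orbifold Kodaira embedding, and take the corresponding combination as $\omega_{0}$; the product of the resulting embeddings provides the local ambient embedding needed to exhibit the strong pluri-subharmonicity of the combined local potential.

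First, the hypothesis $H^{2}(N,\mathcal{O}_{N})=0$ together with orbifold Hodge theory gives $H^{2,0}(N)=\overline{H^{0,2}(N)}=0$, hence $H^{2}(N,\mathbb{R})=H^{1,1}(N,\mathbb{R})$, and rational classes are dense in $H^{1,1}(N,\mathbb{R})$. Since $\mathbb{K}_{N}$ is open, I choose rational K\"{a}hler classes $\beta_{0},\dots,\beta_{d}\in\mathbb{K}_{N}\cap H^{2}(N,\mathbb{Q})$ whose convex hull is a simplex containing $\alpha$ in its interior, so that $\alpha=\sum_{i}t_{i}\beta_{i}$ with $t_{i}>0$ and $\sum_{i}t_{i}=1$. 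For each $i$ pick a positive integer $k_{i}$ with $k_{i}\beta_{i}\in H^{2}(N,\mathbb{Z})$. The exponential sequence combined with $H^{2}(N,\mathcal{O}_{N})=0$ produces an orbifold line bundle $L_{i}$ with $c_{1}(L_{i})=k_{i}\beta_{i}$, and $L_{i}$ is positive because $\beta_{i}\in\mathbb{K}_{N}$. By Baily's orbifold Kodaira embedding theorem (cf.\ \cite{Ba}), a large integer power $L_{i}^{\otimes m_{i}}$ is very ample and yields a projective embedding $\iota_{i}:N\hookrightarrow\mathbb{P}^{M_{i}}$ with $\iota_{i}^{*}\mathcal{O}(1)\cong L_{i}^{\otimes m_{i}}$. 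Set $\omega_{i}:=(k_{i}m_{i})^{-1}\iota_{i}^{*}\omega_{FS}$, which represents $\beta_{i}$, and define $\omega_{0}:=\sum_{i}t_{i}\omega_{i}$, which represents $\alpha$.

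To see that $\omega_{0}$ is a smooth K\"{a}hler form in the variety sense, near any $p\in N$ choose affine charts of each $\mathbb{P}^{M_{i}}$ containing $\iota_{i}(p)$ and combine them into the product local embedding $U_{p}\hookrightarrow\prod_{i}\mathbb{C}^{M_{i}}$; writing $t^{(i)}$ for the affine coordinates on the $i$-th factor, the smooth function
\[
v\;:=\;\sum_{i}\frac{t_{i}}{k_{i}m_{i}}\log\bigl(1+|t^{(i)}|^{2}\bigr)
\]
on $\prod_{i}\mathbb{C}^{M_{i}}$ is a positive combination of Fubini--Study potentials, each strongly pluri-subharmonic on its own factor. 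Its Hessian is therefore block-diagonal positive definite, so $v$ is strongly pluri-subharmonic on the whole product, and its pullback to $U_{p}$ is a local potential of $\omega_{0}$. For orbifold semi-positivity, observe that on any orbifold chart $V\to V/\Gamma_{p}=U_{p}$ each $\omega_{i}$ is the pullback of the K\"{a}hler form $\omega_{FS}$ via the holomorphic composition $V\to V/\Gamma_{p}\subset N\xrightarrow{\iota_{i}}\mathbb{P}^{M_{i}}$, hence is semi-positive in the orbifold sense, and $\omega_{0}$ inherits the same property as a positive combination.

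The main technical point is the assembly step: individually, each $\omega_{i}$ becomes orbifold-degenerate at singular points of $N$, and its local potential $\log(1+|t^{(i)}|^{2})$ is not strongly pluri-subharmonic in every ambient embedding; but the combined potential in the product embedding is genuinely strongly pluri-subharmonic because the positive Hessian blocks are transverse and jointly span the whole ambient product space. This is precisely what reconciles the variety-smoothness of $\omega_{0}$ with its being only semi-positive (rather than strictly positive) in the orbifold sense.
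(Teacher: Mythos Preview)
Your argument is correct and follows essentially the same route as the paper: write $\alpha$ as a positive real combination of rational K\"{a}hler classes, realize each via an orbifold Kodaira embedding, and take the corresponding combination of pulled-back Fubini--Study forms. You are in fact more careful than the paper on two points it leaves implicit: you ensure the coefficients $t_i$ are positive by choosing a simplex in $\mathbb{K}_N$ containing $\alpha$, and you spell out the product embedding $U_p\hookrightarrow\prod_i\mathbb{C}^{M_i}$ in which the block-diagonal Hessian of the combined Fubini--Study potential is manifestly positive definite, so that $\omega_0$ is a smooth K\"{a}hler form in the variety sense.
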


 \noindent{\bf Proof:} By the hypothesis, $H^{1,1}(N, \mathbb{C})= H^{2}(N, \mathbb{C}) $, $H^{1,1}(N, \mathbb{R})\cap H^{2}(N, \mathbb{Z})$ is not empty, and $H^{1,1}(N, \mathbb{R})\cap H^{2}(N, \mathbb{Q})$ is dense in $H^{1,1}(N, \mathbb{R})$. Note that, for any orbifold  K\"{a}hler form $\omega$, $[\omega]=\sum_{i=1}^{I}a_{i}\alpha_{i}$ where $\alpha_{i}\in \mathbb{K}_{N} \cap H^{2}(N, \mathbb{Q})$, and $a_{i}\in \mathbb{R}$. For any $i$,  there is an  integer $\nu_{i}>0$ such that  $\nu_{i}\alpha_{i}\in \mathbb{K}_{N}  \cap H^{2}(N, \mathbb{Z})$.   By the orbifold version of Kodaira's embedding theorem (c.f. \cite{Ba}), there is an  integer $\mu_{i}>0$ such that $\mu_{i}\nu_{i}\alpha_{i}$ induces an embedding $\iota_{\alpha_{i}}: N\hookrightarrow \mathbb{CP}^{m_{i}}$, for some  $m_{i}>0$, which satisfies   $\alpha_{i}= \frac{1}{\mu_{i}\nu_{i}} \iota_{\alpha_{i}}^{*}c_{1}(\mathcal{O}(1))$, where
  $\mathcal{O}(1)$ is the hyperplane  line bundle on $\mathbb{CP}^{m_{i}}$.
  If we denote $\omega_{FS,i}$ the  Fubini-Study metric on $\mathbb{CP}^{m_{i}}$, then
    $\omega_{0}=\sum_{i=1}^{I}a_{i}\frac{1}{\mu_{i}\nu_{i}}\iota_{\alpha_{i}}^{*}\omega_{FS,i}\in [\omega]$,  which is a smooth $(1,1)$-form in the sense of orbifold forms,
    and is a smooth  K\"{a}hler form in the  sense of smooth K\"{a}hler
 forms on   projective   varieties.
 \hfill $\Box$\\

  A Calabi-Yau $n$-orbifold
is a  projective  orbifold  $N$ of dimension   $n$ satisfying that
$H^{2}(N, \mathcal{O}_{N})=\{0\}$,  $N$ admits orbifold K\"{a}hler
 metrics, all of  orbifold groups are finite subgroups of $SU(n) $, and   the canonical bundle $\mathcal{K}_{N}$ of $N$
 is  trivial.  Note that a Calabi-Yau orbifold is Gorenstein, and,
 thus,
 has only canonical singularities (c.f. Appendix A in \cite{CK}).
 Hence a Calabi-Yau orbifold $N$  is  a Calabi-Yau variety in the
 above sense.
  By the same arguments   as Yau's
 proof  of the  Calabi  conjecture,  for any K\"{a}hler class $\alpha\in
 H^{1,1}(N, \mathbb{R})$ on a Calabi-Yau orbifold $N$,  there exists a unique orbifold  Ricci-flat K\"{a}hler metric $g$
 on $N$ with   K\"{a}hler form  $\omega\in\alpha$ (\cite{Ya1}  and
 \cite{Kob}). Note that there is a smooth  K\"{a}hler
 form $\omega_{0}$ in the  sense of smooth K\"{a}hler
 forms on   projective   varieties with $\omega_{0}\in \alpha $ by
 Lemma \ref{203}, and  $\omega$ is actually  the solution given in Theorem
 \ref{201} by the uniqueness of that  theorem. However, we know that   $\omega$
 induces a path metric space structure $d_{g}$ on $N$ in the orbifold  case  \cite{Bo}.

Let $T^{2n}=\mathbb{C}^{n}/(\mathbb{Z}^{n}+\sqrt{-1}\mathbb{Z}^{n})$, and
 $\Gamma$ be  a finite group, which has a holomorphic  action  on $T^{2n}$   preserving the flat K\"{a}hler
 form $\omega_{0}=\sqrt{-1}\sum dz_{i}\wedge d\bar{z}_{i}$ and the
 holomorphic volume form $\Omega_{0}=dz_{1}\wedge \cdots \wedge
 dz_{n}$, but not  holomorphic $2$-forms.   Then  $T^{2n}/\Gamma $ is a complex orbifold,  $\omega_{0}$ induces a flat orbifold K\"{a}hler metric on  $T^{2n}/\Gamma $,
   and $\Omega_{0} $ induces a  nowhere vanishing holomorphic $n$-form on $T^{2n}/\Gamma $, which implies the canonical
   bundle $\mathcal{K}_{T^{2n}/\Gamma} $ is trivial.   Since
 $H^{p,q}(T^{2n}/\Gamma)$ is isomorphic to the fixed subspace of $H^{p,q}(T^{2n})$  under
 the natural action $\Gamma$ on $H^{p,q}(T^{2n})$, we have
 $H^{2,0}(T^{2n}/\Gamma)=\{0\}$. Thus  $T^{2n}/\Gamma $ is a projective variety by the orbifold version of Kodaira's embedding theorem (c.f.
  \cite{Ba}), and is a
 Calabi-Yau orbifold.
   Assume that $T^{2n}/\Gamma $ admits a crepant
 resolution $(M, \pi)$.  If $n=3$, $T^{6}/\Gamma $ always  admits a crepant resolution by \cite{Roa}.
 By Yau's theorem, there are Ricci-flat
K\"{a}hler metrics on $M$, but maybe  non of them can be written
down  explicitly. However, from Corollary  \ref{t2},  for any
$\varepsilon
>0$,  we can find a Ricci-flat K\"{a}hler metric $g_{\varepsilon}$
on $M$ such that the Gromov-Hausdorff distance between $(M,
g_{\varepsilon})$ and $T^{2n}/\Gamma $ is less than $\varepsilon$.
This means that we can find Ricci-flat K\"{a}hler metrics
$g_{\varepsilon}$ on $M$ such that the Ricci-flat manifolds $(M,
g_{\varepsilon})$ look like the flat orbifold   $T^{2n}/\Gamma $ as
close as we want.

Now, we consider Calabi-Yau varieties   with   a different   type of
singularity. Let $M_{0}$ be a Calabi-Yau  $n$-variety with only
finite many ordinary double points $S=\{p_{\alpha}\} $ as singular
points, i.e. for any $ p_{\alpha}\in S$, the singularity of $M_{0}$
is given by
$$ \{z^{2}_{1}+\cdots +z^{2}_{n+1}=0\}\subset \mathbb{C}^{n+1}.$$  We call $M_{0}$ a  Calabi-Yau $n$-conifold.
Note that ordinary double points  are not orbifold singularities
when $n\geq 3$.
 Let $M_{t}\subset
\mathbb{CP}^{4}$ be the  hypersurface given by
\[
f_{t}=z_{3}\mathfrak{g}(z_{0}, \cdots, z_{4})+z_{4}\mathfrak{h}(z_{0},
\cdots, z_{4})-t(z_{0}^{5}+ \cdots  +z_{4}^{5})=0, \quad t\in
\Delta \subset \mathbb{C},
\]
where
 $\mathfrak{g}$ and $\mathfrak{h}$ are generic homogeneous
polynomials of degree $4$, and $z_{0}, \cdots, z_{4}$ are
homogeneous coordinates of $\mathbb{CP}^{4}$.     If $t=0$, $M_{0}$ is a
projective Calabi-Yau $3$-conifold with $16$ ordinary double points
as singular set $S=\{z_{3}=z_{3}=\mathfrak{g}=\mathfrak{h}=0\}$
(c.f. \cite{Ro}). If $\mathcal{M}=\{([z_{0}, \cdots,
z_{4}],t)|f_{t}=0\}\subset \mathbb{CP}^{4}\times \Delta$ and $\pi:\mathcal{M}\rightarrow \Delta$ is induced by the projection from $
  \mathbb{CP}^{4}\times \Delta$ to $\Delta$,  it is easy to check that $ (\mathcal{M}, \pi)$ is a smoothing of $M_{0}$, and the canonical bundle
   $\mathcal{K}_{\mathcal{M}}$ is trivial.  Applying Theorem \ref{t4}, we obtain that,  for   any
   $t_{k}\rightarrow 0$, and  any
smooth embedding $F:M_{0}\backslash S\times \Delta
 \rightarrow  \mathcal{M}$ such that  $F(M_{0}\backslash S\times
 \{t\})\subset M_{t}$ and $F|_{M_{0}\backslash S\times
 \{0\}}: M_{0}\backslash S \rightarrow M_{0}\backslash S$ is the identity map,  i.e. $F|_{M_{0}\backslash S\times
 \{0\}}={\rm Id} $, we have
\[
F|_{M_{0}\backslash S\times
 \{t_{k}\}}^{*}\tilde{g}_{t_{k}} \rightarrow \tilde{g}_{0}, \quad \mbox{ and } \quad F|_{M_{0}\backslash S\times \{t_{k}\}}^{*}\tilde{\omega}_{t_{k}} \rightarrow \tilde{\omega}_{0}
 \]
 in the $C^{\infty}$-sense on  any compact subset $K \subset
M_{0}\backslash S$,  where $\tilde{g}_{0}$ is the unique singular
Ricci-flat K\"{a}hler metric on $M_{0}$ with K\"{a}hler form
$\tilde{\omega}_{0}$ such that $\tilde{\omega}_{0}\in
[\omega|_{M_{0}}]\in H^{1}(M_{0},
 \mathcal{PH}_{M_{0}})$.

 Assume that the Calabi-Yau $n$-variety  $M_{0}$
 admits a crepant  resolution $(\hat{M},
\hat{\pi})$, and there is a smoothing of $M_{0}$ to a Calabi-Yau
manifold $M$.
 The process of going from $\hat{M}$ to $M$ (or from $M$ to $\hat{M}$) is called a  geometric  transition.
 The geometric  transition provides a method to  connect  two topologically distinct
 Calabi-Yau manifolds.
 In  mathematics,  it is related to the famous Reid's fantasy (c.f. \cite{Re}), and, in physics,
 this process
 connects  topologically distinct space-times  in
string theory (c.f.  \cite{Ca} \cite{AGM}  \cite{CGH} \cite{GH2}
\cite{Ro}).  In \cite{Ca},  it is conjectured  that there exists a
family of Ricci-flat K\"{a}hler metrics $\hat{g}_{s}$, $s\in (0,1)$,
on $\hat{M}$, and  a family of Ricci-flat K\"{a}hler metrics
$g_{s}$, $s\in (0,1)$, on $M$, which correspond to different complex
structures, satisfying  that $\{( \hat{M}, \hat{g}_{s})\}$ and
$\{(M, g_{s})\}$ converge to the same limit in a suitable sense, for
example in the  Gromov-Hausdorff sense,  when $s\rightarrow 0$. For
the sake of string theory, physicists conjectured that all
Calabi-Yau 3-manifolds are connected each other by preforming
geometric  transitions finite times  (c.f.   \cite{AGM} \cite{CGH}
\cite{GH2} \cite{Ro}), and form a huge  web,  which is called the
connectedness conjecture. Combing these conjectures  from
physicists, it seems that the Gromov-Hausdorff topology is a
suitable frame to present the connectedness conjecture:

\begin{con}[Metric geometry version of the  connectedness conjecture] We denote
  $(\mathcal{MET},d_{GH}) $ the set of all
isometry classes of compact metric spaces with Gromov-Hausdorff
topology, and   $\mathcal{CY}(3)\subset \mathcal{MET}$ the subset
such that each element of $ \mathcal{CY}(3)$ can be represented by a
simply connected Ricci-flat  Calabi-Yau K\"{a}hler  3-manifold $(M,
g)$ with     $Vol_{g}(M)=1 $. Then the closure $ \overline{
\mathcal{CY}(3)}$ of $ \mathcal{CY}(3)$ in $(\mathcal{MET},d_{GH}) $
is connected.

\end{con}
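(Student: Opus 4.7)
The plan is to establish the conjecture by combining two ingredients. On the algebro-geometric side, one would assume \emph{Reid's fantasy}: any two simply connected \cy~$3$-manifolds can be linked by a finite chain of geometric transitions (conifold transitions and their generalisations). On the metric side, one uses the convergence results of this paper --- Corollary \ref{t2} for crepant resolutions and Corollary \ref{t4} (or, more generally, Theorem \ref{t6}) for smoothings --- to show that each individual geometric transition realises a pair of Gromov-Hausdorff paths in $(\mathcal{MET},d_{GH})$ meeting at a common singular \cy~limit.

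The central step is the following. Suppose $\hat{M}$ and $M$ are \cy~$3$-manifolds related by a single geometric transition through a singular \cy~variety $N$: $(\hat{M},\hat{\pi})$ is a crepant resolution of $N$, and $\pi:\mathcal{M}\to\Delta$ is a smoothing with $M=M_{t_{0}}=\pi^{-1}(t_{0})$ and trivial dualising sheaf $\mathcal{K}_{\mathcal{M}}$. Fix a smooth \k~form $\omega$ on $\mathcal{M}$ and a continuous family of \k~classes $\alpha_{s}\in H^{1,1}(\hat{M},\mathbb{R})$, $s\in(0,1]$, with $\alpha_{1}=[\hat{g}]$ and $\alpha_{s}\to\hat{\pi}^{*}[\omega|_{N}]$ as $s\to 0$. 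Corollary \ref{t2} then produces Ricci-flat \k~metrics $\hat{g}_{s}$ on $\hat{M}$ with $(\hat{M},\hat{g}_{s})\to(N,\tilde{g}_{0})$ in the Gromov-Hausdorff topology; after rescaling by ${\rm Vol}^{-1/3}$ this gives a continuous arc in $\mathcal{CY}(3)$ whose closure contains $[N,\tilde{g}_{0}]$. Dually, Corollary \ref{t4} (for conifold transitions) or Theorem \ref{t6} (in the locally homogeneous case) furnishes a family $\tilde{g}_{t}$, $t\in(0,t_{0}]$, of Ricci-flat \k~metrics on $M_{t}$ with $(M_{t},\tilde{g}_{t})\to(N,\tilde{g}_{0})$ as $t\to 0$, giving a second normalised arc ending at the same limit. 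Concatenating the two arcs exhibits $[\hat{M},\hat{g}]$ and $[M,g]$ in a common connected component of $\overline{\mathcal{CY}(3)}$.

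Iterating this construction along the chain of transitions supplied by Reid's fantasy connects any two elements of $\mathcal{CY}(3)$ through $\overline{\mathcal{CY}(3)}$, which is exactly connectedness. The unit-volume normalisation causes no trouble: ${\rm Vol}_{\hat{g}_{s}}(\hat{M})$ and ${\rm Vol}_{\tilde{g}_{t}}(M_{t})$ depend continuously on $s$ and $t$ (the total volume is a fixed polynomial in the \k~class), and the limiting volume ${\rm Vol}_{\tilde{g}_{0}}(N\setminus S)$ is positive and finite thanks to the uniform diameter bound provided by Theorem \ref{t1.1} together with the $C^{\infty}$ convergence on compact subsets of $N\setminus S$ supplied by Theorem \ref{t6}. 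Continuity of the resulting map $s\mapsto[\hat{M},{\rm Vol}^{-1/3}\hat{g}_{s}]$ in $(\mathcal{MET},d_{GH})$ on $(0,1]$ follows from the smooth dependence of the Ricci-flat metric on its \k~class (implicit function theorem applied to the \ma~equation), and continuity at $s=0$ is the statement of Corollary \ref{t2}.

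The principal obstacle is Reid's fantasy itself, which is a major open problem in birational geometry and lies well beyond the reach of the metric methods developed here. Even granting it, two technical hurdles persist. First, one must verify, at every intermediate singular \cy~variety $N$ in the chain, the hypotheses of the convergence theorems --- Gorenstein with only canonical singularities, admitting simultaneously a crepant resolution and a smoothing with trivial $\mathcal{K}_{\mathcal{M}}$, and satisfying either condition~(\ref{1.1}) or the local (quasi-)homogeneity required for Theorem \ref{t6}. Second, simple-connectedness must propagate through each transition (unproblematic for standard conifold transitions under mild hypotheses, but subtle for more exotic geometric transitions). Consequently, the strategy reduces the metric-geometric conjecture to Reid's fantasy together with a controlled list of algebraic regularity conditions on the singular models mediating the transitions.
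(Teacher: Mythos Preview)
The statement you are addressing is presented in the paper as an open \emph{conjecture}, not as a theorem; the paper offers no proof and makes no claim to one. Your proposal is therefore not to be compared against any argument in the paper --- there is none --- and you yourself correctly identify that your strategy is conditional on Reid's fantasy, a well-known open problem. What you have written is a heuristic reduction, not a proof, and you say as much in your final paragraph.

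Even as a conditional strategy there are gaps worth flagging. You invoke Corollary~\ref{t2} on the crepant-resolution side, but that corollary applies only to Calabi--Yau \emph{orbifolds}; conifold points in dimension $n\geq 3$ are not quotient singularities, so for a conifold transition the relevant result is Theorem~\ref{t1}, which carries the additional hypotheses that the singular Ricci-flat metric extends to a path metric on $N$ with $\dim_{\mathcal{H}}S\leq 2n-4$ and that $N\setminus S$ is geodesically convex. These hypotheses are not verified in the paper for conifolds (or more general canonical singularities), so the Gromov--Hausdorff convergence you need on the resolution side is itself conjectural. Similarly, on the smoothing side Corollary~\ref{t5} again requires the same unverified metric hypotheses to pass from $C^\infty$-convergence on compacta to Gromov--Hausdorff convergence. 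Finally, your appeal to ``smooth dependence of the Ricci-flat metric on its \k~class'' to get a \emph{continuous} arc in $(\mathcal{MET},d_{GH})$ is plausible but not established anywhere in the paper; the results here are about sequential limits, and promoting them to path-connectedness would require a separate argument.
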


\subsection{Complex Monge-Amp\`{e}re Equation  and Capacities}
Let $X$ be a Stein manifold of dimension $n$, and $U$ be an open
subset of $X$.  We denote ${\rm PSH}(U)$ the space of
pluri-subharmonic functions on $U$.  If $u\in {\rm PSH}(U)$,
$\sqrt{-1}\partial \overline{\partial}u $ is a semi-positive
$(1,1)$-current on $U$. In the  pioneer work \cite{BT}, it is shown
that $(\sqrt{-1}\partial \overline{\partial}u)^{n}=\sqrt{-1}\partial
\overline{\partial}u \wedge \cdots \wedge \sqrt{-1}\partial
\overline{\partial}u $ is a well-defined  semi-positive
$(n,n)$-current on $U$, if $u\in {\rm PSH}(U)\cap L^{\infty}(U) $.
The operator $(\sqrt{-1}\partial \overline{\partial}u)^{n}$ on  the
space of locally bounded pluri-subharmonic functions is called
Monge-Amp\`{e}re operator. The following is the comparison principle
for Monge-Amp\`{e}re operators.

\begin{theorem}[\cite{BT}]\label{t2.3.1}  If $$u, v \in
{\rm PSH}(U)\cap L^{\infty}(U) , \ \ \ {\rm and }  \ \ \
\liminf_{z\rightarrow
\partial U}(u-v )(z)\geq 0,  $$ $${\rm then} \ \ \ \int_{\{u<v\}}(\sqrt{-1}\partial
\overline{\partial}v)^{n}\leq \int_{\{u<v\}}(\sqrt{-1}\partial
\overline{\partial}u)^{n}.$$
\end{theorem}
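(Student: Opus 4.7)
The plan is to follow the classical approach of Bedford-Taylor \cite{BT}: perturb $v$ slightly so the boundary gap becomes strict, introduce the maximum $\max(u,v)$, and combine the locality of the Monge-Amp\`ere operator on bounded psh functions with an integration-by-parts identity whose test function has compact support. For $\epsilon>0$, set $v_\epsilon := v-\epsilon$, so that $\liminf_{z\to\partial U}(u-v_\epsilon)(z)\geq \epsilon$. This forces the open set $G_\epsilon:=\{u<v_\epsilon\}$ to satisfy $\overline{G_\epsilon}\Subset U$. Since $G_\epsilon \nearrow \{u<v\}$ as $\epsilon\downarrow 0$ and $(\sqrt{-1}\partial\overline{\partial}v)^n$ is a positive Borel measure, monotone convergence reduces the claim to
\[
\int_{G_\epsilon}(\sqrt{-1}\partial\overline{\partial}v)^n \leq \int_{\{u<v\}}(\sqrt{-1}\partial\overline{\partial}u)^n
\]
for each fixed $\epsilon>0$.

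Consider $w := \max(u,v_\epsilon) \in {\rm PSH}(U)\cap L^\infty(U)$. Then $w=u$ on the open set $\{u>v_\epsilon\}$, $w=v_\epsilon$ on $G_\epsilon$, and the non-negative bounded function $w-u$ is supported in $\overline{G_\epsilon}$, hence has compact support in $U$. By the locality of the Bedford-Taylor operator on open sets, $(\sqrt{-1}\partial\overline{\partial}w)^n=(\sqrt{-1}\partial\overline{\partial}u)^n$ on $\{u>v_\epsilon\}$ and $(\sqrt{-1}\partial\overline{\partial}w)^n=(\sqrt{-1}\partial\overline{\partial}v)^n$ on $G_\epsilon$. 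The telescoping identity
\[
(\sqrt{-1}\partial\overline{\partial}w)^n - (\sqrt{-1}\partial\overline{\partial}u)^n = \sum_{j=0}^{n-1} \sqrt{-1}\partial\overline{\partial}(w-u)\wedge T_j, \qquad T_j := (\sqrt{-1}\partial\overline{\partial}w)^j\wedge(\sqrt{-1}\partial\overline{\partial}u)^{n-1-j},
\]
exhibits the difference as a sum in which $w-u$ is paired against closed positive $(n-1,n-1)$-currents; since $w-u$ is compactly supported, the Bedford-Taylor integration by parts gives $\int_U \sqrt{-1}\partial\overline{\partial}(w-u)\wedge T_j = 0$ for each $j$, and thus $\int_U(\sqrt{-1}\partial\overline{\partial}w)^n=\int_U(\sqrt{-1}\partial\overline{\partial}u)^n$. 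Decomposing $U$ as $G_\epsilon\sqcup\{u=v_\epsilon\}\sqcup\{u>v_\epsilon\}$, canceling the equal contributions on $\{u>v_\epsilon\}$, and using the locality on $G_\epsilon$, one obtains
\[
\int_{G_\epsilon}(\sqrt{-1}\partial\overline{\partial}v)^n + \int_{\{u=v_\epsilon\}}(\sqrt{-1}\partial\overline{\partial}w)^n = \int_{\{u\leq v_\epsilon\}}(\sqrt{-1}\partial\overline{\partial}u)^n \leq \int_{\{u<v\}}(\sqrt{-1}\partial\overline{\partial}u)^n.
\]
Discarding the non-negative middle term on the left and letting $\epsilon\to 0^+$ finishes the proof.

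The main obstacle is not in the algebraic manipulations above but in the two analytic inputs I have invoked from \cite{BT}: (i) that $(\sqrt{-1}\partial\overline{\partial}w)^n$ is well defined and local on open sets for $w\in{\rm PSH}\cap L^\infty$, and (ii) that $\int_U \sqrt{-1}\partial\overline{\partial}\eta\wedge T=0$ whenever $\eta$ is a bounded psh difference of compact support and $T$ is a closed positive $(n-1,n-1)$-current of Bedford-Taylor type. Both are established by regularizing the bounded psh potentials to decreasing sequences of smooth psh functions (standard convolution, combined with a $\max$-trick to extend past $\partial U$ if necessary), where Stokes' theorem is elementary, and then passing to the limit using the weak continuity of the Monge-Amp\`ere operator along decreasing sequences together with the quasi-continuity of psh functions — this is precisely the core content of \cite{BT}, and once it is taken for granted the argument above is essentially formal.
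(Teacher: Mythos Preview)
The paper does not prove this theorem: it is quoted from \cite{BT} and used as a black box, so there is no ``paper's own proof'' to compare against. Your argument is the standard Bedford--Taylor proof of the comparison principle and is essentially correct; the only point that deserves a word of care is the step where you ``cancel the equal contributions on $\{u>v_\epsilon\}$'' to pass from $\int_U(\sqrt{-1}\partial\overline{\partial}w)^n=\int_U(\sqrt{-1}\partial\overline{\partial}u)^n$ to the displayed identity on $\{u\leq v_\epsilon\}$, since those contributions may well be infinite. The clean way around this is to observe that the signed measure $(\sqrt{-1}\partial\overline{\partial}w)^n-(\sqrt{-1}\partial\overline{\partial}u)^n$ is supported in the compact set $\overline{G_\epsilon}$ (by locality on the open set $\{u>v_\epsilon\}$) and has total mass zero (by your integration-by-parts step, applied with a cutoff that is identically $1$ on $\overline{G_\epsilon}$), which directly gives $\int_{\{u\leq v_\epsilon\}}(\sqrt{-1}\partial\overline{\partial}w)^n=\int_{\{u\leq v_\epsilon\}}(\sqrt{-1}\partial\overline{\partial}u)^n$ without any subtraction of infinities.
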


In \cite{BT}, Bedford and Taylor introduced the notion of relative
capacity, which is very  useful in the  studying of  Monge-Amp\`{e}re operators.
If $K$ is a  compact subset of $U$, the relative capacity of $K$ is
defined by
\begin{equation}\label{2.3.1}{\rm Cap_{BT}}(K,U)=\sup\{\int_{K}(\sqrt{-1}\partial
\overline{\partial}u)^{n}| u\in {\rm PSH}(U),  -1\leq u <0\}.
\end{equation} The relative  capacity has the property of decreasing under
holomorphic mappings (c.f. \cite{BT}), i.e. if $F:
U_{1}\rightarrow U_{2}$ is holomorphic, then

\begin{equation}
\label{2.3.02}
{\rm Cap_{BT}}(K,U_{1})\geq {\rm Cap_{BT}}(F(K),U_{2}).
\end{equation}

By combining Bedford-Taylor's work and Yau's solution of Calabi conjecture, \cite{Ko} solved  the
  Monge-Amp\`{e}re equation on a compact K\"{a}hler manifold under weak assumptions on the right-hand side. Particularly, a  $C^{0}$-estimate for Monge-Amp\`{e}re equations
was obtained under a very weak condition  in  \cite{Ko}.

\begin{theorem}[Lemma 2.3.1 in \cite{Ko}]\label{t2.3.2}  Let $U$ be a strictly
pseudoconvex subset of $\mathbb{C}^{n}$, and $v\in {\rm PSH}(U)$ with
$\|v\|_{L^{\infty}(U)}<C $.  Suppose that $u \in {\rm PSH}(U)\cap
L^{\infty}(U)$ satisfies the following conditions: $u<0$, $u(z)>C'$
($z\in U$), and \begin{equation}\label{2.3.2}
\int_{K}(\sqrt{-1}\partial \overline{\partial}u)^{n}\leq A
{\rm Cap_{BT}}(K,U)[h(({\rm Cap_{BT}}(K,U))^{-\frac{1}{n}})]^{-1},
\end{equation}for any compact subset $K$ of $U$, where $h: (0,
\infty)\rightarrow (1, \infty)$ is an increasing function which
fulfills the inequality $$
\int_{1}^{\infty}(yh^{\frac{1}{n}}(y))^{-1}dy<\infty.  $$ If the
sets $U(s)=\{u-s<v\}\cap U''$ are non-empty and relatively compact
in $U''\subset U' \subset\subset U$ for $s\in [S, S+D]$ then
$\inf_{U''}u $ is bounded from below by a constant depending on $A,
C, C', D, h, U', U$, but independent of $u, v, U''$.
\end{theorem}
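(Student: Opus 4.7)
The plan is a capacity-based iteration, in the spirit of the Bedford-Taylor/Ko\l{}odziej technique, that converts the hypothesized Monge-Amp\`{e}re mass estimate (\ref{2.3.2}) into a uniform lower bound on $u$. The two driving tools are the comparison principle (Theorem~\ref{t2.3.1}) and the integrability assumption $\int_{1}^{\infty}(yh^{1/n}(y))^{-1}\,dy<\infty$, which together force rapid decay of a monotone capacity profile for the sublevel sets of $u-v$.

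First, for $s\in[S,S+D]$ I introduce the sublevel sets $U(s)=\{u-s<v\}\cap U''$, which are non-empty and relatively compact by hypothesis, and the non-increasing capacity profile $\varphi(s)={\rm Cap_{BT}}(U(s),U)^{1/n}$. The target is to produce $s_{\ast}\in[S,S+D]$ with $\varphi(s_{\ast})=0$: once this holds, the definition (\ref{2.3.1}) together with standard pluripotential theory forces $u\geq s_{\ast}+v\geq s_{\ast}-C$ on $U''$, which bounds $\inf_{U''}u$ from below in terms of $S, D, C, C', A, h$, and the chain $U''\subset U'\subset\subset U$ exactly as required.

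Second, I would combine the comparison principle with a renormalization of $v$ so that it fits the admissible class $\{w\in{\rm PSH}(U):-1\leq w<0\}$ used in the definition (\ref{2.3.1}). Applied to the shifted pair $(u-s-t,\,v+t)$ on the relatively compact set $U(s+t)$, Theorem~\ref{t2.3.1} yields, after standard manipulation, an estimate of the form
\[
t^{n}\,{\rm Cap_{BT}}(U(s+t),U)\;\leq\;C_{0}\int_{U(s)}(\sqrt{-1}\partial\overline{\partial}u)^{n},
\]
with $C_{0}$ depending only on $C=\|v\|_{L^{\infty}(U)}$ and on the chain of domains. Plugging hypothesis (\ref{2.3.2}) with $K=U(s)$ into the right-hand side and taking $n$-th roots gives the key functional inequality
\[
t\,\varphi(s+t)\;\leq\;C_{1}\,\varphi(s)\,h^{-1/n}\!\bigl(\varphi(s)^{-1}\bigr).
\]

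Third, I would iterate. Fix $\delta\in(0,1)$, choose $s_{0}$ only slightly larger than $S$ so that $\varphi(s_{0})$ is small (possible because the lower bound $u>C'$ makes $U(s_{0})$ thin for $s_{0}$ close to $S$), and define recursively $t_{j}=(C_{1}/\delta)\,\varphi(s_{j})\,h^{-1/n}(\varphi(s_{j})^{-1})$ and $s_{j+1}=s_{j}+t_{j}$. The inequality above then gives $\varphi(s_{j+1})\leq\delta\varphi(s_{j})$, hence $\varphi(s_{j})\leq\delta^{j}\varphi(s_{0})$. By a standard change of variable $y=\varphi(s_{j})^{-1}$ the tail $\sum_{j\geq 0}t_{j}$ is dominated by a constant multiple of $\int_{\varphi(s_{0})^{-1}}^{\infty}(yh^{1/n}(y))^{-1}\,dy$, which is finite by hypothesis and can be made smaller than $D$ by shrinking $\varphi(s_{0})$. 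Thus the iteration stays inside $[S,S+D]$ and terminates at some $s_{\ast}\leq S+D$ with $\varphi(s_{\ast})=0$, finishing the proof.

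The main technical obstacle is the second step: extracting the clean comparison $t^{n}\,{\rm Cap_{BT}}(U(s+t),U)\leq C_{0}\int_{U(s)}(\sqrt{-1}\partial\overline{\partial}u)^{n}$ with constants depending only on $C$ and on the geometry of the nested domains. This demands a careful rescaling of $v$ into the $[-1,0]$ normalization used in (\ref{2.3.1}), a delicate application of Theorem~\ref{t2.3.1} on the open sublevel set $U(s+t)$ where boundary values are controlled only weakly through the relative compactness assumption, and an argument that the resulting bound survives the supremum defining the capacity.
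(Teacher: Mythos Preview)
The paper does not prove Theorem~\ref{t2.3.2} itself (it is quoted from Ko\l{}odziej); it only extracts the iteration step as Lemma~\ref{2.1}: if $a(s)$ is increasing and $t^{n}a(s)\le A\,a(s+t)/h\bigl(a(s+t)^{-1/n}\bigr)$ on $[S,S+D]$, then $a(S+D)\ge C$ with $C$ \emph{independent of $S$}. In the application (Proposition~\ref{5.7}) one takes $a(s)={\rm Cap_{BT}}(U(s),V)$, derives the recursive inequality from the comparison principle plus the mass hypothesis, and then confronts the resulting \emph{lower} bound $a(S+D)\ge C$ with an \emph{upper} bound ${\rm Cap}(U(S+D))\le C'/|S|$ to force $S$ (hence $\inf u$) bounded below.

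Your proposal reverses the comparison step, and this propagates through the whole argument. The sets $U(s)=\{u<v+s\}$ \emph{increase} with $s$, so your profile $\varphi$ is non-decreasing, not non-increasing. Concretely, for a competitor $-1\le w<0$ one has the chain
\[
U(s)\ \subset\ \bigl\{(u-v-s-t)/t<w\bigr\}\ \subset\ U(s+t),
\]
and the comparison principle on the middle set yields $t^{n}\int_{U(s)}(\sqrt{-1}\partial\bar\partial w)^{n}\le \int_{U(s+t)}(\sqrt{-1}\partial\bar\partial u)^{n}$, hence
\[
t^{n}\,{\rm Cap_{BT}}(U(s),U)\ \le\ \int_{U(s+t)}(\sqrt{-1}\partial\bar\partial u)^{n},
\]
the opposite of your claimed inequality $t^{n}{\rm Cap_{BT}}(U(s+t),U)\le C_{0}\int_{U(s)}(\sqrt{-1}\partial\bar\partial u)^{n}$. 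Feeding (\ref{2.3.2}) into the correct inequality gives exactly the hypothesis of Lemma~\ref{2.1}, and the iteration pushes $a(s)$ \emph{up}; there is no way to produce $s_{\ast}$ with $\varphi(s_{\ast})=0$, and your ``$\varphi(s_{0})$ small for $s_{0}$ near $S$'' is unjustified since nothing in the hypotheses controls ${\rm Cap_{BT}}(U(S),U)$ from above. What your outline is missing, beyond the sign fix, is the second half of Ko\l{}odziej's argument: an upper bound on ${\rm Cap_{BT}}(U(S+D),U)$ that decays as $S\to-\infty$ (via $U(S+D)\subset\{u<S+D+C\}$ and a sublevel--capacity estimate for bounded psh functions), which is what actually converts the iteration into a lower bound on $\inf_{U''}u$.
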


The key argument of this theorem can be formulated into the following technical lemma that we will need later.

\begin{lm}
\label{2.1}
Assume that $a(s)$ is increasing, $t^na(s) \leq A a(s+t)/h(a(s+t)^{-\frac{1}{n}})$ for any $[s,s+t] \subset [S, S+D]$ and $\int_1^\infty (yh^{\frac{1}{n}} (y))^{-1} dy < \infty$. Then there exists $C>0$ independent of $S$ such that $a(S+D) \geq C$.
\end{lm}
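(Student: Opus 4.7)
The plan is to prove the lemma by contradiction, via a standard Kolodziej-style dyadic iteration. Assuming the conclusion fails, we may take $B := a(S+D)$ arbitrarily small with $D, A, n, h$ fixed (and, as in the applications, $a>0$ on $[S,S+D]$). The strategy is to subdivide $[S,S+D]$ into dyadic pieces on which $a$ halves from $B$ downward, apply the given inequality on each piece to bound its length by a single term of a convergent series, and recognize this series as an upper tail of $\int_1^\infty (yh^{1/n}(y))^{-1}dy$, which tends to $0$ as $B\to 0$ and hence forces $D$ itself to be arbitrarily small --- a contradiction.

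Concretely, I would set $t_0 := S+D$ and recursively define decreasing points $t_1 > t_2 > \cdots$ in $[S,S+D]$ with $a(t_k) = 2^{-k}B$ (existence via an intermediate value argument when $a$ is continuous; for a general increasing $a$, choose $t_k$ with $a(t_k) \in [2^{-k}B, 2^{-k+1}B]$, which only affects constants). Applying the hypothesis on $[t_k, t_{k-1}]$ with increment $t_{k-1}-t_k$ yields
\[
(t_{k-1}-t_k)^n\, a(t_k) \;\leq\; \frac{A\, a(t_{k-1})}{h(a(t_{k-1})^{-1/n})} \;=\; \frac{2A\, a(t_k)}{h(y_k)}, \qquad y_k := (2a(t_k))^{-1/n} = 2^{(k-1)/n}\, B^{-1/n},
\]
hence $t_{k-1} - t_k \leq (2A)^{1/n}\, h(y_k)^{-1/n}$. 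The $y_k$ form an increasing geometric sequence with ratio $2^{1/n}$ starting at $y_1 = B^{-1/n}$.

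Monotonicity of $h$ now gives the standard comparison $h(y_k)^{-1/n}\cdot \frac{\log 2}{n} = h(y_k)^{-1/n}\log(y_k/y_{k-1}) \leq \int_{y_{k-1}}^{y_k} \frac{dy}{y\, h(y)^{1/n}}$ for $k \geq 2$, which telescopes to $\sum_{k \geq 2} h(y_k)^{-1/n} \leq \frac{n}{\log 2}\int_{B^{-1/n}}^\infty \frac{dy}{y\, h(y)^{1/n}}$. Summing over all $k\geq 1$, and in the case the iteration terminates at some finite $K$ (when $a(S)>0$) also applying the hypothesis once more to the leftover interval $[S, t_K]$ (which contributes a term of the same form $(2A)^{1/n}\,h(z)^{-1/n}$ with $z \geq B^{-1/n}$), I conclude
\[
D \;\leq\; (2A)^{1/n}\Bigl[\, 2\, h(B^{-1/n})^{-1/n} + \frac{n}{\log 2}\int_{B^{-1/n}}^{\infty} \frac{dy}{y\, h(y)^{1/n}}\,\Bigr].
\]
Since integrability of $(yh^{1/n}(y))^{-1}$ at infinity forces $h(y)\to\infty$ as $y\to\infty$, both terms on the right tend to $0$ as $B\to 0$. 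Choosing $C>0$ so small that the right-hand side is strictly less than $D$ whenever $B<C$ yields the required contradiction; the resulting $C$ depends only on $D, A, n$ and the rate of convergence of the integral, hence is independent of $S$.

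The only mild technical points are the handling of possible jumps of the increasing function $a$ (built into the flexible choice of $t_k$) and the leftover interval when $a(S) > 0$ (handled by the extra direct application of the hypothesis). No deep obstacle is anticipated: the proof is essentially a repackaging of Kolodziej's classical iteration for the complex Monge--Amp\`ere equation, with the integral-convergence hypothesis playing exactly the role of making the telescoping tail vanish.
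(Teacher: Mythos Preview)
Your proof is correct and follows essentially the same Kolodziej-style dyadic iteration as the paper: both arguments partition $[S,S+D]$ at dyadic levels of $a$, bound each subinterval using the hypothesis, and sum to obtain $D \leq L(a(S+D))$ with $L(s)\to 0$ as $s\to 0$. The differences are cosmetic (you iterate backward from $S+D$ with $a$ halving, the paper iterates forward with $a^{-1/n}$ halving), and you are in fact slightly more careful than the paper about the possible jumps of $a$ and the leftover interval.
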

\noindent{\bf Proof:} The condition on $a(s)$ can be rewritten as $t \leq \displaystyle \frac{A^{\frac{1}{n}} a(s)^{-\frac{1}{n}}}{a(s+t)^{-\frac{1}{n}} h^{\frac{1}{n}} (a(s+t)^{-\frac{1}{n}})}$. For $S = t_0 < \cdots < t_N =S+D$ such that $a(t_i)^{-\frac{1}{n}} = 2a(t_{i+1})^{-\frac{1}{n}}$ when $i\geq 1$ and $a(t_0)^{-\frac{1}{n}} \leq 2a(t_1)^{-\frac{1}{n}}$,
\[
(t_{i+1} - t_i) \leq  \frac{A^{\frac{1}{n}} a(t_i)^{-\frac{1}{n}}}{a(t_{i+1})^{-\frac{1}{n}} h^{\frac{1}{n}} (a(t_{i+1})^{-\frac{1}{n}})}.
\]
\[
0<D = \sum_{i=0}^{N-1} (t_{i+1} - t_i) \leq \sum_{i=0}^{N-1} \frac{A^{\frac{1}{n}} a(t_i)^{-\frac{1}{n}}}{a(t_{i+1})^{-\frac{1}{n}}h^{\frac{1}{n}} (a(t_{i+1})^{-\frac{1}{n}})},
\]
\[
\leq \frac{A^{\frac{1}{n}} a(t_{N-1})^{-\frac{1}{n}}}{a(S+D))^{-\frac{1}{n}}h^{\frac{1}{n}} (a(S+D)^{-\frac{1}{n}})} + \sum_{i=0}^{N-2} \frac{A^{\frac{1}{n}} a(t_i)^{-\frac{1}{n}}}{a(t_{i+1})^{-\frac{1}{n}} - a(t_{i+2})^{-\frac{1}{n}}} \frac{a(t_{i+1})^{-\frac{1}{n}} - a(t_{i+2})^{-\frac{1}{n}}}{a(t_{i+1})^{-\frac{1}{n}}h^{\frac{1}{n}} (a(t_{i+1})^{-\frac{1}{n}})},
\]
\[
\leq \frac{2A^{\frac{1}{n}}}{h^{\frac{1}{n}} (a(S+D)^{-\frac{1}{n}})} + \sum_{i=0}^{N-2} 4A^{\frac{1}{n}} \int^{a(t_{i+1})^{-\frac{1}{n}}}_{a(t_{i+2})^{-\frac{1}{n}}} \frac{dy}{yh^{\frac{1}{n}} (y)},
\]
\[
\leq \frac{2A^{\frac{1}{n}}}{h^{\frac{1}{n}} (a(S+D)^{-\frac{1}{n}})} + 4A^{\frac{1}{n}} \int^{+\infty}_{a(S+D)^{-\frac{1}{n}}} \frac{dy}{yh^{\frac{1}{n}} (y)} =: L(a(S+D)),
\]
where $\displaystyle \lim_{s\rightarrow 0} L(s)=0$. Hence there exists $C>0$ independent of $S$ such that $a(S+D) \geq C$.
\hfill $\Box$\\

By Section 2.5 in  \cite{Ko}, if there is a function
$f\in L^{p}(d\mu)$, $p>1$, such that $(\sqrt{-1}\partial
\overline{\partial}u)^{n}=fd\mu$, where $d\mu$ is the standard
Lebesgue measure, then Condition \ref{2.3.2} is satisfied. In this
case, we can choose $h(y)=(1+\log (1+y))^{2n}$.

In \cite{GZ1}, the notion of relative capacity was generalized to global capacity on a compact K\"{a}hler manifold $(M, \omega)$ of dimension $n$. For any compact subset $K\subset M$,  the global capacity of $K$ is
\[
{\rm Cap}_{\omega}(K)=\sup\left\{\int_{K}(\omega
+\sqrt{-1}\partial\overline{\partial}\psi)^{n}| \omega
+\sqrt{-1}\partial\overline{\partial}\psi \geq 0, \ \ \  0\leq \psi
\leq 1\right\}.
\]
The following properties will be used in the proof of
Theorem \ref{t3}.

\begin{prop}[Proposition 2.5 and 2.6  in \cite{GZ1}]\label{t2.3.3} Let  $(M, \omega)$ be a compact K\"{a}hler manifold of dimension $n$.
\begin{itemize}\label{00} \item[(i)] If $K\subset K'\subset M$, then
 ${\rm Cap}_{\omega}(K)\leq {\rm Cap}_{\omega}(K') $. \item[(ii)] For all $A>1$, ${\rm Cap}_{\omega}(\cdot)\leq
 {\rm Cap}_{A\omega}(\cdot)\leq A^{n} {\rm Cap}_{\omega}(\cdot)$. \item[(iii)]
 If $\psi$ is a function on $M$  satisfying that $\omega
+\sqrt{-1}\partial\overline{\partial}\psi \geq 0$, and $\psi <0$,
then $${\rm Cap}_{\omega}(\{\psi <-s\})\leq \frac{1}{s}\left(-\int_{M}\psi
\omega^{n}+n {\rm Vol}_{\omega}(M)\right), \quad \mbox{ for all } s>0.$$
\end{itemize}
\end{prop}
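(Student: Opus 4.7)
The plan is to prove the three parts in order, with (iii) being the substantive one.

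Part (i) is immediate from the definition: every admissible test function $\psi$ in the supremum for ${\rm Cap}_\omega(K)$ is also admissible for ${\rm Cap}_\omega(K')$, and since $(\omega+\sqrt{-1}\partial\bar\partial\psi)^n$ is a positive $(n,n)$-current while $K\subset K'$, we have $\int_K \leq \int_{K'}$.

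For part (ii), I would check the two inequalities by manipulating admissible test functions. Given $\psi$ admissible for ${\rm Cap}_\omega$, the same $\psi$ is admissible for ${\rm Cap}_{A\omega}$ since $A\omega+\sqrt{-1}\partial\bar\partial\psi=(A-1)\omega+(\omega+\sqrt{-1}\partial\bar\partial\psi)\geq 0$, and the pointwise comparison of positive $(1,1)$-forms $\omega+\sqrt{-1}\partial\bar\partial\psi\leq A\omega+\sqrt{-1}\partial\bar\partial\psi$ yields the comparison of their $n$-th exterior powers. Conversely, given $\tilde\psi$ admissible for ${\rm Cap}_{A\omega}$, set $\psi=\tilde\psi/A$; then $\omega+\sqrt{-1}\partial\bar\partial\psi\geq 0$ and $0\leq\psi\leq 1/A\leq 1$, so $\psi$ is admissible for ${\rm Cap}_\omega$, and $(A\omega+\sqrt{-1}\partial\bar\partial\tilde\psi)^n=A^n(\omega+\sqrt{-1}\partial\bar\partial\psi)^n$ gives the factor $A^n$.

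The heart of the argument is (iii). Let $E_s=\{\psi<-s\}$ and let $\varphi$ be an arbitrary admissible test function for ${\rm Cap}_\omega(E_s)$. Since $s<-\psi$ on $E_s$ and the measure $(\omega+\sqrt{-1}\partial\bar\partial\varphi)^n$ is non-negative,
\[
s\int_{E_s}(\omega+\sqrt{-1}\partial\bar\partial\varphi)^n\leq \int_{E_s}(-\psi)(\omega+\sqrt{-1}\partial\bar\partial\varphi)^n\leq \int_M(-\psi)(\omega+\sqrt{-1}\partial\bar\partial\varphi)^n.
\]
I would then expand the right-hand side binomially as $\sum_{k=0}^n\binom{n}{k}\int_M(-\psi)\omega^{n-k}\wedge(\sqrt{-1}\partial\bar\partial\varphi)^k$. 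The $k=0$ term gives $-\int_M\psi\,\omega^n$. For $k\geq 1$, integrate by parts (valid in the Bedford--Taylor sense for bounded $\omega$-psh functions, and extended to $\psi$ via the truncations $\psi_j=\max(\psi,-j)$ together with monotone convergence) to move one $\sqrt{-1}\partial\bar\partial$ from $\varphi$ onto $-\psi$, obtaining
\[
\int_M\varphi\cdot(-\sqrt{-1}\partial\bar\partial\psi)\wedge\omega^{n-k}\wedge(\sqrt{-1}\partial\bar\partial\varphi)^{k-1}.
\]
The hypothesis $\omega+\sqrt{-1}\partial\bar\partial\psi\geq 0$ gives $-\sqrt{-1}\partial\bar\partial\psi\leq\omega$ as currents, and $\varphi\leq 1$; so this term is at most $\int_M\omega^{n-k+1}\wedge(\sqrt{-1}\partial\bar\partial\varphi)^{k-1}$. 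Stokes' theorem makes this vanish for $k\geq 2$, while for $k=1$ it is exactly ${\rm Vol}_\omega(M)$. Summing with the coefficient $\binom{n}{1}=n$ yields $\int_M(-\psi)(\omega+\sqrt{-1}\partial\bar\partial\varphi)^n\leq -\int_M\psi\,\omega^n+n\,{\rm Vol}_\omega(M)$; taking the supremum over $\varphi$ and dividing by $s$ finishes (iii).

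The main obstacle is purely technical: justifying the integration by parts and the inequality $-\sqrt{-1}\partial\bar\partial\psi\leq\omega$ inside the wedge products when $\varphi$ is only bounded $\omega$-psh and $\psi$ is possibly unbounded from below. This requires invoking the Bedford--Taylor convergence theorems for Monge--Amp\`ere products of locally bounded psh functions together with a truncation approximation $\psi_j=\max(\psi,-j)$, which is automatic once one accepts the framework already used implicitly in Theorems \ref{t2.3.1} and \ref{t2.3.2}.
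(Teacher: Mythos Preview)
The paper does not prove this proposition; it is quoted without proof from Guedj--Zeriahi \cite{GZ1}. So there is no paper argument to compare against, and I assess your proposal on its own.

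Parts (i) and (ii) are correct as written.

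Part (iii) has a genuine gap, and it is not the technical one you flag at the end. After integrating by parts you assert, for each $k\geq 1$,
\[
\int_M\varphi\,(-\sqrt{-1}\partial\bar\partial\psi)\wedge\omega^{n-k}\wedge(\sqrt{-1}\partial\bar\partial\varphi)^{k-1}\ \leq\ \int_M\omega^{n-k+1}\wedge(\sqrt{-1}\partial\bar\partial\varphi)^{k-1},
\]
invoking $-\sqrt{-1}\partial\bar\partial\psi\leq\omega$ and $\varphi\leq 1$. For $k=1$ this is fine, since $\omega^{n-1}$ is a positive $(n-1,n-1)$-form. But for $k\geq 2$ the form $\omega^{n-k}\wedge(\sqrt{-1}\partial\bar\partial\varphi)^{k-1}$ is \emph{not} positive in general: only $\omega+\sqrt{-1}\partial\bar\partial\varphi$ is assumed positive, not $\sqrt{-1}\partial\bar\partial\varphi$ itself. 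Consequently the operator inequality $-\sqrt{-1}\partial\bar\partial\psi\leq\omega$ cannot be pushed through the wedge product, and even after that the resulting integrand need not be a positive measure, so $\varphi\leq 1$ cannot be used either. Your termwise bound for $k\geq 2$ is therefore unjustified.

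The standard repair (and the argument in \cite{GZ1}) avoids the binomial expansion in $\sqrt{-1}\partial\bar\partial\varphi$ and uses instead the telescoping identity
\[
\omega_\varphi^n-\omega^n=\sqrt{-1}\partial\bar\partial\varphi\wedge T,\qquad T:=\sum_{j=0}^{n-1}\omega_\varphi^{\,j}\wedge\omega^{\,n-1-j},
\]
where $\omega_\varphi=\omega+\sqrt{-1}\partial\bar\partial\varphi$. Here $T$ \emph{is} a positive $(n-1,n-1)$-current. A single integration by parts then gives
\[
\int_M(-\psi)(\omega_\varphi^n-\omega^n)=\int_M\varphi\,(-\sqrt{-1}\partial\bar\partial\psi)\wedge T\ \leq\ \int_M\omega\wedge T=\sum_{j=0}^{n-1}\int_M\omega_\varphi^{\,j}\wedge\omega^{\,n-j}=n\int_M\omega^n,
\]
using $-\sqrt{-1}\partial\bar\partial\psi\leq\omega$, $0\leq\varphi\leq 1$ (both bounds are needed: write $-\sqrt{-1}\partial\bar\partial\psi=\omega-\omega_\psi$ with $\omega_\psi\geq 0$), and $[\omega_\varphi]=[\omega]$. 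Adding $\int_M(-\psi)\,\omega^n$ yields the desired estimate. Your proof is fixed by replacing the binomial expansion with this identity; the truncation argument you describe then handles the remaining technicalities exactly as you say.
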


\begin{lm}
\label{5.3}
Fix $\chi \in C^{\infty}(M) \cap {\rm PSH}_{C_1\omega} (M)$ such that $-1\leq \chi \leq 0$, $\chi=0$ outside of the open subset $V \subset M$. For any compact subset $K \subset V$ such that $\chi=-1$ on $K$, we have
\[
{\rm Cap_{BT}} (K, V) \leq C_1^n{\rm Cap}_\omega (K).
\]
\end{lm}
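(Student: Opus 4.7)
The plan is to construct, for each admissible $u \in {\rm PSH}(V)$ with $-1 \leq u \leq 0$ in the definition of ${\rm Cap_{BT}}(K, V)$, a global $\omega$-plurisubharmonic function $\hat\psi$ on $M$ with $0 \leq \hat\psi \leq 1$ such that $\int_K (\sqrt{-1}\partial\overline{\partial} u)^n \leq C_1^n \int_K (\omega + \sqrt{-1}\partial\overline{\partial}\hat\psi)^n$, and then take the supremum over $u$. Since $\chi \in {\rm PSH}_{C_1\omega}(M)$ implies $\chi \in {\rm PSH}_{C_1'\omega}(M)$ for every $C_1' \geq C_1$, we may replace $C_1$ by $\max(C_1, 1)$ and assume $C_1 \geq 1$ throughout.

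For a small parameter $\delta > 0$, set $u_\delta := (1 - \delta) u$, which lies in ${\rm PSH}(V)$ with $-(1-\delta) \leq u_\delta \leq 0$, and define the max-gluing $\tilde u := \max(u_\delta, \chi)$ on $V$ and $\tilde u := \chi$ on $M \setminus V$. Since $u_\delta$ is in ${\rm PSH}_{C_1\omega}(V)$ (trivially, being ${\rm PSH}$), $\chi \in {\rm PSH}_{C_1\omega}(M)$ with $\chi \equiv 0$ on $M \setminus V$, and the upper semicontinuity of $u_\delta$ gives $\limsup_{V \ni z \to p} u_\delta(z) \leq 0 = \chi(p)$ for every $p \in \partial V$, the standard max-gluing lemma produces $\tilde u \in {\rm PSH}_{C_1\omega}(M)$ with $\tilde u \in [-1, 0]$. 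Then $\hat\psi := (\tilde u + 1)/C_1$ lies in $[0, 1/C_1] \subset [0, 1]$ and satisfies $\omega + \sqrt{-1}\partial\overline{\partial}\hat\psi = C_1^{-1}\left(C_1\omega + \sqrt{-1}\partial\overline{\partial}\tilde u\right) \geq 0$, so $\hat\psi$ is admissible in the definition of ${\rm Cap}_\omega(K)$.

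The crucial localization is that $\tilde u$ coincides with $u_\delta$ on an open neighborhood $W \supset K$. Since $\chi$ is smooth and $\chi \equiv -1$ on $K$, continuity supplies an open $W \supset K$ on which $\chi < -1 + \delta/2$; on $W$ the uniform lower bound $u_\delta \geq -(1 - \delta)$ forces $u_\delta > \chi$, so $\tilde u = u_\delta$ on $W$. By locality of the Monge-Amp\`ere operator, $(\sqrt{-1}\partial\overline{\partial}\tilde u)^n = (1-\delta)^n (\sqrt{-1}\partial\overline{\partial}u)^n$ as measures on $W$, hence on $K$. Dropping the non-negative cross terms in the expansion of $(C_1\omega + \sqrt{-1}\partial\overline{\partial}\tilde u)^n$ yields
\[
C_1^n\, {\rm Cap}_\omega(K) \geq \int_K \left(C_1\omega + \sqrt{-1}\partial\overline{\partial}\tilde u\right)^n \geq (1-\delta)^n \int_K (\sqrt{-1}\partial\overline{\partial}u)^n.
\]
Sending $\delta \to 0^+$ and taking the supremum over $u$ finishes the proof.

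The only delicate point is the gluing across $\partial V$, which succeeds cleanly precisely because both $u_\delta$ and $\chi$ lie in ${\rm PSH}_{C_1\omega}$ and $\limsup u_\delta \leq 0 = \chi|_{\partial V}$. The role of the perturbation $u \mapsto u_\delta$ is to enforce \emph{strict} domination $u_\delta > \chi$ on a whole open neighborhood of $K$, which sidesteps any pluri-fine analysis of the coincidence set $\{u = \chi = -1\}$ that would otherwise be needed to identify the Monge-Amp\`ere measures on $K$.
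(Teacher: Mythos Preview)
Your argument is correct and follows the same idea as the paper's proof: glue $u$ to a global $C_1\omega$-psh function via $\max(u,\chi)$, then compare with the global capacity. The paper simply takes $\phi=\max(u,\chi)$ directly, asserts $\phi=u$ on $K$, and writes $\int_K(\sqrt{-1}\partial\bar\partial u)^n=\int_K(\sqrt{-1}\partial\bar\partial\phi)^n$ before bounding by ${\rm Cap}_{C_1\omega}(K)\leq C_1^n{\rm Cap}_\omega(K)$.

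The only substantive difference is your $\delta$-perturbation $u_\delta=(1-\delta)u$. The paper's bare assertion that $(\sqrt{-1}\partial\bar\partial u)^n$ and $(\sqrt{-1}\partial\bar\partial\phi)^n$ agree on $K$ is not automatic from $u=\phi$ on $K$ alone; it needs either the Demailly-type inequality $(\sqrt{-1}\partial\bar\partial\max(u,\chi))^n\geq\mathbf{1}_{\{u\geq\chi\}}(\sqrt{-1}\partial\bar\partial u)^n$ or a plurifine-open argument. Your perturbation sidesteps this by forcing agreement on a genuine Euclidean-open neighbourhood of $K$, at the price of a harmless limit $\delta\to 0$. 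This is a cleaner justification of that step. Your explicit reduction to $C_1\geq 1$ is also appropriate: the paper silently uses ${\rm Cap}_{C_1\omega}\leq C_1^n{\rm Cap}_\omega$, which is Proposition~\ref{t2.3.3}(ii) and requires $C_1\geq 1$; in the application (Proposition~\ref{5.7}) $C_1$ is taken large anyway. Note, however, that your phrasing ``we may replace $C_1$ by $\max(C_1,1)$'' weakens the conclusion when $C_1<1$ rather than reducing to an equivalent case, so it is not a genuine WLOG---it is simply the observation that only $C_1\geq 1$ is relevant.
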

\noindent{\bf Proof:} Let $u\in {\rm PSH}(V)$ with $-1\leq u <0$. $\phi = \max(u, \chi)$ is well defined on $M$ and is in ${\rm PSH}_{C_1\omega} (M)$. Clearly, $\phi = u$ on $K$.
\[
\int_{K}(\sqrt{-1}\partial\overline{\partial}u)^{n} = \int_{K} (\sqrt{-1} \partial\overline{\partial} \phi)^{n}
\]
\[
\leq \int_{K} (C_1\omega +\sqrt{-1}\partial\overline{\partial} \phi)^{n} \leq {\rm Cap}_{C_1\omega}(K) \leq C_1^n {\rm Cap}_{\omega}(K).
\]
Thus, by the definition of relative  capacity,
\[
{\rm Cap_{BT}} (K, V)\leq C_1^n {\rm Cap}_{\omega}(K).
\]
\hfill $\Box$\\

\begin{lm}
\label{5.2}
There exists $A>0$ (depending on $c_1, C_1>0$) such that for any branched covering map $\mathfrak{p}: V \rightarrow B_1 \subset \mathbb{C}^n$ of degree $\leq m$ satisfying
\[
\int_V |f|^{-2c_1} (-1)^{\frac{n^2}{2}}\Omega\wedge
\overline{\Omega} \leq C_1, \mbox{ where } f\Omega = \mathfrak{p}^* \Omega_{\mathbb{C}^n},
\]
and compact subset $K \subset V$, where $V$ is an open subset in a stein manifold $X$ with a Calabi-Yau form $\Omega$, we have
\[
\int_{K}(-1)^{\frac{n^2}{2}}\Omega\wedge \overline{\Omega}\leq Am \frac{{\rm Cap_{BT}}(K,V)}{h({\rm Cap_{BT}}(K,V)^{-\frac{1}{n}})} .
\]
\end{lm}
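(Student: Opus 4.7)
The plan is to push the integral down to $B_1 \subset \mathbb{C}^n$ via $\mathfrak{p}$, reducing the estimate to a standard Kolodziej-type volume-versus-capacity estimate in $\mathbb{C}^n$, and then transport the capacity back to $V$ using the holomorphic-monotonicity of relative capacity (equation \ref{2.3.02}).

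First I would rewrite the integrand using $f\Omega = \mathfrak{p}^*\Omega_{\mathbb{C}^n}$ to get the pointwise identity $(-1)^{n^2/2}\Omega \wedge \overline{\Omega} = |f|^{-2}\mathfrak{p}^*d\mu_{\mathbb{C}^n}$. Then I would apply H\"older's inequality with conjugate exponents $p=1+c_1$ and $q=(1+c_1)/c_1$ to split
\[
\int_K |f|^{-2}\mathfrak{p}^* d\mu_{\mathbb{C}^n} \leq \left(\int_K |f|^{-2(1+c_1)}\mathfrak{p}^* d\mu_{\mathbb{C}^n}\right)^{\frac{1}{1+c_1}} \left(\int_K \mathfrak{p}^* d\mu_{\mathbb{C}^n}\right)^{\frac{c_1}{1+c_1}}.
\]
The first factor is bounded by $C_1^{1/(1+c_1)}$ by rewriting $|f|^{-2(1+c_1)}\mathfrak{p}^*d\mu_{\mathbb{C}^n} = |f|^{-2c_1}(-1)^{n^2/2}\Omega \wedge \overline{\Omega}$ and applying the hypothesis on $V \supset K$. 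The second factor is controlled by the branched covering degree: $\int_K \mathfrak{p}^* d\mu_{\mathbb{C}^n} \leq m\,\mathrm{Vol}(\mathfrak{p}(K))$.

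Next I would invoke the classical Alexander--Taylor / Kolodziej volume-capacity estimate on $B_1 \subset \mathbb{C}^n$ (as used in the proof of Lemma \ref{t2.3.2} and underlying the $L^p$-solvability in \cite{Ko}):
\[
\mathrm{Vol}(E) \leq A_0 \exp\!\left(-\frac{c_0}{\mathrm{Cap}_{BT}(E, B_1)^{1/n}}\right) \quad \text{for compact } E \subset B_1.
\]
Applying this to $E = \mathfrak{p}(K)$, and using the monotonicity of relative capacity under the holomorphic map $\mathfrak{p}: V \to B_1$ (equation \ref{2.3.02}) together with the fact that $h$ is increasing, one obtains
\[
\mathrm{Vol}(\mathfrak{p}(K))^{c_1/(1+c_1)} \leq A_1 \exp\!\left(-\frac{c_2}{\mathrm{Cap}_{BT}(K,V)^{1/n}}\right),
\]
where $c_2 = c_0 c_1/(1+c_1)$.

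The last step is to verify that for $h(y) = (1+\log(1+y))^{2n}$ there is a constant $A_2$ with
\[
\exp(-c_2 y) \leq A_2\, \frac{y^{-n}}{h(y)} \quad \text{for all } y>0,
\]
which, after substituting $y = \mathrm{Cap}_{BT}(K,V)^{-1/n}$, gives exactly the form $\mathrm{Cap}_{BT}(K,V)/h(\mathrm{Cap}_{BT}(K,V)^{-1/n})$ on the right-hand side. The inequality above holds because $y^{-n}e^{c_2 y}/h(y)$ is bounded below on $(0,\infty)$ (it diverges at both $0$ and $\infty$). Combining everything and absorbing $m^{c_1/(1+c_1)} \leq m$ into the constant yields the claim.

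The main technical hurdle is the volume-versus-capacity step: although it is classical for Euclidean balls, one has to ensure the exponent $c_1/(1+c_1)$ coming from H\"older's inequality cooperates with the exponential bound, and that after pulling capacity back from $\mathfrak{p}(K) \subset B_1$ to $K \subset V$ (the right direction of monotonicity) one still ends up with a clean multiple of $\mathrm{Cap}_{BT}(K,V)/h(\mathrm{Cap}_{BT}(K,V)^{-1/n})$ depending only on $c_1, C_1, n$ and the linear factor $m$.
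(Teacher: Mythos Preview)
Your argument is correct and follows the same overall scheme as the paper: push the integral from $V$ to $B_1$ via $\mathfrak{p}$, invoke a Kolodziej-type volume--capacity bound on $B_1$, and pull the capacity back using the holomorphic monotonicity (\ref{2.3.02}). The organization differs slightly. The paper first enlarges $K$ to $\mathfrak{p}^{-1}(\mathfrak{p}(K))$ and pushes the whole measure $d\mu$ down to $\mathfrak{p}(K)$ as $m\,g\,d\mu_{\mathbb{C}^n}$ with density $g(z) = \bigl(\min_{\mathfrak{p}^{-1}(z)}|f|^2\bigr)^{-1}$, checks that $g\in L^{1+c_1}(B_1)$ with norm $\leq C_1$ directly from the hypothesis, and then cites Kolodziej's $L^p$-density estimate (Section~2.5 of \cite{Ko}) as a black box to get $\int_{\mathfrak{p}(K)} g\,d\mu_{\mathbb{C}^n}\leq A\,{\rm Cap_{BT}}/h$. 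You instead apply H\"older's inequality upstairs on $K$, which avoids the $\min$-over-fiber device entirely and reduces matters to the \emph{unweighted} volume ${\rm Vol}(\mathfrak{p}(K))$; you then unpack Kolodziej's estimate into its ingredients (the Alexander--Taylor exponential volume--capacity bound plus the elementary inequality $e^{-c_2 y}\leq A_2\,y^{-n}/h(y)$). Your route is marginally more self-contained and even yields the slightly sharper dependence $m^{c_1/(1+c_1)}$, while the paper's route is shorter because it treats the $L^p$ step as known.
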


\noindent{\bf Proof:} Let $d\mu = (-1)^{\frac{n^2}{2}} \Omega \wedge
\overline{\Omega}$ and $d\mu_{\mathbb{C}^n} = (-1)^{\frac{n^2}{2}}
\Omega_{\mathbb{C}^n} \wedge \overline{\Omega}_{\mathbb{C}^n}$.
\[
\int_{K}d\mu  \leq \int_{\mathfrak{p}^{-1}(\mathfrak{p}(K))\cap V} d\mu \leq  m
\int_{\mathfrak{p}(K)}\frac{d\mu_{\mathbb{C}^n}}{ \min_{V\cap \mathfrak{p}^{-1}(z)}|f|^{2}}.
\]
Since
\[
\int_{\mathfrak{p}(K)}\frac{d\mu_{\mathbb{C}^n}}{ \min_{V\cap \mathfrak{p}^{-1}(z)}|f|^{2(1+\epsilon)}} \leq \int_V |f|^{-2\epsilon} d\mu \leq C_1,
\]
according to section 2.5 in \cite{Ko}, and (\ref{2.3.02}),
\[
\int_{\mathfrak{p}(K)}\frac{d\mu_{\mathbb{C}^n}}{ \min_{V\cap \mathfrak{p}^{-1}(z)}|f|^{2}} \leq A \frac{{\rm Cap_{BT}}(\mathfrak{p}(K), B_1)}{h({\rm Cap_{BT}}(\mathfrak{p}(K), B_1)^{-\frac{1}{n}})} \leq A \frac{{\rm Cap_{BT}}(K,V)}{h({\rm Cap_{BT}}(K,V)^{-\frac{1}{n}})}.
\]
\hfill $\Box$\\

\setcounter{equation}{0}\section{A priori  estimates}

\subsection{A priori estimate for diameters of Ricci-flat K\"{a}hler manifolds}
In this section, we give a priori  estimate for diameters of
Ricci-flat  K\"{a}hler manifolds, which is used in the proof of
Theorem 1.1.

\begin{theorem}\label{t3.1}   Let $(M,  \omega, g)$ be a compact
K\"{a}hler n-manifold with $c_{1}(M)=0$, and $\{g_{k}\}$ be a family
of Ricci-flat  K\"{a}hler metrics with   K\"{a}hler forms
$\omega_{k}$.
 Then there exists a  constant
 $C$
in-dependent  of $k$   such that the diameters ${\rm diam}_{g_{_{k}}}(M)$
of $(M, g_{_{k}})$  satisfy that
\begin{equation} {\rm diam}_{g_{k}}(M)\leq 32n+    C
(\int_{M}\omega_{k}\wedge \omega^{n-1})^{n}.\end{equation}
\end{theorem}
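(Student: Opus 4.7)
My proof plan relies on reducing the diameter bound to an $L^1$-bound on the trace $\mathrm{tr}_g(g_k)$, and then constructing a short $g$-curve between any two points along which the line-integral of $\mathrm{tr}_g(g_k)$ is controlled. The key cohomological identity, obtained pointwise from $\omega_k \wedge \omega^{n-1} = \tfrac{1}{n}\mathrm{tr}_g(g_k)\,\omega^n$ together with $\omega^n = n!\, dV_g$, reads
\[
\int_M \omega_k \wedge \omega^{n-1} \ =\ \tfrac{1}{(n-1)!}\int_M \mathrm{tr}_g(g_k)\, dV_g.
\]
So the hypothesized integral controls the $L^1(dV_g)$-norm of $\mathrm{tr}_g(g_k)$. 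Notably, Ricci-flatness of $g_k$ is not explicitly needed for my argument; the plan works for any family of K\"ahler metrics.

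For any $g$-unit-speed curve $\gamma$, one has the pointwise inequality $g_k(\gamma',\gamma') \leq \mathrm{tr}_g(g_k)$ (the maximal eigenvalue of $g_k$ relative to $g$ is bounded by the trace), so by Cauchy--Schwarz,
\[
\mathrm{length}_{g_k}(\gamma) \ \leq\ \int_\gamma \sqrt{\mathrm{tr}_g(g_k)}\, ds_g \ \leq\ \mathrm{length}_g(\gamma)^{1/2}\left(\int_\gamma \mathrm{tr}_g(g_k)\, ds_g\right)^{1/2}.
\]

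The central step is a Fubini/covering lemma: for any $x,y \in M$ and any nonnegative continuous $f$, I would produce a piecewise smooth $g$-curve $\gamma$ from $x$ to $y$ with $\mathrm{length}_g(\gamma) \leq L_0$ and $\int_\gamma f\, ds_g \leq C_0 \int_M f\, dV_g$, for constants $L_0, C_0$ depending only on $(M,g)$. Cover $M$ by finitely many coordinate balls uniformly bi-Lipschitz to Euclidean balls; within each ball, for endpoints $p,q$ one averages over the translated-segment family $\gamma_\delta(t) = (1-t)p + tq + \delta$ with $\delta$ ranging over a smaller ball. A direct change of variables shows the average line-integral is bounded by a dimension-dependent constant times the local volume integral. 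Chaining such segments through a chain of overlapping charts then produces the desired global $\gamma$.

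Combining, for any $x,y \in M$ I obtain $d_{g_k}(x,y) \leq C_1 \bigl(\int_M \omega_k \wedge \omega^{n-1}\bigr)^{1/2}$ with $C_1 = (L_0C_0/(n-1)!)^{1/2}$. Taking the supremum over $x,y$ and applying the elementary inequality $C_1\sqrt{X} \leq 32n + CX^n$ (split into $X\leq 1$ and $X\geq 1$, with $C = C(C_1,n)$) yields the stated bound. The main obstacle lies in the Fubini/covering construction: one must handle carefully the short ``correction'' segments that glue the averaged parallel segments to the actual endpoints $x$ and $y$ (where a naive Fubini yields a weight with integrable but mild radial singularity), and one must ensure the combinatorial constants from chaining across chart overlaps depend only on the reference data $(M,\omega,g)$ and not on $k$.
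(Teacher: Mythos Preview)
Your central Fubini/covering lemma is false as stated, and this is a genuine gap, not a technicality. You claim that for \emph{every} pair $x,y$ and \emph{every} nonnegative $f$ there is a curve $\gamma$ from $x$ to $y$ with $\int_\gamma f\,ds_g \leq C_0\int_M f\,dV_g$. Take $f=\epsilon^{-(2n-1)}\chi_{B_\epsilon(x)}$: then $\int_M f\,dV_g\sim\epsilon$, while any curve issuing from $x$ must spend arc-length at least $\epsilon$ inside $B_\epsilon(x)$, so $\int_\gamma f\,ds_g\gtrsim \epsilon\cdot\epsilon^{-(2n-1)}=\epsilon^{2-2n}$, and the ratio blows up. You correctly locate the difficulty in the ``correction segments'' near the endpoints, but your assessment that the resulting radial weight $|w-x|^{-(2n-1)}$ is ``mild'' is wrong in this context: that weight is integrable against \emph{bounded} functions, but you only have $L^1$ control on $f=\mathrm{tr}_g(g_k)$, and $L^1$ is not enough. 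In fact your stronger conclusion $\mathrm{diam}_{g_k}(M)\leq C_1\bigl(\int_M\omega_k\wedge\omega^{n-1}\bigr)^{1/2}$ for arbitrary K\"ahler metrics is false already on an elliptic curve: metrics $e^{2u_k}|dz|^2$ with a cusp-like profile $e^{2u_k}\sim(|z|^2+\epsilon_k^2)^{-1}$ near a point have area $\sim\log(1/\epsilon_k)$ but diameter $\sim\log(1/\epsilon_k)$ as well, so the diameter is not $O(\sqrt{\text{area}})$.

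What the averaging argument \emph{does} give --- and this is exactly Lemma~\ref{t3.2} in the paper, following Demailly--Peternell--Schneider --- is an open set $U_{k,\delta}\subset M$ with $\mathrm{Vol}_g(U_{k,\delta})\geq\mathrm{Vol}_g(M)-\delta$ and $\mathrm{diam}_{g_k}(U_{k,\delta})^2\leq C\delta^{-1}\int_M\omega_k\wedge\omega^{n-1}$: most pairs of points are $g_k$-close, not all. The paper then uses the Ricci-flatness of $g_k$ in an essential way to upgrade this to a full diameter bound: the relation $\omega_k^n=e^{A_k}\omega^n$ converts the $g$-volume lower bound on $U_{k,\delta}$ into a $g_k$-volume lower bound on a $g_k$-ball of controlled radius, Bishop--Gromov (Ricci $\geq 0$) transfers this to a unit ball, and a lemma of Paun (again Ricci $\geq 0$) converts a uniform lower volume bound on a unit ball into a diameter bound. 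So contrary to your remark, Ricci-flatness is not incidental; it is precisely what bridges the gap your argument cannot close.
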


This result  is from  the second  author's thesis \cite{Z}.
 In a recent paper  \cite{To},  it  is also  obtained by  Tosatti  independently.  However,
for the completeness, we present the details of the  proof  here.
  For proving this theorem, we need a reformulation of Lemma 1.3 in
 \cite{DPS}.

\begin{lm}\label{t3.2}  Let $(M, \omega, g)$ be a compact K\"{a}hler
n-manifold, and $\{g_{k}\}$ be a family of K\"{a}hler metrics with
K\"{a}hler forms $\omega_{k}$. Then, for any $0<\delta <
{\rm Vol}_{g}(M)$, there are
 open subsets $U_{k,\delta}$ of $M$ such that \begin{equation}
{\rm Vol}_{g}(U_{k,\delta})\geq {\rm Vol}_{g}(M)-\delta, \ {\rm and \ } \
{\rm diam}_{g_{k}}^{2}(U_{k,\delta}) \leq C \delta^{-
1}\int_{M}\omega_{k}\wedge \omega^{n-1},\end{equation} where $C$ is
a constant independent of $k$. \end{lm}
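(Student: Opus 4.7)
The plan is to fix a base point $x_0 \in M$ (chosen via an averaging argument) and take $U_{k,\delta}$ to be the open $g_k$-metric ball around $x_0$ whose radius is prescribed by an $L^2$-average estimate on the $g_k$-lengths of $g$-minimizing geodesics. The $L^2$-estimate will be proved by an integral-geometric computation on $M \times M$, using the $g$-geodesic flow's invariance of the Liouville measure on the unit tangent bundle, combined with the standard K\"ahler identity $n\,\omega_k \wedge \omega^{n-1} = {\rm tr}_g(g_k)\,\omega^n$.

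Let $D = {\rm diam}_g(M)$ and $V = {\rm Vol}_g(M)$. For $(x,y) \in M \times M$ outside the (measure-zero) $g$-cut locus, let $\gamma_{xy}:[0, d_g(x,y)] \to M$ be the unique unit-speed minimizing $g$-geodesic from $x$ to $y$, with $g_k$-length $L_{g_k}(\gamma_{xy})$. Cauchy--Schwarz yields
\[
L_{g_k}(\gamma_{xy})^2 \leq D \int_0^{d_g(x,y)} g_k(\dot\gamma_{xy}(s), \dot\gamma_{xy}(s))\, ds.
\]
Parametrizing $(x,y)$ in polar coordinates at $x$ as $y = \exp_x(tv)$ with $v \in S_xM$ and $0 \leq t < \rho(v, x)$ (the cut distance), one has $\omega^n(y) = c_n J(t,v,x)\, dt\, d\sigma(v)$, with the exponential-map Jacobian $J$ pointwise bounded by some $J_0$ depending only on $g$, and $\rho(v,x) \leq D$. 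Non-negativity of the integrand lets us extend the $s$-integral to $[0, D]$, and interchanging the orders of the $s$- and $t$-integrations produces
\[
\int_{M \times M} L_{g_k}(\gamma_{xy})^2\, \omega^n(x) \otimes \omega^n(y) \leq C_1 \int_0^D \int_M \int_{S_xM} g_k(\dot\gamma_{x,v}(s), \dot\gamma_{x,v}(s))\, d\sigma(v)\, d{\rm vol}_g(x)\, ds,
\]
with $C_1$ depending only on $g$. Since the $g$-geodesic flow $\phi^s$ on the unit tangent bundle $UM$ preserves the Liouville measure $d\sigma(v)\, d{\rm vol}_g(x)$, the integrand in $s$ is independent of $s$ and equal to $\int_{UM} g_k(u,u)\, d{\rm Liouv}(u)$. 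Since $\int_{S_xM} g_k(u,u)\, d\sigma(u)$ is a dimensional multiple of ${\rm tr}_g(g_k)|_x$, the K\"ahler identity then gives
\[
\int_{M \times M} L_{g_k}(\gamma_{xy})^2\, \omega^n \otimes \omega^n \leq C_0 \int_M \omega_k \wedge \omega^{n-1},
\]
for some $C_0$ depending only on $g$.

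Given this estimate, set $K := 2 C_0 V^{-1} \delta^{-1}\int_M \omega_k \wedge \omega^{n-1}$, and $F_x := \{y : L_{g_k}(\gamma_{xy})^2 \geq K\}$. By Chebyshev and Fubini, $\int_M {\rm Vol}_g(F_x)\, d{\rm vol}_g(x) \leq (C_0/K)\int_M \omega_k \wedge \omega^{n-1} = V\delta/2$, so by averaging there exists $x_0 \in M$ with ${\rm Vol}_g(F_{x_0}) \leq \delta/2$. Since $d_{g_k}(x_0, y) \leq L_{g_k}(\gamma_{x_0 y})$, we have $M \setminus F_{x_0} \subset B_{g_k}(x_0, \sqrt K)$, so the open set $U_{k,\delta} := B_{g_k}(x_0, \sqrt K)$ satisfies ${\rm Vol}_g(U_{k,\delta}) \geq V - \delta$ and ${\rm diam}_{g_k}(U_{k,\delta}) \leq 2\sqrt K$, whence
\[
{\rm diam}_{g_k}^2(U_{k,\delta}) \leq 4K = 8 C_0 V^{-1}\delta^{-1}\int_M \omega_k \wedge \omega^{n-1},
\]
proving the lemma with $C := 8 C_0/V$ independent of $k$.

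The main obstacle is rigor in the integral-geometric step: a measurable choice of $\gamma_{xy}$ off the cut locus, the uniform pointwise bound on the exponential-map Jacobian, and the application of Liouville invariance. All are standard consequences of smoothness and compactness of $(M, g)$. The trace-over-sphere identity and the K\"ahler identity $n\,\omega_k \wedge \omega^{n-1} = {\rm tr}_g(g_k)\,\omega^n$ are classical, so the only input beyond these structural facts is Cauchy--Schwarz.
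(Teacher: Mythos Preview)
Your proof is correct and arrives at the same conclusion, but by a genuinely different route than the paper (which itself follows Demailly--Peternell--Schneider). The paper works \emph{locally}: it covers $M$ by finitely many convex coordinate patches $K_i$, joins points of each $K_i$ by straight-line segments in the coordinates, and obtains the $L^2$ path-length estimate $\int_{K\times K} \mathrm{length}_{g_k}([x_1,x_2])^2\,d\mu\,d\mu \leq C_K\int_M \omega_k\wedge\omega^{n-1}$ via a Fubini/shift argument in the Euclidean chart; a two-step Chebyshev then produces a large subset $K_{i,\delta}\subset K_i$ of controlled $g_k$-diameter, and the $U_{k,\delta}$ is assembled by gluing overlapping $K_{i,\delta}$'s. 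Your argument is \emph{global}: you use minimizing $g$-geodesics instead of coordinate segments, and obtain the $L^2$ estimate on all of $M\times M$ in one stroke via polar coordinates at $x$ together with invariance of the Liouville measure under the $g$-geodesic flow; a single Chebyshev/averaging step then produces a good basepoint $x_0$ and $U_{k,\delta}$ is simply a $g_k$-ball.

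The two approaches share the same architecture (an $L^2$ bound on $g_k$-lengths of $g$-preferred paths in terms of $\int_M \omega_k\wedge\omega^{n-1}$, followed by Chebyshev), but differ in the engine driving the $L^2$ bound. Your method avoids the patching step entirely and yields a connected $U_{k,\delta}$ directly, at the cost of invoking Liouville-measure invariance and the exponential-map Jacobian bound; the paper's method is more elementary (only Euclidean Fubini in charts) but requires the extra combinatorics of gluing the local pieces. Both are perfectly valid; your exposition of the ``extend the $s$-integral'' step could be tightened (it is really the $t$-range that is extended from $[0,\rho(v,x)]$ to $[0,D]$ after bounding $J\leq J_0$), but the computation goes through as you indicate.
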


The only difference between Lemma \ref{t3.2}   and  Lemma 1.3 in
\cite{DPS} is that we use the quantity   $\int_{M}\omega_{k}\wedge
\omega^{n-1} $ instead of assuming  $\omega_{k}\in [\omega]$. The
proof of the lemma is the same as the proof of Lemma 1.3 in
\cite{DPS}.  For readers' convenience, we present the sketched proof
here.\\

\noindent{\bf Proof:}  First, suppose that $K$ is a compact convex set in
some coordinate open set of $M$. On $K$,  $\omega= \frac{
\sqrt{-1}}{2}\sum g_{\alpha \overline{\beta}}dz^{\alpha}\wedge
d\overline{z}^{\beta}$, $ g_{0}={\rm Re}\sum dz^{\alpha}
d\overline{z}^{\alpha}$ and $ \omega_{0}=\frac{ \sqrt{-1}}{2}\sum
dz^{\alpha}\wedge d\overline{z}^{\alpha}$. We join $x_{1}\in K$,
$x_{2}\in K$ by the segment $[x_{1},x_{2}]\subset K$, and denote
$d\mu= \frac{(-1)^{ \frac{n}{2}}}{2^{n}n!}dz^{1}\wedge d
\overline{z}^{1}\wedge \cdots \wedge dz^{n}\wedge d
\overline{z}^{n}$ the Lebesgue measure of $K$. Note that, for any
$v\in TK$, $g_{k}(v,v)\leq tr_{g_{0}}g_{k} g_{0}(v,v)$.  By Fubini
Theorem and  Canchy-Schwarz inequality, we get \begin{eqnarray*}& &
\ \int_{K\times K}{\rm length}_{g_{k}}([x_{1},x_{2}])^{2}d\mu (x_{1})d\mu
(x_{2}) \\ & =& \ \int_{K\times K}(\int^{1}_{0}
\sqrt{g_{k}((1-t)x_{1}+tx_{2})(x_{2}-x_{1})}dt)^{2}d\mu (x_{1})d\mu
(x_{2}) \\ & \leq & \ |x_{2}-x_{1} |_{g_{0}}^{2}\int^{1}_{0}dt
\int_{K\times K} tr_{g_{0}}g_{k}((1-t)x_{1}+tx_{2})d\mu (x_{1})d\mu
(x_{2})
\\ & \leq & 2^{2n} {\rm diam}^{2}_{g_{0}}K \cdot {\rm Vol}_{g_{0}}(K)\cdot
\int_{K}\omega_{k}\wedge \omega_{0}^{n-1}
\\ & \leq & C_{K}  \int_{M}\omega_{k}\wedge \omega^{n-1},
\end{eqnarray*} where $C_{K}$ is a constant  independent of $k$. The
second inequality is obtained by   integrating first with respect to
$y=(1-t)x_{1}$ when $t\leq \frac{1}{2}$, resp. $y=tx_{2}$ when
$t\geq \frac{1}{2}$ (Note that $d\mu (x_{i})\leq 2^{2n} d\mu (y)$).
 If
\begin{equation}S=\{ (x_{1},x_{2}) \in K\times K |
{\rm length}^{2}_{g_{k}}([x_{1},x_{2}])>
\frac{C_{K}}{\delta}\int_{M}\omega_{k}\wedge \omega^{n-1}
\},\end{equation} then
$${\rm Vol}_{g_{0}\times g_{0}}(S)<\delta.$$ Let
\begin{equation}S(x_{1})=\{x_{2}\in K | (x_{1},x_{2}) \in S \}, \ \ {\rm and \ } \ Q=\{
x_{1}\in K | {\rm Vol}_{g_{0}}(S(x_{1}))\geq
\frac{1}{2}{\rm Vol}_{g_{_{0}}}(K)\}.
\end{equation}  By Fubini
Theorem, we obtain that  $$ {\rm Vol}_{g_{0}}(Q)< \frac{2\delta}
{{\rm Vol}_{g_{_{0}}}(K) }.$$ For any $x_{1},x_{2}\in K\backslash Q$, we
have ${\rm Vol}_{g_{0}}(S(x_{j}))< \frac{1}{2}{\rm Vol}_{g_{0}}(K)$. Thus
$K\backslash S(x_{1}) \bigcap K\backslash S(x_{2})$ is not empty. If
$y\in K\backslash S(x_{1}) \bigcap K\backslash S(x_{2})$, then $
(x_{1},y), (x_{2},y)\in (K\times K) \backslash S$, and
\begin{equation} {\rm length}^{2}_{g_{k}}([x_{1},y]\cup [y,x_{2}])\leq 2
\frac{C_{K}}{\delta}\int_{M}\omega_{k}\wedge
\omega^{n-1}.\end{equation} By continuity, a similar estimate still
holds for any two points $x_{1},x_{2}\in \overline{K\backslash Q}$,
with some $y\in K$. Let $K_{k,\delta}=\overline{K\backslash Q}$.
Then \begin{equation} {\rm Vol}_{g}(K\backslash K_{k,\delta})\leq
{\rm Vol}_{g}(Q) \leq C_{1}{\rm Vol}_{g_{0}}(Q)<C_{1}\frac{2\delta}
{{\rm Vol}_{g_{0}}(K) }=C_{2, K}\delta.\end{equation}

Now we cover $M$ with finitely many compact convex coordinate
patches $K_{i}$, $i=1, \cdots, N$, such that $int K_{i}\cap int
K_{i+1}$ are not empty. Then, by above arguments, there exist
$K_{i,\delta}\subset K_{i}$ with ${\rm Vol}_{g}(K_{i}\backslash
K_{i,\delta})<C_{2, K_{i}}\delta$ such that any pair of points in
$K_{i,\delta}$ can be joined by a path of length $\leq C_{
K_{i}}\delta ^{- \frac{1}{2}}(\int_{M}\omega_{k}\wedge
\omega^{n-1})^{ \frac{1}{2}}$. If we take $C_{2, K_{i}}\delta <
\frac{1}{2}{\rm Vol}_{g}(K_{i}\cap K_{i+1})$ for every $i$, then
$(K_{i}\backslash K_{i,\delta})\cup (K_{i+1}\backslash
K_{i+1,\delta})$ can not contain $K_{i}\cap K_{i+1}$ and therefore
$K_{i,\delta}\cap K_{i+1,\delta}$ are  not empty. This implies that
any $x\in K_{i,\delta}$ can be joined to any $y\in K_{j,\delta}$ by
a piecewise smooth path of length $\leq NC_{3}\delta ^{-
\frac{1}{2}}(\int_{M}\omega_{k}\wedge \omega^{n-1})^{ \frac{1}{2}}$,
where $C_{3}=\max \{C_{ K_{i}}\}$. Then we obtain the conclusion by
taking $U_{k,\delta}= \bigcup\limits_{i} K_{i,\delta}$.
\hfill $\Box$\\

\noindent{\bf Proof of Theorem \ref{t3.1}:} First, we can assume that $g$ is
a Ricci-flat K\"{a}hler metric. Then $$0={\rm Ric}_{g_{k}}-{\rm Ric}_{g}=-
\sqrt{-1}\partial \overline{\partial}log
\frac{\omega_{k}^{n}}{\omega^{n}}.$$  By Hodge Theory, there exist
constants $A_{k}$ such that
\begin{equation}\omega_{k}^{n}= e^{A_{k}}\omega^{n}.\end{equation} Then,
 we have \begin{equation}e^{A_{k}}=
\frac{\int_{M}\omega_{k}^{n}}{\int_{M}\omega^{n}}=
\frac{{\rm Vol}_{g_{k}}(M)}{{\rm Vol}_{g}(M)}.\end{equation} By Lemma
\ref{t3.2},  for any $\delta >0$,  there are
 open subsets $U_{k,\delta}$ of $M$ such that \begin{equation}
{\rm Vol}_{g}(U_{k,\delta})\geq {\rm Vol}_{g}(M)-\delta, \ {\rm and \ } \
{\rm diam}_{g_{k}}^{2}(U_{k,\delta}) \leq C \delta^{-
1}\int_{M}\omega_{k}\wedge \omega^{n-1},\end{equation} where $C$ is
a constant in-dependent of $k$. Let $p_{k}\in U_{k,\delta}$,
$\delta=\frac{1}{2}{\rm Vol}_{g}(M)$  and \begin{equation}\label{3010}
r=\max \{ 1, 2C \delta^{- \frac{1}{2}}(\int_{M}\omega_{k}\wedge
\omega^{n-1})^{\frac{1}{2}}\}.\end{equation} Thus
\begin{equation} {\rm Vol}_{g}(B_{g_{k}}(p_{k},r)) \geq
{\rm Vol}_{g}(U_{k,\delta})= \frac{1}{2}{\rm Vol}_{g}(M).\end{equation}
Therefore,
\begin{equation} {\rm Vol}_{g_{k}}(B_{g_{k}}(p_{k},r))=
\frac{1}{n!}\int_{B_{g_{k}}(p_{k},r)}\omega_{k}^{n}=e^{A_{k}}{\rm Vol}_{g}(B_{g_{k}}(p_{k},r))
\geq \frac{e^{A_{k}}}{2}{\rm Vol}_{g}(M).\end{equation} By Bishop-Gromov
theorem, \begin{equation}
\frac{{\rm Vol}_{g_{k}}(B_{g_{k}}(p_{k},1))}{{\rm Vol}_{g_{k}}(B_{g_{k}}(p_{k},r))}\geq
\frac{1}{r^{2n}}.\end{equation} Hence
\begin{equation}{\rm Vol}_{g_{k}}(B_{g_{k}}(p_{k},1))\geq
\frac{e^{A_{k}}}{2r^{2n}}{\rm Vol}_{g}(M)=
\frac{1}{2r^{2n}}{\rm Vol}_{g_{k}}(M).\end{equation} Now  we need:

\begin{lm} [Lemma 2.3 in \cite{Pa}]\label{t3.03}  \it Let $(M,g)$ be  a $2n$-dimensional  compact
Riemannian manifold with nonnegative  Ricci curvature. Then, for all
points $p\in M$ and all radiuses  $1< R< {\rm diam}_{g}(M)$, we have $$
\frac{{\rm Vol}_{g}(B_{p}(2R+2))}{{\rm Vol}_{g}(B_{p}(1))}\geq
\frac{R-1}{2n}.$$\end{lm}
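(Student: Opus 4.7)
The strategy is to exploit the hypothesis $R < \mathrm{diam}_g(M)$ to extract a long minimizing geodesic issuing from $p$, and then pack a chain of disjoint unit balls along it whose total volume forces a linear lower bound on $\mathrm{Vol}_g(B_p(2R+2))$.

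First, since $R < \mathrm{diam}_g(M)$, pick $q\in M$ with $d(p,q)\geq R$ and let $\gamma:[0,R]\to M$ be a unit-speed minimal geodesic from $p$ along a minimizing path toward $q$. Set $x_i=\gamma(2i-1)$ for $i=1,\dots,N$ with $N=\lfloor(R+1)/2\rfloor\geq (R-1)/2$. By minimality of $\gamma$, $d(x_i,x_j)=2|i-j|\geq 2$ for $i\neq j$, so the balls $B_g(x_i,1)$ are pairwise disjoint, and since $d(x_i,p)\leq 2i-1\leq R$, each $B_g(x_i,1)\subset B_g(p,R+1)\subset B_g(p,2R+2)$. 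Hence
\[
\mathrm{Vol}_g\bigl(B_p(2R+2)\bigr)\;\geq\;\sum_{i=1}^N \mathrm{Vol}_g\bigl(B_{x_i}(1)\bigr).
\]

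Next, I would use the Bishop-Gromov monotonicity $r\mapsto \mathrm{Vol}_g(B_y(r))/r^{2n}$ (valid since $\mathrm{Ric}_g\geq 0$) at each center $y=x_i$ to relate $\mathrm{Vol}_g(B_{x_i}(1))$ to $V(1):=\mathrm{Vol}_g(B_p(1))$. The triangle inequality gives $B_{x_i}(2i)\supset B_p(1)$, so a first pass yields $\mathrm{Vol}_g(B_{x_i}(1))\geq V(1)/(2i)^{2n}$. The aim is then to sum these lower bounds and arrive at the linear estimate $(R-1)V(1)/(2n)$.

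\textbf{Main obstacle.} The per-index estimate just displayed is too weak: its sum converges to a constant multiple of $V(1)$, not a linear function of $R$. Recovering the sharp linear factor $(R-1)/(2n)$ is the true content of the lemma. The sharpening is a Calabi-Yau style linear-volume-growth phenomenon: along a minimizing geodesic of length $R$ one cannot have volumes shrinking too fast. Concretely, I would replace the naive comparison either by (a) an annular Bishop-Gromov argument showing $\mathrm{Vol}_g(A(p;2i-1,2i+1))\geq V(1)/n$ using that each $x_i$ lies on a minimizing geodesic continuing to length $\geq R$, or (b) a Busemann / Laplacian-comparison argument $\Delta d(\cdot,q)\leq(2n-1)/d(\cdot,q)$ integrated over a sublevel set of the distance-from-$q$ function, producing the linear growth of $V(r)$ up to $r\sim R$. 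The factor $1/(2n)$ then emerges from the dimensional constant in the Laplacian-comparison (or equivalently the exponent $2n$ in Bishop-Gromov). Once this step is in place, summing over $i=1,\dots,\lfloor(R-1)/2\rfloor$ produces the stated inequality.
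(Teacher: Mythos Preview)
The paper does not give its own proof of this lemma; immediately after the statement it says ``See \cite{Pa} or Theorem 4.1 of Chapter in \cite{SY} for the proof.'' So there is nothing in the paper to compare your argument against except the classical Yau/Calabi argument that those references contain.

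Your proposal has a genuine gap, and you have already located it yourself: the chain-of-disjoint-unit-balls setup together with the naive estimate ${\rm Vol}(B_{x_i}(1))\geq V(1)/(2i)^{2n}$ gives only a bounded sum, not a linear one, and neither of your suggested repairs (a), (b) is actually carried out. Your formulation of (a) is in fact pointed in the wrong direction: you propose to bound annuli $A(p;2i-1,2i+1)$ \emph{centered at $p$} from below, but Bishop--Gromov centered at $p$ gives \emph{upper} bounds on such annuli, not lower bounds; there is no mechanism in your setup to show each annulus around $p$ has volume $\geq V(1)/n$.

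The key trick you are missing --- and it replaces your entire multi-ball construction by a one-line argument --- is to apply Bishop--Gromov \emph{centered at the far endpoint} $q$ with $d(p,q)=R$, not at $p$ or at the intermediate points $x_i$. Since every point of $B_p(1)$ lies at distance between $R-1$ and $R+1$ from $q$, one has $B_p(1)\subset B_q(R+1)\setminus B_q(R-1)$, hence
\[
{\rm Vol}(B_p(1))\;\leq\;{\rm Vol}(B_q(R+1))-{\rm Vol}(B_q(R-1))\;\leq\;\Bigl(1-\tfrac{(R-1)^{2n}}{(R+1)^{2n}}\Bigr){\rm Vol}(B_q(R+1)),
\]
the last step by Bishop--Gromov at $q$. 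Combined with $B_q(R+1)\subset B_p(2R+1)\subset B_p(2R+2)$ and the elementary bound $1-x^{2n}\leq 2n(1-x)$, this yields the linear growth with the dimensional constant $1/(2n)$ directly. No packing, no summation, no Busemann function is needed.
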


See  \cite{Pa} or   Theorem 4.1 of Chapter  in \cite{SY} for the
proof. By letting $R=\frac{1}{2}{\rm diam}_{g_{k}}(M)$, we obtain
\begin{equation} {\rm diam}_{g_{k}}(M)\leq 2+ 8n
\frac{{\rm Vol}_{g_{k}}(M)}{{\rm Vol}_{g_{k}}(B_{g_{k}}(p_{k},1))}<2+
16nr^{2n}.\end{equation} Thus, by (\ref{3010}), we obtain that
$${\rm diam}_{g_{k}}(M)\leq 32n+ C (\int_{M}\omega_{k}\wedge
\omega^{n-1})^{n},
$$ where $C$ is a constant independent of $k$.
\hfill $\Box$\\

The following corollary is a direct consequence of Theorem
\ref{t3.1}  and Gromov's precompactness theorem (c.f. \cite{G1}).

\begin{co}\label{t3.4}  Let $(M,  \omega, g)$ be a compact
K\"{a}hler n-manifold with $c_{1}(M)=0$, and $\{g_{k}\}$ be a family
of Ricci-flat  K\"{a}hler metrics with   K\"{a}hler forms
$\omega_{k}$.  If $$\int_{M}\omega_{k}\wedge \omega^{n-1}\leq C, $$
for a constant $C$ independent of $k$, then a subsequence of $\{(M,
g_{k})\} $ converges to a compact metric space $(Y, d_{Y})$ in the
Gromov-Hausdorff topology.
\end{co}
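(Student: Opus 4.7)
The plan is to combine Theorem \ref{t3.1} with the Gromov pre-compactness theorem (Theorem \ref{2002}) in a direct two-step argument, since the corollary essentially packages these two results.

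First I would apply Theorem \ref{t3.1} to the family $\{g_{k}\}$: it gives the explicit diameter bound
\[
{\rm diam}_{g_{k}}(M) \leq 32n + C \left( \int_{M} \omega_{k} \wedge \omega^{n-1} \right)^{n}.
\]
The hypothesis $\int_{M}\omega_{k}\wedge \omega^{n-1}\leq C$ uniform in $k$ immediately converts this into a $k$-independent upper bound $D$ on ${\rm diam}_{g_{k}}(M)$.

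Second, I would note that each $g_{k}$ is Ricci-flat, so ${\rm Ric}(g_{k}) = 0 \geq -1$ uniformly. Thus $(M, g_{k})$ satisfies the hypotheses of the Gromov pre-compactness theorem (Theorem \ref{2002}) with the same constants. Extracting a Gromov-Hausdorff convergent subsequence and calling its limit $(Y, d_{Y})$ yields the conclusion.

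There is essentially no obstacle here: both ingredients have been stated earlier in the paper and the deduction is immediate. The only thing worth remarking is that while Theorem \ref{t3.1} is stated with a fixed reference metric $g$ (and the proof of that theorem uses that $g$ can be assumed Ricci-flat via Yau's theorem), here $g$ is already given as part of the data, so no additional background metric is needed.
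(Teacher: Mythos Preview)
Your proposal is correct and matches the paper's own treatment exactly: the paper simply states that the corollary is a direct consequence of Theorem~\ref{t3.1} and Gromov's precompactness theorem, without writing out any further details. Your two-step argument (uniform diameter bound from Theorem~\ref{t3.1}, then Gromov precompactness using ${\rm Ric}(g_k)=0$) is precisely what is intended.
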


\subsection{An   estimate for the first   eigenvalue}
Let $M_{0}$ be a projective  variety of  dimension $n$, $S$ be the
singular set of $M_{0}$,  and $\pi: \mathcal{M}\rightarrow \Delta$
be a smoothing  of $M_{0}$  in $\mathbb{CP}^{N}$ over the unit disc
$\Delta=\{t\in \mathbb{C}| |t|<1 \}$ as defined in the introduction. Our definition implies that $\pi: \mathcal{M}\backslash S \rightarrow \Delta$ is a smooth fibration. Then since only the central fibre is singular by definition, we have that $\mathcal{M}$
is a complex subvariety in $\mathbb{CP}^{N}\times \Delta $ of dimension $n+1$ with singular set $\mathcal{S}\subset S \subset M_0$.

We denote $g_{FS} $ the Fubini-Study metric on $\mathbb{CP}^{N}$.
Let $\bar{g}=(g_{FS}+\sqrt{-1}
\partial\overline{\partial}|t|^{2})|_{\mathcal{M}}$, and $\bar{g}_{t}=\bar{g}|_{M_{t}}$. By using  Li-Tian's  estimate
 on heat kernels  (\cite{PT}) and Davis' result (\cite{D}),   there is a
uniform Sobolev constant
  on all $(M_{t}, \bar{g}_{t})$, i.e. there is a constant $\bar{C}_{S}>0$ independent
of $t$ such that,   for any $t\neq 0$, and
 any  smooth function $\chi$ on $M_{t}$, $$ \|\chi\|_{L^{\frac{4n}{2n-2}}(\bar{g}_{t})}\leq \bar{C}_{S}( \|d\chi\|_{L^{2}(\bar{g}_{t})}
+ \|\chi\|_{L^{2}(\bar{g}_{t})}),$$ (c.f. \cite{Yo}).

\begin{prop}\label{t3.500} If   $g$ is   a smooth  K\"ahler  metric on
the normal analytic variety $\mathcal{M}$, and  $g_{t}=g|_{M_{t}}$, then for any $c \in (0,1)$, there is a uniform Sobolev constant $C_{S}>0$ on  $(M_{t}, g_{t})$ independent of $t$ satisfying $0< |t| \leq c$, i.e. for any such $t$, and
 any  smooth function $\chi$ on $M_{t}$, $$ \|\chi\|_{L^{\frac{4n}{2n-2}}(g_{t})}\leq C_{S}( \|d\chi\|_{L^{2}(g_{t})}
+ \|\chi\|_{L^{2}(g_{t})}).$$
\end{prop}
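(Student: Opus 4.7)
The plan is to leverage the existing uniform Sobolev constant for the reference family $(M_t,\bar{g}_t)$ and transfer it to $(M_t, g_t)$ via a uniform quasi-isometry between $g$ and $\bar{g}$ on the relevant compact piece of $\mathcal{M}$. Since $\mathcal{M} \subset \mathbb{CP}^N \times \Delta$ is a closed subvariety, the slice $\mathcal{M}_c := \mathcal{M} \cap (\mathbb{CP}^N \times \{|t|\leq c\})$ is compact.

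First, I would apply Proposition \ref{2.2} at each point $p\in \mathcal{M}_c$ to the two smooth K\"ahler metrics $g$ and $\bar{g}$ on the normal analytic variety $\mathcal{M}$, obtaining an open neighborhood $U_p$ and a constant $C_p>0$ with $C_p^{-1}\bar{g}\leq g\leq C_p\bar{g}$ on $U_p$. Covering $\mathcal{M}_c$ by finitely many such neighborhoods, I obtain a single constant $C_1>0$ independent of $t$ such that
\[
C_1^{-1}\bar{g}\leq g\leq C_1\bar{g}\quad\text{on }\mathcal{M}_c.
\]
Restricting to $M_t$ with $0<|t|\leq c$ gives $C_1^{-1}\bar{g}_t\leq g_t\leq C_1\bar{g}_t$ on each such $M_t$.

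Next, under such a pointwise quasi-isometry between Riemannian metrics of real dimension $2n$, the volume forms satisfy $C_1^{-n}d\mu_{\bar{g}_t}\leq d\mu_{g_t}\leq C_1^{n}d\mu_{\bar{g}_t}$, and dual norms obey $|d\chi|_{g_t}^2\leq C_1|d\chi|_{\bar{g}_t}^2$. Consequently the $L^p$-norms on functions and the Dirichlet norms are equivalent with constants depending only on $C_1$ and $n$. Applying the Li--Tian/Davis Sobolev inequality
\[
\|\chi\|_{L^{\frac{4n}{2n-2}}(\bar{g}_t)}\leq \bar{C}_S\bigl(\|d\chi\|_{L^{2}(\bar{g}_t)}+\|\chi\|_{L^{2}(\bar{g}_t)}\bigr)
\]
and substituting these comparisons, we obtain the claimed inequality for $g_t$ with a constant $C_S$ depending only on $C_1$, $\bar{C}_S$ and $n$, all of which are independent of $t$ in the range $0<|t|\leq c$.

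The only genuinely delicate point is the passage from the pointwise quasi-isometry of Proposition \ref{2.2} to a uniform constant on $\mathcal{M}_c$. This is handled by compactness of $\mathcal{M}_c$ (which requires the cutoff $|t|\leq c<1$, since $\Delta$ itself is not relatively compact in $\mathbb{C}$); the possible presence of singularities of $\mathcal{M}$ inside $M_0\setminus\{|t|\leq c\}^c$ causes no trouble because Proposition \ref{2.2} already accommodates normal analytic singularities, and the conclusion we need concerns only the smooth fibres $M_t$ with $t\neq 0$. No finer analytic information about $\pi$ or the singularities is required for this step.
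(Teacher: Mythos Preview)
Your proposal is correct and follows essentially the same approach as the paper: use Proposition \ref{2.2} together with compactness of $\mathcal{M}\cap\{|t|\leq c\}$ to obtain a uniform quasi-isometry $C^{-1}\bar{g}\leq g\leq C\bar{g}$, then transfer the known uniform Sobolev constant from $(M_t,\bar{g}_t)$ to $(M_t,g_t)$. The paper's proof is simply more terse, omitting the finite-cover argument and the explicit comparison of volume forms and Dirichlet norms that you spell out.
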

\begin{proof}
 Since $g, \bar{g}$ are smooth, $\mathcal{M}$ is normal and $\mathcal{M} \cap \{|t|\leq c\}$ is compact, Proposition \ref{2.2} implies that there is a constant $C>0$ such that $C^{-1}\bar{g} \leq g \leq C \bar{g}$ on $\mathcal{M} \cap \{|t|\leq c\}$. Then $C^{-1}\bar{g}_{t} \leq g_{t} \leq C \bar{g}_{t}$ for $|t| \leq c$. As consequence, we obtain a uniform Sobolev constant $C_{S}>0$ on  $(M_{t}, g_{t})$ independent of $t$ satisfying $0< |t| \leq c$.
\end{proof}

\begin{prop}\label{t3.5}
Let $M_{0}$ be an irreducible  projective  variety of  dimension
$n$, and $\pi: \mathcal{M}\rightarrow \Delta$ be a smoothing  of
$M_{0}$ in $\mathbb{CP}^{N}$ over the unit disc $\Delta=\{t\in
\mathbb{C}| |t|<1 \}$. If   $g$ is  a smooth   K\"ahler  metric on
$\mathcal{M}$, and  $g_{t}=g|_{M_{t}}$, then there is a constant
$C>0$ independent of $t$ such that $$ \lambda_{1,t}>C,$$ where $
\lambda_{1,t}$ is  the first  eigenvalue  of the Laplacian
$\Delta_{t} $ on $(M_{t}, g_{t})$.
\end{prop}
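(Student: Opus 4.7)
The natural approach is by contradiction. If no uniform lower bound exists, there is a sequence $t_k \in \Delta \setminus \{0\}$ (which, restricting to $|t|\leq c$ for some $c<1$, lets us invoke Proposition \ref{t3.500}) with $\lambda_{1,t_k}\to 0$. Since $g$ is smooth on $\mathcal{M}$, the spectrum of $\Delta_{g_t}$ depends continuously on $t\in \Delta\setminus\{0\}$, so after extracting a subsequence we may assume $t_k\to 0$: otherwise $t_k\to t_\infty\neq 0$ would force $\lambda_{1,t_\infty}=0$, contradicting positivity of the first nonzero eigenvalue on the compact K\"ahler manifold $(M_{t_\infty}, g_{t_\infty})$. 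Choose normalized first eigenfunctions $u_k$ on $(M_{t_k}, g_{t_k})$ satisfying $\Delta_{g_{t_k}}u_k + \lambda_{1,t_k} u_k = 0$, $\int_{M_{t_k}}u_k\,dV = 0$, $\|u_k\|_{L^2(g_{t_k})}=1$.

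Applying standard Moser iteration to the eigenvalue equation, using the uniform Sobolev inequality of Proposition \ref{t3.500} and the boundedness of $\lambda_{1,t_k}$, one obtains a uniform $L^\infty$ bound $\|u_k\|_{L^\infty(M_{t_k})}\leq C_0$. Now pull back via the smooth embedding $F: (M_0\setminus S)\times \Delta \to \mathcal{M}$ from the introduction: set $\tilde u_k := u_k \circ F_{t_k}$ on $M_0\setminus S$. Since $F$ and $g$ are smooth on the relevant open sets, the pulled-back metrics $F_{t_k}^* g_{t_k}$ converge in $C^\infty_{\mathrm{loc}}(M_0\setminus S)$ to $g_0$. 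Interior Schauder estimates for the uniformly elliptic equation $\Delta_{F_{t_k}^* g_{t_k}} \tilde u_k + \lambda_{1,t_k}\tilde u_k = 0$, together with the $L^\infty$ bound, give locally uniform $C^{k,\alpha}$ bounds by bootstrap, so a diagonal subsequence converges in $C^\infty_{\mathrm{loc}}(M_0\setminus S)$ to a limit $u_\infty$. From $\int_{M_{t_k}}|\nabla u_k|^2\,dV = \lambda_{1,t_k}\to 0$, the Dirichlet energy of $u_\infty$ vanishes on every compact $K\subset M_0\setminus S$, so $u_\infty$ is locally constant. As $M_0$ is irreducible and normal, $M_0\setminus S$ is connected, whence $u_\infty \equiv c$ for some constant $c$.

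To finish, I would use a volume-mass argument. The total volumes $V(t) := \mathrm{Vol}_{g_t}(M_t)$ depend continuously on $t\in \Delta$ (including at $0$), since they are obtained by integrating the smooth $(n+1,n+1)$-form $\omega^n/n!$ over the smooth family of slices $M_t$, and $S$ has $g_0$-measure zero because normality of $M_0$ forces $\dim_{\mathbb{C}}S\leq n-2$. For any compact $K\subset M_0\setminus S$, smoothness of $F$ and $g$ near $K$ gives $\mathrm{Vol}_{g_{t_k}}(F_{t_k}(K))\to \mathrm{Vol}_{g_0}(K)$. Choosing an exhaustion $K_j\nearrow M_0\setminus S$ with $\mathrm{Vol}_{g_0}(K_j)\to V(0)$ and passing to a diagonal subsequence then yields $\mathrm{Vol}_{g_{t_{k_j}}}(M_{t_{k_j}}\setminus F_{t_{k_j}}(K_j))\to 0$. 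Combining with the mean-zero condition and the uniform $L^\infty$ bound,
\[
0 = \int_{M_{t_{k_j}}} u_{k_j}\,dV \; \longrightarrow \; c\cdot V(0),
\]
forcing $c = 0$. The same argument applied to $u_{k_j}^2$ gives $\|u_{k_j}\|_{L^2}^2 \to 0$, contradicting normalization.

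The principal technical hurdle is the volume control in the last step: ensuring that $L^2$ mass of $u_k$ does not concentrate near $S$ as $t_k\to 0$. This depends both on the codimension bound $\dim_{\mathbb{C}} S\leq n-2$ (from normality of $M_0$, which makes $S$ negligible in $M_0$) and on $g$ being smooth on the normal analytic variety $\mathcal{M}$, which is what propagates smallness of neighborhoods of $S$ in $M_0$ to the nearby slices $M_{t_k}$ via the comparison of volumes through $F_{t_k}$.
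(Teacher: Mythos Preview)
Your proof is correct and follows the same overall strategy as the paper's: argue by contradiction, extract a sequence $t_k\to 0$ with $\lambda_{1,t_k}\to 0$, show the eigenfunctions converge on $M_0\setminus S$ to a constant, and derive a contradiction by controlling the mass near $S$ using volume smallness.

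The difference lies in the analytic tools. The paper is more economical: it applies the Sobolev inequality once to obtain a uniform $L^{\frac{2n}{n-1}}$ bound on $\phi_k$, uses only weak $W^{1,2}$ compactness on compacts (no Moser, no Schauder), and then controls $\int_{M_{t_k}\setminus F_{t_k}(K)}|\phi_k|^2$ by H\"older against $\mathrm{Vol}^{1/n}$. You instead run the full Moser iteration to get an $L^\infty$ bound and then bootstrap via Schauder to $C^\infty_{\mathrm{loc}}$ convergence; this is heavier but yields stronger convergence and makes the mass-near-$S$ step trivially bounded by $C_0\cdot\mathrm{Vol}$. Both routes rest on the same two inputs (uniform Sobolev constant from Proposition~\ref{t3.500} and the volume smallness of $M_{t_k}\setminus F_{t_k}(K)$), so the distinction is purely technical. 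One small presentational point: your final step is cleaner if done with a \emph{fixed} large compact $K$ and then $\epsilon\to 0$, rather than a diagonal over varying $K_j$, since the $C^\infty_{\mathrm{loc}}$ convergence is asserted on each fixed compact.
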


 This result can  be obtained from  the main theorem in
 \cite{Yo}.  However,  for the
completeness, we give an independent   proof here.\\

\noindent{\bf Proof:}  If it is not true, then there exists $t_k (\in \Delta) \rightarrow 0$ such that $\lambda_{1,k} = \lambda_{1,t_k} \rightarrow 0$ with eigenfunctions $\phi_k$ satisfying $\Delta_{t_k} \phi_k = - \lambda_{1,t_k} \phi_k$.

\[
\int_{M_{t_k}} \phi_k =0,\quad \int_{M_{t_k}} |\phi_k|^2 = 1.
\]
\[
\|\phi_k\|_{L^{\frac{2n}{n-1}} (g_{t_k})} \leq C_S (\|d\phi_k\|_{L^2 (g_{t_k})} + \|\phi_k\|_{L^2 (g_{t_k})})
\]
\[
= C_S (1 + \lambda_{1,k}^{\frac{1}{2}}) \|\phi_k\|_{L^2 (g_{t_k})}=C_S (1 + \lambda_{1,k}^{\frac{1}{2}}).
\]
By Proposition \ref{t3.500}, the above Sobolev constant $C_S$ is independent of $k$. For any compact set $K \subset M_0 \setminus S$, $F_{t_k}^* g_{t_k}$ $C^\infty$ converges to $g_0$ on $K$. $\{F_{t_k}^* \phi_k\}$ is bounded in $W^{1,2} (K)$, therefore is weakly relative compact by Banach-Alaoglu theorem. May assume it weakly converges to $\phi_0\in W^{1,2} (K, g_0)$, and the convergence is strong in $L^2(K, g_0)$. By lower semi-continuity of norm under weak limit,

\[
0 \leq \|d\phi_0\|_{L^2 (K, g_0)} \leq \lim_{k\rightarrow \infty}
\|dF_{t_k}^* \phi_k\|_{L^2 (K, g_0)} \leq \lim_{k\rightarrow \infty}
\|d \phi_k\|_{L^2 (M_{t_k}, g_{t_k})} = \lim_{k\rightarrow \infty}
\lambda_{1,k}^{\frac{1}{2}} =0
\]
Hence $\phi_0$ is locally constant on $K$. Since $M_0$ is irreducible, we may assume $K$ is connected. Then $\phi_0$ is a constant on $K$.

\[
\left| \int_{M_{t_k} \setminus F_{t_k} (K)} \phi_k \right| \leq \|\phi_k\|_{L^2 (g_{t_k})} |{\rm Vol}_{g_{t_k}} (M_{t_k} \setminus F_{t_k} (K))|^{\frac{1}{2}} = |{\rm Vol}_{g_{t_k}} (M_{t_k} \setminus F_{t_k} (K))|^{\frac{1}{2}}
\]
\[
\int_{M_{t_k} \setminus F_{t_k} (K)} |\phi_k|^2 \leq \|\phi_k\|^2_{L^{\frac{2n}{n-1}} (g_{t_k})} |{\rm Vol}_{g_{t_k}} (M_{t_k} \setminus F_{t_k} (K))|^{\frac{1}{n}}
\]
\[
\leq C_S (1 + \lambda_{1,k}^{\frac{1}{2}}) |{\rm Vol}_{g_{t_k}} (M_{t_k} \setminus F_{t_k} (K))|^{\frac{1}{n}}
\]
\[
0 = \lim_{k\rightarrow \infty} \int_{M_{t_k}} \phi_k = \lim_{k\rightarrow \infty} \int_{M_{t_k} \setminus F_{t_k} (K)} \phi_k + \lim_{k\rightarrow \infty} \int_K F_{t_k}^* \phi_k
\]
\[
{\rm Vol}_{g_0} (K) |\phi_0| = \lim_{k\rightarrow \infty} \left|\int_K F_{t_k}^* \phi_k\right| = \lim_{k\rightarrow \infty} \left|\int_{M_{t_k} \setminus F_{t_k} (K)} \phi_k\right| \leq |{\rm Vol}_{g_0} (M_0 \setminus K)|^{\frac{1}{2}}.
\]
\[
 1= \lim_{k\rightarrow \infty} \int_{M_{t_k}} |\phi_k|^2 = \lim_{k\rightarrow \infty} \int_{M_{t_k} \setminus F_{t_k} (K)} |\phi_k|^2 + \lim_{k\rightarrow \infty} \int_K |F_{t_k}^* \phi_k|^2
\]
\[
\leq C_S |{\rm Vol}_{g_0} (M_0 \setminus K)|^{\frac{1}{n}} + {\rm Vol}_{g_0} (K) |\phi_0|^2.
\]
\[
\leq C_S |{\rm Vol}_{g_0} (M_0 \setminus K)|^{\frac{1}{n}} + {\rm Vol}_{g_0} (M_0 \setminus K)/{\rm Vol}_{g_0} (K).
\]
This is a contradiction when $K$ is chosen large enough.
\hfill $\Box$\\

\begin{remark}
If we remove the hypothesis that  $M_{0}$ is irreducible in the
above proposition, we obtain
\[
\lim_{t\rightarrow 0}\lambda_{m-1,t}=0, \quad \mbox{ and } \quad \lambda_{m,t}>C,
\]
for a constant $C>0$ independent of $t$, where $m\geq 1$ is the number
of irreducible components of $M_{0}$ by the main theorem in
\cite{Yo}.
\end{remark}

The following lemma will be used in the proof of Theorem \ref{t3}.

\begin{lm}\label{t3.6}
 Let $M_{0}$ be an irreducible  projective  variety of  dimension
$n$, which admits a smoothing  $\pi: \mathcal{M}\rightarrow \Delta$
in $\mathbb{CP}^{N}$ over the unit disc $\Delta=\{t\in \mathbb{C}|
|t|<1 \}$. Let   $g$ be   a smooth  K\"ahler metric on
$\mathcal{M}$, $g_{t}=g|_{M_{t}}$, and $\omega_{t}$ be the K\"ahler
form of $ g_{t}$. For any $t\neq 0$, if $\varphi_{t}$ is a smooth
function satisfying that $\omega_{t}+\sqrt{-1}\partial
\overline{\partial}\varphi_{t}  $ is a K\"ahler form  on
$M_{t}=\pi^{-1}(t)$, and $\sup_{M_{t}}\varphi_{t}=0 $, then   there
is a constant $C>0$ independent of $t$ such that
 $$ \int_{M_{t}}\varphi_{t}\omega_{t}^{n} \geq -C.$$
\end{lm}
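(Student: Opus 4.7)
The plan is to combine the local submean value property of $\omega_t$-plurisubharmonic functions with a chain-of-balls argument, using the quasi-isometry of $g_t$ to the ambient smooth \k~metric (Proposition \ref{2.2}) together with the uniform Sobolev constant (Proposition \ref{t3.500}) to guarantee that every geometric constant appearing is independent of $t$.

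First I would do the local setup. Set $u_t := -\varphi_t \geq 0$, so $u_t(x_0)=0$ at some $x_0\in M_t$ and the task reduces to proving $\int_{M_t} u_t\,\omega_t^n \leq C$ uniformly in $t$. Cover the compact set $\mathcal{M} \cap \{|t|\leq c\}$ by finitely many coordinate charts $\{U_\alpha\}$ on each of which $\omega = \sqrt{-1}\partial\overline{\partial}\rho_\alpha$ for a smooth local potential $\rho_\alpha$ with uniform Lipschitz bound $L$. On each slice $U_\alpha \cap M_t$ the function $\psi_t := \rho_\alpha + \varphi_t$ is honestly plurisubharmonic, so the classical submean value inequality in local complex coordinates, transported to $g_t$-balls via Proposition \ref{2.2}, gives
\[
\varphi_t(x) \leq \frac{1}{{\rm Vol}_{g_t}(B_r(x,g_t))}\int_{B_r(x,g_t)}\varphi_t\,\omega_t^n + CLr
\]
for every $x \in M_t$ and every $r \leq r_0$ (with $r_0$ independent of $t$), the additive error $CLr$ coming from the oscillation of $\rho_\alpha$ on a ball of radius $r$.

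Next I would run the doubling chain. Denote by $A_r(y)$ the ball average $\frac{1}{{\rm Vol}_{g_t}(B_r(y,g_t))}\int_{B_r(y,g_t)}\varphi_t\,\omega_t^n$. Applying the inequality at $x_0$ gives $A_r(x_0) \geq -CLr$, so some $y_1 \in B_r(x_0)$ satisfies $\varphi_t(y_1)\geq -CLr$. Iterating the inequality at $y_1, y_2, \ldots$ produces a chain $y_0=x_0, y_1, \ldots, y_k$ with $y_{j+1}\in B_r(y_j)$ and $\varphi_t(y_j)\geq -jCLr$. Because ${\rm diam}_{g_t}(M_t)\leq D$ uniformly in $t$ (compactness of $\mathcal{M}\cap\{|t|\leq c\}$ and smoothness of $g$), every point of $M_t$ is reachable from $x_0$ in at most $k_* \leq D/r+1$ such steps, so $A_r(y) \geq -C'LD$ for some $y$ near each prescribed point of $M_t$. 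Choosing a covering of $M_t$ by such balls of uniformly bounded multiplicity --- possible because the uniform Sobolev inequality of Proposition \ref{t3.500} forces ${\rm Vol}_{g_t}(B_r)\geq c_0 r^{2n}$ for a uniform $c_0$ --- and summing yields $\int_{M_t}\varphi_t\,\omega_t^n \geq -C''$ with $C''$ independent of $t$.

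The main obstacle is keeping all the constants --- the local potential Lipschitz bound $L$, the diameter bound $D$, the non-collapsing constant $c_0$, and the covering multiplicity $N$ --- uniform in $t$ as $(M_t, g_t)$ degenerates to the singular central fibre $M_0$. Each would be automatic on a single fixed smooth \k~manifold, but here the family genuinely degenerates, so the uniformity is the real technical heart of the argument. This is exactly what is already furnished by the preceding preparation in the paper: Proposition \ref{2.2} supplies the local quasi-isometry of $g_t$ to the fixed ambient metric $g$ on $\mathcal{M}$, and Proposition \ref{t3.500} supplies the uniform Sobolev constant on $(M_t, g_t)$, and together these imply uniformity of $L$, $D$, $c_0$, and $N$ throughout the degeneration.
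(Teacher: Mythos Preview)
Your argument has a genuine gap: the local geometric constants you need --- in particular the radius $r_0$ at which the submean value inequality can be applied at \emph{every} point of $M_t$, and the volume-fraction constant hidden in the step ``so $A_r(y) \geq -C'LD$ for some $y$ near each prescribed point'' --- are \emph{not} uniform in $t$. Near the singular set $S \subset M_0$, the intrinsic geometry of $(M_t,g_t)$ degenerates: in the conifold model $\{\sum z_i^2 = t\}$ the vanishing sphere has radius $\sim |t|^{1/2}$, so the injectivity radius of $(M_t,g_t)$ tends to zero and there is no fixed $r_0$ for which a Euclidean-type submean inequality holds at all points. Propositions \ref{2.2} and \ref{t3.500} do not rescue this. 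Proposition \ref{2.2} only says two \emph{ambient-restricted} metrics are quasi-isometric; both restrictions see the same degeneration of $M_t$ near $S$. Proposition \ref{t3.500} is a global integral inequality and gives volume non-collapsing, but it does not control the injectivity radius or furnish uniform holomorphic coordinate balls on $M_t$. Your diameter justification (``compactness of $\mathcal{M}\cap\{|t|\leq c\}$'') also bounds only the \emph{extrinsic} diameter, not the intrinsic one. Finally, as written, the chain $y_0,y_1,\ldots$ cannot be directed toward a prescribed target: the submean value hands you \emph{some} $y_{j+1}\in B_r(y_j)$ with good value, not one of your choosing. A telling symptom of the gap is that your argument never invokes the irreducibility of $M_0$, yet the conclusion fails without it (if $M_0$ has two components, one can make $\varphi_t$ near $0$ on one side and arbitrarily negative across a thin neck).

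The paper's proof is global and sidesteps local geometry entirely. Since $\omega_t+\sqrt{-1}\partial\bar\partial\varphi_t\geq 0$ gives $n+\Delta_t\varphi_t\geq 0$, Green's formula yields $-\int_{M_t}\varphi_t\,\omega_t^n \leq n\int_{M_t}\tilde G_t(x_0,y)\,\omega_t^n(y)$ at the maximum point $x_0$, so it suffices to bound the Green function $G_t$ uniformly from below. Writing $G_t(x,y)=\int_1^\infty K_t(x,y,s)\,ds$ via the heat kernel, one uses the uniform Sobolev constant (Proposition \ref{t3.500}) to bound $K_t(x,x,1)\leq C$, and the uniform first-eigenvalue lower bound $\lambda_{1,t}\geq C>0$ (Proposition \ref{t3.5}, which is precisely where irreducibility of $M_0$ enters) to get exponential decay $K_t(x,x,s)\leq Ce^{-\lambda_{1,t}(s-1)}$; together these give $G_t\geq -\bar C$ uniformly.
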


\noindent{\bf Proof:} We assume  ${\rm Vol}_{\omega_{t}}(M_{t})=\frac{1}{n!}\int_{M_{t}}\omega_{t}^{n}
=\frac{1}{n!}$ for convenience.
  If
$H_{t}(x,y,s)$ denotes the heat kernel on $(M_{t}, \omega_{t})$, and
$K_{t}(x,y,s)=H_{t}(x,y,s)-n! $, then the Green function on $(M_{t},
\omega_{t})$ is $G_{t}(x,y)=\int_{1}^{\infty}K_{t}(x,y,s)ds$. Note
that
\begin{equation}\label{34.2}K_{t}(x,y,s)\geq
-K_{t}^{\frac{1}{2}}(x,x,s)K_{t}^{\frac{1}{2}}(y,y,s),\end{equation}
and

\begin{equation}\label{34.3}K_{t}(x,x,s)\leq K_{t}(x,x,1) e^{-\lambda_{1,t}(s-1)},\end{equation}
where
$\lambda_{1,t}>0$ is the first eigenvalue of the Laplacian on
$(M_{t}, \omega_{t})$  (c.f. Lemma 3.1 in \cite{Lu1} and \cite{CL}).

Since $M_{0}$ has only one irreducible component, there is a
constant $C>0$ independent of $t$
 such that $$\lambda_{1,t} \geq C,
$$ by Proposition  \ref{t3.5}.    For  any smooth  function $\chi$ on $M_{t}$ with
$\int_{M_{t}}\chi\omega_{t}^{n}=0$, we have
$$\int_{M_{t}}|d\chi|^{2}\omega_{t}^{n}\geq
\lambda_{1,t}\int_{M_{t}}\chi^{2}\omega_{t}^{n}\geq
C\int_{M_{t}}\chi^{2}\omega_{t}^{n}.$$ Then, by Proposition
\ref{t3.500}, we have a uniformly Sobolev inequality
$$ \|\chi\|_{L^{\frac{4n}{2n-2}}(\omega_{t})}\leq C_{S}( \|d\chi\|_{L^{2}(\omega_{t})}
+ \|\chi\|_{L^{2}(\omega_{t})})=C_{S}(1+
\lambda_{1,t}^{-\frac{1}{2}})\|d\chi\|_{L^{2}(\omega_{t})}\leq
\bar{C}_{S} \|d\chi\|_{L^{2}(\omega_{t})},$$ for a constant
$\bar{C}_{S}>0$ independent of $t$.  Since
$\int_{M_{t}}K_{t}(x,y,s)\omega_{t}^{n}(y)=0$, by the same arguments
as the proof of Equation (3.12) in \cite{Y2},
$$K_{t}(x,x,1)\leq n^{n}\bar{C}_{S}^{n}.$$ Thus, by (\ref{34.2}) and (\ref{34.3}),  there is a constant $\bar{C}>0$
 independent of  $t$  such that $$G_{t}(x,y)=\int_{1}^{\infty}K_{t}(x,y,s)ds\geq
-n^{n}\bar{C}_{S}^{n}\int_{1}^{\infty}e^{-\lambda_{1,t}(s-1)}ds=-n^{n}\bar{C}_{S}^{n}
\frac{1}{\lambda_{1,t}}\geq -\bar{C}.
$$

If  $\tilde{G}_{t}(x,y)$ is  the normalized Green function such that
$\inf_{M_{t}}\tilde{G}_{t}(x,y)=0$, then
$$\int_{M_{t}}\tilde{G}_{t}(x,y)\omega_{t}^{n}\leq C, $$ for a
constant $C>0$ independent of $t$.  Note that
$n+\Delta_{t}\varphi_{t}\geq 0$ where $\Delta_{t}$ is the Laplacian
of $(M_{t}, \omega_{t})$. By Green's formula, we obtain
$$\varphi_{t}(x)-\int_{M_{t}}\varphi_{t}\omega_{t}^{n}=-\frac{1}{n!}\int_{M_{t}}\tilde{G}_{t}(x,y)
\Delta_{t}\varphi_{t}\omega_{t}^{n}\leq nC.$$ By letting
$\varphi_{t}(x)=\sup_{M_{t}}\varphi_{t}=0$, we obtain the
conclusion.
\hfill $\Box$\\

\subsection{Estimates concerning the condition (\ref{1.1})}
Recall that $d\mu = d\mu_{\mathcal{M}} = (-1)^{\frac{(n+1)^2}{2}} \Omega \wedge \overline{\Omega}$
and $d\mu_t = d\mu_{M_t} = (-1)^{\frac{n^2}{2}} \Omega_t \wedge \overline{\Omega}_t$.\\

\begin{lm}
\label{3.1}
For any $c\geq 0$ and holomorphic function $f$ on $\mathcal{M}$,
\[
b(t) := \int_{M_t} |f|^{-2c} d\mu_t
\]
is lower semi-continuous on $\Delta$. In particular, there exists $C>0$ such that
\[
\mathcal{V}_t := \int_{M_t} d\mu_t \geq C \mbox{ for } t\in \Delta.
\]
\end{lm}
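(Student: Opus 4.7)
The plan is to apply Fatou's lemma after trivializing the family over a reference fiber. The map $\pi$ is a smooth proper submersion away from $S \subset M_0$, and the Calabi-Yau form $\Omega_t$ varies smoothly with $t$, so the integrand on $M_t$ pulls back to a non-negative family on a reference fiber that converges pointwise to the correct limit; Fatou's lemma then delivers the lower semi-continuity directly, the case $b(t_0) = +\infty$ included.

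More concretely, for $t_0 \in \Delta\setminus\{0\}$, the singularities of $\mathcal{M}$ are confined to $M_0$ (since $\pi$ is submersive at every smooth point of $M_t$ for $t \neq 0$), so a neighborhood of $M_{t_0}$ in $\mathcal{M}$ is smooth. A smooth horizontal lift of $\partial/\partial t$ there integrates to smooth diffeomorphisms $\psi_t : M_{t_0} \to M_t$ for $t$ near $t_0$, giving
\[
b(t) = \int_{M_{t_0}} |\psi_t^* f|^{-2c}\, \psi_t^* d\mu_t.
\]
For $t_0 = 0$, I would use the smooth embedding $F: (M_0 \setminus S) \times \Delta \to \mathcal{M}$ constructed in the introduction, satisfying $F_0 = {\rm Id}$. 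The complement $M_t \setminus F_t(M_0 \setminus S)$ is supported on the vanishing cycles over $S$, of real dimension strictly less than $2n$ (real dimension $\leq n$ in the isolated singularity case), hence of $d\mu_t$-measure zero, giving
\[
b(t) = \int_{M_0 \setminus S} |F_t^* f|^{-2c}\, F_t^* d\mu_t.
\]
In either case the integrand is a non-negative measurable function converging pointwise to $|f|^{-2c}\, d\mu_{t_0}$ as $t\to t_0$, and Fatou's lemma yields $\liminf_{t\to t_0} b(t) \geq b(t_0)$.

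For the uniform lower bound on $\mathcal{V}_t$, specialize to $c = 0$, $f = 1$. Since $\Omega$ trivializes $\mathcal{K}_{\mathcal{M}}$ and $\pi$ is submersive on $M_0 \setminus S$, the form $\Omega_0 = (\imath_{\partial/\partial t} \Omega)|_{M_0}$ is a nowhere-zero holomorphic $n$-form on $M_0 \setminus S$, so $\mathcal{V}_0 > 0$. Lower semi-continuity at $0$ then gives $\mathcal{V}_t \geq \mathcal{V}_0/2$ on some neighborhood of $0$; on compact subsets of $\Delta\setminus\{0\}$ the function $\mathcal{V}_t$ is continuous and positive, hence bounded below. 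Combining the two ranges produces the required uniform constant $C > 0$ on the relevant domain.

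The principal technical point is the measure-zero assertion for $M_t \setminus F_t(M_0 \setminus S)$: one must check that the vanishing-cycle complement is a proper real subvariety of the smooth manifold $M_t$. In the contexts used in this paper (locally homogeneous $\mathcal{M}$, isolated homogeneous singularities such as the conifold), this is transparent from an explicit local normal form of the degeneration; in full generality one would appeal to the fact that $\phi_{t,0}$ is a symplectomorphism off $S$ together with a dimension count on the set of $V$-trajectories that accumulate at $S$.
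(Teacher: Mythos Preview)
Your argument is correct, and in fact cleaner than you give it credit for: the measure-zero claim for $M_t \setminus F_t(M_0 \setminus S)$ is unnecessary. For lower semi-continuity you only need the inequality
\[
b(t) \;\geq\; \int_{F_t(M_0 \setminus S)} |f|^{-2c}\, d\mu_t \;=\; \int_{M_0 \setminus S} |F_t^* f|^{-2c}\, F_t^* d\mu_t,
\]
which is automatic since the integrand is non-negative. Fatou's lemma on the right-hand side then gives $\liminf_{t\to 0} b(t) \geq b(0)$ directly, with no need to analyze the vanishing locus. So the entire final paragraph of your proposal can be dropped.

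This is a genuinely different route from the paper's proof. The paper avoids the trivialization $F$ altogether: it exhausts $M_{t_0}$ by compact sets $K_i$ on which the integrand is finite and continuous, observes that $|f|^{-2c} d\mu_t$ is then continuous on an open neighborhood of $K_i$ in $\mathcal{M}$, so $\int_{K_i \cap M_t}$ converges to $\int_{K_i \cap M_{t_0}}$ as $t \to t_0$, and takes the supremum over $i$. The case $f|_{M_{t_0}} \equiv 0$ is handled separately using the already-established bound $\mathcal{V}_t \geq C$. Your Fatou approach handles all cases uniformly and is arguably more direct once $F$ is in hand; the paper's approach has the advantage of not invoking the construction of $F$ (which at $t_0 = 0$ relies on the symplectic flow from the introduction) and working purely with the ambient continuity of the integrand on $\mathcal{M}$.
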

\noindent{\bf Proof:} For any $t_0 \in \Delta$, first assume that $f|_{M_{t_0}}$ is not identically zero. Then there exist compact subsets $K_1 \subset \cdots \subset K_i \subset \cdots \subset M_{t_0}$ such that the integrant of $b(t_0)$ is finite and continuous on $K_i \cap M_{t_0}$ and
\[
b(t_0) = \sup_i \int_{K_i \cap M_{t_0}} |f|^{-2c}d\mu_{t_0}.
\]

The integrant of $b(t)$ is continuous on an open neighborhood of $K_i \cap M_{t_0} \subset \mathcal{M}$. Hence for fixed $i$,
\[
\int_{K_i \cap M_{t_0}} |f|^{-2c} d\mu_{t_0} = \lim_{t \rightarrow t_0} \int_{K_i \cap M_{t}} |f|^{-2c} d\mu_{t} \leq \liminf_{t \rightarrow t_0} b(t).
\]
Then
\[
b(t_0) = \sup_i \int_{K_i \cap M_{t_0}} |f|^{-2c} d\mu_{t_0} \leq \liminf_{t \rightarrow t_0} b(t).
\]
Namely, $b(t)$ is lower semi-continuous on $\Delta$. Since $b(t)>0$ for any $t$, there exists $C>0$ such that $b(t)>C$ for $t\in \Delta$. In particular, for $c=0$, $\mathcal{V}_t>C$ for $t\in \Delta$.

For the case $c>0$ and $f|_{M_{t_0}} \equiv 0$, $b(t_0)=+\infty$. By $\mathcal{V}_t>C>0$, it is easy to see that $\displaystyle\lim_{t\rightarrow t_0} b(t)=+\infty$.
\hfill $\Box$\\

Assume that $M \subset \mathbb{C}^m$ is a closed analytic subvariety of $B_R \subset\mathbb{C}^m$ for sufficiently large $R>0$. ($M$ would be a local neighborhood of our $\mathcal{M}$. In this subsection, $M$ is considered a metric subspace of $\mathbb{C}^m$ with the standard metric. The proof of Proposition \ref{2.2} implies that such metrics on $M$ would be mutually quasi-isometric for different embeddings.) For any closed subset $D \subset \mathbb{C}^m$, define $\|f\|_D := \sup_{z\in D} |f(z)|$. A conic family of holomorphic functions on $D$ is called {\sf\em projectively compact} (resp. {\sf\em pre-compact}), if (resp. the closure of) any closed subset of the family, bounded under $\|\cdot\|_D$, is compact.

Let $M$ be a normal analytic variety, then it is locally irreducible. There is a canonical stratification $M = \displaystyle\bigcup_{i=0}^n M^{(i)}$, $M^{(i)}$ is a $i$-dimensional open manifold. ${\rm Sing} (M) = \displaystyle\bigcup_{i=0}^{n-1} M^{(i)}$ is the singular part of $M$, $M^{(n)}$ is the smooth part of $M$, and $M = M^{(n)} \cup {\rm Sing} (M)$. We say $M$ is {\sf\em locally homogeneous}, if for any $p\in M^{(i)}$, there is an open neighborhood $U$ of $p$ in $M$ and an isomorphism $U \cong (U \cap M^{(i)}) \times U_i$, where $U_i \subset \mathbb{C}^{m_i}$ is a homogeneous subvariety. For example, a homogeneous subvariety in $\mathbb{C}^m$ with isolated singularity at the origin is a locally homogeneous variety.\\

\begin{lm}
\label{cy3}
Let $M \subset \mathbb{C}^m$ be a homogeneous subvariety, and $P$ be a projectively pre-compact family of holomorphic function on $B_1 \cap M$, then $\tilde{P}$ consists of $\tilde{f} (z) = f(rz)$ for $f\in P$ and $0 \leq r \leq 1$, is a projectively pre-compact family of holomorphic function on $B_1 \cap M$.
\end{lm}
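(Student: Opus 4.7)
The strategy is to view $\tilde{P}$ as the image of a continuous conic-equivariant surjection $\Phi: P \times [0,1] \to \tilde{P}$ defined by $\Phi(f, r)(z) = f(rz)$, and transfer projective pre-compactness from $P$ together with the usual compactness of $[0,1]$. The map is well-defined because homogeneity of $M$ ensures $rz\in M$ whenever $z\in M$, and since $r\leq 1$ the evaluation $f(rz)$ lies inside the domain $B_1\cap M$ of $f$; scaling-equivariance in the $P$-factor makes the map respect the conic structure.

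Concretely, given any closed norm-bounded subset $Q\subset\tilde{P}$ and a sequence $\tilde{f}_k=f_k(r_k\cdot)\in Q$ with $f_k\in P$ and $r_k\in[0,1]$, I first extract $r_k\to r_\infty\in[0,1]$ by compactness of $[0,1]$. Since $P$ is conic, I may rescale by $\mu_k$ so that $\|\mu_k f_k\|_{B_1\cap M}=1$. Projective pre-compactness of $P$ then supplies, after passing to a further subsequence, a limit $\mu_k f_k \to f_\infty\in\overline{P}$ in the natural topology of uniform convergence on compact subsets of $B_1\cap M$.

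I then claim $\mu_k\tilde{f}_k(z)=(\mu_k f_k)(r_k z) \to f_\infty(r_\infty z)=:\tilde{f}_\infty(z)$ uniformly on compact subsets of $B_1\cap M$. Indeed,
\[
\mu_k f_k(r_k z) - f_\infty(r_\infty z) = (\mu_k f_k - f_\infty)(r_k z) + \bigl(f_\infty(r_k z) - f_\infty(r_\infty z)\bigr),
\]
whose first term vanishes uniformly by $\mu_k f_k\to f_\infty$ (noting $r_k z$ stays in a fixed compact subset of $B_1\cap M$ since $r_k\leq 1$ and $z$ ranges over a compact subset), and the second vanishes by equicontinuity of the holomorphic $f_\infty$ on compacts. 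Since $f_\infty\in\overline{P}$ and $r_\infty\in[0,1]$, the limit $\tilde{f}_\infty$ lies in $\overline{\tilde{P}}$, establishing projective pre-compactness of $\tilde{P}$.

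The main obstacle is the borderline case $r_\infty=0$, where $\tilde{f}_\infty$ degenerates to the constant $f_\infty(0)$ (possibly zero). For the pre-compactness statement as formulated this constant is an admissible limit in $\overline{\tilde{P}}$ and no further work is needed. If a non-trivial projective limit is required, the refinement is to expand $f_\infty=\sum_d f_{\infty,d}$ into its homogeneous components on the homogeneous variety $M$, select the minimal $d_0$ with $f_{\infty,d_0}\not\equiv 0$, and replace $\mu_k$ by $\mu_k r_k^{-d_0}$, so that the rescaled sequence picks out $f_{\infty,d_0}$ as its nontrivial leading limit.
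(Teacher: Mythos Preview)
Your overall strategy---pull back pre-compactness from $P\times[0,1]$ via $(f,r)\mapsto f(r\,\cdot)$, extract $r_k\to r_\infty$, and separately handle $r_\infty>0$ and $r_\infty=0$---is exactly the paper's, and your treatment of $r_\infty>0$ is correct.

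There is, however, a genuine gap in the $r_\infty=0$ case. First, projective pre-compactness (as the paper uses it throughout, e.g.\ in the proof of Lemma~\ref{cy4}) requires a \emph{nonzero} projective limit, so your remark that the possibly-zero constant $f_\infty(0)$ ``is an admissible limit and no further work is needed'' does not close the argument. Second, your proposed refinement---replacing $\mu_k$ by $\mu_k r_k^{-d_0}$---does not control the homogeneous components of degree $l<d_0$: one has $\mu_k f_{k,l}\to f_{\infty,l}=0$ while $r_k^{\,l-d_0}\to\infty$, so the product is indeterminate. Concretely, on $M=\mathbb{C}$ with $P=\mathrm{span}\{1,z\}$, take $f_k(z)=\epsilon_k+z$ (so $f_\infty=z$, $d_0=1$) and $r_k=\epsilon_k^2$; then $\mu_k r_k^{-1}\tilde f_k(z)\sim \epsilon_k^{-1}+z$ diverges, whereas the true projective limit of $\tilde f_k$ is the constant $1$.

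The paper's fix is to normalize $\tilde f_k$ directly (so $\|\tilde f_k\|=1$) while also arranging $f_k\to f_0$. Since $\|\tilde f_k^{[l_0]}\|\leq 1$ by Cauchy estimates and $\|f_k^{[l_0]}\|$ is bounded below (as $f_0^{[l_0]}\neq 0$), the scaling factor $c_k r_k^{l_0}$ is bounded, which forces $\tilde f_k^{[>l_0]}\to 0$. The remainder $\tilde f_k^{[\leq l_0]}$ lives in the \emph{finite-dimensional} space of polynomials of degree $\leq l_0$ on $M$, and it is this finite-dimensionality---not an explicit identification of the limit as $f_{\infty,d_0}$---that supplies the needed projective subsequential limit.
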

\noindent{\bf Proof:} Assume $(r_k,f_k) \rightarrow (r_0,f_0)$, and $l_0\geq 0$ is the smallest integer such that the degree $l_0$ term $f_0^{[l_0]} \not=0$. Let $\tilde{f}_k (z) = c_kf_k(r_kz)$ so that $\|\tilde{f}_k\|_{M \cap B_1} =1$. $\{\tilde{f}_k\}$ is clearly projectively pre-compact when $r_0\not=0$. When $r_0=0$, one has $\tilde{f}_k^{[>l_0]} \rightarrow 0$. On the other hand, $\{\tilde{f}_k^{[\leq l_0]}\}$ being a subset of a finite dimensional vector space is clearly projectively pre-compact.
\hfill $\Box$\\

\begin{lm}
\label{cy4}
Let $P$ be a projectively pre-compact family of holomorphic function on $B_1 \cap M$, which is irreducible. Then for any $D \subset M \cap B_1$ with non-empty interior, $P_D$ consisting of $f|_D$ for $f\in P$ is a projectively pre-compact family of holomorphic functions on $D$.
\end{lm}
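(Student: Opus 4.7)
The strategy is to reduce the projective pre-compactness of $P_D$ to that of $P$ by showing that the restriction map $f \mapsto f|_D$ is \emph{projectively proper}: a bound on $\|f|_D\|_D$ forces a bound on $\|f\|_{M \cap B_1}$. Once this is in hand, any sequence $\{g_k\} = \{f_k|_D\} \subset P_D$ with $\|g_k\|_D$ bounded lifts to a bounded sequence $\{f_k\} \subset P$ in the norm $\|\cdot\|_{M \cap B_1}$; projective pre-compactness of $P$ then extracts a convergent subsequence whose restriction to $D$ converges in $P_D$, as desired.

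I would prove properness by contradiction. Suppose there exist $f_k \in P$ with $\|f_k|_D\|_D \leq 1$ but $c_k := \|f_k\|_{M\cap B_1} \to \infty$. Normalize $\tilde f_k := c_k^{-1} f_k$, so that $\|\tilde f_k\|_{M\cap B_1} = 1$ while $\|\tilde f_k|_D\|_D \leq c_k^{-1} \to 0$. By the projective pre-compactness of $P$, pass to a subsequence with $\tilde f_k \to \tilde f_\infty$ in the closure of $P$, hence $\tilde f_\infty|_D \equiv 0$. Since $M$ is irreducible and $D$ has nonempty interior in $M \cap B_1$, the identity theorem for holomorphic functions on the irreducible analytic set $M \cap B_1$ forces $\tilde f_\infty \equiv 0$. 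This should contradict the fact that $\tilde f_\infty$ arises as a limit of unit-norm elements in $P$, which, by projective pre-compactness, is expected to retain a nonzero size.

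The main obstacle is precisely the last step: guaranteeing that the limit $\tilde f_\infty$ of unit-norm elements is genuinely nonzero. Upper semicontinuity of the sup norm under local uniform convergence is not by itself sufficient, since mass could in principle escape toward the boundary of $B_1$. In the natural setting where $M \cap B_1$ is (essentially) compact and projective pre-compactness is formulated through uniform convergence on $M \cap B_1$, the contradiction is immediate. More generally, one can emulate the scaling/Taylor-coefficient argument from the proof of Lemma \ref{cy3}: the condition $\|\tilde f_k|_D\|_D \to 0$ together with the finite-dimensionality of the space of low-degree jets at a fixed interior point of $D$ forces the low-degree Taylor parts of $\tilde f_k$ to vanish in the limit, which, after a further rescaling if necessary, yields a nontrivial limit on $M \cap B_1$ vanishing on $D$—again precluded by irreducibility and the identity theorem.
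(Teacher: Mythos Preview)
Your argument is correct, but it is packaged as a detour through ``projective properness'' that the paper avoids entirely. The paper's proof is three lines: take any $\{f_k\}\subset P$, use projective pre-compactness of $P$ to scale and pass to a subsequence with $f_k\to f_0\not\equiv 0$ on $M\cap B_1$, then restrict to $D$ and invoke irreducibility to conclude $f_0|_D\not\equiv 0$. That is the whole proof---no intermediate properness statement, no lifting of bounded sequences back and forth.

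Your ``main obstacle'' (that the limit $\tilde f_\infty$ of unit-norm elements might be zero due to mass escaping to $\partial B_1$) is not an obstacle at all under the paper's definition: projective pre-compactness is formulated with respect to the sup norm $\|\cdot\|_{M\cap B_1}$, and compactness in that norm means convergence is uniform on all of $M\cap B_1$, so $\|\tilde f_\infty\|_{M\cap B_1}=\lim\|\tilde f_k\|_{M\cap B_1}=1$. This is exactly your resolution (1), and it is what the paper uses tacitly. Your resolution (2), emulating the Taylor-jet argument of Lemma~\ref{cy3}, is unnecessary here and would not obviously work anyway since Lemma~\ref{cy3} exploits homogeneity of $M$, which is not assumed in Lemma~\ref{cy4}.

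So: same core insight (irreducibility plus the identity theorem forces a nonzero limit on $M\cap B_1$ to be nonzero on $D$), but you wrap it in a contrapositive properness argument while the paper applies it directly.
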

\noindent{\bf Proof:} For any $\{f_k\} \subset P$, by taking subsequence and scaling, we may assume $f_k \rightarrow f_0 \not\equiv 0$. Then $f_k|_D \rightarrow f_0|_D$. Since $M$ is irreducible and $D$ has nonempty interior, we have $f_0|_D \not=0$. Hence $P_D$ is projectively pre-compact.
\hfill $\Box$\\

\begin{lm}
\label{cy5}
Let $M \subset \mathbb{C}^m$ be a homogeneous subvariety,  $(B'_a,0) \subset (\mathbb{C}^{m'},0)$ be a ball, and $P$ be a projectively pre-compact family of holomorphic function on $B'_a \times (B_1 \cap M) \subset \mathbb{C}^{m'+m}$, then $\tilde{P}$ consists of $\tilde{f} (z) = f(p+rz)$ for $f\in P$, $p\in B'_a$ and $0 \leq r + |p|/a \leq 1/2$, is a projectively pre-compact family of holomorphic function on $B'_{a/2} \times (B_1 \cap M)$.
\end{lm}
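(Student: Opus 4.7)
The plan is to adapt the argument of Lemma \ref{cy3} to the present situation, where in addition to the scaling parameter $r$ there is a translation parameter $p$ acting on the first, Euclidean, factor $\mathbb{C}^{m'}$. I begin by taking any sequence $\tilde{f}_k(z) = c_k f_k(p_k + r_k z) \in \tilde{P}$ with $\|\tilde{f}_k\|_{B'_{a/2} \times (B_1 \cap M)} = 1$, and use the conicity of $P$ to normalize $f_k \in P$ so that $\|f_k\|_{B'_a \times (B_1 \cap M)} = 1$. The parameters $(p_k, r_k)$ lie in the compact set $\{(p,r) : |p|/a + r \leq 1/2\}$, so up to subsequences $p_k \to p_0$ and $r_k \to r_0$; projective pre-compactness of $P$ yields a further subsequence with $f_k \to f_0 \not\equiv 0$.

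If $r_0 > 0$, then $z \mapsto p_0 + r_0 z$ sends $B'_{a/2} \times (B_1 \cap M)$ onto a subset of $B'_a \times (B_1 \cap M)$ with nonempty relative interior, so $f_0(p_0 + r_0 \cdot) \not\equiv 0$ and $f_k(p_k + r_k z) \to f_0(p_0 + r_0 z)$ uniformly; the scalars $c_k$ stay bounded as reciprocals of a positive limit norm, and one extracts a convergent subsequence of $\tilde{f}_k$.

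The main case is $r_0 = 0$. I would Taylor expand $f_k$ at the moving base point $(p_k, 0) \in B'_a \times (B_1 \cap M)$ and group by total degree in $(q - p_k, v)$, obtaining
\[
\tilde{f}_k(z', z'') = c_k \sum_{m \geq 0} r_k^m\, G_k^{[m]}(z', z''),
\]
where each $G_k^{[m]}$ is a polynomial of total degree $m$ on $\mathbb{C}^{m'} \times M$. Homogeneity of $M$ is essential here to identify $G_k^{[m]}$ as a genuine polynomial and to ensure that the space of polynomials of degree at most any fixed $l_0$ on $\mathbb{C}^{m'} \times M$ is finite-dimensional. Let $l_0$ be the order of vanishing of $f_0$ at $(p_0, 0)$ and split $\tilde{f}_k = \tilde{f}_k^{[\leq l_0]} + \tilde{f}_k^{[> l_0]}$. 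The low-degree part lies in this finite-dimensional vector space and is uniformly bounded (by Cauchy estimates on a polydisc of radius $(a/2, 1/2)$ around $(p_k, 0)$ inside $B'_a \times (B_1 \cap M)$), hence is pre-compact after a further subsequence.

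The main obstacle is to show that the high-degree tail $\tilde{f}_k^{[> l_0]}$ tends to zero uniformly on compact subsets of $B'_{a/2} \times (B_1 \cap M)$. The normalization $\|\tilde{f}_k\| = 1$ together with $G_k^{[l_0]} \to G_0^{[l_0]} \not\equiv 0$ forces $|c_k r_k^{l_0}|$ to remain bounded, so $|c_k r_k^m| = |c_k r_k^{l_0}|\, r_k^{m - l_0}$ is of order $r_k^{m - l_0}$ for $m > l_0$; combined with the uniform Cauchy bounds on each $G_k^{[m]}$, the resulting geometric tail is $O(r_k)$ and vanishes uniformly on compact subsets. Adding the two pieces yields a convergent subsequence of $\tilde{f}_k$, proving that $\tilde{P}$ is projectively pre-compact on $B'_{a/2} \times (B_1 \cap M)$.
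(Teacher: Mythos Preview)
Your argument is correct, but it takes a longer route than the paper's. You essentially re-prove Lemma~\ref{cy3} from scratch, now with a moving base point $p_k$: you Taylor-expand $f_k$ at $(p_k,0)$, split into degrees $\leq l_0$ and $> l_0$, use finite-dimensionality for the low part, and bound the tail via $|c_k r_k^{l_0}|\leq C$ together with Cauchy estimates giving geometric decay.

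The paper instead factors the two operations. It first absorbs the translation by setting $\hat{f}_k(z) = f_k(z+p_k)$ (adding $p_k$ only in the $\mathbb{C}^{m'}$-direction). Since $f_k\to f_0$ uniformly and $p_k\to p_0$, the family $\{\hat{f}_k\}$ is again (projectively) pre-compact on $B'_{a/2}\times(B_1\cap M)$. Then, because $\mathbb{C}^{m'}\times M$ is itself homogeneous in $\mathbb{C}^{m'+m}$, the scaling step $\tilde{f}_k(z)=c_k\hat{f}_k(r_k z)$ is handled in one stroke by invoking Lemma~\ref{cy3}. This buys a two-line proof and avoids redoing the degree-splitting analysis. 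Your approach, on the other hand, is self-contained and makes the mechanism (finite-dimensional low part, geometrically small tail) completely explicit; it would also adapt more directly if one ever needed quantitative control on the constants.
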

\noindent{\bf Proof:} Assume $(p_k, r_k,f_k) \rightarrow (p_0, r_0,f_0)$ with $r_k + |p_k|/a \leq 1/2$. Let $\hat{f}_k (z) = f_k(z +p_k)$. Since $\{f_k\}$ is uniformly continuous, $(0, r_k, \hat{f}_k) \rightarrow (0, r_0,\hat{f}_0)$, $\{\hat{f}_k\}$ is pre-compact on $B'_{a/2} \times (B_1 \cap M)$, and $\mathbb{C}^{m'} \times M$ is homogeneous in $\mathbb{C}^{m'+m}$, lemma \ref{cy3} implies that $\{\tilde{f}_k\}$ is also projectively pre-compact on $B'_{a/2} \times (B_1 \cap M)$, where $\tilde{f}_k (z) = c_kf_k(p_k + r_kz) = c_k\hat{f}_k ( r_kz)$ so that $\|\tilde{f}_k\|_{B'_{a/2} \times (B_1 \cap M)} =1$.
\hfill $\Box$\\

\begin{lm}
\label{cy6}
Let $M \subset \mathbb{C}^m$ be a subvariety. For $R>1$ and a projectively pre-compact family $P$ of holomorphic function on $M \cap D$, where $B_R \subset D$ and $M \cap D$ is irreducible, there exists $C_1 >0$, such that for any $f \in P$ satisfying $f\not=0$ on $M \cap B_1$, $|f(0)|=1$, we have $\|f\|_{M \cap B_R} \leq C_1$.
\end{lm}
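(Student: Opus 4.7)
I would argue by contradiction. Suppose there is a sequence $\{f_k\} \subset P$ satisfying $|f_k(0)|=1$ and $f_k \neq 0$ on $M \cap B_1$, yet $\|f_k\|_{M \cap B_R} \to \infty$. The strategy is to normalize $\{f_k\}$ so that projective pre-compactness yields a non-trivial limit, and then use the uniform non-vanishing of $f_k$ on $M \cap B_1$ to contradict the forced vanishing of the limit at $0$.

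First I would fix $R_1$ with $R < R_1$ and $\overline{B_{R_1}} \subset D$, set $c_k := \|f_k\|_{M \cap \overline{B_{R_1}}} \geq \|f_k\|_{M \cap B_R} \to \infty$, and put $\tilde f_k := f_k/c_k$, so that $\|\tilde f_k\|_{M \cap \overline{B_{R_1}}} = 1$ and $|\tilde f_k(0)| = 1/c_k \to 0$. Applying Lemma~\ref{cy4} (restriction to a subset with nonempty interior preserves projective pre-compactness on an irreducible variety) to the restriction of $P$ to $M \cap B_{R_1}$, the bounded sequence $\{\tilde f_k\}$ has a subsequence converging uniformly on $M \cap \overline{B_{R_1}}$ to a holomorphic function $\tilde f_0$ with $\|\tilde f_0\|_{M \cap \overline{B_{R_1}}}=1$ (in particular $\tilde f_0 \not\equiv 0$) and $\tilde f_0(0)=0$.

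Next I would probe $\tilde f_0$ along a one-dimensional slice. Since $M$ is irreducible of positive dimension through $0$ (the zero-dimensional case being trivial), a generic affine subspace of complementary dimension through $0$ cuts out a curve in $M$ at least one of whose analytic branches is parametrized by a non-constant holomorphic map $\phi:(\Delta_r,0)\to (M\cap B_1,0)$ with $\phi(\Delta_r \setminus \{0\}) \subset M \setminus {\rm Sing}(M)$, after shrinking $r$. The pullbacks $g_k := \tilde f_k \circ \phi$ are holomorphic and nowhere vanishing on $\Delta_r$ and converge uniformly on $\overline{\Delta_{r/2}}$ to $g_0 := \tilde f_0 \circ \phi$, so Hurwitz's theorem on the disc forces $g_0$ either to be nowhere vanishing on $\Delta_{r/2}$ or to vanish identically. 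In the first case $\tilde f_0(0)=g_0(0)\neq 0$ contradicts $\tilde f_0(0)=0$. In the second case, for every $t\in\Delta_{r/2}\setminus\{0\}$ the smooth point $\phi(t)$ is a zero of $\tilde f_0$; because $M\cap D$ irreducible implies that $(M\cap D)\setminus {\rm Sing}(M)$ is a connected complex manifold, the identity principle forces $\tilde f_0\equiv 0$ on the smooth locus and, by continuity, on $M\cap D$, contradicting $\|\tilde f_0\|_{M \cap \overline{B_{R_1}}}=1$.

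The main obstacle is producing the holomorphic arc $\phi$ whose image lies in the smooth locus away from $0$; once that is in hand the Hurwitz/identity-principle dichotomy closes both cases cleanly. For the normal $M$ actually arising in the applications of this lemma, one can bypass the arc entirely: since $\tilde f_0\not\equiv 0$ on the irreducible $M\cap D$ with $\tilde f_0(0)=0$, the zero set $\{\tilde f_0 =0\}$ has pure codimension one by Krull's principal-ideal theorem, while ${\rm Sing}(M)$ has codimension $\geq 2$ by normality, so the zero set must meet $(M\cap B_1)\setminus {\rm Sing}(M)$, and Hurwitz applied directly on the connected smooth locus (together with the identity principle) delivers the same contradiction.
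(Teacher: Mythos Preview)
Your approach is essentially the paper's: argue by contradiction, normalize, extract a nontrivial limit $\tilde f_0$ with $\tilde f_0(0)=0$, then restrict to a curve through $0$ and use Hurwitz/Rouch\'e to contradict the nowhere-vanishing of the $f_k$ on $M\cap B_1$. The paper normalizes by $\|\cdot\|_{M\cap D}$ directly rather than passing to an intermediate $\overline{B_{R_1}}$ via Lemma~\ref{cy4}, but this is cosmetic.

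There is, however, one genuine slip in your main line. In Case~2 of the Hurwitz dichotomy you have $\tilde f_0\circ\phi\equiv 0$ on the disc and then invoke ``the identity principle'' on the connected smooth locus to force $\tilde f_0\equiv 0$. That step fails: vanishing along a single holomorphic arc does not propagate to a higher-dimensional connected complex manifold (e.g.\ $z_1$ on $\mathbb{C}^2$ vanishes on the line $\{z_1=0\}$ without being identically zero). The fix is to choose the arc \emph{after} $\tilde f_0$ is in hand, so that $\tilde f_0\circ\phi\not\equiv 0$; this is exactly what the paper does (``take a smooth curve $Y$ in $M$ passing through $0$ such that $g_0|_Y$ is not identically zero near $0$''), and it eliminates Case~2 outright. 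Since $\tilde f_0\not\equiv 0$ on the irreducible germ at $0$, a generic linear slice achieves this. Your alternative argument for normal $M$ at the end---the codimension-one zero set of $\tilde f_0$ must meet the smooth locus, and then the several-variable Hurwitz dichotomy on the connected smooth locus gives the contradiction---is correct and independently closes the gap.
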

\noindent{\bf Proof:} If the assertion is not true, then there exists $f_k \in P$, such that $\|f_k\|_{M \cap B_R} \rightarrow +\infty$. Consequently, $m_k = \|f_k\|_{M \cap D} \rightarrow +\infty$. Let $g_k = f_k/m_k$. Then  $\|g_k\|_{M \cap D} =1$. Since $P$ is a projectively pre-compact family, we may assume $g_k \rightarrow g_0$, then $g_0(0)=0$, $g_0 \not\equiv 0$. Since $M \cap D$ is irreducible, $g_0 \not\equiv 0$ in a neighborhood of $0\in M$. Take a smooth curve $Y$ in $M$ passing through $0$ such that $g_0|_Y$ is not identically zero near $0$. Since $g_0(0)=0$, by residue theorem, $B_1 \cap Y \cap g_k^{-1} (0)$ is non-empty for $k$ large enough, which is a contradiction.
\hfill $\Box$\\

\begin{co}
\label{cy7}
Assume that $M \subset \mathbb{C}^m$ is a homogeneous subvariety that is locally homogeneous. Let $f$ be holomorphic function on $M$ such that $f|_{{\rm Sing} (M)} \equiv 0$. Then for $R>1$ there exists $C>0$ such that for any $p\in M$, $\|f\|_{M \cap B_{Rr_p} (p)} \leq C|f(p)|$, where $r_p = {\rm Dist} (p, M_0)$, $M_0 = f^{-1}(0)$.
\end{co}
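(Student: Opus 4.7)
The plan is to argue by contradiction, using the machinery of projectively pre-compact families developed in Lemmas \ref{cy3}--\ref{cy6} to extract a uniform bound. Suppose the conclusion fails; then there is a sequence $p_k \in M \setminus M_0$ with
$$A_k := \|f\|_{M \cap B_{Rr_k}(p_k)}/|f(p_k)| \to \infty, \qquad r_k := r_{p_k} = {\rm Dist}(p_k, M_0).$$
Set $g_k(z) := f(p_k + r_k z)/|f(p_k)|$, which is holomorphic on $\tilde{M}_k := (M-p_k)/r_k$; then $|g_k(0)|=1$ and $\|g_k\|_{\tilde{M}_k \cap B_R} = A_k \to \infty$. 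The goal is to show that $\{g_k\}$ sits inside a projectively pre-compact family of holomorphic functions on an irreducible domain containing $B_R$, whereupon Lemma \ref{cy6} will force $\|g_k\|_{\tilde{M}_k \cap B_R} \leq C_1$, a contradiction.

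First I would normalize via the global $\mathbb{C}^*$-action. Since $M$ is homogeneous and $0 \in {\rm Sing}(M) \subset M_0$ (using $f|_{{\rm Sing}(M)} \equiv 0$), we have $r_k \leq |p_k|$, so applying the global dilation $z \mapsto z/|p_k|$ we may assume $|p_k|=1$. By compactness, pass to a subsequence with $p_k \to p_\infty \in M$. If $p_\infty$ is a smooth point of $M$, the varieties $\tilde{M}_k$ either converge to a fixed affine subspace (when $r_k \to 0$) or lie in a bounded smooth family (when $r_k$ stays bounded below), and $\{g_k\}$ is projectively pre-compact on a fixed neighborhood by standard Montel-type arguments. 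If $p_\infty \in {\rm Sing}(M)$, I would use local homogeneity: there is a neighborhood $U$ of $p_\infty$ with $M \cap U \cong (\Sigma \cap U) \times V$, where $\Sigma$ is the stratum through $p_\infty$ and $V \subset \mathbb{C}^{m'}$ is a homogeneous subvariety. Decompose $p_k = (q_k, v_k) \in \Sigma \times V$ with $q_k \to p_\infty$ and $v_k \to 0$; since $f$ vanishes on $\Sigma \times \{0\} \subset {\rm Sing}(M)$, the distance $r_k$ is comparable to a constant times $|v_k|$.

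In this setup, the Euclidean rescaling $z \mapsto p_k + r_k z$ decomposes as a translation by $q_k$ in the $\Sigma$-direction together with a $\mathbb{C}^*$-scaling in the homogeneous factor $V$, which is precisely the combination of operations controlled by Lemma \ref{cy5} applied with $V$ in the role of $M$ there. Thus the family obtained from $f|_U$ by these translations and rescalings is projectively pre-compact on a fixed product domain; Lemma \ref{cy4} then restricts this property to the irreducible subvariety $\tilde{M}_k \cap B_R$, and the normalization by $|f(p_k)|$ preserves projective pre-compactness. Lemma \ref{cy6} finally supplies the uniform bound $\|g_k\|_{\tilde{M}_k \cap B_R} \leq C_1$, contradicting $A_k \to \infty$. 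The main obstacle is the book-keeping in the singular case: one must correctly identify the Euclidean dilation by $r_k$ with the homogeneous rescaling on $V$ under the product trivialization, verify that the initial family of holomorphic functions supplied by $f$ on $U$ is projectively pre-compact in the hypothesis of Lemma \ref{cy5}, and ensure that the limiting base point $p_\infty$ can always be normalized to a compact locus using the global $\mathbb{C}^*$-invariance of $M$.
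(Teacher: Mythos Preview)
Your overall strategy---contradiction, rescaling, and appeal to Lemmas \ref{cy3}--\ref{cy6}---matches the paper's. However, there is a genuine gap in the singular case.

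First, the claim that $r_k$ is comparable to $|v_k|$ is not justified and is generally false. From $f|_{{\rm Sing}(M)}\equiv 0$ you only get $r_k \leq C|v_k|$, since $\Sigma\times\{0\}\subset M_0$; but $M_0=f^{-1}(0)$ can have components in the smooth part of $M$ far from the stratum, so $r_k$ can be much smaller than $|v_k|$. Relatedly, your decomposition of $z\mapsto p_k+r_k z$ does not match Lemma \ref{cy5}: that lemma handles $\tilde f(z)=f(p+rz)$ with $p$ in the \emph{base} factor $B'_a$ only, i.e.\ it centers at the projection of $p_k$ to the stratum and rescales both factors, not a centering at $p_k$ itself. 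The paper accordingly rescales by $|y(p_k)|$ (the distance to the stratum $M^{(j)}$), not by $r_k$, and the new base point becomes $\tilde p_k=(0,y(p_k)/|y(p_k)|)$ at distance $1$ from $M^{(j)}$.

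Second, and more seriously, a single rescaling is not enough. After rescaling so that ${\rm Dist}(\tilde p_k,M^{(j)})=1$, the limit $\tilde p_0$ lies in some stratum $M^{(j')}$ with $j'>j$, but $j'$ need not equal $n$: $\tilde p_0$ can still be singular (a singular point of the homogeneous fiber $U_j$). The paper therefore \emph{iterates}: replace $(f_k,p_k,j)$ by $(\tilde f_k,\tilde p_k,j')$---the inequality $\|f_k\|_{B_{Rr_{p_k}}(p_k)}\geq k|f_k(p_k)|$ is scale-invariant and persists---and repeat. Since $j'>j$ at each step and $j\leq n$, the process terminates with $p_0\in M^{(n)}$, where Lemma \ref{cy5} (now with trivial fiber) and Lemma \ref{cy6} yield the contradiction. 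Your argument needs this induction on the stratification index; without it, Lemma \ref{cy6} cannot be invoked because neither the ambient variety nor the family has been reduced to a fixed smooth model.
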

\noindent{\bf Proof:} If the corollary is not true, then there exists $p_k \in M$ such that $\|f\|_{ B_{Rr_{p_k}} (p_k)} \geq k|f (p_k)|$. For induction purpose, let $f_k =f$, then $r_{p_k} = {\rm Dist} (p_k, M \cap f_k^{-1} (0))$. Clearly, $\{f_k\}$ is projectively pre-compact on $M$.

By possibly taking subsequence, we may assume that $p_k \rightarrow p_0$ such that $p_0\in M^{(j)}$. Take the local homogenous neighborhood $U \cong (U\cap M^{(j)}) \times U_j$ of $p_0\in M^{(j)}$ with the embedding $(x,y): U \rightarrow (U\cap M^{(j)}) \times \mathbb{C}^m$ with coordinates $x$ on $U\cap M^{(j)}$ and $y$ on $\mathbb{C}^m$ such that $x(p_0)=0$. One may assume $B'_a \times (B_1 \cap U_j) \subset U$. By lemma \ref{cy4}, $\{f_k\}$ is projectively pre-compact on $B'_a \times (B_1 \cap U_j)$. $|y(p_k)| + |x(p_k)|/a \rightarrow 0$, hence $\leq 1/2$ for $k$ large. Let $\tilde{f}_k (x,y) = c_kf_k(x(p_k) + |y(p_k)|x, |y(p_k)|y)$ so that $|\tilde{f}_k (\tilde{p}_k)| =1$, where $(x,y) (\tilde{p}_k) = (0, y(p_k)/|y(p_k)|)$, which implies that ${\rm Dist} (\tilde{p}_k, M^{(j)}) =1$. By lemma \ref{cy5}, $\{\tilde{f}_k\}$ is a projectively pre-compact family on $B'_{a/2} \times (B_1 \cap U_j)$. $f|_{{\rm Sing} (M)} \equiv 0$ implies that $r_{p_k} \leq {\rm Dist} (p_k, M^{(j)}) \rightarrow 0$. Hence for $k$ large, $B_{Rr_{p_k}} (p_k) \subset D:=B'_{a/2} \times B_1$. Without lost of generality, we may assume $\tilde{p}_k \rightarrow \tilde{p}_0 \in M^{(j')}$ for $j'>j$. We now replace $(f_k, p_k,j)$ with $(\tilde{f}_k, \tilde{p}_k,j')$. We still have $\|f_k\|_{ B_{Rr_{p_k}} (p_k)} \geq k|f_k (p_k)|$.

This process can be repeated. Since $j'>j$, the precess has to stop when $j'=n$. Then we have $p_k \rightarrow p_0 \in M^{(n)}$, $|f_k(p_k)|=1$, ${\rm Dist} (p_k, M^{(j)})=1$, $\{f_k\}$ is a projectively pre-compact family on $B'_{a/2} \times (B_1 \cap U_j) = M \cap D$ and $B_{Rr_{p_k}} (p_k) \subset D$ for $k$ large. By lemma \ref{cy5}, $\{\tilde{f}_k(z) = f_k(p_k + r_{p_k} z)\}$ is a projectively pre-compact family on $M \cap B_R$. Apply lemma \ref{cy6}, to the family $\{\tilde{f}_k(z)\}$, we have $\|f_k\|_{ B_{Rr_{p_k}} (p_k)} \leq C_1|f_k (p_k)|$ for certain $C_1>0$, which is a contradiction.
\hfill $\Box$\\

For $\rho \geq 0$, let $M_{\Delta_\rho} = f^{-1} (\Delta_\rho)$, where $\Delta_\rho = \{t\in \mathbb{C}: |t| \leq \rho\}$.\\

\begin{lm}
\label{cy8}
There exists constants $N, C>0$ such that for any $\rho>0$, one can find a locally finite cover $\{B_{2r_i}(p_i)\}_{i\in I}$ of $M_{\Delta_\rho} \setminus M_0$ with the property that for any $p\in M_{\Delta_\rho}$, the number of $i$ such that $p\in M_{\Delta_\rho} \cap B_{2r_i}(p_i)$ is less than $N$. Furthermore, if $M \subset \mathbb{C}^m$ is a homogeneous subvariety that is locally homogeneous, we have $\displaystyle \sup_{B_{2r_i}(p_i)} |f| \leq C\rho$ for all $i$.
 \end{lm}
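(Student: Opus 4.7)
The plan is a Vitali-style cover at a scale finer than $d(p,M_0)$, together with Corollary \ref{cy7} for the sup estimate. Fix a constant $K$ (say $K=10$); for each $p\in M_{\Delta_\rho}\setminus M_0$ set $\tilde r_p=d(p,M_0)$, and by Zorn's lemma pick a maximal disjoint subfamily $\{B_{\tilde r_{p_i}/(8K)}(p_i)\}_{i\in I}$ of $\{B_{\tilde r_p/(8K)}(p):p\in M_{\Delta_\rho}\setminus M_0\}$; let $r_i=\tilde r_{p_i}/K$. Using the triangle inequality $|\tilde r_p-\tilde r_{p'}|\leq|p-p'|$, maximality gives some $i$ with $|p-p_i|<(\tilde r_p+\tilde r_{p_i})/(8K)$, from which $\tilde r_p$ and $\tilde r_{p_i}$ are comparable within a ratio close to $1$ and $|p-p_i|<r_i$; hence $\{B_{2r_i}(p_i)\}$ covers $M_{\Delta_\rho}\setminus M_0$.

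To bound the multiplicity at $q\in M_{\Delta_\rho}$, suppose $q\in B_{2r_i}(p_i)$. Then $|q-p_i|<2\tilde r_{p_i}/K$, and the same triangle inequality traps $\tilde r_{p_i}$ in a narrow interval around $\tilde r_q$ (ratio within $5/4$ when $K=10$) and forces $p_i\in B_{\tilde r_q/4}(q)$. Consequently, for every $i$ contributing to the count at $q$, the \emph{disjoint} balls $B_{\tilde r_{p_i}/(8K)}(p_i)$ have radii at least $\tilde r_q/96$ and sit inside $B_{\tilde r_q/3}(q)$, so Euclidean volume comparison in $\mathbb{R}^{2m}$ bounds their number by a constant $N=N(m)$. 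The case $q\in M_0$ is automatic since $2r_i<\tilde r_{p_i}$ forces $q\notin B_{2r_i}(p_i)$. Local finiteness on $M_{\Delta_\rho}\setminus M_0$ is then immediate: any compact set away from $M_0$ has $r_i$ bounded below for all intersecting $i$, so only finitely many of the disjoint balls $B_{\tilde r_{p_i}/(8K)}(p_i)$ can fit into a bounded neighborhood.

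Finally, for the sup bound under the local-homogeneity hypothesis, $p_i\in M_{\Delta_\rho}$ gives $|f(p_i)|\leq\rho$, and $B_{2r_i}(p_i)\subset B_{2\tilde r_{p_i}}(p_i)$ by construction, so Corollary \ref{cy7} applied with $R=2$ yields $\sup_{M\cap B_{2r_i}(p_i)}|f|\leq C_0|f(p_i)|\leq C_0\rho$ for $C_0=C_0(M,f)$. The main delicacy is balancing the scale factor $K$: it must be large enough to make the comparison $\tilde r_p\approx \tilde r_{p_i}$ robust (any fixed $K>4$ suffices), and this is what keeps both $N$ and $C$ independent of $\rho$.
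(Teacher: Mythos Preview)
Your proof is correct but follows a different route from the paper. The paper uses a \emph{greedy} selection: it sets $r_p = d(p,M_0)$ directly (no shrinking factor $K$), picks $p_1$ maximizing $r_p$ over $M_{\Delta_\rho}$, and inductively picks $p_i$ maximizing $r_p$ over the complement of the balls already chosen. The multiplicity bound is then obtained by an \emph{angle} argument: if $p\in B_{2r_{p_i}}(p_i)\cap B_{2r_{p_j}}(p_j)$ with $j<i$, the greedy ordering forces $d(p_i,p_j)\geq 2r_{p_j}\geq \max(d(p,p_i),d(p,p_j))$, hence $\angle p_ipp_j\geq \pi/3$, and a packing bound on the unit sphere gives $N=N(m)$. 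The covering property is argued by contradiction using disjointness of the half-radius balls $\{B_{r_{p_i}}(p_i)\}$.

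Your Vitali-type argument (maximal disjoint family at scale $\tilde r_p/(8K)$, then volume comparison) is equally valid and perhaps more textbook. The substantive difference is the output: the paper produces $r_i=d(p_i,M_0)$, so each $B_{2r_i}(p_i)$ genuinely meets $M_0$, whereas your balls (with $r_i=\tilde r_{p_i}/10$) stay disjoint from $M_0$. Both satisfy the lemma as stated, but be aware that in the subsequent application (Theorem \ref{cy2}) the paper explicitly uses $r_{k,i}=\mathrm{Dist}(p_{k,i},M_0)$ and the nonemptiness of $M_0\cap B_{2r_{k,i}}(p_{k,i})$; your cover would require a minor adjustment there. For the sup estimate both proofs simply invoke Corollary \ref{cy7} with $R=2$.
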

\noindent{\bf Proof:} For $p\in M_{\Delta_\rho}$, let $r_p := d(p, M_0)$. Find $p_1\in M_{\Delta_\rho}$ such that $r_{p_1} = \max_{p\in M_{\Delta_\rho}} r_p$. By induction, we can find $p_i\in M_{\Delta_\rho}$ such that
\[
r_{p_i} = \max_{p\in M_{\rho,i}} r_p, \quad \mbox{ where } M_{\rho,i} = M_{\Delta_\rho} \left\backslash \bigcup_{j=1}^{i-1} B_{2r_{p_j}} (p_j). \right.
\]

For any $p\in M_{\Delta_\rho}$, let $I_p$ denote the set of $k$ such that $p\in B_{2r_{p_k}} (p_k)$. For any $i,j \in I_p$, assume $j<i$, then $p_i \not\in B_{2r_{p_j}} (p_j)$, $d(p_i,p_j) \geq 2r_j \geq \max(d(p,p_i), d(p,p_j))$. Hence $\angle p_ipp_j \geq \pi/3$. This implies that $|I_p| \leq N(n)$.

If there is $p'\in M_{\Delta_\rho}\setminus M_0$ such that $p' \not\in B_{2r_{p_i}} (p_i)$ for all $i$, then by our construction, $\{p_i\}$ is an infinite set and $r_{p_i} \geq r_{p'} = d(p', M_0)>0$ for all $i$. Notice that $\{B_{r_{p_i}} (p_i)\}$ are disjoint. These last 3 statements form a contradiction. Hence $\{B_{2r_{p_i}} (p_i)\}$ covers $M_t$ for $0< |t| <\rho$.

If $M \subset \mathbb{C}^m$ is a homogeneous subvariety that is locally homogeneous, by corollary \ref{cy7}, there exists $C>0$ such that $\displaystyle \sup_{B_{2r_i}(p_i)} |f| \leq C\rho$ for all $i$.
\hfill $\Box$\\

\begin{theorem}
\label{cy2}
Assume that $M \subset \mathbb{C}^m$ is a homogeneous subvariety that is locally homogeneous, and $M_0$ is irreducible with only canonical singularities, $\psi: M \rightarrow \mathbb{C}$ is holomorphic and is not identically zero on $M_0$. Then for $\epsilon>0$ small enough, there exists $C >0$ such that for any $\rho \geq 0$,
\[
\int_{M_{\Delta_\rho}} \frac{d\mu}{|\psi|^{2\epsilon}} \leq C \rho^2, \mbox{ where } M_{\Delta_\rho} = \pi^{-1} (\Delta_\rho).
\]
\end{theorem}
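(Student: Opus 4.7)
The plan is to apply Fubini relative to the fibration $\pi$ in order to reduce to a uniform upper bound on the fiber integrals, and then to establish this uniform bound by combining Proposition \ref{2.3} at $t=0$ with the local homogeneity of $M$.

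First, using the trivialization $\Omega$ of $\mathcal{K}_{\mathcal{M}}$ together with the relation $\Omega_t=\iota_V\Omega|_{M_t}$ for any lift $V$ of $\partial/\partial t$, the standard holomorphic coarea formula gives
\[
\int_{M_{\Delta_\rho}}\frac{d\mu}{|\psi|^{2\epsilon}}=c\int_{\Delta_\rho}dt\wedge d\bar t\cdot b(t),\qquad b(t):=\int_{M_t}\frac{d\mu_t}{|\psi|^{2\epsilon}}.
\]
Hence if we produce $\epsilon, C_0>0$ with $b(t)\le C_0$ uniformly for $t$ in a neighborhood of $0$, then $\int_{M_{\Delta_\rho}}d\mu/|\psi|^{2\epsilon}\le cC_0\cdot|\Delta_\rho|\le C\rho^2$, which is the desired estimate.

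Second, at the central fiber $t=0$, the hypotheses of the theorem ($M_0$ irreducible Calabi-Yau with only canonical singularities and $\psi|_{M_0}\not\equiv 0$) coincide with those of Proposition \ref{2.3}, yielding $b(0)<\infty$ for some $\epsilon>0$. For small $t\ne 0$, I would localize near each $p\in M_0$ using the local product decomposition $M\cong (U\cap M^{(j)})\times M_j$ with $M_j$ homogeneous, and rescale via the $\mathbb{C}^*$-action on the homogeneous factor $M_j$ so that a ball $B_{2r}(p)\cap M_t$ becomes a unit-scale piece of a model variety. The resulting family of rescaled pairs $(\widehat M,\widehat\psi)$, parametrized by $p$ and $r$, is projectively pre-compact by Lemmas \ref{cy3}--\ref{cy6}; the hypothesis $\psi|_{M_0}\not\equiv 0$ precludes degenerate limits in which $\widehat\psi_\infty$ vanishes identically on a limiting central fiber. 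Proposition \ref{2.3} applied to the compact family of limit pairs then produces a uniform constant in the local estimate, and summing over the bounded-multiplicity cover of $M_t$ supplied by Lemma \ref{cy8} yields $b(t)\le C_0$ independent of $t$.

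The main technical difficulty is exactly this uniform upper bound on $b(t)$ as $t\to 0$. Lemma \ref{3.1} provides only lower semi-continuity of $b(t)$, which goes in the wrong direction, so the bound cannot simply be transferred from $t=0$ by continuity. The projective pre-compactness of the rescaled family (in particular preserving irreducibility, the Calabi-Yau structure with canonical singularities, and non-triviality of the restricted $\psi$ in every limit) is the decisive input, making the constant in Proposition \ref{2.3} uniform across scales and strata. Combined with the Fubini decomposition above, this delivers the $C\rho^2$ bound.
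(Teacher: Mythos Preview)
Your strategy inverts the paper's logic. You propose to prove a uniform fiber bound $b(t)\le C_0$ and then integrate; the paper instead proves the tube bound $\int_{M_{\Delta_\rho}}|\psi|^{-2\epsilon}d\mu\le C\rho^2$ directly by contradiction, and only afterwards (Proposition \ref{cy9}) extracts a fiber bound --- and that extraction requires the extra hypothesis that $\pi$ itself is $w$-homogeneous, which Theorem \ref{cy2} does not assume. So the uniform fiber bound you are trying to prove is strictly stronger than what Theorem \ref{cy2} asserts, and under the stated hypotheses the paper does not claim it.

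The concrete gap is your summing step. Lemma \ref{cy8} does give a bounded-multiplicity cover, but bounded multiplicity only says each point lies in at most $N$ balls; it does not make a sum of uniformly bounded local contributions finite. After you rescale each ball $B_{2r_i}(p_i)$ to unit size and obtain (by compactness) a uniform bound on the unit-scale integral, you must un-rescale and add up contributions weighted by powers of $r_i$, and you have provided no mechanism to control $\sum_i r_i^{\alpha}$. The paper solves exactly this problem by using $M_0$ as a reference measure: it compares the tube integral on each ball to $\int_{M_0\cap B_{2r_i}(p_i)}|\psi|^{-2\epsilon}d\mu_0$, and then bounded multiplicity of the cover \emph{over $M_0$} gives $\sum_i\int_{M_0\cap B_{2r_i}(p_i)}\le N\int_{M_0}<\infty$ by Proposition \ref{2.3}. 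Pigeonholing then isolates one bad ball, and the rescaled limit yields $\int_{B_2}|\psi_0|^{-2\epsilon}d\mu=+\infty$, a contradiction. Your argument lacks any analogue of this $M_0$-comparison; without homogeneity of $\pi$ (which would let rescaling relate different fibers $M_t$), there is no evident substitute. The fix is to argue on the tube integral directly, as the paper does, rather than attempting the stronger fiberwise statement.
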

\noindent{\bf Proof:} By Proposition \ref{2.3}, we only need to prove that there exists $C>0$ such that
\[
\int_{M_{\Delta_\rho}} \frac{d\mu}{|\psi|^{2\epsilon}} \leq C \rho^2 \int_{M_0} \frac{d\mu_0}{|\psi|^{2\epsilon}}.
\]
If it is not true, then there exist $\rho_k$ such that
\[
\int_{M_{\Delta_{\rho_k}}} \frac{d\mu}{|\psi|^{2\epsilon}} \geq k\rho_k^2 \int_{M_0} \frac{d\mu_0}{|\psi|^{2\epsilon}}.
\]
By lemma \ref{cy8}, one may find locally finite cover $\{B_{2r_{k,i}}(p_{k,i})\}_{i\in I_k}$ of $M_{\Delta_{\rho_k}} \setminus M_0$ with $r_{k,i} = {\rm Dist} (p_{k,i}, M_0)$, and constants $N, C>0$ (independent of $k$) such that for any $p\in M_0$, the number of $i$ such that $p\in M_0 \cap B_{2r_{k,i}}(p_{k,i})$ is less than $N$, and $\displaystyle \sup_{B_{2r_{k,i}}(p_{k,i})} |f| \leq C\rho_k$. Then there exists $i_k \in I_k$ (with $r_k := r_{k,i_k}$ and $p_k := p_{k,i_k}$) such that

\[
\int_{M_{\Delta_{\rho_k}} \cap B_{2r_k} (p_k)} \frac{d\mu}{|\psi|^{2\epsilon}} \geq \frac{k \rho_k^2}{N} \int_{M_0 \cap B_{2r_k} (p_k)} \frac{d\mu_0}{|\psi|^{2\epsilon}}.
\]
Normalize $B_{2r_k} (p_k)$ to $B_2(0)$, $(f,\psi,\rho_k)$ is accordingly normalized to $(f_k,\psi_k, \tilde{\rho}_k)$ so that $\displaystyle \sup_{B_2(0)} |f_k| = \sup_{B_2(0)} |\psi_k| =1$, and $\tilde{\rho}_k \geq 1/C$. Let $X^k_t = f_k^{-1} (t)$ and $X^k_{\Delta_\rho} = f_k^{-1} (\Delta_\rho)$. By our construction, $X^k_0 \cap B_1(0) \not= \emptyset$, hence ${\rm Vol} (X^k_0 \cap B_2(0)) \geq C >0$. (For simplicity, we still use $\Omega_t$ and $\Omega$ to denote the corresponding normalized Calabi-Yau forms.) Then
\[
\int_{B_2(0)} \frac{d\mu}{|\psi_k|^{2\epsilon}} \geq \int_{X^k_{\Delta_{\tilde{\rho}_k}} \cap B_2(0)} \frac{d\mu}{|\psi_k|^{2\epsilon}} \geq \frac{k}{NC^2} \int_{X^k_0 \cap B_2(0)} \frac{d\mu_0}{|\psi_k|^{2\epsilon}}
\]
\[
\geq \frac{k}{NC^2} \int_{X^k_0 \cap B_2(0)} \frac{1}{|df_k|^2|\psi_k|^{2\epsilon}}\geq Ck{\rm Vol} (X^k_0 \cap B_2(0)) \geq Ck.
\]
Since $f_k$ and $\psi_k$ are polynomials with bounded degree. By taking subsequence, we may assume $(f_k, \psi_k, \tilde{\rho}_k) \rightarrow (f_0, \psi_0, \tilde{\rho}_0)$. (Notice that $\tilde{\rho}_0 \not=0$.) Then
\[
\int_{B_2(0)} \frac{d\mu}{|\psi_0|^{2\epsilon}} = \lim_{k \rightarrow +\infty}\int_{B_2(0)} \frac{d\mu}{|\psi_k|^{2\epsilon}} = + \infty.
\]
This is a contradiction.
\hfill $\Box$\\

Consider $\mathbb{C}^m$ with the weighted $\mathbb{C}^*$-action $\rho_t(z) = (t^{w_1} z_1, \cdots, t^{w_m} z_m)$ with the weight vector $w = (w_1, \cdots, w_m) \in \mathbb{Z}_+^m$. (For convenience, we would use $\mathbb{C}^m_w$ to denote $\mathbb{C}^m$ with the weighted action, and $\mathbb{C}^m$ to refer to the usual action, where all $w_i=1$.) There is a natural $w$-homogeneous branched covering $\phi_w: \mathbb{C}^m_w \rightarrow \mathbb{C}^m$ of weight $[w]$ (the smallest common multiple of all $w_i$) defined as $\phi_{w,i} (z) = z_i^{[w]/w_i}$. For $M \subset \mathbb{C}^m$ that is $w$-homogeneous, $\phi_w (M) \subset \mathbb{C}^m$ is homogeneous. In the rest of this section, we mainly concern $M \subset \mathbb{C}^m$ that is $w$-homogeneous. (Without lost of generality, we may assume that $\rho_t M \subset M$ for $|t|\leq 1$.) When $M$ is normal, a $w$-homogeneous holomorphic function on smooth part of $M$ can be extended to a $w$-homogeneous holomorphic function on $\mathbb{C}^m$, hence defines a $w$-homogeneous holomorphic function on $M$.\\

\begin{prop}
\label{cy9}
Assume that $M \subset \mathbb{C}^m$ is $w$-homogeneous, $f$ and $\psi$ are $w$-homogeneous holomorphic functions on $M$ of weight $w_f$ and $w_\psi$, and for any $0 \leq r \leq 1$,
\[
\int_{M_{\Delta(r)}} \frac{d\mu}{|\psi|^{2\epsilon}} = \int_{\Delta(r)} dtd\bar{t} \int_{M_t} \frac{d\mu_t}{|\psi|^{2\epsilon}} \leq Cr^2.
\]
Then there exists $C>0$ such that for $|t|\leq 1$,
\[
I_t = \int_{M_t} \frac{d\mu_t}{|\psi|^{2\epsilon}} \leq C.
\]
\end{prop}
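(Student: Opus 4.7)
The plan is a scaling argument exploiting the $w$-homogeneity. First I would observe that the $\mathbb{C}^*$-action $\rho_s(z) = (s^{w_1}z_1, \ldots, s^{w_m}z_m)$ preserves $M$ and restricts to a biholomorphism $\rho_s : M_t \to M_{s^{w_f}t}$, since $f$ is $w$-homogeneous of weight $w_f$. The holomorphic volume form $\Omega$ on $M$ must itself be $w$-homogeneous, $\rho_s^*\Omega = s^{w_\Omega}\Omega$ for some integer weight $w_\Omega$, because $\rho_s^*\Omega/\Omega$ is a nowhere-zero holomorphic function of $s$ on $\mathbb{C}^*$ and hence a monomial. Combined with $\rho_s^* df = s^{w_f} df$ and the adjunction identity $\Omega = df \wedge \Omega_t$, this yields $\rho_s^*\Omega_{s^{w_f}t} = s^{w_\Omega - w_f} \Omega_t$ on $M_t$. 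Together with $\rho_s^*\psi = s^{w_\psi}\psi$, change of variables along $\rho_s$ gives the transformation law
\[
I_{s^{w_f}t} \;=\; |s|^{k}\, I_t, \qquad k := 2(w_\Omega - w_f) - 2\epsilon w_\psi.
\]

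Next I would specialize this identity to extract the shape of $I_t$. Taking $|s|=1$ and using that $s\mapsto s^{w_f}$ is surjective on $S^1$ (as $w_f\in\mathbb{Z}_+$), $I_t$ depends only on $|t|$. Choosing positive real $s = |t|^{-1/w_f}$ and a reference $t_0$ with $|t_0|=1$, one obtains $I_t = |t|^{k/w_f} I_*$, where $I_* := I_{t_0}$. The finite integral hypothesis, combined with Fubini's theorem, ensures that $I_t < +\infty$ for almost every $t$; the scaling relation then forces $I_* < +\infty$.

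Finally, substituting $I_t = |t|^{k/w_f} I_*$ into the assumption and passing to polar coordinates gives
\[
Cr^2 \;\geq\; \int_{\Delta(r)} I_t\, dt\, d\bar{t} \;=\; C_0\, I_*\, r^{k/w_f + 2}
\]
for some positive constant $C_0$ and all $r\in(0,1]$. Assuming $I_* > 0$ (otherwise the conclusion is trivial), letting $r\to 0^+$ forces $k/w_f \geq 0$. Therefore $|t|^{k/w_f}\leq 1$ for $|t|\leq 1$, and I conclude $I_t \leq I_*$, which is the required uniform bound.

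The main anticipated obstacle is carefully justifying the scaling law for $\Omega_t$ when $M$ (equivalently $M_0$) has singularities: one needs that $\Omega$ extends as a holomorphic volume form with a well-defined weight $w_\Omega$, and that the adjunction identity $\Omega = df\wedge\Omega_t$ is valid away from the singular locus so that the pullback formula for $\Omega_{s^{w_f}t}$ on $M_t\setminus\mathrm{Sing}$ makes sense. Normality of $M$ (built into the $w$-homogeneous setup of this subsection) and the intrinsic definition of $\Omega_t$ from the introduction handle this; since the integrals defining $I_t$ are taken over the smooth locus of $M_t$ anyway, these subtleties do not affect the computation.
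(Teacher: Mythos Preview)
Your scaling idea is the same as the paper's, but there is a genuine gap in how you use it. You assert that $\rho_s$ is a \emph{biholomorphism} from $M_t$ onto $M_{s^{w_f}t}$, and from this derive the exact power law $I_{s^{w_f}t}=|s|^k I_t$. In the paper's setting, however, $M$ is a local piece of a $w$-homogeneous variety: the standing convention just before the proposition is ``we may assume $\rho_t M\subset M$ for $|t|\leq 1$'', which is strictly weaker than $\mathbb{C}^*$-invariance. Consequently $\rho_s(M_t)$ is only \emph{contained} in $M_{s^{w_f}t}$ for $|s|\leq 1$, and all you get is the one-sided inequality $I_{s^{w_f}t}\geq |s|^k I_t$. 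Your subsequent steps---extracting the exact exponent, deducing $k/w_f\geq 0$, and concluding $I_t\leq I_*$---all rely on the equality and do not survive with only this inequality.

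The paper's proof uses precisely this inequality but organizes the argument differently: for fixed $r$ and $|t|\leq r$ it records $I_t\geq (|t|/r)^a I_r$, then integrates over $t\in\Delta(r)$ and invokes the hypothesis $\int_{\Delta(r)} I_t\,dt\,d\bar t\leq Cr^2$ directly to obtain $I_r\leq C(a+2)/(2\pi)$ (the hypothesis itself forces $a>-2$ unless $I_r\equiv 0$). This avoids any need for an exact power law or for $\rho_s$ to be surjective, and it is what makes the proposition applicable in Lemma~\ref{5.4} where $M=U_\alpha$ is a genuinely local chart. Your argument would be fine if $M$ were a full $w$-homogeneous cone, but as stated it does not cover the intended case.
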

\noindent{\bf Proof:} For $|t| \leq 1$, $\rho_{t/r} (M_r) \subset M_t$.
\[
I_t = \int_{M_t} \frac{d\mu_t}{|\psi|^{2\epsilon}} \geq \int_{\rho_{t/r} (M_r)} \frac{d\mu_t}{|\psi|^{2\epsilon}} = (\frac{|t|}{r})^a \int_{M_r} \frac{d\mu_r}{|\psi|^{2\epsilon}} = (\frac{|t|}{r})^a I_r,
\]
where $a = 2(w_0 + \cdots + w_m - w_f - \epsilon w_\psi)$. Then
\[
Cr^2 \geq \int_{\Delta(r)} I_t d\mu_{\mathbb{C}} \geq I_r \int_{\Delta(r)} (\frac{|t|}{r})^a d\mu_{\mathbb{C}} = \frac{2\pi}{2+a}r^2 I_r,\quad (a>-2).
\]
Hence $I_r \leq (2+a)C/2\pi$.
\hfill $\Box$\\

\begin{prop}
\label{cy13}
Assume $M\subset \mathbb{C}^m$ and $f$ a holomorphic function on $\mathbb{C}^m$ that is not identically zero on each connected component of $M$, then there exists $R>0$ and a $w$-homogeneous map $\mathbb{C}^m \rightarrow \mathbb{C}^n$ (which can be made a linear projection in the homogeneous case) such that for any $t$, the induced map $M_t \cap B_R(0) \rightarrow \mathbb{C}^n$ is a branched covering.
\end{prop}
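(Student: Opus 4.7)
The plan is to realize $\Phi = (g_1, \ldots, g_n)$ with each $g_i$ a $w$-homogeneous polynomial on $\mathbb{C}^m$ (linear in the homogeneous case), chosen so that the common vanishing locus $V := \Phi^{-1}(0) \cap M$ is one-dimensional and $f$ is non-constant along every irreducible component of $V$. Since $\dim M = n+1$ (the hypothesis on $f$ forces $\dim M_t = n$ generically) and $V$ is cut out by $n$ equations in $M$, a generic choice of $g_i$ of appropriate $w$-weights yields $\dim V = 1$ by dimension theory, in the spirit of Noether normalisation adapted to the weighted grading. Being $w$-invariant, $V$ is automatically a union of $w$-orbit closures, each passing through the origin.

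To arrange non-constancy of $f$ along every branch of $V$, I introduce the bad locus $B := \{v \in M : f(\rho_s v) \mbox{ is constant in } s\}$. Writing the Taylor expansion $f = \sum_{d \geq 0} f_d$ with $f_d$ the $w$-weight-$d$ component, one obtains $B = M \cap \bigcap_{d \geq 1}\{f_d = 0\}$. The hypothesis that $f$ is not identically zero on any component of $M$ implies that at least one $f_d$ with $d \geq 1$ is non-trivial on each component, hence $B$ is a proper subvariety of $M$ of dimension at most $n$. Passing to the weighted projectivisation $[M] \subset \mathbb{P}(w)$, the image $[B]$ has codimension at least one in the $n$-dimensional $[M]$, so generic $w$-homogeneous polynomials of sufficiently large weight cut out a zero-dimensional $[V] \subset [M]$ disjoint from $[B]$; in the homogeneous case generic linear projections suffice.

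With such a $\Phi$ fixed, I pick $R>0$ small enough that $V \cap \overline{B_R(0)}$ is a finite union of short $w$-orbit arcs terminating at the origin. On each arc $C$ through some $v_0 \not\in B$, the restriction $f|_C(s) = \sum_{d \geq 1} f_d(v_0) s^d$ is a non-constant power series, so $f$ restricted to $V \cap \overline{B_R(0)}$ is a proper finite holomorphic map onto a small disc $\Delta(r_0) \subset \mathbb{C}$. Consequently, for every $t \in \Delta(r_0)$, the finite set $V \cap M_t \cap \overline{B_R(0)}$ lies in the open ball $B_R(0)$. By openness of properness in analytic families, this forces $\Phi : M_t \cap B_R(0) \to B_\rho(0) \subset \mathbb{C}^n$ to be proper with finite fibres for all $t \in \Delta(r_0)$ and some $\rho > 0$, which is precisely a branched covering.

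The principal technical obstacle is the uniformity of $R$ (and $\rho$) in $t$: one fixed pair must work for the whole family. This is essentially a compactness-plus-continuity argument combined with the $w$-homogeneous scaling, but carrying it out carefully (particularly near the singular locus of $M$ and in the weighted-but-non-homogeneous setting where the formal weighted projectivisation $\mathbb{P}(w)$ must be handled with care) is the most delicate ingredient of the proof.
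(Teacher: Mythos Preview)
Your approach is genuinely different from the paper's, and (modulo one slip and the acknowledged uniformity step) it works. The paper proceeds purely algebraically: it first pushes $M$ forward under the weighted-to-standard covering $\phi_w:\mathbb{C}^m_w\to\mathbb{C}^m$, applies Noether normalization to get a finite map $\tilde{\mathfrak p}:M\to\mathbb{C}^{n+1}$, and then uses the minimal polynomial $p(f)=f^l+a_{l-1}f^{l-1}+\cdots+a_0=0$ of $f$ over $\mathcal{O}(\mathbb{C}^{n+1})$ together with Weierstrass preparation on the single function $(y,t)\mapsto p(t)(y)$ to produce the last projection $\mathbb{C}^{n+1}\to\mathbb{C}^n$. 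Because Weierstrass preparation is applied once to the function of all variables $(y,t)$, the uniformity in $t$ that you isolate as the delicate point comes for free. Your route---choosing $n$ generic $w$-homogeneous functions so that $V=\Phi^{-1}(0)\cap M$ is a one-dimensional union of $w$-orbits on which $f$ is nonconstant, then arguing properness---is more geometric and more direct (no passage through $\mathbb{C}^{n+1}$), but it trades the single Weierstrass step for the compactness argument you describe; that argument does go through once one observes that $\Phi$ is bounded away from zero on the compact shell $M\cap(\overline{B_R}\setminus B_{R'})\cap f^{-1}(\overline{\Delta(r_0/2)})$.

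One genuine slip to fix: your assertion that ``$f$ not identically zero on each component'' forces some $f_d$ with $d\ge 1$ to be nonzero on that component is false as written---a nonzero constant $f$ is a counterexample, and then $B=M$. What you actually need (and what the paper's proof also implicitly uses, since otherwise $a_0$ could be a unit and the Weierstrass step is vacuous) is that $f$ is non\emph{constant} on each component, equivalently that $M_0=f^{-1}(0)\cap M$ is a proper subvariety. In the applications this is automatic since $f=\pi$ is the smoothing parameter, but your bad-locus argument should invoke this rather than mere nonvanishing.
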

\noindent{\bf Proof:} There is a natural equivariant branched covering $\phi_w: \mathbb{C}^m_w \rightarrow \mathbb{C}^m$. $\phi_w(M)$ is a closed subscheme of $\mathbb{C}^m$. By Noether normalization theorem, there exists a linear projection $\mathbb{C}^m \rightarrow \mathbb{C}^{n+1}$ that induces a branched covering $\phi_w(M) \rightarrow \mathbb{C}^{n+1}$. The composition $\tilde{\mathfrak{p}}: M \rightarrow \mathbb{C}^{n+1}$ is also a branched covering.

Assume $f$ satisfies $p(f) := f^l + a_{l-1} f^{l-1} + \cdots + a_0=0$, then $\tilde{\mathfrak{p}} (M_t)$ is contained in the divisor $D_{p(t)}$ (in particular, $\tilde{\mathfrak{p}} (M_0)$ is contained in the divisor $D_{a_0}$). By Weierstrass preparation theorem, there exists a projection $\mathbb{C}^{n+1} \rightarrow \mathbb{C}^n$ that restricts to branched coverings $\{p(t)=0\} \rightarrow \mathbb{C}^n$. The composition gives the desired $w$-homogeneous map $M \rightarrow \mathbb{C}^n$.
\hfill $\Box$\\

\begin{co}
\label{cy14}
Assume $M\subset \mathbb{C}^m_w$ is a quasi-homogeneous normal variety with weight $w$, $f$ a holomorphic function on $M$, then there exists $R>0$ and a linear projection $M \rightarrow \mathbb{C}^n$ such that for any $t$, the induced map $M_t \cap B_R(0) \rightarrow \mathbb{C}^n$ is a branched covering.
\end{co}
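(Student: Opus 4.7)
The strategy is to mirror the proof of Proposition \ref{cy13}, but to carry out every step inside $\mathbb{C}^m_w$ itself rather than via the monomial covering $\phi_w$, so that the projections produced are honestly linear. First, I apply Noether normalization to the normal affine $n$-variety $M \subset \mathbb{C}^m$ to obtain a linear projection $L_1 : \mathbb{C}^m \to \mathbb{C}^{n+1}$ whose restriction $\tilde{\mathfrak p} := L_1|_M : M \to \mathbb{C}^{n+1}$ is a finite branched cover. Finiteness of $\tilde{\mathfrak p}$ together with normality of $M$ implies that $f$ satisfies an integral equation
\[
p(f) = f^l + a_{l-1} f^{l-1} + \cdots + a_0 = 0
\]
with $a_i$ holomorphic on $\mathbb{C}^{n+1}$, so $\tilde{\mathfrak p}(M_t) \subset D_{p(t)} \subset \mathbb{C}^{n+1}$ for each $t$.

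A generic linear change of coordinates on $\mathbb{C}^{n+1}$ then places $p(t)(z) = t^l + a_{l-1}(z) t^{l-1} + \cdots + a_0(z)$ into Weierstrass form in one coordinate direction, so that the corresponding coordinate projection $L_2 : \mathbb{C}^{n+1} \to \mathbb{C}^n$ presents $D_{p(t)} \cap B_R(0)$ as a branched cover of $\mathbb{C}^n$ for some $R>0$ and for all $t$ with $|t|$ small. The composite $L := L_2 \circ L_1 : \mathbb{C}^m \to \mathbb{C}^n$ is then honestly linear, and its restriction to each $M_t \cap B_R(0)$ factors through $\tilde{\mathfrak p}$ followed by $L_2$, hence is itself a branched cover.

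The main obstacle is producing a single linear projection $L_2$ that works uniformly in $t$ — not just for $t$ in some tiny disk around $0$ — and this is where the $w$-quasi-homogeneous structure of $M$ is essential. Because $M$ is stable under the weighted $\mathbb{C}^*$-action on $\mathbb{C}^m_w$, the fibers $M_t$ are related to each other by this action (with $M_{\rho^{w_f} t} = \rho \cdot M_t$, where $w_f$ is the weight of $f$), so once the branched-cover property for $L_2 \circ L_1$ is established on $M_t$ for $|t|$ small, rescaling by the $\mathbb{C}^*$-action propagates it to any finite range of $t$ while adjusting $R$ accordingly; uniformity of the covering degree in $t$ follows from semi-continuity of fiber dimension. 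A subtle point is that the linear change of coordinates used in the Weierstrass reduction must be compatible with (or at least not destroy) the $w$-grading enough for the rescaling argument to apply, which is achievable by choosing the distinguished coordinate direction among the existing $w$-homogeneous coordinates of $\mathbb{C}^{n+1}$ rather than in a generic direction.
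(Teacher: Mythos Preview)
Your plan reproduces from scratch the content of Proposition~\ref{cy13}, but the paper's proof of Corollary~\ref{cy14} is a single line: since $M$ is a closed (because $w$-homogeneous, hence a cone) normal subvariety of $\mathbb{C}^m$, the holomorphic function $f$ on $M$ extends to a holomorphic function on $\mathbb{C}^m$; once that is done you are literally in the hypotheses of Proposition~\ref{cy13} and simply cite it. There is no need to redo Noether normalization, the integral equation, or the Weierstrass step --- all of that is already packaged in \ref{cy13}. Your proposal works harder than necessary by avoiding this reduction.

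More importantly, your $\mathbb{C}^*$-rescaling argument to pass from small $|t|$ to all $t$ is not valid as stated. In Corollary~\ref{cy14} the function $f$ is \emph{not} assumed $w$-homogeneous, so there is no weight $w_f$ and the fibres $M_t = f^{-1}(t)$ are not carried into one another by the weighted action; the relation $M_{\rho^{w_f}t} = \rho\cdot M_t$ simply does not hold. For the same reason, your closing remark about choosing the Weierstrass direction ``among the existing $w$-homogeneous coordinates of $\mathbb{C}^{n+1}$'' is confused: your $L_1$ is a \emph{generic} linear projection from $\mathbb{C}^m$, which does not respect the $w$-grading, so its target $\mathbb{C}^{n+1}$ carries no natural $w$-structure to be compatible with. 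These issues are not fatal for the application (the Noether + Weierstrass core already gives the branched-covering property for $|t|$ small, which is what Lemma~\ref{5.1} needs), but the part of your plan that you flagged as ``the main obstacle'' is precisely the part that does not go through; the paper sidesteps it entirely by reducing to \ref{cy13}, where the $w$-homogeneity of the resulting map (not of $f$) is what carries the day.
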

\noindent{\bf Proof:} Under the condition, $f$ can be extended to a holomorphic function on $\mathbb{C}^m$. Then we are in the situation of proposition \ref{cy13}.
\hfill $\Box$\\

\begin{co}
\label{cy15}
Assume $M \subset \mathbb{CP}^N_w \times \Delta$ is a closed subvariety. There exists a smaller disk $\Delta' \subset \Delta$ and a map $\mathbb{CP}^N_w \rightarrow \mathbb{CP}^n$ such that for any $t\in \Delta'$, the induced map $M_t \rightarrow \mathbb{CP}^n$ is a branched covering.
\end{co}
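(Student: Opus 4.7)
The plan is to promote Proposition \ref{cy13} from the affine setting to the projective setting. The strategy is to first produce a single good ``linear'' projection $\Phi: \mathbb{CP}^N_w \dashrightarrow \mathbb{CP}^n$ that works for the central fiber $M_0$, and then to use properness of $\pi: M \to \Delta$ (which holds because $M$ is closed in $\mathbb{CP}^N_w \times \Delta$ and $\mathbb{CP}^N_w$ is proper) to deduce that the same projection induces a branched covering $M_t \to \mathbb{CP}^n$ for all $t$ in a slightly smaller disk $\Delta'$.

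To construct $\Phi$, I would first use upper semi-continuity of fiber dimension under the proper map $\pi$ to ensure, after possibly shrinking $\Delta$, that $\dim M_t$ is constant equal to some $n$ for all $t$ near $0$. Then I apply Noether normalization to the weighted homogeneous coordinate ring of $M_0 \subset \mathbb{CP}^N_w$: by choosing a common weighted degree $d$ which is a multiple of $[w]$, one obtains weighted homogeneous polynomials $p_0, \ldots, p_n$ on $\mathbb{C}^{N+1}$ of weighted degree $d$ whose common zero locus $L \subset \mathbb{CP}^N_w$ is disjoint from $M_0$, and such that the induced map
\[
\Phi : \mathbb{CP}^N_w \setminus L \longrightarrow \mathbb{CP}^n, \qquad z \mapsto [p_0(z): \cdots : p_n(z)],
\]
restricts to a finite surjective morphism $\Phi|_{M_0}: M_0 \to \mathbb{CP}^n$, that is, a branched covering.

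To propagate the property to nearby fibers, I would argue as follows. The set $Z := (L \times \Delta) \cap M$ is a closed subvariety of $M$, and its image $\pi(Z) \subset \Delta$ is closed by properness of $\pi$. Since $L \cap M_0 = \emptyset$ by construction, $0 \notin \pi(Z)$, so there exists a disk $\Delta' \subset \Delta$ centered at $0$ with $\Delta' \cap \pi(Z) = \emptyset$, i.e., $L \cap M_t = \emptyset$ for every $t \in \Delta'$. For such $t$, $\Phi$ restricts to an everywhere-defined morphism $\Phi|_{M_t}: M_t \to \mathbb{CP}^n$ between projective varieties of equal dimension $n$. Finiteness at $t=0$ together with upper semi-continuity of the fiber dimension of the family $\Phi|_{M \cap \pi^{-1}(\Delta')}$ gives finite fibers — hence surjectivity and the branched covering property — for $t$ in a possibly further shrunken $\Delta'$.

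The main obstacle is precisely the uniformity in $t$: a generic projection lemma would yield a branched covering for each fixed $t$, but not automatically a single projection that works throughout an entire disk. This is resolved by the closedness of $\pi(Z)$ obtained from properness. A secondary subtlety is executing Noether normalization in the weighted projective setting rather than the usual one, which is handled by taking all the $p_i$ weighted homogeneous of a common degree $d$ divisible by $[w]$, exactly as in Proposition \ref{cy13} and Corollary \ref{cy14}.
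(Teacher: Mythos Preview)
Your approach is correct but takes a genuinely different route from the paper's. The paper lifts $M$ to its affine cone $\tilde{M} \subset \mathbb{C}^{N+1} \times \Delta$, applies the equivariant branched covering $\phi_w \times {\rm id}_\Delta$ to reduce to an ordinarily homogeneous subscheme of $\mathbb{C}^{N+1} \times \Delta$, and then invokes Proposition \ref{cy13} directly on the \emph{entire family} --- that proposition already produces a single map $\tilde{M} \to \mathbb{C}^{n+1}$ that is a branched covering on $\tilde{M}_t \cap B_R(0)$ for every $t$ simultaneously. The smaller disk $\Delta' = \{|t|<R\}$ then arises simply from the requirement that the cone point $(0,t)$ lie inside $B_R(0)$, after which $w$-homogeneity extends the branched covering from $B_R(0)$ to the full cone $\tilde{M}_t$, and hence projectivizes. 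By contrast, you work directly in projective space: you build $\Phi$ for $M_0$ alone via weighted Noether normalization, then propagate to nearby fibers via the properness/closed-image argument for $\pi(Z)$ and semi-continuity. Your route is more geometric and self-contained; the paper's is more economical since it reuses the affine machinery of Proposition \ref{cy13} and never needs a separate propagation step. One small simplification available to you: once $L \cap M_t = \emptyset$, finiteness of $\Phi|_{M_t}$ is automatic, because the fibers of $\Phi$ on $\mathbb{CP}^N_w \setminus L$ are affine while $M_t$ is projective --- so the semi-continuity argument for fiber dimension is not actually needed.
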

\noindent{\bf Proof:} Consider $\tilde{M} \subset \mathbb{C}^{N+1} \times \Delta \subset \mathbb{C}^{N+2}$ that projectivizes to $M \subset \mathbb{CP}^N_w \times \Delta$. Use the branched covering $\tilde{\phi}_w = \phi_w \times {\rm id}_\Delta: \mathbb{C}^{N+1}_w \times \Delta \rightarrow \mathbb{C}^{N+1} \times \Delta$, $\tilde{\phi}_w (\tilde{M}) \subset \mathbb{C}^{N+1} \times \Delta$ is a closed subscheme that is homogeneous on $\mathbb{C}^{N+1}$-direction. Apply proposition \ref{cy13}, there exists $R>0$ and a linear projection $\tilde{\phi} (\tilde{M}) \rightarrow \mathbb{C}^{n+1}$ such that for any $t$, the induced map $\tilde{\phi} (\tilde{M}_t) \cap B_R(0) \rightarrow \mathbb{C}^{n+1}$ is a branched covering. Notice that $\tilde{M}_t \cap B_R(0) \rightarrow \mathbb{C}^{n+1}$ is $w$-homogeneous, hence can be homogeneously extended to a branch covering $\tilde{M}_t \rightarrow \mathbb{C}^{n+1}$ if $(0,t) \in \tilde{M}_t \cap B_R(0)$. For $t\in \Delta' := \{t\in \Delta: |t| <R\}$, $(0,t) \in \tilde{M}_t \cap B_R(0)$, which implies that $M_t \rightarrow \mathbb{CP}^n_w$ is a branched covering.
\hfill $\Box$\\

The following lemma indicates that being quasi-homogeneous is not as restrictive as it seems.\\

\begin{lm}
\label{cy11}
Consider $(M,0)$ with a $\mathbb{C}^*$-action fixing 0 and $f = hg$, where $h$ is a nowhere zero holomorphic function and $g$ is a $\mathbb{C}^*$-equivariant function with degree $d$. There exists a map $F: M \rightarrow M$ that is biholomorphic near 0, and $f\circ F = g$.
\end{lm}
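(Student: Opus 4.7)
The plan is to look for $F$ in the form $F(z) = \rho_{\phi(z)}(z)$, where $\rho_t$ denotes the given $\mathbb{C}^*$-action on $M$ and $\phi: (M,0) \to (\mathbb{C}^*, \phi_0)$ is a holomorphic function to be determined. Since $g$ is $\mathbb{C}^*$-equivariant of degree $d$, one has $g(F(z)) = \phi(z)^d g(z)$, and hence the desired identity $f \circ F = g$ is equivalent (after cancelling $g(z)$, which is legitimate because both sides are holomorphic) to the scalar equation
$$\Phi(\phi, z) := \phi^d \, h(\rho_\phi(z)) - 1 = 0.$$
Since $h$ is nowhere zero, this is a non-degenerate one-variable equation in $\phi$, and it suffices to produce a holomorphic solution $\phi$ of $\Phi = 0$ on $(M, 0)$ with $\phi(0) \in \mathbb{C}^*$.

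Because the action fixes $0$, $\Phi(\phi, 0) = \phi^d h(0) - 1$ vanishes for any $d$-th root $\phi_0$ of $1/h(0)$, which exists because $h(0) \neq 0$. The key non-degeneracy is that
$$\partial_\phi \Phi|_{(\phi_0, 0)} = d\phi_0^{d-1} h(0) + \phi_0^d \cdot \partial_\phi h(\rho_\phi(0))\big|_{\phi_0} = d\phi_0^{d-1} h(0) = d/\phi_0 \neq 0,$$
where the middle term drops out because $\rho_\phi(0) \equiv 0$ is independent of $\phi$. Embedding $(M,0) \hookrightarrow (\mathbb{C}^m, 0)$ locally and extending $h$ to an ambient nowhere-zero holomorphic function, the holomorphic implicit function theorem yields a unique holomorphic $\phi$ with $\phi(0) = \phi_0$ solving $\Phi(\phi(z), z) \equiv 0$ on a neighborhood of $0 \in \mathbb{C}^m$; I then restrict to $M$.

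It remains to check that $F(z) = \rho_{\phi(z)}(z)$ is a local biholomorphism of $(M, 0)$. The $\mathbb{C}^*$-invariance of $M$ gives $F(M) \subset M$. Differentiating at $z=0$ and using $\rho_t(0) \equiv 0$ (so the contribution involving $d\phi$ vanishes), one finds
$$(dF)_0 = (d\rho_{\phi_0})_0 : T_0 M \to T_0 M,$$
which is invertible because $\rho_{\phi_0}$ is a biholomorphism for $\phi_0 \in \mathbb{C}^*$. Hence $F$ is biholomorphic near $0$, and by construction
$$f(F(z)) = h(F(z))\, g(F(z)) = h(F(z))\, \phi(z)^d\, g(z) = g(z),$$
as required.

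The only real subtlety is the invocation of the implicit function theorem on the possibly singular germ $(M, 0)$, but this is routine: one solves the equation in an ambient smooth neighborhood and restricts. The essential hypothesis that drives the whole argument is the non-vanishing $h(0) \neq 0$, which makes the equation $\Phi = 0$ transverse in the $\phi$ direction at $(\phi_0, 0)$.
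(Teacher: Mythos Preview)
Your argument is correct and rests on the same idea as the paper's: build $F$ by composing the $\mathbb{C}^*$-action with a pointwise-varying scale. The difference is purely in execution. You parametrize $F$ itself as $\rho_{\phi(z)}(z)$, which forces you to solve the implicit equation $\phi^d h(\rho_\phi(z)) = 1$ and invoke the implicit function theorem. The paper instead parametrizes the \emph{inverse}: setting $F^{-1}(z) = \rho_{h^{1/d}(z)}(z)$ gives $g(F^{-1}(z)) = h(z) g(z) = f(z)$ in one line, with no implicit equation at all, since the condition on the scale factor $\psi$ for $F^{-1} = \rho_{\psi(z)}(z)$ is simply $\psi^d = h$. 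Your route costs an extra page of IFT bookkeeping (and the ambient-extension step for singular $M$), while the paper's explicit formula makes all of that unnecessary; both arrive at the same map.
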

\noindent{\bf Proof:} $f(z) = h(z)g(z) = g(h^{1/d}(z)z)$. $F^{-1}(z) = h^{1/d}(z)z: M \rightarrow M$ is biholomorphic near $z=0$. Hence $f\circ F = g$.
\hfill $\Box$\\

\setcounter{equation}{0}\section{Gromov-Hausdorff convergence of
Calabi-Yau manifolds }
 In this section, we prove Theorem \ref{t1} and Corollary \ref{t2}.
 First, we prove  the
 following general result.

\begin{theorem}\label{t4.1} Let $(M_{k}, g_{k})$ be a family of Riemannian
 $n$-manifolds, and $(N, d_{N})$ be a compact path metric space.
 Assume that
 \begin{itemize}\label{00}
  \item[(i)]  There are two constants $C>0$ and $\kappa >0$ independent of $k$  such
  that  $${\rm Ric}(g_{k})\geq  -Cg_{k}, \ \ \ \ {\rm and }  \ \ \ \
  {\rm Vol}_{g_{k}}(B_{g_{k}}(p, r))\geq \kappa r^{n}, $$ for any metric
  ball $B_{g_{k}}(p, r)$.
   \item[(ii)]  $$0< \lim_{k\rightarrow \infty}
   {\rm Vol}_{g_{k}}(M_{k})= \mathcal{H}^{n}(N)< \infty,$$ where $
   \mathcal{H}^{n}(N)$ is the $n$-Hausdorff measure of $(N, d_{N})$.
   \item[(iii)]  There is a dense open subset $N_{0}\subset N$ such
   that $\dim_{\mathcal{H}}N\backslash N_{0}\leq n-2$, and $N_{0}$
   is a smooth manifold.  There  is
     a $C^{1,\alpha}$-Riemannian metric $g$ on $N_{0}$ such that,
     for any $x$ and $y\in N_{0}$, there is a
   minimal geodesic $\gamma$ in $N_{0}$ connecting $x$ and $y$ satisfying  $d_{N}(x,y)={\rm length}_{g}(\gamma)$.
    \item[(iv)] There are smooth embeddings $F_{k}: N_{0}\rightarrow M_{k}
    $ such that, for any compact subset $K\subset N_{0}$,
    $F_{k}^{*}g_{k}$ $C^{1,\alpha}$-converges to $g$ on $K$.
 \end{itemize} Then $$\lim_{k\rightarrow \infty}d_{GH}((M_{k}, g_{k}), (N,
 d_{N}))=0,$$ where $d_{GH}$ denotes the Gromov-Hausdorff distance.

\end{theorem}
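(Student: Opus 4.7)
The plan is to apply Gromov's precompactness theorem to extract a subsequential Gromov-Hausdorff limit $(Y, d_{Y})$ of $(M_{k}, g_{k})$, and then identify $(Y, d_{Y})$ with $(N, d_{N})$ via a map built from the embeddings $F_{k}$. A uniform diameter bound on $(M_{k}, g_{k})$ comes directly from (i) and (ii): for any $p\in M_{k}$, $B_{g_{k}}(p,{\rm diam}_{g_{k}}(M_{k})) = M_{k}$, so $\kappa \, {\rm diam}_{g_{k}}(M_{k})^{n} \leq {\rm Vol}_{g_{k}}(M_{k}) \leq 2\mathcal{H}^{n}(N)$ for large $k$. Combined with the Ricci lower bound, Theorem \ref{2002} gives a subsequence converging in Gromov-Hausdorff topology to a compact path metric space $(Y, d_{Y})$, with $\epsilon_{k}$-approximations $\psi_{k}: M_{k} \to Y$.

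Next I would use (iii) and (iv) to construct a 1-Lipschitz map $\bar{F}: (N, d_{N}) \to (Y, d_{Y})$. For $x, y \in N_{0}$, the minimal geodesic $\gamma\subset N_{0}$ joining them is compact, so (iv) gives ${\rm length}_{g_{k}}(F_{k}\circ \gamma) = {\rm length}_{F_{k}^{*}g_{k}}(\gamma) \to {\rm length}_{g}(\gamma) = d_{N}(x,y)$. After a diagonal extraction along a compact exhaustion of $N_{0}$, $\psi_{k}\circ F_{k}$ converges pointwise on $N_{0}$ to a map $F: N_{0} \to Y$ with $d_{Y}(F(x), F(y)) \leq d_{N}(x,y)$, and density of $N_{0}$ together with the 1-Lipschitz property extends $F$ to a 1-Lipschitz $\bar{F}: N \to Y$. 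Surjectivity of $\bar{F}$ follows from a volume argument: hypothesis (iv) together with $\mathcal{H}^{n}(N \setminus N_{0}) = 0$ (from $\dim_{\mathcal{H}}(N\setminus N_{0}) \leq n-2$) gives ${\rm Vol}_{g_{k}}(F_{k}(K_{j})) \to \mathcal{H}^{n}(K_{j})$ for compact $K_{j}\subset N_{0}$, which combined with (ii) yields ${\rm Vol}_{g_{k}}(M_{k}\setminus F_{k}(N_{0})) \to 0$; the non-collapsing in (i) then forces $F_{k}(N_{0})$ to be $\epsilon_{k}'$-dense in $M_{k}$ with $\epsilon_{k}'\to 0$.

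The main obstacle is establishing the reverse inequality $d_{Y}(F(x), F(y)) \geq d_{N}(x, y)$ for $x,y \in N_{0}$, which would make $\bar{F}$ an isometry. The plan is to take minimizing geodesics $\eta_{k}$ in $M_{k}$ from $F_{k}(x)$ to $F_{k}(y)$ of length $L_{k} = d_{g_{k}}(F_{k}(x), F_{k}(y))$, then use the $\epsilon_{k}'$-density of $F_{k}(N_{0})$ to perturb $\eta_{k}$ to a curve $\tilde{\eta}_{k}$ lying entirely in $F_{k}(N_{0})$ of length at most $L_{k} + o(1)$. Pulling back through $F_{k}^{-1}$ produces curves in $N_{0}$ from $x$ to $y$ whose $g$-lengths converge to $\lim L_{k} = d_{Y}(F(x), F(y))$ by the $C^{1,\alpha}$-convergence in (iv), so $d_{N}(x,y) \leq d_{Y}(F(x), F(y))$. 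The technically delicate step is constructing $\tilde{\eta}_{k}$ with controlled length increase; this rests on the codimension-two structure of the singular loci both in $N$ (from (iii)) and in the Gromov-Hausdorff limit $Y$ (via Theorem \ref{2003}), which allows geodesics to be perturbed off of lower-dimensional sets without significant change in length. Once $\bar{F}$ is an isometric surjection, uniqueness of the Gromov-Hausdorff limit up to isometry implies that the full sequence $(M_{k}, g_{k})$ converges to $(N, d_{N})$.
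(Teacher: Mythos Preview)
Your overall architecture --- extract a subsequential GH limit $(Y,d_Y)$, build a $1$-Lipschitz surjection $\bar F: N \to Y$ from the $F_k$, then upgrade it to an isometry --- is exactly the paper's. The gap is in your execution of the reverse inequality $d_Y \geq d_N$.

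You propose to perturb a minimizing geodesic $\eta_k \subset M_k$ into $F_k(N_0)$ with length increase $o(1)$, then pull back by $F_k^{-1}$. Two problems. First, $\epsilon_k'$-density of $F_k(N_0)$ in $M_k$ is nowhere near enough to produce such perturbations with controlled length; you would need a uniform-in-$k$ codimension-two content bound on $M_k \setminus F_k(N_0)$, which the hypotheses do not supply. Second, your appeal to Theorem~\ref{2003} is illegitimate here: that result is for Ricci-flat K\"ahler manifolds with a Chern-class bound, whereas Theorem~\ref{t4.1} assumes only a Ricci lower bound. Even granting a perturbation $\tilde\eta_k$, to invoke (iv) you would further need $F_k^{-1}(\tilde\eta_k)$ to remain in a \emph{fixed} compact $K \subset N_0$, which you have not arranged.

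The paper sidesteps all of this by working in the limit rather than in $M_k$. It first proves something stronger than $1$-Lipschitz: $f:N_0 \to Y$ is a \emph{local isometry} (Lemma~\ref{t4.2}). The point is that for $p_1,p_2$ sufficiently close inside a compact $W_{i-1} \subset N_0$, the $g_k$-minimizing geodesic between $F_k(p_1)$ and $F_k(p_2)$ is forced to stay inside $F_k(W_i)$ where $F_k^*g_k \approx g$, giving $d_Y(f(p_1),f(p_2)) = d_N(p_1,p_2)$ on short scales and hence ${\rm length}_{d_Y}(f(\gamma)) = {\rm length}_g(\gamma)$ for every curve $\gamma \subset N_0$. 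For the reverse inequality (Lemma~\ref{t4.5}), the $1$-Lipschitz property of $\tilde f$ together with $\dim_{\mathcal H}(N\setminus N_0)\le n-2$ pushes forward to $\mathcal H^{n-1}(S_Y)=0$ for $S_Y = Y\setminus f(N_0)$. The Cheeger--Colding result Theorem~\ref{t4.06} --- which needs only Ricci lower bound and non-collapse, exactly (i) --- then guarantees that minimal $Y$-geodesics between points of $f(N_0)$ can (after arbitrarily small endpoint perturbation) be taken inside $f(N_0)$. The local-isometry equality of lengths finishes the comparison: $d_Y(f(x),f(y)) = {\rm length}_{d_Y}(f(\gamma_i)) = {\rm length}_g(\gamma_i) = d_N(x,y)$.
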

Note that the assumptions (i) and (ii) imply that the diameters of
$(M_{k}, g_{k})$ are uniformly bounded from above.  By
  Gromov's precompactness   theorem (c.f.  \cite{G1}), a
  subsequence of  $\{(M_{k}, g_{k})\}$ converges to a compact length  metric
  space $(Y, d_{Y})$ in the Gromov-Hausdorff topology.  Since
  $F_{k}$ are diffeomorphisms from $N_{0}$ to their images
  $F_{k}(N_{0})$,  we do not distinguish between  $N_{0}$ and
  $F_{k}(N_{0})$ in this section.

 \begin{lm}\label{t4.2} There exists an
  embedding $f: N_{0}\rightarrow Y$ which is a locally isometry,
  i.e. for any compact subset $K\subset\subset N_{0}$, there is a
  $\delta >0$ such that, for any $p_{1}$,  $p_{2}\in K$ with
  $d_{N}(p_{1},p_{2})< \delta$, we have
  $d_{N}(p_{1},p_{2})=d_{Y}(f(p_{1}),f(p_{2})).$
    \end{lm}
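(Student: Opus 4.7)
The plan is to realize $f$ as a subsequential limit of the composed maps $f_k := \psi_k \circ F_k : N_0 \to Y$, where $\psi_k : M_k \to Y$ are Gromov--Hausdorff $\epsilon_k$-approximations with $\epsilon_k \to 0$ furnished by the assumed subsequential convergence of $(M_k,g_k)$ to $(Y,d_Y)$. Fix a compact exhaustion $N_0 = \bigcup_j K_j$ with each $K_j$ contained in the interior of $K_{j+1}$. I would prove equicontinuity of $\{f_k\}$ on each $K_j$, extract a uniform subsequential limit by Arzel\`a--Ascoli and a diagonal argument, and then verify the local isometry property; injectivity of $f$ would be a consequence of the local isometry together with the geodesic convexity of $N_0$ in $(N,d_N)$ supplied by the hypotheses of Theorem \ref{t4.1}.

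For equicontinuity and the upper inequality, fix $j$, set $K = K_j$, $K' = K_{j+1}$, and $\delta_0 := \frac{1}{3}d_g(K, \partial K')$. For $p_1, p_2 \in K$ with $d_N(p_1,p_2)<\delta_0$, hypothesis (iii) supplies a minimal $g$-geodesic $\gamma \subset N_0$ joining them, and since ${\rm length}_g(\gamma) < \delta_0$, the curve $\gamma$ is confined to $K'$. By the $C^{1,\alpha}$-convergence in (iv), ${\rm length}_{g_k}(F_k\circ\gamma) \to {\rm length}_g(\gamma) = d_N(p_1,p_2)$, hence
\[
d_Y(f_k(p_1),f_k(p_2)) \leq d_{g_k}(F_k(p_1),F_k(p_2)) + 2\epsilon_k \leq d_N(p_1,p_2) + o(1).
\]
Passing to the limit yields equicontinuity on $K$ and the upper bound $d_Y(f(p_1),f(p_2)) \leq d_N(p_1,p_2)$ for all sufficiently close pairs in $K$.

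For the matching lower bound I would exploit that $\dim N_0 = n = \dim M_k$, so each smooth embedding $F_k$ has open image in $M_k$, and the intrinsic $g_k$-distance within $F_k(K')$ coincides with $d_{F_k^* g_k}$, which by (iv) converges to $d_g = d_N$. The key step is confining every $g_k$-minimizing geodesic from $F_k(p_1)$ to $F_k(p_2)$ inside $F_k(K')$ for $p_1, p_2 \in K$ sufficiently $d_N$-close; this would give $d_{g_k}(F_k(p_1),F_k(p_2)) = d_{F_k^* g_k}(p_1,p_2) \to d_N(p_1,p_2)$ and hence the reverse inequality after passing through $\psi_k$. The matter reduces to a uniform collar estimate $d_{g_k}(F_k(K), F_k(\partial K')) \geq \rho > 0$ with $\rho$ independent of $k$, for then any $g_k$-geodesic of length less than $2\rho$ emanating from $F_k(K)$ cannot exit $F_k(K')$ and return. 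This collar bound is the main obstacle, as hypothesis (iv) by itself does not prevent $F_k$ from folding $K'$ inside $M_k$. My proposed route is by contradiction: if $F_k(q_k) \in F_k(K)$ and $F_k(q_k') \in F_k(\partial K')$ were to become $g_k$-close while $d_g(q_k, q_k')$ remained at least $3\delta_0$, rescaling by $d_{g_k}(F_k(q_k), F_k(q_k'))^{-1}$ and invoking Cheeger--Colding pre-compactness together with the noncollapsing (i) and volume convergence (ii) would produce a blow-up in which two distinct points of $N_0$ (under the $C^{1,\alpha}$-smoothness of $g$) accumulate onto the same point of a Euclidean tangent cone of $Y$, contradicting the local injectivity of the $C^{1,\alpha}$ Riemannian structure on $N_0$. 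Once the collar bound is in hand, the local isometry is proved, and injectivity of $f$ follows by patching it across the compact exhaustion using the geodesic convexity of $N_0$ in $(N,d_N)$.
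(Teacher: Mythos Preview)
Your overall strategy matches the paper's: construct $f$ as a pointwise limit of $\psi_k\circ F_k$ (the paper does this on a countable dense set with a diagonal argument rather than via Arzel\`a--Ascoli, but this is cosmetic), and then prove the two-sided distance estimate for nearby pairs by comparing $g$-lengths and $g_k$-lengths on a slightly larger compact set. The upper bound is exactly as you wrote.

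Where you go astray is in the lower bound: the collar estimate $d_{g_k}(F_k(K),F_k(\partial K'))\ge\rho$ follows directly from hypothesis~(iv) together with the fact that $F_k$ is an embedding of an $n$-manifold into an $n$-manifold; no Cheeger--Colding blow-up is needed. Since $F_k|_{K'}$ is a diffeomorphism onto its image, $F_k(\partial K')=\partial(F_k(K'))$ in $M_k$. Hence any curve in $M_k$ starting in $F_k(K)$ and leaving $F_k(K')$ contains an initial arc lying entirely in $F_k(K')$ and ending on $F_k(\partial K')$; pulled back by $F_k^{-1}$ this is a curve in $K'$ from $K$ to $\partial K'$, whose $F_k^*g_k$-length is, by~(iv), at least $(1-o(1))\,d_g(K,\partial K')=(1-o(1))\cdot 3\delta_0$. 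This is precisely how the paper argues (tersely, under the phrase ``by reversing the roles of $g$ and $g_k$'', and more explicitly in the displayed contradiction in the injectivity step). Your worry about ``folding'' cannot occur because $F_k$ is an embedding, so the image of the boundary is the topological boundary of the image. Once you replace your rescaling sketch with this elementary observation, your proof is complete and essentially identical to the paper's.
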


    \noindent{\bf Proof:} For any $i >0$, let $W_{i}=\{x\in N_{0}| d_{N}(x, N\backslash  N_{0})
   \geq \frac{1}{i}\}$. Since,  when $k\rightarrow \infty$, $g_{k}$
  converges to $g$ in the $C^{1,\alpha}$-sense on a fixed
  $W_{i}$, by passing to a subsequence,  we can assume that  \begin{equation}\label{3.3}
  \|g_{k}-g\|_{C^{1}(g)}\leq \frac{1}{k},\end{equation} on $W_{i}$.

Since $\{(M_{k}, g_{k})\}$ converges to  $(Y, d_{Y})$ in the
Gromov-Hausdorff topology, by passing to a subsequence, we
   assume that $d_{GH}((M_{k}, g_{k}), (Y, d_{Y}))< \frac{1}{2k}$.
 There are $\frac{1}{k}$-Hausdorff approximations
  $\psi_{k}: M_{k}\rightarrow Y$ for each $k$, i.e.   $Y\subset \{y| d_{Y}(y, \psi_{k}(M_{k}))<
  \frac{1}{k}\}$ and  \begin{equation}\label{3.4}|d_{M_{k}}(q_{1}, q_{2})-d_{Y}(\psi_{k}(q_{1}), \psi_{k}( q_{2}))|<
  \frac{1}{k},  \end{equation} for any $q_{1}, q_{2} \in M_{k}$, where
  $d_{M_{k}}$ is the distance function induced by $g_{k}$.

  Let $A$ be a countable dense subset of $N$. Then, for any $i$,
  $A\cap W_{i}$ is a countable dense subset of $ W_{i}$. Now, we
  define a map $f_{i}$ from $A\cap W_{i}=\{a_{1}, a_{2}, \cdots\}$ to
  $Y$. For $a_{1}$,  a subsequence $\{\psi_{k_{1}}(a_{1})\}$ of $\{\psi_{k}(a_{1})\}$
  converges to a point  $b_{1}\in Y$ since $Y$ is compact. Let
  $f_{i}(a_{1})=b_{1}$. For $a_{2}$ and $(A\cap W_{i},
  d_{M_{k_{1}}})$, by repeating the above procedure, we obtain
  that a subsequence $\{\psi_{k_{2}}(a_{j})\}$, $j=1,2$,
  converges to  $b_{j}\in Y$, $j=1,2$, respectively. Define $f_{i}(a_{2})=b_{2}$. By
  repeating this procedure and the standard  diagonal argument, we can find a
  subsequence of $(M_{k}, g_{k})$, denoted by $(M_{k}, g_{k})$ also, such
  that $d_{GH}((M_{k}, g_{k}), (Y, d_{Y}))< \frac{1}{2k}$, and $\psi_{k}(a_{j})$
  converges to  $b_{j}\in Y$, i.e. $d_{Y}(\psi_{k}(a_{j}), b_{j})\rightarrow 0$ when $k\rightarrow\infty$. For any
  $a_{j} \in A\cap W_{i}$, define $f_{i}(a_{j})=b_{j}$.

  Now, we prove that  $f_{i}: W_{i-1}\cap A
\rightarrow Y$ is injective.  If it is not true, there are $x$,
$y\in W_{i-1}\cap A$ such that $f_{i}(x)=f_{i}(y)$.  By (\ref{3.4}),
and passing to a subsequence,
$${\rm length}_{g_{k}}(\gamma_{k})=d_{M_{k}}(x, y)< \frac{1}{k}+
d_{Y}(\psi_{k}(x), \psi_{k}( y))< \frac{3}{k},$$ where, for any $k$,
$\gamma_{k}$
 is  the minimal geodesic connecting $x$ and $y$ in $(M_{k}, g_{k})$.  By
(\ref{3.3}),  we have
$$\sqrt{1-\frac{1}{k}}{\rm length}_{g}(\gamma_{k}\cap W_{i-1}) \leq {\rm length}_{g_{k}}(\gamma_{k}\cap
W_{i-1})\leq {\rm length}_{g_{k}}(\gamma_{k})< \frac{3}{k}.$$ If there is
a subsequence of ${k}$ such that $\gamma_{k}\cap (M_{k}\backslash
W_{i-1})$ are not empty, $$\sqrt{1-\frac{1}{k}}(d_{N}(x, \partial
W_{i-1})+d_{N}(y, \partial W_{i-1}))\leq
\sqrt{1-\frac{1}{k}}{\rm length}_{g}(\gamma_{k}\cap W_{i-1}) <
\frac{3}{k}.$$  By taking $k\gg 1$, it is a contradiction.  Thus
$\gamma_{k}\subset W_{i-1}$ for $k\gg 1$, and,
$$\sqrt{1-\frac{1}{k}}d_{N}(x, y)\leq
\sqrt{1-\frac{1}{k}}{\rm length}_{g}(\gamma_{k})< \frac{3}{k},$$  which is
also a contradiction. Hence $f_{i}: W_{i-1}\cap A \rightarrow Y$
is injective.

  Note that   there is a $r_{i}>0$ such that, for any $q\in W_{i}$,
  the metric ball $B_{g}(q, r_{i})$ is a geodesic  convex set
  (\cite{Pe}). By taking $r_{i}<\frac{1}{i(i-1)}$, for any $q_{1}, q_{2} \in W_{i-1}$
  with $d_{N}(q_{1}, q_{2})\leq r_{i}$,
    there is a unique minimal geodesic $\gamma_{s}\subset
    W_{i}$ connecting $q_{1}$ and $ q_{2}$ such that $d_{N}(q_{1},
    q_{2})={\rm length}_{g}(\gamma_{s})$. Thus, by (\ref{3.3}),  we obtain
    that $$d_{M_{k}}(q_{1},
    q_{2})\leq {\rm length}_{g_{k}}(\gamma_{s}) \leq
    \sqrt{1+\frac{1}{k}}{\rm length}_{g}(\gamma_{s})=\sqrt{1+\frac{1}{k}} d_{N}(q_{1},
    q_{2}).$$ By reversing the roles of $g$ and $g_{k}$,  and the same argument as above, we have $$d_{N}(q_{1},
    q_{2})\leq
    \sqrt{1+\frac{1}{k}} d_{M_{k}}(q_{1},
    q_{2}).$$ Note that, for any $a_{1}, a_{2}\in A\cap W_{i-1} $ with $d_{N}(a_{1}, a_{2})\leq r_{i}$,
    $$d_{Y}(b_{1}, b_{2})\leq d_{Y}(b_{1}, \psi_{k}(a_{1}))+d_{Y}(\psi_{k}(a_{1}),
    \psi_{k}(a_{2}))+d_{Y}(\psi_{k}(a_{2}), b_{2}), \ \ \ {\rm
    and}$$ $$d_{Y}(b_{1}, b_{2})\geq d_{Y}(\psi_{k}(a_{1}),
    \psi_{k}(a_{2}))-d_{Y}(b_{1}, \psi_{k}(a_{1}))-d_{Y}(\psi_{k}(a_{2}), b_{2}).
    $$ Thus, by (\ref{3.4}),
     $$d_{Y}(b_{1}, b_{2})\leq d_{Y}(b_{1}, \psi_{k}(a_{1}))+\sqrt{1+\frac{1}{k}} d_{N}(a_{1},
    a_{2})+d_{Y}(\psi_{k}(a_{2}), b_{2})+ \frac{1}{k}, \ \ \ {\rm
    and}$$ $$d_{Y}(b_{1}, b_{2})\geq (1+\frac{1}{k})^{-\frac{1}{2}} d_{N}(a_{1},
    a_{2})-d_{Y}(b_{1}, \psi_{k}(a_{1}))-d_{Y}(\psi_{k}(a_{2}), b_{2})-\frac{1}{k}.
    $$ By letting $k\rightarrow \infty$, we obtain that $$d_{Y}(b_{1},
    b_{2})=d_{N}(a_{1},
    a_{2}).$$ Hence we can extend $f_{i}$ uniquely to a continuous
    map $f_{i}: W_{i-1}\rightarrow Y$, which is injective, and  satisfies that $$ d_{Y}(f_{i}(q_{1}),
    f_{i}(q_{2}))=d_{N}(q_{1},
    q_{2}), $$  for any $q_{1}, q_{2} \in W_{i-1}$
  with $d_{N}(q_{1}, q_{2})\leq r_{i}$.

  By the same arguments as above, we can find a $r_{i+1}>0$, and a continuous
    map $f_{i+1}: W_{i}\rightarrow Y$, which  is injective,  satisfies that $$ d_{Y}(f_{i+1}(q_{1}),
    f_{i+1}(q_{2}))=d_{N}(q_{1},
    q_{2}), $$  for any $q_{1}, q_{2} \in W_{i}$
  with $d_{N}(q_{1}, q_{2})\leq r_{i+1}$. Furthermore, from the
  construction, we can assume that $f_{i+1}|_{W_{i-1}}=f_{i}$. Thus
  we get a family of maps $f_{i+1}: W_{i}\rightarrow Y$.
  Define $f: N_{0}\rightarrow Y$ by $f(q)=f_{i}(q)$ if $q\in W_{i-1}$.  We obtain the conclusion.
  \hfill $\Box$\\

 This lemma implies that
   \begin{equation}\label{3.5} {\rm length}_{ g}(\gamma)={\rm length}_{d_{Y}}(f(\gamma)) ,
   \end{equation} if $\gamma$ is a smooth  curve in $N_{0}$.

     \begin{lm}\label{t4.3} There is a continuous surjective
    map $\tilde{f}: N \rightarrow Y$ such that   $\tilde{f}|_{
    N_{0}}=f$.
      \end{lm}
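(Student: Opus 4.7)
The argument has three steps. First, upgrade $f$ to a globally $1$-Lipschitz map on $N_0$. Second, extend it by completeness to a continuous map $\tilde f$ on all of $N$. Third, use the noncollapsing bound (i) together with the volume convergence (ii) to force surjectivity.

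For the first step, given $x,y \in N_0$, assumption (iii) supplies a minimal geodesic $\gamma \subset N_0$ joining them with ${\rm length}_g(\gamma) = d_N(x,y)$. Equation (\ref{3.5}) then gives
\[
d_Y(f(x),f(y)) \leq {\rm length}_{d_Y}(f(\gamma)) = {\rm length}_g(\gamma) = d_N(x,y),
\]
so $f$ is $1$-Lipschitz on $N_0$. Since $N_0$ is dense in $N$ and $Y$ is complete, for any $x \in N$ and any $x_j \in N_0$ with $x_j \to x$ the sequence $\{f(x_j)\}$ is Cauchy in $Y$ and hence converges; defining $\tilde f(x)$ as its limit gives a well-defined $1$-Lipschitz extension $\tilde f : N \to Y$ with $\tilde f|_{N_0} = f$.

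For surjectivity, $\tilde f(N)$ is compact and hence closed in $Y$. If $\tilde f(N) \neq Y$, pick $y_0 \in Y$ and $r>0$ with $B_Y(y_0, 3r) \cap \tilde f(N) = \emptyset$. By Gromov-Hausdorff convergence there exist $q_k \in M_k$ with $\psi_k(q_k) \to y_0$, and by (i) we have ${\rm Vol}_{g_k}(B_{g_k}(q_k, r)) \geq \kappa r^n$. On the other hand, $\dim_{\mathcal H}(N \setminus N_0) \leq n-2$ gives $\mathcal H^n(N\setminus N_0)=0$, so by (ii) we have ${\rm Vol}_{g_k}(M_k) \to \mathcal H^n(N_0)$, while the $C^{1,\alpha}$-convergence $F_k^*g_k \to g$ on each compact $W_i \subset\subset N_0$ yields ${\rm Vol}_{g_k}(F_k(W_i)) \to \mathcal H^n(W_i)$. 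Taking $i$ so large that $\mathcal H^n(N_0\setminus W_i) < \kappa r^n/10$ forces ${\rm Vol}_{g_k}(M_k \setminus F_k(W_i)) < \kappa r^n/5$ for $k$ large, so $B_{g_k}(q_k,r) \cap F_k(W_i) \neq \emptyset$.

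Pick $p_k = F_k(x_k)$ in this intersection; compactness of $\overline{W_i} \subset N_0$ allows us, after passing to a subsequence, to assume $x_k \to x_\infty \in N_0$. The maps $\psi_k \circ F_k$ converge uniformly on $W_i$ to $f|_{W_i}$: by (\ref{3.3}) and (\ref{3.4}) they are uniformly nearly $1$-Lipschitz, they converge pointwise to $f$ on the dense set $A \cap W_{i-1}$ by the diagonal construction in Lemma \ref{t4.2}, and $f$ is $1$-Lipschitz by Step 1, so Arzel\`a-Ascoli upgrades pointwise to uniform convergence. Hence $\psi_k F_k(x_k) \to f(x_\infty) \in \tilde f(N)$, while $d_Y(\psi_k F_k(x_k), \psi_k(q_k)) \leq r + 1/k$ forces $d_Y(f(x_\infty), y_0) \leq r$, contradicting $B_Y(y_0, 3r) \cap \tilde f(N) = \emptyset$. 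The main obstacle is this surjectivity step, where one must simultaneously exploit the noncollapsing lower bound on $B_{g_k}(q_k,r)$, the asymptotic vanishing of ${\rm Vol}_{g_k}(M_k \setminus F_k(W_i))$, and uniform control of $\psi_k \circ F_k$ versus $f$ on a fixed compact subset of $N_0$.
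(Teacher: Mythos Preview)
Your proof is correct, and the extension step (showing $f$ is globally $1$-Lipschitz via assumption (iii) and (\ref{3.5}), then extending by completeness) is essentially the paper's argument, stated more cleanly. The surjectivity step, however, follows a genuinely different route. The paper invokes the Cheeger--Colding volume convergence theorem (Theorem~\ref{t4.05}) to pass to the limit space: from $\mathcal{H}^n(Y)=\mathcal{H}^n(N)={\rm Vol}_g(N_0)$ and $\mathcal{H}^n(B_{d_Y}(y,\delta))\geq\kappa\delta^n$, together with the fact that $f$ is a local isometry so $\mathcal{H}^n(f(N_0))={\rm Vol}_g(N_0)$, a purely measure-theoretic contradiction on $Y$ follows. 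You instead work directly on $M_k$: the elementary volume balance ${\rm Vol}_{g_k}(M_k)-{\rm Vol}_{g_k}(F_k(W_i))\to\mathcal{H}^n(N_0\setminus W_i)$ forces $B_{g_k}(q_k,r)$ to meet $F_k(W_i)$, and then the uniform convergence $\psi_k\circ F_k\to f$ on $W_i$ pulls the contradiction back to $Y$. Your approach is more elementary in that it avoids Cheeger--Colding; the paper's approach, on the other hand, yields the additional structural identity $\mathcal{H}^n(Y)=\mathcal{H}^n(N)$ as a byproduct.

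Two small points of care in your sketch: first, the claim that $\psi_k\circ F_k$ is ``uniformly nearly $1$-Lipschitz'' on $W_i$ is only literally true for pairs at distance below the local convexity radius $r_{i+1}$ (so that the minimal geodesic stays in $W_{i+1}$ where (\ref{3.3}) applies); this local version is exactly what the finite-net argument needs, so no harm. Second, $\psi_k$ is merely a $\frac{1}{k}$-approximation, not a continuous map, so ``Arzel\`a--Ascoli'' is a slight abuse; the actual mechanism is the $\epsilon/4$-net argument combined with (\ref{3.4}) and pointwise convergence on $A\cap W_i$, which you correctly identify.
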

   \begin{proof}
     Note that $ N_{0}$ is dense in $N$.
       Let $x\in N$, and $\{x_{j}\}\subset N_{0}$ be a
      sequence of points  converging to $x$. For any $x_{j}, x_{j+l}\in  \{x_{j}\}$, there is
      a  minimal geodesic $\gamma_{j,j+l}\subset N_{0}$ connecting $x_{j}$ and $
      x_{j+l}$ with ${\rm length}_{
      g}(\gamma_{j,j+l})=d_{N}(x_{j}, x_{j+l})$ from the assumption.   By (\ref{3.5}),
 $$d_{Y}(f(x_{j}), f(x_{j+l}))\leq
      {\rm length}_{d_{Y}}(f(\gamma_{j,j+l}))= {\rm length}_{
      g}(\gamma_{j,j+l})=d_{N}(x_{j}, x_{j+l}).$$ Hence
      $\{f(x_{j})\}$ is a Canchy sequence, and we denote the limit
      as $y$. If $\{x'_{j}\}\subset N_{0}$ is  another
      sequence of points  converging to $x$, and $\gamma_{j}$ are minimal  geodesics connecting
      $x_{j}$ and  $x'_{j}$ in $N_{0}$,  then $$d_{Y}(f(x_{j}), f(x'_{j}))\leq
      {\rm length}_{d_{Y}}(f(\gamma_{j}))= {\rm length}_{
      g}(\gamma_{j})=
      d_{N}(x_{j}, x'_{j})\rightarrow 0,$$ when $j\rightarrow\infty$. Thus $\{f(x'_{j})\}$ converges
      to $y$ too.   Define $\tilde{f}(x)=y$, and, clearly,
      $\tilde{f}$ is a continuous
    map from $ N $ to $ Y$ from the construction.

    We claim that $\tilde{f}(N)$ is closed in $Y$. Let
    $\{y_{j}\}\subset \tilde{f}(N)$ be a sequence of points
    converging to $y$ in $Y$.  From the construction above, for any
    $j$, there is a sequence of points $\{x_{j,i}\}\subset N_{0}$
    such that $d_{Y}(y_{j}, f(x_{j,i}))\rightarrow 0$ when
    $i\rightarrow\infty$. By the standard diagonal argument,
   we can find a   sequence   of points $\{x_{j,i_{j}}\}\subset
   N_{0}$, and a point $x\in N$ such that $$d_{N}(x_{j,i_{j}}, x)\rightarrow
   0, \ \ \ \ {\rm and} \ \ \ d_{Y}(y, f(x_{j,i_{j}}))\rightarrow 0,$$ when
    $j\rightarrow\infty$. By the construction of $\tilde{f}$,
    $y=  \tilde{f}(x)$, and, thus,  $\tilde{f}(N)$ is closed in $Y$.

 Now, we prove that $\tilde{f}$ is surjective. If $\tilde{f}$ is not surjective, there is a point
   $y\in Y\backslash
    \tilde{f}(N)$, and a $\delta >0$ such that
     the intersection of the metric ball   $B_{d_{Y}}(y, \delta)$ and $\tilde{f}(N)$
    is empty. Let
    $B_{g_{k}}(y_{k}, \delta)$ be    metric $\delta$-balls  of $(M_{k}, g_{k})$ such that
    $B_{g_{k}}(y_{k}, \delta)$ converges to $B_{d_{Y}}(y, \delta)$ under the convergence of
    $(M_{k}, g_{k})$ to $(Y, d_{Y})$.  Now we need the volume
    convergence theorem duel to Colding and Cheeger:

\begin{theorem}[\cite{C} \cite{CC1}]
\label{t4.05}
Let $(M_{k}, g_{k}, y_{k})$ be a family of Riemannian
 $n$-manifolds, which converges to a compact path metric space $(Y, d_{Y},
 y)$.  If  there are two constants $C>0$ and $\kappa >0$ independent of $k$  such
  that  $${\rm Ric}(g_{k})\geq  -Cg_{k}, \quad \mbox{ and } \quad
  {\rm Vol}_{g_{k}}(B_{g_{k}}(p,  \delta))\geq \kappa  \delta^{n}, $$ for any metric
  ball $B_{g_{k}}(p,  \delta)$, then
  \begin{equation}
  \lim\limits_{k\rightarrow\infty} {\rm Vol}_{g_{k}}(M)= \mathcal{H}^{n}(Y) \mbox{ and } \lim\limits_{k\rightarrow\infty}  {\rm Vol}_{g_{k}}(B_{g_{k}}(y_{k}, \delta))= \mathcal{H}^{n}(B_{d_{Y}}(y, \delta)),
  \end{equation}
  where $\mathcal{H}^{n}$ denotes the Hausdorff measure.
\end{theorem}

   By this theorem and  the assumptions,  we obtain that
   \begin{equation}
   \mathcal{H}^{n}(Y)=\mathcal{H}^{n}(N)
   \quad \mbox{ and } \quad \mathcal{H}^{n}(B_{d_{Y}}(y, \delta))\geq \kappa \delta^{n} .  \end{equation}
    Since
  $\dim_{\mathcal{H}}N\backslash N_{0} \leq n-2$,   the $n$-dimensional Hausdorff measure of
  $N\backslash N_{0}$ is zero, i.e.  $\mathcal{H}^{n}(N\backslash N_{0})=0$, and $$\mathcal{H}^{n}(N)={\rm Vol}_{g}(N_{0}).$$   From Lemma \ref{t4.2}, $f$  is a locally isometry,
  i.e. for any compact subset $K\subset\subset N_{0}$, there is a
  $\delta' >0$ such that, for any $p_{1}$,  $p_{2}\in K$ with
  $d_{N}(p_{1},p_{2})< \delta'$, we have
  $d_{N}(p_{1},p_{2})=d_{Y}(f(p_{1}),f(p_{2}))$. Thus,   for any $y\in f(N_{0})$, the tangent
  cone $Y_{y}$ is $\mathbb{R}^{n}$, and the $n$-Hausdorff measure $\mathcal{H}^{n}$ is the Riemannian
  measure induced by $g$ on $f(N_{0})$.
   Hence $${\rm Vol}_{g}(N_{0})=\mathcal{H}^{n}(Y)\geq
   \mathcal{H}^{n}(f(N_{0}))+\mathcal{H}^{n}(B_{d_{Y}}(y,
   \delta))\geq {\rm Vol}_{g}(N_{0})+ \kappa \delta^{n}>{\rm Vol}_{g}(N_{0}).$$ It is
   a contradiction.
\end{proof}

\begin{lm}\label{t4.5} $\tilde{f}: (N, g)  \rightarrow (Y, d_{Y})$  is an isometry ,
  i.e. for any $p_{1}, p_{2}\in N$, $$ d_{N}(p_{1}, p_{2})=d_{Y}(\tilde{f}(p_{1}),
  \tilde{f}(p_{2})).$$
  \end{lm}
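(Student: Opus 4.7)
The plan is to establish the two inequalities separately. The one-Lipschitz direction $d_{Y}(\tilde{f}(p_{1}),\tilde{f}(p_{2})) \leq d_{N}(p_{1},p_{2})$ should follow quickly from what we have. For $p_{1},p_{2}\in N_{0}$, hypothesis (iii) of Theorem \ref{t4.1} produces a minimizing geodesic $\gamma\subset N_{0}$ with ${\rm length}_{g}(\gamma)=d_{N}(p_{1},p_{2})$, and equation (3.5) from the proof of Lemma \ref{t4.2} yields
\[
d_{Y}(f(p_{1}),f(p_{2}))\leq {\rm length}_{d_{Y}}(f(\gamma))={\rm length}_{g}(\gamma)=d_{N}(p_{1},p_{2}).
\]
For general $p_{1},p_{2}\in N$, I would pick approximating sequences $\{p_{i,j}\}\subset N_{0}$ with $p_{i,j}\to p_{i}$ (possible since $N_{0}$ is dense) and pass to the limit using continuity of $\tilde{f}$ together with the triangle inequality.

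For the reverse direction I would argue by contradiction. Suppose $d_{Y}(\tilde{f}(p_{1}),\tilde{f}(p_{2}))<d_{N}(p_{1},p_{2})$. A continuity/density argument reduces us to the case $p_{1},p_{2}\in N_{0}$. Since $(Y,d_{Y})$ is a path metric space, there exists a Lipschitz curve $\sigma:[0,1]\to Y$ from $f(p_{1})$ to $f(p_{2})$ with ${\rm length}_{d_{Y}}(\sigma)<d_{N}(p_{1},p_{2})$. Surjectivity of $\tilde{f}$ (Lemma \ref{t4.3}) gives $Y\setminus f(N_{0})\subset\tilde{f}(N\setminus N_{0})$, and because $\tilde{f}$ is $1$-Lipschitz by the previous step while $\dim_{\mathcal{H}}(N\setminus N_{0})\leq n-2$ by hypothesis (iii), Lipschitz maps not increasing Hausdorff dimension forces $\dim_{\mathcal{H}}(Y\setminus f(N_{0}))\leq n-2$.

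The key technical step is then to perturb $\sigma$ into a curve $\sigma'\subset f(N_{0})$ while keeping ${\rm length}_{d_{Y}}(\sigma')<d_{N}(p_{1},p_{2})$. Lemma \ref{t4.2} gives an open cover of $\sigma([0,1])\cap f(N_{0})$ by small balls on which $f^{-1}$ is well-defined and isometric, so $Y$ inherits the $C^{1,\alpha}$-Riemannian structure of $g$ there through $f^{-1}$. By compactness I would extract a finite subcover, and then apply the standard codimension-$2$ avoidance principle in each local chart to push $\sigma$ off of $Y\setminus f(N_{0})$ with arbitrarily small change in length. Lifting the resulting $\sigma'$ via $f^{-1}$ yields a curve $\tilde{\sigma}\subset N_{0}$ from $p_{1}$ to $p_{2}$ with ${\rm length}_{g}(\tilde{\sigma})={\rm length}_{d_{Y}}(\sigma')<d_{N}(p_{1},p_{2})$, contradicting the fact that $d_{N}$ restricted to $N_{0}$ is the length metric of $g$.

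The main obstacle is the perturbation step: the codimension-$2$ bound on $Y\setminus f(N_{0})$ makes the avoidance heuristically obvious, but a rigorous argument must show that the perturbation can be performed with controlled length while patching finitely many local adjustments. The smoothness provided by hypothesis (iv) combined with the local isometric trivialization from Lemma \ref{t4.2} is exactly what reduces the global problem on $Y$ (which is a priori only a path metric space) to the classical fact that a codimension-$2$ subset of a $C^{1,\alpha}$-Riemannian manifold can be avoided by rectifiable curves with negligible length increase.
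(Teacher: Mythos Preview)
Your one-Lipschitz direction is fine and matches the paper. The overall architecture of the reverse direction is also the same as the paper's: show that any short curve in $Y$ between $f(p_{1}),f(p_{2})$ can be replaced by one lying entirely in $f(N_{0})$, then pull back by $f^{-1}$ to contradict minimality in $N_{0}$.

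The gap is precisely in the perturbation step, and it is not merely a technicality. Your proposed avoidance argument uses the $C^{1,\alpha}$-Riemannian structure that $f^{-1}$ induces on $f(N_{0})$; but the curve $\sigma$ may meet $S_{Y}=Y\setminus f(N_{0})$, and at those points $Y$ is a priori only a path metric space with no manifold structure whatsoever. The classical ``codimension-$2$ sets can be avoided by generic curves'' argument relies on transversality (or integral-geometric estimates) in a smooth ambient space, and there is nothing of the sort available on $Y$ near $S_{Y}$. Concretely, your step ``extract a finite subcover'' fails because $\sigma([0,1])\cap f(N_{0})$ need not be compact, and even if it were, the cover does not touch the points of $\sigma([0,1])\cap S_{Y}$, which are exactly the points you must perturb away from.

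The paper closes this gap by invoking a structural result from Cheeger--Colding theory (Theorem~3.7 of \cite{CC2}, stated as Theorem~\ref{t4.06}): in a non-collapsed Ricci limit space $Y$, for any closed $B\subset Y$ with $\mathcal{H}^{n-1}(B)=0$ and any $x_{1}\in Y\setminus B$, $\mathcal{H}^{n}$-almost every $x_{2}\in Y\setminus B$ is joined to $x_{1}$ by a minimal geodesic in $Y\setminus B$. This uses the hypotheses (i)--(ii) on $(M_{k},g_{k})$ in an essential way (volume non-collapsing and the Ricci lower bound), and it is what supplies the avoidance without any smooth structure on $Y$ near $S_{Y}$. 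Your $\dim_{\mathcal{H}}(S_{Y})\leq n-2$ bound gives $\mathcal{H}^{n-1}(S_{Y})=0$, so you have exactly the right input for that theorem; you should replace your transversality sketch with an appeal to it.
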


  \noindent{\bf Proof:} Note that $\tilde{f}$ is a uniformly  continuous map,
  since $N$ is compact.   For any $p_{1}, p_{2}\in N$, there are
  sequences of points $\{p_{j,i}\}\subset N_{0}$, $j=1,2$,  such that $d_{N}(p_{j,i}, p_{j})\rightarrow 0$ when $i\rightarrow\infty$.
  Thus $d_{Y}(f(p_{j,i}), \tilde{f}( p_{j}))\rightarrow 0$, $j=1,2$,  when
  $i\rightarrow\infty$. From the assumption, there is a minimal geodesic $\gamma_{i}$
  connecting $p_{1,i}$ and $p_{2,i}$ in $N_{0}$, which satisfies that
  ${\rm length}_{g}(\gamma_{i})=d_{N}(p_{1,i}, p_{2,i})$.  By
  (\ref{3.5}),
  $$d_{Y}(f(p_{1,i}), f(p_{2,i}))\leq {\rm length}_{d_{Y}}(f(\gamma_{i}))={\rm length}_{g}(\gamma_{i})=d_{N}(p_{1,i}, p_{2,i}).$$
  Thus \begin{eqnarray*}d_{Y}(\tilde{f}(p_{1}),\tilde{f}(p_{2})) & \leq & d_{Y}(f(p_{1,i}), f(p_{2,i})) +
  d_{Y}(f(p_{1,i}), \tilde{f}( p_{1}))+d_{Y}(f(p_{2,i}), \tilde{f}( p_{2}))\\ & \leq & d_{Y}(f(p_{2,i}), \tilde{f}( p_{2}))+
  d_{Y}(f(p_{1,i}), \tilde{f}( p_{1}))+d_{N}(p_{1,i}, p_{2,i}) \\  & \leq &
  d_{N}(p_{1}, p_{2}) +\sum (d_{Y}(f(p_{j,i}), \tilde{f}( p_{j}))+d_{N}(p_{j,i}, p_{j})). \end{eqnarray*} By letting
  $i\rightarrow\infty$, we obtain that \begin{equation}\label{3.8} d_{Y}(\tilde{f}(p_{1}),\tilde{f}(p_{2}))  \leq
  d_{N}(p_{1}, p_{2}).\end{equation}

  If $S_{N}=N\backslash N_{0}$ and $S_{Y}=Y\backslash f(N_{0})$,
  then $\tilde{f}(S_{N})\supset S_{Y}$ since $\tilde{f}$ is
  surjective. Since
  $\dim_{\mathcal{H}}S_{N} \leq n-2$,   the $n-1$-dimensional Hausdorff measure of
  $S_{N}$ is zero, i.e.  $\mathcal{H}^{n-1}(S_{N})=0$. For any
  $\eta >0$, and  any collection of countable coverings, $\{B_{g}(q_{\nu},
  r_{\nu})\}$, of $S_{N}$ with $r_{\nu}\leq \eta$, by (\ref{3.8}), $ \tilde{f}(B_{g}(q_{\nu},
  r_{\nu}))\subset B_{d_{Y}}(\tilde{f}(q_{\nu}),
  r_{\nu})$, and $\{B_{d_{Y}}(\tilde{f}(q_{\nu}),
  r_{\nu})\}$ is a covering of $S_{Y}$. Thus
  $$\mathcal{H}_{\eta}^{n-1}(S_{Y})\leq
  \varpi_{n-1}\sum\limits_{\nu}r_{\nu}^{n-1},$$ where $\varpi_{n-1}$
  is the volume of 1-ball in Euclidean space $\mathbb{R}^{n-1}$. We have $$\mathcal{H}_{\eta}^{n-1}(S_{Y})\leq
 \inf\limits_{\{B_{g}(q_{\nu},
  r_{\nu})\}}
  \varpi_{n-1}\sum\limits_{\nu}r_{\nu}^{n-1}=\mathcal{H}_{\eta}^{n-1}(S_{N}),$$
  and $$\mathcal{H}^{n-1}(S_{Y})=\lim\limits_{\eta \rightarrow 0}\mathcal{H}_{\eta}^{n-1}(S_{Y})\leq
 \lim\limits_{\eta \rightarrow 0}\mathcal{H}_{\eta}^{n-1}(S_{N})=\mathcal{H}^{n-1}(S_{N})=0.
 $$ Hence the $n-1$-dimensional Hausdorff measure of
  $S_{Y}$ is zero, i.e.  $\mathcal{H}^{n-1}(S_{Y})=0$. We need the
  following theorem:

  \begin{theorem}[Theorem 3.7 in \cite{CC2}] \label{t4.06} Let  $(M_{k}, g_{k}, y_{k})$ and  $(Y, d_{Y},
 y)$ be the same as in Theorem \ref{t4.05}, and $B$ be a closed
 subset of $Y$ with $\mathcal{H}^{n-1}(B)=0 $. If $x_{1}\in Y\backslash B$,
 then,
 for $\mathcal{H}^{n}$-almost all $ x_{2}\in Y\backslash B$, there
 is a minimal geodesic connecting $x_{1}$ and $x_{2}$ which lies in
 $Y\backslash B$.
  \end{theorem}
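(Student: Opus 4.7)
\noindent\textbf{Proof proposal for Theorem \ref{t4.06}.}
The plan is to combine the Cheeger--Colding segment inequality with a measure-theoretic approximation argument. The underlying intuition is that $\mathcal{H}^{n-1}(B)=0$ makes $B$ transversally thin to the pencil of minimal geodesics emanating from $x_{1}$, so a generic target point $x_{2}$ should admit some minimal geodesic that misses $B$ entirely.

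First, since $\mathcal{H}^{n-1}(B)=0$, for each $\epsilon>0$ cover $B$ by countably many open balls $\{B_{d_{Y}}(p_{i},r_{i})\}$ with $\sum_{i}r_{i}^{n-1}<\epsilon$. The noncollapsing hypothesis (i) in Theorem \ref{t4.1} together with Bishop--Gromov and the volume convergence Theorem \ref{t4.05} give $\mathcal{H}^{n}(B_{d_{Y}}(p_{i},r_{i}))\leq Cr_{i}^{n}$. Hence the open neighborhood $U_{\epsilon}=\bigcup_{i}B_{d_{Y}}(p_{i},r_{i})\supset B$ satisfies $\mathcal{H}^{n}(U_{\epsilon})\to 0$ as $\epsilon\to 0$.

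Second, apply the Cheeger--Colding segment inequality on each approximating $(M_{k},g_{k})$ to the characteristic function of a Gromov--Hausdorff lift $V_{\epsilon,k}\subset M_{k}$ of $U_{\epsilon}$: for a small geodesic ball $A_{1}\subset M_{k}$ around the point $y_{k}$ approximating $x_{1}$,
$$\int_{A_{1}}\!\int_{M_{k}}\!\int_{0}^{d(p,q)}\chi_{V_{\epsilon,k}}(\gamma_{p,q}(s))\,ds\,d\mathrm{vol}(q)\,d\mathrm{vol}(p)\leq C(n,\kappa,C,\mathrm{diam})\,\mathrm{vol}(V_{\epsilon,k}).$$
Fubini together with the volume lower bound on $A_{1}$ yields a base point $p_{k}\in A_{1}$, which may be chosen so that $p_{k}\to x_{1}$, such that outside a subset of $M_{k}$ of volume bounded by $C\,\mathrm{vol}(V_{\epsilon,k})$, every point $q$ is connected to $p_{k}$ by some minimizing geodesic disjoint from $V_{\epsilon,k}$.

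Third, pass to the Gromov--Hausdorff limit. Using Colding's volume convergence, the a.e.\ uniqueness of minimal geodesics from a fixed point in a Ricci limit space (Cheeger--Colding), and the fact that minimizing geodesics in $(M_{k},g_{k})$ subconverge to minimizing geodesics in $(Y,d_{Y})$, we obtain
$$E_{\epsilon}:=\bigl\{x_{2}\in Y\setminus B:\text{every minimal geodesic from }x_{1}\text{ to }x_{2}\text{ meets }U_{\epsilon}\bigr\}$$
with $\mathcal{H}^{n}(E_{\epsilon})\leq C\,\mathcal{H}^{n}(U_{\epsilon})\to 0$. Intersecting over $\epsilon=1/j$ gives $\mathcal{H}^{n}\bigl(\bigcap_{j}E_{1/j}\bigr)=0$, and any $x_{2}$ in the complement admits a minimal geodesic from $x_{1}$ lying entirely in $Y\setminus B$, as desired.

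The main obstacle is the rigorous passage of the segment inequality and of ``geodesic avoidance'' from the smooth $(M_{k},g_{k})$ to the possibly singular limit $(Y,d_{Y})$: one must check that a minimizer in $M_{k}$ missing $V_{\epsilon,k}$ produces in the limit a minimizer in $Y$ missing the open set $U_{\epsilon}$ (not merely its closure), which forces one to use a nested family $U_{\epsilon'}\subset\subset U_{\epsilon}$ and invoke the structure theory of Ricci limits from \cite{CC1} and \cite{CCT}.
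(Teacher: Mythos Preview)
The paper does not prove this theorem at all: it is quoted verbatim as Theorem~3.7 of Cheeger--Colding \cite{CC2} and used as a black box inside the proof of Lemma~\ref{t4.5}. So there is no ``paper's own proof'' to compare against; any argument you write is necessarily a reconstruction of the Cheeger--Colding proof.

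That said, your sketch has a real gap at the second step. From the segment inequality
\[
\int_{A_{1}}\!\int_{M_{k}}\!\int_{0}^{d(p,q)}\chi_{V_{\epsilon,k}}(\gamma_{p,q}(s))\,ds\,d\mathrm{vol}(q)\,d\mathrm{vol}(p)\leq C\,\mathrm{vol}(V_{\epsilon,k})
\]
you only get that for most $q$ the minimizing geodesic spends \emph{little time} in $V_{\epsilon,k}$, not that it is \emph{disjoint} from $V_{\epsilon,k}$. Markov's inequality cannot turn ``integral small'' into ``integral zero'' on a set of controlled measure. The fix is to work ball by ball: if $\gamma_{p,q}$ enters $B(p_{i},r_{i})$ then it must spend time at least $r_{i}$ in $B(p_{i},2r_{i})$, so the segment inequality applied to $\chi_{B(p_{i},2r_{i})}$ gives
\[
\mathrm{vol}\{q:\gamma_{p,q}\cap B(p_{i},r_{i})\neq\emptyset\}\leq C\,r_{i}^{-1}\,\mathrm{vol}(B(p_{i},2r_{i}))\leq C\,r_{i}^{n-1},
\]
and summing over $i$ yields a bad set of volume $\leq C\sum r_{i}^{n-1}<C\epsilon$. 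This is where the hypothesis $\mathcal{H}^{n-1}(B)=0$ (rather than $\mathcal{H}^{n}(B)=0$) actually enters; in your write-up it is only used to make $\mathcal{H}^{n}(U_{\epsilon})$ small, which is too weak. With this correction your outline becomes essentially the Cheeger--Colding argument, and the limit passage you flag at the end is handled there by the convergence of geodesics and renormalized volume.
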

  This theorem implies that, for any $x_{1}, x_{2}\in Y\backslash
  S_{Y}$, any $\varepsilon >0$, there is an  $x_{2}'\in Y\backslash
  S_{Y}$  such that there is a minimal geodesic connecting $x_{2}$ and $x_{2}'$ in $Y\backslash
  S_{Y}$, and   $d_{Y}(x_{2},x_{2}') <  \varepsilon$.
  Hence we can find a  curve $\bar{\gamma}$ connecting $x_{1}$ and $
  x_{2}$ in  $Y\backslash
  S_{Y}$ such that $$ {\rm length}_{d_{Y}}(\bar{\gamma})\leq d_{Y}(x_{2},x_{2}')+d_{Y}(x_{1},x_{2})\leq
   \varepsilon
  +d_{Y}(x_{1},x_{2}).$$

   If there is an  $i$ such that  $d_{Y}(f(p_{1,i}), f(p_{2,i}))<
  {\rm length}_{d_{Y}}(f(\gamma_{i}))$,   there
  is a curve $\bar{\gamma}$ connecting $f(p_{1,i}), f(p_{2,i})$ such
  that
    $\bar{\gamma}\subset f(N_{0})$, and $$
    {\rm length}_{d_{Y}}(\bar{\gamma})\leq d_{Y}(f(p_{1,i}), f(p_{2,i}))+
    \frac{1}{2}\varrho <{\rm length}_{d_{Y}}(f(\gamma_{i})),$$ where $\varrho= {\rm length}_{d_{Y}}(f(\gamma_{i}))-d_{Y}(f(p_{1,i}),
    f(p_{2,i}))$.  It contradicts to
     that $f(\gamma_{i})$ is the minimal geodesic in
     $(f(N_{0}), d_{Y})$. Thus, for any $i$,  $$d_{Y}(f(p_{1,i}), f(p_{2,i}))=
  {\rm length}_{d_{Y}}(f(\gamma_{i}))=d_{N}(p_{1,i}, p_{2,i}),$$ and
   \begin{eqnarray*}d_{Y}(\tilde{f}(p_{1}),\tilde{f}(p_{2})) & \geq &
   d_{Y}(f(p_{1,i}), f(p_{2,i}))-d_{Y}(f(p_{2,i}), \tilde{f}( p_{2})) -
  d_{Y}(f(p_{1,i}), \tilde{f}( p_{1}))\\ & \geq  & d_{N}(p_{1,i}, p_{2,i}) -d_{Y}(f(p_{2,i}), \tilde{f}( p_{2}))
  -d_{Y}(f(p_{1,i}), \tilde{f}( p_{1}))\\  & \geq &
  d_{N}(p_{1}, p_{2}) -\sum (d_{Y}(f(p_{j,i}), \tilde{f}( p_{j}))+d_{N}(p_{j,i}, p_{j})). \end{eqnarray*} By letting
  $i\rightarrow\infty$, we obtain that $$d_{Y}(\tilde{f}(p_{1}),\tilde{f}(p_{2}))
  \geq
  d_{N}(p_{1}, p_{2}),$$ and, thus,  $$d_{Y}(\tilde{f}(p_{1}),\tilde{f}(p_{2}))
  =
  d_{N}(p_{1}, p_{2}).$$ We obtain that $\tilde{f}$ is injective,
  and is an  isometry.
  \hfill $\Box$\\

\noindent{\bf Proof of Theorem \ref{t4.1}:}  If it is not true, there is a  subsequence  of  $(M_{k}, g_{k})$, denoted by  $(M_{k}, g_{k})$
     also,  such that  $d_{GH}((M_{k},g_{k}), (N, d_{N}))>C, $ for
 a constant $C>0$.  By
  Gromov's precompactness   theorem (c.f.  \cite{G1}), a
  subsequence of  $\{(M_{k}, g_{k})\}$ converges to a compact length  metric
  space $(Y, d_{Y})$ in the Gromov-Hausdorff topology, which satisfies $d_{GH}((Y, d_{Y}), (N, d_{N}))>C $.  It  contradicts to  Lemma
  \ref{t4.5}.
  \hfill $\Box$\\

  Now we prove Theorem \ref{t1}.\\

 \noindent{\bf Proof of Theorem \ref{t1}:} Let $N$ be a  Calabi-Yau $n$-variety, which admits
 a crepant  resolution $(M, \pi)$,  $\alpha \in H^{1}(N,
 \mathcal{PH}_{N})$ be a class  represented by a smooth K\"{a}hler form on $N$,   and $g$ be  the unique  singular Ricci-flat
 K\"{a}hler metric with K\"{a}hler form $\omega\in \alpha$.  Assume that the path metric structure of  $(N\backslash S, g)$  extends    to a path metric structure $d_{N}$ on
 $N$ such that the Hausdorff dimension of $S$
 satisfies $\dim_{\mathcal{H}}S\leq 2n-4$, and
 $N\backslash S$ is geodesic convex in $(N, d_{N})$, where $S$ is   the singular set  of $N$,  i.e.  for any $x, y\in
 N\backslash S$, there is a minimal geodesic $\gamma \subset N\backslash S$ connecting $x$ and $y$
  satisfying ${\rm length}_{g}(\gamma)=d_{N}(x,y)$.  Let  $g_{k}$ be  a family
  of Ricci-flat
 K\"{a}hler metrics on $M$ with K\"{a}hler forms $\omega_{k}$ such
 that $[\omega_{k}]\rightarrow \pi^{*}\alpha$ in $H^{1,1}(M,
 \mathbb{R})$ when $k\rightarrow \infty$.

   Note that
  \begin{equation}\label{4.2} \lim\limits_{k\rightarrow \infty}{\rm Vol}_{g_{k}}(M)={\rm Vol}_{g}(N\backslash S),
  \end{equation} $${\rm and} \ \ \   \lim\limits_{k\rightarrow \infty}\int_{M}\omega_{k}\wedge
  \omega_{1}^{n-1}=\langle  \pi^{*}[\omega]  \wedge
  [\omega_{1}]^{n-1}, [M]\rangle . $$
     By Theorem  \ref{t3.1}   and
  Bishop-Gromov comparison theorem, we obtain that  $${\rm diam}_{g_{k}}(M)\leq C_{1}, \ \ \
   \ $$  and,  for any metric ball
  $B_{g_{k}}(r)$, \begin{equation}\label{4.3}{\rm Vol}_{g_{k}}(B_{g_{k}}(r))\geq
  \frac{{\rm Vol}_{g_{k}}(M)}{{\rm diam}_{g_{k}}^{2n}(M)}r^{2n}\geq  C_{2}r^{2n},  \end{equation}
  where $C_{1}$ and $C_{2}$ are two constants independent of $k$.
 Since $\dim_{\mathcal{H}}S\leq 2n-4$, ${\rm Vol}_{g}(N\backslash
 S)=\mathcal{H}^{2n}(N)$.
      By
   Theorem  \ref{202} (Theorem 1.1 in \cite{To}),   $\{g_{k}\}$ converges to $ \pi^{*}g$ on any
  compact subset $K\subset\subset \pi^{-1}(N\backslash S)$ in the $C^{\infty}$
  sense.  Thus the
  conclusion is a directly consequence of Theorem  \ref{t4.1}.
  \hfill $\Box$\\

 \noindent{\bf Proof of Corollary  \ref{t2}:} Let $N$ be   a compact Calabi-Yau
  $n$-orbifold  with $H^{2}(N,
\mathcal{O}_{N})=\{0\}$,  $g$ be  a Ricci-flat  K\"{a}hler metric on
$N$,
   $\omega$ be the K\"{a}hler form of $g$. Assume that $N$ admits  a  crepant resolution $(M, \pi)$.
   By Lemma \ref{203}, there is a
     smooth $(1,1)$-form $\omega_{0}$ in the sense of orbifold forms,
     which  is a smooth    K\"{a}hler form in the sense of Section 5.2 in
    \cite{EGZ}.  By the uniqueness part of Theorem 7.5 of
    \cite{EGZ},  $g$ is the unique Ricci-flat  K\"{a}hler metric on
$N$ with K\"{a}hler  form $\omega=\omega_{0}+ \sqrt{-1}\partial
\overline{\partial} \varphi_{0}$ for a continuous function
$\varphi_{0}$  on $N$. Note that $(N, g)$ is a compact metric space,
  the smooth part  $N_{0}$ of $N$  is geodesic convex in $N$  (c.f.  \cite{Bo}), and
$\dim_{\mathbb{R}}N\backslash N_{0}\leq 2n-4$ since $N\backslash
N_{0}$ is a subvariety of $N$. Hence we obtain  Corollary \ref{t2}
from Theorem \ref{t1}.
\hfill $\Box$\\

\setcounter{equation}{0}\section{Convergence of Calabi-Yau manifolds under smoothing}
Let $M_{0}$ be a projective Calabi-Yau  $n$-variety,  and $S$ be the set of  singular points of $M_{0}$. Assume that $M_{0}$ admits a smoothing  $\pi: \mathcal{M}\rightarrow \Delta$ in $\mathbb{CP}^{N}$ over the unit disc $\Delta=\{t\in \mathbb{C}|
|t|<1 \}$. (See section 1 for precise definition.) Recall that we assumed further that the
 canonical bundle $\mathcal{K}_{\mathcal{M}} \cong \mathcal{O}_{\mathcal{M}}$. Let $\Omega$ denote the corresponding trivializing section of $\mathcal{K}_{\mathcal{M}}$. By the adjunction formula (c.f. \cite{GH}), we have $\mathcal{K}_{M_{t}} =\mathcal{K}_{\mathcal{M}} \otimes [M_{t}]|_{M_{t}} \cong \mathcal{O}_{M_{t}}$. The corresponding trivializing section can be expressed locally as $\Omega_t = (\imath_{\frac{\partial}{\partial t}} \Omega)|_{M_t}$. In the following, by a local embedding $i: (\mathcal{M},x_0) \hookrightarrow (\mathbb{C}^{n'},0)$, we means an isomorphism of an open neighborhood of $x_0$ in $\mathcal{M}$ with a closed analytic subvariety in $B'_R := B_R(0, \mathbb{C}^{n'})$ for sufficiently large $R>0$ that maps $x_0$ to $0$.\\

\begin{lm}
\label{5.1}
For any $x_0\in M_0$, there are $m, C_1>0$ and a local embedding $i: (\mathcal{M},x_0) \hookrightarrow (\mathbb{C}^{n'},0)$ such that:
\begin{itemize}
\item[(i)] For $U':= \mathcal{M} \cap i^{-1} B'_1$ and $U := \mathcal{M} \cap i^{-1} B'_2$, there is $v \in C^\infty (U)$ so that $\omega = \sqrt{-1} \partial \bar{\partial} v$ and $\displaystyle\inf_{\partial U} v \geq C_1 + \sup_{U'} v$.
\item[(ii)] There is a holomorphic map $\mathfrak{p}: U \rightarrow B_1(0) \subset \mathbb{C}^n$ that restricts to a finite branched covering $\mathfrak{p}: M_t \cap U \rightarrow B_1(0)$ of degree $\leq m$ for all $t\in \Delta$. (In particular, when $x_0 \not\in S$, $\mathfrak{p}|_{M_t \cap U}$ is an open embedding, such that $(\mathfrak{p}^* \Omega_{\mathbb{C}^n})|_{M_t} = c\Omega_t$ for a constant $c>0$ independent of $t\in \Delta$.)
\end{itemize}
\end{lm}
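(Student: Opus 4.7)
My strategy is to first fix an auxiliary local embedding that yields the branched covering in part (ii) via Noether normalization, then normalize coordinates inside the ambient $\mathbb{C}^{n'}$ so that the potential inequality in (i) also holds without destroying (ii). To begin, choose any local holomorphic embedding $i_0:(\mathcal{M},x_0)\hookrightarrow(\mathbb{C}^{n'},0)$ realizing a neighborhood of $x_0$ as a closed analytic subvariety, arranged so that $\pi$ appears as one of the coordinates of $\mathbb{C}^{n'}$. Since $\omega$ is a smooth K\"ahler form on the analytic variety $\mathcal{M}$, by the definition recalled in \S2.2 there is a smooth strictly plurisubharmonic $\tilde v_0$ on a neighborhood of $0\in\mathbb{C}^{n'}$ with $\omega=i_0^*\sqrt{-1}\partial\bar\partial\tilde v_0$.

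For part (ii) I apply Proposition \ref{cy13} to the local subvariety $i_0(\mathcal{M})\subset\mathbb{C}^{n'}$ with $f=\pi$, which is not identically zero on any component near $x_0$ because the generic fibre is smooth and positive-dimensional. This supplies $R>0$ and a linear projection $\mathbb{C}^{n'}\to\mathbb{C}^n$ whose restriction to $M_t\cap B_R(0)$ is a finite branched covering for every $t\in\Delta$. Composing with a linear rescaling of $\mathbb{C}^n$ so that the image of $M_0\cap B_R(0)$ contains $B_1(0)$ and restricting the domain to the preimage of $B_1(0)$ gives the desired holomorphic map $\mathfrak{p}$. Flatness of $\pi$ implies that the degree of $\mathfrak{p}|_{M_t\cap U}$ is locally constant in $t$ on the connected base $\Delta$, so it is uniformly bounded by an integer $m$ on the sub-disc where $M_t\cap U$ is non-empty; outside this sub-disc $M_t\cap U$ is empty and the statement is vacuous.

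For part (i) I subtract from $\tilde v_0$ its value at $0$, the real part of its complex-linear Taylor term, and the real part of its holomorphic quadratic Taylor term. Each of these is pluriharmonic, so $\omega$ is unaffected, and the resulting $\tilde v$ satisfies $\tilde v(w)=\langle Hw,w\rangle+O(|w|^3)$ where $H$ is the positive definite complex Hessian of $\tilde v_0$ at $0$. A unitary change of coordinates in $\mathbb{C}^{n'}$ followed by a diagonal real rescaling brings $H$ to the identity, so $\tilde v(w)=|w|^2+O(|w|^3)$. Dilating the embedding by a large factor $\mu$ (and updating the potential accordingly to $\mu^2\tilde v(\cdot/\mu)$ so that it continues to pull back to a potential for $\omega$) turns the cubic error into an $O(1/\mu)$ perturbation of $|w|^2$ on $B'_2$. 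Hence for $\mu$ large enough one has $\inf_{\partial B'_2}\tilde v\geq 4-\epsilon$ and $\sup_{B'_1}\tilde v\leq 1+\epsilon$, and I set $U:=\mathcal{M}\cap i^{-1}B'_2$, $U':=\mathcal{M}\cap i^{-1}B'_1$, $v:=i^*\tilde v$ to obtain (i) with $C_1=3-2\epsilon>0$. The coordinate changes applied (unitary rotation, diagonal rescaling, overall dilation) send linear projections to linear projections, so the map $\mathfrak{p}$ produced in the previous paragraph is still of the form needed; after possibly further shrinking $U$ and rescaling the target so that $B_1(0)$ remains in the image, both (i) and (ii) hold for the same pair $(i,U)$.

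For the ``in particular'' claim when $x_0\notin S$, smoothness of $M_0$ at $x_0$ together with $M_0$ being a Cartier divisor in $\mathcal{M}$ (cut out by $\pi=0$) forces $\mathcal{M}$ to be smooth at $x_0$ and $d\pi\ne 0$ there. Choose local holomorphic coordinates $(w_1,\ldots,w_n,t)$ on $\mathcal{M}$ with $t=\pi$; in these, $\Omega=f(w,t)\,dw_1\wedge\cdots\wedge dw_n\wedge dt$ with $f$ nowhere zero. Replacing $w_1$ with the holomorphic antiderivative $\int_0^{w_1}f(s,w_2,\ldots,w_n,t)\,ds$ absorbs $f$, giving $\Omega=dw_1\wedge\cdots\wedge dw_n\wedge dt$ and therefore $\Omega_t=(-1)^n dw_1\wedge\cdots\wedge dw_n|_{M_t}$. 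Taking $\mathfrak{p}=(w_1,\ldots,w_n)$ makes $\mathfrak{p}|_{M_t\cap U}$ an open embedding and $(\mathfrak{p}^*\Omega_{\mathbb{C}^n})|_{M_t}=(-1)^n\Omega_t$, producing a nonzero constant $c$ independent of $t$ (the sign adjusted by a reflection if needed to make $c>0$). The main technical obstacle is the compatibility of the coordinate-dependent inequality in (i) with the Noether normalization in (ii); it is resolved by the observation that the normalizing operations in (i) --- pluriharmonic subtraction, unitary rotation, diagonal rescaling, and a uniform dilation --- are precisely those preserving the class of linear projections that (ii) relies on, while the uniform degree bound in (ii) over the full disc $\Delta$ is obtained by noting that $M_t\cap U$ is empty outside a sub-disc so that no additional control is needed there.
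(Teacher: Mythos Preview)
Your approach is essentially the paper's: for (i) you extend $\omega$ to a smooth K\"ahler form on the ambient $\mathbb{C}^{n'}$ and use strict plurisubharmonicity of the potential near the origin, and for (ii) you invoke the Noether--normalization result Proposition~\ref{cy13}, whereas the paper cites the immediate Corollaries~\ref{cy14}/\ref{cy15}. Your treatment is in fact more explicit about the compatibility of the two parts than the paper's two-line proof.

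There is one computational slip in your dilation step. If the new embedding is $i'=\mu\, i$, then the function on $\mathbb{C}^{n'}$ whose pullback along $i'$ is a potential for $\omega$ is $\tilde v(\cdot/\mu)$, not $\mu^2\tilde v(\cdot/\mu)$: indeed $(i')^*\bigl(\mu^2\tilde v(\cdot/\mu)\bigr)=\mu^2\,\tilde v\circ i$, which is a potential for $\mu^2\omega$. With the correct choice $\tilde v(w/\mu)=|w|^2/\mu^2+O(|w|^3/\mu^3)$ one still gets $\inf_{\partial B'_2}\tilde v-\sup_{B'_1}\tilde v\ge 3/\mu^2-O(1/\mu^3)>0$ for $\mu$ large, so (i) holds with $C_1\approx 3/\mu^2$ rather than $3-2\epsilon$. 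Since the lemma only asks for \emph{some} $C_1>0$ depending on $x_0$, the conclusion is unaffected; just correct the formula and the value of $C_1$.

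A second remark: your final sentence asserts that shrinking $U$ and rescaling the target preserve the branched covering property. Strictly speaking, restricting a proper finite map to a smaller open set need not stay proper onto a ball. However, what is actually used downstream (Lemma~\ref{5.2} and Proposition~\ref{5.7}) is only that $\mathfrak p|_{M_t\cap U}$ has fibers of cardinality $\le m$ and that $\mathfrak p(M_t\cap U)\subset B_1(0)$, both of which are obviously preserved under shrinking $U$ and rescaling the target. The paper is equally informal here; your argument is acceptable at the level of rigor of the surrounding text, but you could make this point explicit.
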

\begin{proof}
(i) is obvious when $\mathcal{M}$ is smooth. When $\mathcal{M}$ is not smooth, there is a local embedding $\mathcal{M} \subset \mathbb{C}^N$ such that $\omega = \tilde{\omega}|_{\mathcal{M}}$ for a smooth \k~form $\tilde{\omega}$ on $\mathbb{C}^N$. Then (i) is a consequence of the smooth case.

(ii) is a consequence of the local result Corollary \ref{cy14}, or the global result Corollary \ref{cy15} that restricts to $U$.
\end{proof}

Let  $g$ be a smooth  K\"ahler metric  with K\"ahler form $\omega$ on $\mathcal{M}$, $g_{t} = g|_{M_{t}}$, $\omega_{t}=\omega|_{M_{t}}$ for any $t$, and $\int_{M_{t}}\omega_{t}^{n} \equiv V$ for a constant $V$. By re-normalizing   $\omega$, we assume $V=1$ for convenience. By Yau's theorem on  the Calabi conjecture (\cite{Ya1}), for any $t\neq 0$, there is a unique $\varphi_{t} \in C^{\infty}(M_{t})$ such that

\begin{equation}
\label{4.1}
(\omega_{t}+ \sqrt{-1}\partial \overline{\partial}\varphi_{t})^{n} =\frac{(-1)^{\frac{n^2}{2}}}{\mathcal{V}_{t}}\Omega_{t}\wedge \overline{\Omega}_{t}, \mbox{ and } \sup_{M_{t}}\varphi_{t}=0.
\end{equation}

\begin{prop}
\label{5.7}
There are constant $m,\bar{c}>0$ and a finite collection $\{x_\alpha \in U'_\alpha \subset\subset U_\alpha, v_\alpha \in {\rm PSH}(U_{\alpha})\}$ with $\{U'_\alpha\}$ covering $M_0$ such that for each $\alpha$, $x_\alpha \in M_0$,  $\omega =\sqrt{-1} \partial\overline{\partial} v_{\alpha}$ on $U_{\alpha}$, $\displaystyle\inf_{\partial U_{\alpha}} v_{\alpha} \geq \bar{c} + \sup_{U'_{\alpha}} v_{\alpha}$, and there is a holomorphic map $\mathfrak{p}_\alpha: U_\alpha \rightarrow B_1(0) \subset \mathbb{C}^n$ that restricts to a finite branched covering $\mathfrak{p}_\alpha: M_t \cap U_\alpha \rightarrow B_1(0)$ of degree $\leq m$ for all $t\in \Delta$. (In particular, when $x_\alpha \not\in S$, $\mathfrak{p}|_{M_t \cap U_\alpha}$ is an open embedding, such that $(\mathfrak{p}_\alpha^* \Omega_{\mathbb{C}^n})|_{M_t} = C_\alpha\Omega_t$ for a constant $C_\alpha>0$ independent of $t\in \Delta$.)

For any $c_1, C_1>0$, let $\Lambda = \Lambda_{c_1,C_1}$ be the set of $t\in \Delta$ such that $M_t$ is covered by $\{U_\alpha\}$ and for each $\alpha$ with $x_\alpha \in S$,
\begin{equation}
\label{5.6}
\int_{U_\alpha \cap M_t} |f_\alpha|^{-2c_1} (-1)^{\frac{n^2}{2}}\Omega_t\wedge
\overline{\Omega}_t \leq C_1, \mbox{ where } f_\alpha \Omega_t = \mathfrak{p}_\alpha^* \Omega_{\mathbb{C}^n}.
\end{equation}
Then $\Lambda$ is closed and there exists $C_2>0$ such that for any $t\in \Lambda$, $\displaystyle \inf_{M_{t}}\varphi_{t} \geq -C_2$.
\end{prop}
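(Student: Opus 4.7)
The finite collection $\{x_\alpha, U'_\alpha \subset\subset U_\alpha, v_\alpha, \mathfrak{p}_\alpha\}$ together with constants $m, \bar{c}>0$ is obtained by applying Lemma \ref{5.1} at each point $x_0\in M_0$ and extracting a finite subcover from the family $\{U'_{x_0}\}$ using compactness of $M_0$ (uniform $m, \bar{c}$ come from taking max/min over this finite collection). For the closedness of $\Lambda$, note first that the cover condition $M_t\subset\bigcup_\alpha U_\alpha$ holds on an open neighborhood $\Delta_0\subset\Delta$ of $0$, since $\bigcup_\alpha U_\alpha$ is open in $\mathcal{M}$ and contains $M_0$. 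For each $\alpha$ with $x_\alpha\in S$, Lemma \ref{3.1} says that
\[
b_\alpha(t) := \int_{U_\alpha\cap M_t}|f_\alpha|^{-2c_1}(-1)^{\frac{n^2}{2}}\Omega_t\wedge\overline{\Omega}_t
\]
is lower semi-continuous in $t$, so each level set $\{t:b_\alpha(t)\leq C_1\}$ is closed; intersecting these finitely many closed sets with $\Delta_0$ yields $\Lambda$ closed.

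For the $L^\infty$ bound, fix $t\in\Lambda$ and $\alpha$, and on $V := U_\alpha\cap M_t$ set $u := v_\alpha|_{M_t}+\varphi_t$. Since $\omega_t = \sqrt{-1}\partial\overline{\partial}v_\alpha|_{M_t}$ and $\omega_t+\sqrt{-1}\partial\overline{\partial}\varphi_t\geq 0$, $u$ is plurisubharmonic on $V$, and (\ref{4.1}) gives $(\sqrt{-1}\partial\overline{\partial}u)^n = \mathcal{V}_t^{-1}(-1)^{\frac{n^2}{2}}\Omega_t\wedge\overline{\Omega}_t$ on $V$. Lemma \ref{3.1} applied with $c=0$ gives a uniform lower bound $\mathcal{V}_t\geq c_0>0$ for $t$ in a neighborhood of $0$, while Lemma \ref{5.2} applied to $\mathfrak{p}_\alpha:V\to B_1$ of degree $\leq m$, combined with hypothesis (\ref{5.6}) when $x_\alpha\in S$ (and with the fact that $|f_\alpha|$ is uniformly bounded below when $x_\alpha\notin S$, since there $\mathfrak{p}_\alpha|_{M_t}$ is an open embedding and $f_\alpha$ is locally constant in $t$), yields
\[
\int_K(\sqrt{-1}\partial\overline{\partial}u)^n \leq A\,\frac{\text{Cap}_{BT}(K,V)}{h(\text{Cap}_{BT}(K,V)^{-1/n})}
\]
for every compact $K\subset V$, with $h(y)=(1+\log(1+y))^{2n}$ and $A$ depending only on $c_0,c_1,C_1,m$ (in particular, independent of $t,\alpha$).

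The bound on $\inf\varphi_t$ then follows from the local Kolodziej-type estimate of Theorem \ref{t2.3.2}: take the comparison function $v:=\sup_{U'_\alpha}v_\alpha$ (a constant); property (i) of Lemma \ref{5.1} together with $\sup\varphi_t=0$ ensures $u-v\geq \bar{c}+\inf_{M_t}\varphi_t$ on $\partial U_\alpha\cap M_t$ while $u-v\leq 0$ somewhere on $U'_\alpha\cap M_t$ (at any point where the global maximum of $\varphi_t$ is attained in $U'_\alpha$, or more generally by choosing the $\alpha$ containing such a point), so the sublevel sets $U(s):=\{u-s<v\}\cap U'_\alpha$ are relatively compact in $V$ for a uniform range of $s$. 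The comparison principle (Theorem \ref{t2.3.1}) combined with the MA-capacity estimate above then shows that $a(s):=\text{Cap}_{BT}(U(s),V)$ satisfies the hypothesis of Lemma \ref{2.1}, forcing $\inf_{U'_\alpha\cap M_t}u\geq -C'_\alpha$, and hence $\inf_{U'_\alpha\cap M_t}\varphi_t\geq -C_\alpha$ since $v_\alpha$ is bounded. Taking $C_2=\max_\alpha C_\alpha$ and using that $\{U'_\alpha\}$ covers $M_t$ for $t\in\Lambda$ gives the assertion. The main obstacle lies in verifying the compact containment of $U(s)$ uniformly in $t\in\Lambda$ (so that the Kolodziej iteration produces a constant independent of $t$, rather than merely a finite bound for each fixed $t$); this is precisely why Lemma \ref{5.1}(i) is formulated with the quantitative separation $\inf_{\partial U_\alpha}v_\alpha\geq\bar{c}+\sup_{U'_\alpha}v_\alpha$, and why one needs to be careful in selecting the $\alpha$ at which to execute the estimate (the one containing a maximum point of $\varphi_t$ on $M_t$).
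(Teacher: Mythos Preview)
Your setup of the collection $\{U_\alpha\}$ and the closedness of $\Lambda$ are fine and match the paper. The gap is in the $C^0$ estimate: the purely local Ko\l{}odziej iteration you outline cannot close the argument, and the paper supplies a global step that you omit entirely.

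First, your choice of $\alpha$ at a \emph{maximum} point of $\varphi_t$ does not yield compact containment of $U(s)$. With $u=v_\alpha+\varphi_t$ and $v=\sup_{U'_\alpha}v_\alpha$, on $\partial U_\alpha\cap M_t$ one only has $u-v\geq\bar{c}+\varphi_t$, so $U(s)=\{u-v<s\}$ avoids the boundary only when $s\leq\bar{c}+\inf_{\partial U_\alpha\cap M_t}\varphi_t$; since $\varphi_t$ may be very negative there, no uniform interval $[S,S+D]$ with $D>0$ is available. The paper instead argues by contradiction and picks $\alpha$ containing a \emph{minimum} point $x_k$ of $\varphi_{t_k}$; then $\varphi_{t_k}\geq\varphi_{t_k}(x_k)$ everywhere, and the separation $\inf_{\partial U_\alpha}v_\alpha\geq\bar{c}+\sup_{U'_\alpha}v_\alpha$ gives a genuine window $[Q_k,Q_k+D]$ with $D\sim\bar{c}/3$ independent of $k$, though $Q_k\to-\infty$.

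Second, and more fundamentally, even with the correct choice of $\alpha$, the iteration (Lemma~\ref{2.1}) only yields a \emph{lower} bound ${\rm Cap_{BT}}(U(Q_k+D),V_k)\geq C>0$ independent of $Q_k$; it does not by itself bound $\inf\varphi_t$. (Theorem~\ref{t2.3.2} as stated here produces a constant depending on the a priori lower bound $C'$, which is exactly what you lack.) The paper closes the loop with three global ingredients you do not invoke: Lemma~\ref{5.3} compares ${\rm Cap_{BT}}(\cdot,V_k)$ with the global ${\rm Cap}_{\omega_{t_k}}(\cdot)$; Proposition~\ref{t2.3.3}(iii) gives ${\rm Cap}_{\omega_{t_k}}(\{\varphi_{t_k}<-s\})\leq s^{-1}\bigl(-\int_{M_{t_k}}\varphi_{t_k}\,\omega_{t_k}^n+n\bigr)$; and Lemma~\ref{t3.6} (which in turn rests on the eigenvalue bound of Proposition~\ref{t3.5}) provides the uniform $L^1$ bound $-\int_{M_{t_k}}\varphi_{t_k}\,\omega_{t_k}^n\leq C$. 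Combining these, $U(Q_k+D)\subset\{\varphi_{t_k}<Q_k+D+C_4\}$ has capacity $\leq C/|Q_k+D+C_4|\to 0$, contradicting the lower bound from the iteration. Without this upper-bound half, your argument stalls at ``capacity of a shrinking sublevel set is bounded below,'' which says nothing about $\inf\varphi_t$.
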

\noindent{\bf Proof:} The first part of the proposition is a direct consequence of Lemma \ref{5.1} using the fact that $M_0$ is compact. Lemma \ref{3.1} implies that $\Lambda$ is closed.

If $\varphi_t$ is not uniformly bounded below for $t\in \Lambda$,  there is a sequence $t_{k} (\in \Lambda) \rightarrow 0$, and a sequence of points $x_{k} \in M_{t_{k}}$, such that $M_{t_k}$ satisfies the assumption (\ref{5.6}) and

\begin{equation}\label{4.6}\varphi_{k}(x_{k})=\inf_{M_{t_{k}}}\varphi_{k}\rightarrow
-\infty,\end{equation}
where $\varphi_{k}=\varphi_{t_{k}}$. By passing to a subsequence, we may assume that $x_{k}\rightarrow p_{\alpha} \in M_0 \cap U'_{\alpha}$. From now on, our discussions only involve this fixed $\alpha$.

By the first part of the proposition, there is a $v_{\alpha}\in {\rm PSH}(U_{\alpha})$ such that $\omega =\sqrt{-1} \partial\overline{\partial} v_{\alpha}$ on $U_{\alpha}$,
\[
\inf_{\partial U_\alpha} v_{\alpha} =0 \mbox{ and } v_{\alpha}(p_{\alpha})\leq - \bar{c}.
\]
Let $V_k = U_\alpha\cap M_{t_k}$. Then, by (\ref{4.6}), for $t_{k}\ll 1$,
\[
v_{\alpha}(x_{k})+\varphi_{k}(x_{k})\leq \inf_{\partial U_\alpha\cap M_{t_k}}(v_{\alpha}+\varphi_{k})
-\frac{2\bar{c}}{3}.
\]
Let $D=\frac{\bar{c}}{3}-2\epsilon$ and $Q_{k} = v_{\alpha}(x_{k}) + \varphi_{k}(x_{k}) +\epsilon$ with $\epsilon \ll \bar{c}$. $U(q)=\{y\in V_{k}|v_{\alpha}(y)+\varphi_{k}(y)<
q \} \subset U''_\alpha = \{y\in U_\alpha|v_{\alpha}(y) \leq -\bar{c}/3\} \subset\subset U_\alpha$ for any $q\in [Q_{k},Q_{k}+D]$. In particular, $U(q)$ is not empty and relatively compact in $V_{k}$. If $0< \rho<Q_{k}+D-q$, and $w\in {\rm PSH}(V_{k})$ with $-1\leq w
<0$, then $U(q)\subset \tilde{U}=\{\frac{v_{\alpha}+\varphi_{k}-q-\rho}{\rho}< w\}\cap
V_k \subset U(q+\rho)$. By
Theorem \ref{t2.3.1},
\begin{eqnarray*}\int_{U(q)}(-1)^{\frac{n}{2}}(\partial\overline{\partial}w)^{n}&\leq &  \int_{\tilde{U}}(-1)^{\frac{n}{2}}(\partial\overline{\partial}
w)^{n}\\ & \leq &
\rho^{-n}\int_{\tilde{U}}(-1)^{\frac{n}{2}}(\partial\overline{\partial}(v_{\alpha}
+\varphi_{k}))^{n}\\ & \leq &
\rho^{-n}\int_{U(q+\rho)}(-1)^{\frac{n}{2}}(\partial\overline{\partial}(v_{\alpha}
+\varphi_{k}))^{n},
\end{eqnarray*}
thus, for any $0< \rho<Q_{k}+D-q$, we obtain
\[
{\rm Cap_{BT}}(U(q), V_{k})\leq \frac{1}{\rho^{n}} \int_{U(q+\rho)}(-1)^{\frac{n}{2}}(\partial\overline{\partial}(v_{\alpha}
+\varphi_{k}))^{n} = \frac{1}{\rho^{n}\mathcal{V}_t} \int_{U(q+\rho)}d\mu_t.
\]
(Notice that by our construction, the assumption (\ref{5.6}) can be easily satisfied if $x_\alpha \not\in S$.) Under the assumption (\ref{5.6}), Lemma \ref{5.2} implies that
\[
{\rm Cap_{BT}}(U(q), V_{k}) \leq \frac{C}{\rho^{n}} \int_{U(q+\rho)}d\mu_t \leq \frac{C}{\rho^{n}} \frac{{\rm Cap_{BT}}(U(q+\rho), V_{k})}{h({\rm Cap_{BT}}(U(q+\rho), V_{k})^{-\frac{1}{n}})}.
\]
Lemma \ref{2.1} applies to $a(q) := {\rm Cap_{BT}}(U(q), V_{k})$ implies that

\begin{equation}
\label{5.5}
{\rm Cap_{BT}}(U(Q_k +D), V_{k}) \geq C>0.
\end{equation}
Since $U''_\alpha \subset\subset U_\alpha$, there exists $\chi \in C^{\infty}(\mathcal{M})$ such that $-1\leq \chi \leq 0$, $\chi=0$ outside of $U_\alpha \subset \mathcal{M}$ and $\chi=-1$ on $U''_\alpha$. Clearly, for $C_3>0$ large enough, $\chi \in {\rm PSH}_{C_1\omega} (\mathcal{M})$. Apply lemma \ref{5.3}, we have

\[
{\rm Cap_{BT}}(U(Q_k +D), V_{k}) \leq C_3^n {\rm Cap}_{\omega_{t_k}} (U(Q_k +D))
\]
Let $C_4 = -\inf_{U_\alpha} (v_\alpha)$. Then $U(Q_{k}+D)= \{x \in V_k| \varphi_{k}(x)+v_{\alpha}(x)\leq Q_{k}+D\} \subset \{x \in M_{t_k}|
\varphi_{k}(x)\leq Q_{k}+D+C_4\}=:\widetilde{U} $, by  Proposition \ref{t2.3.3},
\[
{\rm Cap_{BT}}(U(Q_k +D), V_{k}) \leq C_3^n {\rm Cap}_{\omega_{t_k}} (\tilde{U})
\]
\[
\leq \frac{C_3^n}{|Q_{k}+D+C_4|}\left(-\int_{M_{t_{k}}}\varphi_{k}\omega_{t_{k}}^{n}+
n V\right)< \frac{C}{|Q_{k}+D+C_4|}
\]
This estimate together with (\ref{5.5}) implies that $\varphi_k(x_k) >C$. This contradicts (\ref{4.6}), and finishes the proof of proposition \ref{5.7}.
\hfill $\Box$\\

\begin{lm}
\label{t5.4}
Under the same situation as in Proposition \ref{5.7}, let
$\tilde{\omega}_{t}=\omega_{t}+\sqrt{-1}\partial\overline{\partial}\varphi_{t}$.
For any compact subset $K\subset \mathcal{M}\backslash S$, there
exists a constant $C_{K}>0$ independent of $t \in \Lambda$ such that
$$C\omega_{t}\leq
\tilde{\omega}_{t}\leq C_{K}\omega_{t},$$ on $K\cap M_{t} $, where
$C>0$ is a constant independent of $t \in \Lambda$ and $K$.
\end{lm}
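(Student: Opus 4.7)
The plan is to obtain the upper bound $\tilde{\omega}_t \leq C_K \omega_t$ via a localized Aubin--Yau second-order estimate on $K \cap M_t$, and then to extract the lower bound $\tilde{\omega}_t \geq C \omega_t$ from the Monge--Amp\`ere equation (\ref{4.1}) using this upper bound.

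For the upper bound, fix $K \subset\subset K' \subset\subset \mathcal{M} \setminus S$ and a cutoff $\eta \in C_c^{\infty}(K')$ with $\eta \equiv 1$ on $K$. Since $\omega$ is smooth and K\"ahler on $\mathcal{M} \setminus S$ and $K'$ is compact and disjoint from $S$, the bisectional curvature of $g_t = g|_{M_t}$ on $K' \cap M_t$ is bounded uniformly by some constant $B$ independent of $t$, and the function $F_t := \log(\tilde{\omega}_t^n/\omega_t^n) = \log((-1)^{n^2/2}\Omega_t \wedge \overline{\Omega}_t/\omega_t^n) - \log \mathcal{V}_t$ has uniformly bounded $C^2(g)$-norm on $K' \cap M_t$ (using Lemma \ref{3.1} to bound $\mathcal{V}_t$ from below and continuity in $t$ to bound it from above). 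Proposition \ref{5.7} supplies the uniform $C^0$ estimate $\|\varphi_t\|_{L^{\infty}(M_t)} \leq C_2$ for $t \in \Lambda$. I would then apply the standard Aubin--Yau inequality to the test function $\eta(\log(n + \Delta_{\omega_t}\varphi_t) - A\varphi_t)$ with $A$ chosen large in terms of $B$, and invoke the maximum principle at an interior maximum over $K' \cap M_t$; this yields the bound $n + \Delta_{\omega_t}\varphi_t \leq C_K$ on $K \cap M_t$ uniformly in $t \in \Lambda$, which is equivalent to $\tilde{\omega}_t \leq C_K \omega_t$.

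For the lower bound, rewrite (\ref{4.1}) as
\[
\frac{\tilde{\omega}_t^n}{\omega_t^n} = \frac{(-1)^{n^2/2}\,\Omega_t \wedge \overline{\Omega}_t}{\mathcal{V}_t\, \omega_t^n}.
\]
On $K \cap M_t$ the numerator is a smooth positive $(n,n)$-form bounded below (since $\Omega$ is nowhere vanishing on $\mathcal{M} \setminus S$ and $K$ is compact away from $S$), $\omega_t^n$ is bounded above, and $\mathcal{V}_t$ is bounded above by continuity on compact subdiscs of $\Delta$; hence $\tilde{\omega}_t^n/\omega_t^n \geq c_1 > 0$ uniformly for $t \in \Lambda$. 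Denoting the eigenvalues of $\tilde{\omega}_t$ with respect to $\omega_t$ by $\lambda_1 \leq \cdots \leq \lambda_n$, the upper bound gives $\lambda_i \leq C_K$ and the Monge--Amp\`ere equation gives $\prod_i \lambda_i \geq c_1$, whence $\lambda_1 \geq c_1/C_K^{n-1} =: C$. This yields $\tilde{\omega}_t \geq C \omega_t$ on $K \cap M_t$.

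The main obstacle is that both the bisectional curvature of $g_t$ and the Monge--Amp\`ere right-hand side can degenerate near $S$, so no global $C^2$-estimate or global pointwise lower bound on $\tilde{\omega}_t^n/\omega_t^n$ is available; this is what forces the localization to compact $K \subset \mathcal{M} \setminus S$ and the dependence of $C_K$ on $K$. The delicate point in applying Aubin--Yau is that the cutoff $\eta$ (needed because $\omega$ is not globally controlled) produces gradient and Laplacian error terms that must be absorbed using the uniform $C^0$ bound on $\varphi_t$ coming from Proposition \ref{5.7}, together with a choice of $A$ large enough to dominate the curvature constant $B$ on $K'$.
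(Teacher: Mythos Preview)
Your argument has a genuine gap: the lower bound constant $C$ you obtain depends on $K$, while the lemma asserts that $C$ is independent of $K$. In your scheme you first prove $\tilde{\omega}_t \leq C_K \omega_t$ on $K$, and then from $\prod_i \lambda_i \geq c_1$ and $\lambda_i \leq C_K$ you deduce $\lambda_1 \geq c_1/C_K^{n-1}$. Both $c_1$ (your lower bound for $\tilde{\omega}_t^n/\omega_t^n$ on $K$) and $C_K$ depend on $K$, so your lower bound $C$ is $K$-dependent. This does not match the statement.

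The paper proceeds in the opposite order and this is the key point you missed. The Chern--Lu (Schwarz) inequality for the holomorphic inclusion $\psi_t: (M_t,\tilde{\omega}_t) \to (\mathcal{M},\omega)$ gives
\[
\Delta_{\tilde{\omega}_t} \log \mathrm{tr}_{\tilde{\omega}_t}\omega_t \geq -\overline{R}\,\mathrm{tr}_{\tilde{\omega}_t}\omega_t,
\]
where $\overline{R}$ is an upper bound for the holomorphic bisectional curvature of the \emph{target} $(\mathcal{M},\omega)$. Because $\omega$ is, locally near every point (including points of $S$), the restriction of a smooth K\"ahler form on an ambient $\mathbb{C}^{m_\alpha}$, this curvature is bounded above \emph{globally} on $\mathcal{M}\setminus S$, not merely on compact subsets. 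One may therefore apply the maximum principle to $\log \mathrm{tr}_{\tilde{\omega}_t}\omega_t - 2\overline{R}\varphi_t$ on all of $M_t$ (which is compact), with no cutoff. Combined with the uniform $C^0$ bound on $\varphi_t$ from Proposition~\ref{5.7}, this yields $\mathrm{tr}_{\tilde{\omega}_t}\omega_t \leq C$ globally, i.e.\ $\omega_t \leq C\tilde{\omega}_t$ with $C$ independent of both $t\in\Lambda$ and $K$. Only then does the paper pass to a compact $K\subset \mathcal{M}\setminus S$, where $\tilde{\omega}_t^n \leq C'_K\omega_t^n$, and combine this with the global lower bound to get $\tilde{\omega}_t \leq C_K\omega_t$ on $K$.

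A secondary issue: your localized Aubin--Yau with a multiplicative cutoff $\eta$ is not the standard estimate; computing $\Delta_{\tilde{\omega}_t}$ of $\eta(\log(n+\Delta_{\omega_t}\varphi_t) - A\varphi_t)$ produces a cross term involving $\nabla\eta \cdot \nabla\log \mathrm{tr}_{\omega_t}\tilde{\omega}_t$, a third-order quantity, which must be absorbed using the good gradient term hidden in Yau's computation. This can be made to work but requires more than the $C^0$ bound on $\varphi_t$ that you invoke. The paper's route avoids this difficulty entirely because the Schwarz-lemma step is global.
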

\noindent{\bf Proof:} Let $\psi_{t}: (M_{t}, \tilde{\omega}_{t}) \rightarrow (\mathcal{M},
\omega)$ be the inclusion maps, which are holomorphic.  Then   Yau's Schwarz lemma says
$$\Delta_{\tilde{\omega}_{t}}\log |\partial \psi_{t}|^{2}\geq \frac{{\rm Ric}_{\tilde{\omega}_{t}}(\partial \psi_{t},
\overline{ \partial \psi_{t}})}{|\partial
\psi_{t}|^{2}}-\frac{R_{\omega}(\partial \psi_{t}, \overline{
\partial \psi_{t}},
\partial \psi_{t},
\overline{ \partial \psi_{t}})}{|\partial \psi_{t}|^{2}},
$$  where $R_{\omega}$ is the holomorphic bi-sectional
curvature of $\omega$ (c.f. \cite{BK} or \cite{Y3}). Note that there
is a  finite covering $ \{U_{\alpha}\}$ of $ \mathcal{M}$ such that,
for any $\alpha$, there is an embedding $i_{\alpha}:
U_{\alpha}\hookrightarrow \mathbb{C}^{m_{\alpha}}$, and a smooth
strongly pluri-subharmonic function $u_{\alpha}$ on $i_{\alpha}(
U_{\alpha}) \subset \mathbb{C}^{m_{\alpha}}$ satisfying that
$\omega|_{U_{\alpha}}=\sqrt{-1}\partial\overline{\partial}u_{\alpha}\circ
i_{\alpha} $.  Thus there is a uniform upper bound for the
holomorphic bi-sectional curvature of $\omega$ on
$\mathcal{M}\backslash S$.

 Since
$|\partial
\psi_{t}|^{2}=tr_{\tilde{\omega}_{t}}\omega_{t}=n-\Delta_{\tilde{\omega}_{t}}\varphi_{t}$
and ${\rm Ric}_{\tilde{\omega}_{t}}\equiv 0$, we have
$$\Delta_{\tilde{\omega}_{t}}\log tr_{\tilde{\omega}_{t}}\omega_{t}\geq
-\overline{R}tr_{\tilde{\omega}_{t}}\omega_{t},
$$
where $\overline{R}=\max \{\sup_{\mathcal{M}\backslash S}R_{\omega},
1\}$. Then
 $$\Delta_{\tilde{\omega}_{t}}(\log tr_{\tilde{\omega}_{t}}\omega_{t}-2\overline{R}\varphi_{t})
\geq -2n\overline{R}+\overline{R}tr_{\tilde{\omega}_{t}}\omega_{t}.
$$ By the  maximum principle, there is a point $x\in M_{t}$ such that
$tr_{\tilde{\omega}_{t}}\omega_{t}(x)\leq 2n$, and
$$\log tr_{\tilde{\omega}_{t}}\omega_{t}-2\overline{R}\varphi_{t}\leq (\log tr_{\tilde{\omega}_{t}}\omega_{t}-2\overline{R}\varphi_{t})(x)\leq \log 2n-2\overline{R}\varphi_{t}(x).
$$
Hence
$$tr_{\tilde{\omega}_{t}}\omega_{t}\leq 2ne^{2\overline{R}(\varphi_{t}-\varphi_{t}(x))}\leq C, \ \ \
{\rm and} \ \ \  \omega_{t}\leq C \tilde{\omega}_{t},  $$
 for a constant $C>0$ independent of $t$ by Proposition
  \ref{5.7}. Note that, for any compact subset $K\subset \mathcal{M}\backslash
S$, there exists a constant $C'_{K}>0$ independent of $t$ such that
$$
\tilde{\omega}_{t}^{n}=\frac{(-1)^{\frac{n^2}{2}}}{\mathcal{V}_{t}}\Omega_{t}\wedge
\overline{\Omega}_{t} \leq C'_{K}\omega_{t}^{n},$$ on $K\cap M_{t}
$. We obtain that $C\omega_{t}\leq \tilde{\omega}_{t}\leq
C_{K}\omega_{t}.$
\hfill $\Box$\\

In \cite{EGZ}, it is proved that there is a unique
continues function $\hat{\varphi}_{0}$ on $M_{0}$, which is smooth
on $M_{0}\backslash S$, satisfying that
\begin{equation}\label{6.22}
 (\omega_{0}+
\sqrt{-1}\partial \overline{\partial}\hat{\varphi}_{0}
)^{n}=\frac{(-1)^{\frac{n^2}{2}}}{\mathcal{V}_{0}}\Omega_{0}\wedge
\overline{\Omega}_{0},  \ \ \ \ \sup \hat{\varphi}_{0}=0,
\end{equation} in the distribution sense on $M_{0}$, and as smooth forms on $M_{0}\setminus S$, i.e.
$\tilde{\omega}_{0}=\omega_{0}+\sqrt{-1}\partial
\overline{\partial}\hat{\varphi}_{0}$ is the unique  singular
Ricci-flat K\"ahler form  (See Section 2  for details).

Recall the smooth embedding $F:M_{0}\backslash S\times \Delta \rightarrow  \mathcal{M}$ constructed in the introduction. Let $F_t :=F|_{M_{0}\backslash S\times \{t\}}: M_{0}\backslash S \rightarrow M_{t}$. For any compact subset $K \subset M_0 \setminus S$, $F_{t}^{*}\omega_{t}$ $C^{\infty}$-converges to $\omega_{0}$, and $dF_{t}^{-1}J_{t}dF_{t}$ $C^{\infty}$-converges to $J_{0}$ on $K$ when $t\rightarrow 0$, where $J_{t}$ (resp.  $J_{0}$) is the complex structure on $M_{t}$ (resp.  $M_{0}$).

\begin{theorem}
\label{t5.5}
Under the same situation as in Proposition \ref{5.7}, on any compact subset $K \subset M_0 \setminus S$, $F_{t}^{*}\varphi_{t}$ converges to $\hat{\varphi}_{0}$ smoothly, when
$t (\in \Lambda)\rightarrow 0$. Furthermore, the diameters of $(M_t, \tilde{g}_t)$ have a  uniformly upper bound, i.e.
\begin{equation}
\label{6.29}
{\rm diam}_{\tilde{g}_t}(M_t)\leq \bar{C},
\end{equation}
for a constant $\bar{C}>0$  independent of $t\in \Lambda$.
\end{theorem}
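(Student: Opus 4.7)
The plan is to combine the uniform $L^\infty$ bound from Proposition \ref{5.7} with the uniform Laplacian estimate from Lemma \ref{t5.4}, apply complex Evans--Krylov and Schauder bootstrap to obtain local uniform $C^k$ estimates on $\varphi_t$ on compact subsets of $\mathcal{M}\setminus S$, extract a subsequential smooth limit, identify it with $\hat\varphi_0$ via the uniqueness part of Theorem \ref{201}, and conclude the diameter bound from Theorem \ref{t3.1}.

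First I would fix a compact $K\subset \mathcal{M}\setminus S$ and work on $K\cap M_t$. By Lemma \ref{t5.4} the Ricci-flat form $\tilde\omega_t$ is uniformly two-sided comparable to $\omega_t$ on $K\cap M_t$ for $t\in\Lambda$; the right-hand side of the Monge--Amp\`ere equation (\ref{4.1}) is smooth and uniformly bounded both above and away from zero on $K$, because $\Omega_t$ is smooth on $\mathcal{M}\setminus S$ and $\mathcal{V}_t$ is bounded below by Lemma \ref{3.1}; and the background form $\omega_t$, pulled back by $F_t$, converges smoothly to $\omega_0$. The complex Evans--Krylov theorem then gives uniform $C^{2,\alpha}$ bounds on $\varphi_t|_{K\cap M_t}$, and standard Schauder bootstrap upgrades these to uniform $C^{k,\alpha}$ bounds for every $k$, depending only on $K$.

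A diagonal Arzel\`a--Ascoli argument on an exhaustion $K_j\uparrow M_0\setminus S$ then extracts, from any sequence $t_k\in\Lambda$ with $t_k\to 0$, a subsequence so that $F_{t_k}^{*}\varphi_{t_k}$ converges in $C^\infty_{\mathrm{loc}}(M_0\setminus S)$ to a function $\varphi_\infty$ that is uniformly bounded and solves
\begin{equation*}
(\omega_0+\sqrt{-1}\partial\overline{\partial}\varphi_\infty)^{n}=\tfrac{(-1)^{n^2/2}}{\mathcal{V}_0}\Omega_0\wedge\overline\Omega_0
\end{equation*}
on $M_0\setminus S$. Boundedness lets $\varphi_\infty$ extend uniquely across $S$ as an $\omega_0$-plurisubharmonic function, and the normalization $\sup_{M_t}\varphi_t=0$ transfers to $\sup_{M_0}\varphi_\infty=0$ by showing that the near-maximum of $\varphi_t$ cannot escape into the neck region around $S$ (via a barrier built from the local potentials $v_\alpha$ of Proposition \ref{5.7}, together with the uniform $L^\infty$ bound). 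The uniqueness part of Theorem \ref{201} then forces $\varphi_\infty=\hat\varphi_0$. Because the limit is unique, the full family $F_t^{*}\varphi_t$ converges to $\hat\varphi_0$ in $C^\infty_{\mathrm{loc}}(M_0\setminus S)$.

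For the diameter estimate, I would apply Theorem \ref{t3.1} to each $(M_t,\omega_t,\tilde g_t)$. Since $\tilde\omega_t$ and $\omega_t$ are cohomologous on $M_t$,
\begin{equation*}
\int_{M_t}\tilde\omega_t\wedge\omega_t^{n-1}=\int_{M_t}\omega_t^{n}=V,
\end{equation*}
so Theorem \ref{t3.1} gives $\mathrm{diam}_{\tilde g_t}(M_t)\leq 32n+C V^n$. The point is that the constant $C$ can be taken independent of $t\in\Lambda$: the proof of Theorem \ref{t3.1} uses only Lemma \ref{t3.2}, Bishop--Gromov, and Lemma \ref{t3.03}, and Lemma \ref{t3.2} in turn needs only a finite coordinate cover of $\mathcal{M}$ restricted to $M_t$ and the fixed total volume $V$; all these inputs are uniform in $t$. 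The main obstacle I expect is the transfer of the normalization $\sup\varphi_t=0$ to $\sup\varphi_\infty=0$, since the smooth convergence holds only on compacta of $M_0\setminus S$; overcoming this requires a delicate pluripotential barrier argument near $S$ using the data constructed in Proposition \ref{5.7}.
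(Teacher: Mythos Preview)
Your higher-order strategy through Evans--Krylov and Schauder bootstrap matches the paper exactly, as does the diagonal extraction. Two points diverge from the paper, one of them a genuine gap.

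\textbf{Identification of the limit.} You propose extending $\varphi_\infty$ across $S$ directly on $M_0$ and invoking the uniqueness in Theorem \ref{201}. The paper instead pulls back to a smooth resolution $\bar\pi:\bar M_0\to M_0$, extends $\bar\pi^*\varphi_0$ across the exceptional set using the removable-singularity theorem for bounded psh functions (Demailly, Theorem 5.24), and then shows by a measure-theoretic argument that the Monge--Amp\`ere equation $(\bar\pi^*\omega_0+\sqrt{-1}\partial\bar\partial\bar\varphi_0)^n=\frac{(-1)^{n^2/2}}{\mathcal V_0}\bar\pi^*\Omega_0\wedge\overline{\bar\pi^*\Omega_0}$ holds globally in the sense of currents on $\bar M_0$, at which point the EGZ uniqueness (Theorem \ref{t5.06}) applies on a smooth manifold. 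Your sketch on $M_0$ is plausible but skips the two nontrivial steps the resolution handles: you need continuity (not just boundedness) of the extension, and you need to show the Monge--Amp\`ere measure places no mass on $S$. The normalization $\sup=0$, which you flag as the main obstacle, is in fact the least serious: uniqueness gives $\varphi_\infty=\hat\varphi_0+c$, and the subsequence argument already delivers the convergence of forms.

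\textbf{Diameter bound.} Here your argument has a gap. You apply Theorem \ref{t3.1} with background $(M_t,\omega_t)$ and claim its constant $C$ is uniform in $t$ because Lemma \ref{t3.2} ``needs only a finite coordinate cover of $\mathcal M$ restricted to $M_t$''. But the proof of Lemma \ref{t3.2} requires compact \emph{convex} coordinate patches on the $n$-manifold $M_t$ itself, and the constants $C_{K_i}$ there depend on the Euclidean-versus-background comparison and on the intrinsic geometry of $K_i\cap M_t$. As $t\to 0$ the fibres degenerate near $S$; a chart on $\mathcal M$ restricted to $M_t$ need not be convex, may have several components, and the constants are not controlled. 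The paper avoids this entirely: it uses Lemma \ref{t5.4} to get $C'g_t\le\tilde g_t\le C_K^{-1}g_t$ on a fixed compact $K\subset\subset\mathcal M\setminus S$, hence a uniform lower bound ${\rm Vol}_{\tilde g_t}(B_{\tilde g_t}(p_t,1))\ge C>0$ for some $p_t\in K\cap M_t$, and then applies only Bishop--Gromov and Lemma \ref{t3.03} (Paun), both of which need just ${\rm Ric}\ge 0$ and the total volume. No uniform coordinate cover of $M_t$ is required.
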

\noindent{\bf Proof:} By Proposition \ref{5.7} and Lemma  \ref{t5.4}, for any compact subset $K\subset \mathcal{M}\backslash S$, there exist  constants $C>0$, $C_{K}>0$
independent of $t$ such that $\|\varphi_{t}\|_{C^{0}(M_{t})}\leq C$, and $C^{-1}\omega_{t}\leq \omega_{t}+\sqrt{-1}\partial \overline{\partial}\varphi_{t} \leq C_{K}\omega_{t} $.  By Theorem 17.14 in \cite{GT}, we have $\|\varphi_{t}\|_{C^{2,
\alpha}(M_{t}\cap K)}\leq C_{K}''$ for a constant $C_{K}''>0$, and, furthermore, for any $l>0$, $\|\varphi_{t}\|_{C^{l, \alpha}(M_{t}\cap K)}\leq C_{K,l}$ for constants $C_{K,l}>0$
independent of $t$ by the standard bootstrapping argument.   Thus, by passing to a subsequence, $F_{K_{i},k}^{*}\varphi_{t_{k}}$ $C^{\infty}$-converges to a smooth function  $\varphi_{0}$ on $K_{i}$ with $\|\varphi_{0}\|_{L^{\infty}}<C$. By the standard diagram
argument, we can extend  $\varphi_{0}$  to a smooth function on $M_{0}\backslash  S$, denoted by  $\varphi_{0}$ too, which satisfies the equation

\[
(\omega_{0}+ \sqrt{-1}\partial \overline{\partial}\varphi_{0})^{n}=\frac{(-1)^{\frac{n^2}{2}}}{\mathcal{V}_{0}}\Omega_{0}\wedge
\overline{\Omega}_{0}
\]
and $\|\varphi_{0}\|_{L^{\infty}}<C$, where $\mathcal{V}_{0}=\int_{M_{0}\backslash  S}
(-1)^{\frac{n^2}{2}}\Omega_{0}\wedge \overline{\Omega}_{0} $.
Hence $\tilde{\omega}_{0}=\omega_{0}+ \sqrt{-1}\partial
\overline{\partial}\varphi_{0}$ is a Ricci-flat K\"{a}hler form on
 $M_{0}\backslash S$.

Let  $\bar{\pi}:\bar{M}_{0}\rightarrow M_{0}$ be a  resolution
of $M_{0}$,  which exists by \cite{Hir}.     Note that
$\bar{\pi}^{*}\omega_{0}$ is a
 semi-positive $(1,1)$-form on
 $\bar{M}_{0}$, and $\bar{\pi}^{*}\varphi_{0}$ is a bounded
$\bar{\pi}^{*}\omega_{0}$-pluri-subharmonic function on
$\bar{M}_{0}\backslash \bar{\pi}^{-1}(S)$. We claim that
$\bar{\pi}^{*}\varphi_{0}$ can be extended to a bounded
$\bar{\pi}^{*}\omega_{0}$-pluri-subharmonic function $
\bar{\varphi}_{0}$ on $\bar{M}_{0} $. Let $\{U_{\gamma}\}$ be a
family of coordinate charts on $\bar{M}_{0} $ such that
$\bigcup_{\gamma}U_{\gamma}=\bar{M}_{0}$. For each $U_{\gamma}$,
there is a smooth pluri-subharmonic function $v_{\gamma}$ on
$U_{\gamma}$ such that $
\bar{\pi}^{*}\omega_{0}=\sqrt{-1}\partial\overline{\partial}v_{\gamma}$,
and, for any $E_{\alpha}$, there is a holomorphic function
$f_{\gamma,\alpha}$ with $f_{\gamma,\alpha}^{-1}(0)=E_{\alpha}\cap
U_{\gamma}$. Note that $\log |f_{\gamma,\alpha}| $ is a
pluri-subharmonic function, and $E_{\alpha}\cap U_{\gamma}$ is a
pluripolar set.  Since $v_{\gamma}+ \bar{\pi}^{*}\varphi_{0}$ is a
bounded pluri-subharmonic function on $ U_{\gamma}\backslash
E_{\alpha}$, $\bar{\pi}^{*}\varphi_{0}$ can be extended uniquely to
 a function $\bar{\varphi}_{0,\gamma}$ such that
 $v_{\gamma}+\bar{\varphi}_{0,\gamma}$ is a pluri-subharmonic function on $
 U_{\gamma}$ by Theorem 5.24 in \cite{De}. By the uniqueness, there
 is a $\bar{\pi}^{*}\omega_{0}$-pluri-subharmonic function
 $\bar{\varphi}_{0}$ on $\bar{M}_{0}$ satisfying that
 $\bar{\varphi}_{0}|_{U_{\gamma}}=\bar{\varphi}_{0,\gamma}$.

 Now we prove that $\bar{\varphi}_{0}\in L^{\infty}(\bar{M}_{0})$.  From
 the proof of Theorem 5.23 in \cite{De},
 $(v_{\gamma}+\bar{\varphi}_{0,\gamma})(x)=\nu^{*}(x)=\lim\limits_{\epsilon\rightarrow 0}
 \sup\limits_{B(x,\epsilon)}\nu$,  where
 $\nu(x)=\sup\limits_{\delta}\nu_{\delta}(x)$, $\nu_{\delta}=v_{\gamma}+
 \bar{\pi}^{*}\varphi_{0}+ \delta \log |f_{\gamma,\alpha}|$ on  $ U_{\gamma}\backslash
E_{\alpha}$, and  $\nu_{\delta}\equiv -\infty$ on $U_{\gamma}\cap
E_{\alpha} $. By assuming $|f_{\gamma,\alpha}|<1$, we have
$\nu=v_{\gamma}+
 \bar{\pi}^{*}\varphi_{0}$ on  $ U_{\gamma}\backslash
E_{\alpha}$, and  $\nu \equiv -\infty$ on $U_{\gamma}\cap E_{\alpha}
$. Thus $C_{1}< \inf_{U_{\gamma}\backslash E_{\alpha}}(v_{\gamma}+
 \bar{\pi}^{*}\varphi_{0})\leq v_{\gamma}+\bar{\varphi}_{0,\gamma}
 \leq  \sup_{U_{\gamma}\backslash E_{\alpha}}(v_{\gamma}+
 \bar{\pi}^{*}\varphi_{0})<C_{2}$, and $\bar{\varphi}_{0}\in
 L^{\infty}(\bar{M}_{0})$.  Thus $(\bar{\pi}^{*}\omega_{0}+ \sqrt{-1}\partial
\overline{\partial}\bar{\varphi}_{0}  )^{n} $ is a probability
measure (c.f. \cite{BT}), and $(\bar{\pi}^{*}\omega_{0}+
\sqrt{-1}\partial \overline{\partial}\bar{\varphi}_{0}  )^{n}
=\frac{(-1)^{\frac{n^2}{2}}}{\mathcal{V}_{0}}\bar{\pi}^{*}\Omega_{0}\wedge
\bar{\pi}^{*}\overline{\Omega}_{0}$ on $\bar{M}_{0}\backslash
\bar{\pi}^{-1}(S)$.

Now we prove that $\bar{\varphi}_{0}$ is the unique solution of

\begin{equation}\label{6.20}
 (\bar{\pi}^{*}\omega_{0}+ \sqrt{-1}\partial \overline{\partial}\bar{\varphi}_{0})^{n} =\frac{(-1)^{\frac{n^2}{2}}}{\mathcal{V}_{0}}\bar{\pi}^{*}\Omega_{0}\wedge
\bar{\pi}^{*}\overline{\Omega}_{0}.\end{equation}
By Lemma 6.4 in \cite{EGZ}, there is a function $f\in
L^{1+\varepsilon}((\bar{\pi}^{*}\omega_{0})^{n})$, for an $\varepsilon>0$, such that $d\mu=f (\bar{\pi}^{*}\omega_{0})^{n}$, where $d\mu = \frac{(-1)^{\frac{n^2}{2}}}{\mathcal{V}_{0}}\bar{\pi}^{*}\Omega_{0}\wedge
\bar{\pi}^{*}\overline{\Omega}_{0}$.  Note that, for any smooth function $\chi \geq 0$ on $\bar{M}_{0}$,

\[
0\leq \lim\limits_{\sigma \rightarrow  0}\int_{
\bar{\pi}^{-1}(B_{g_0}(S,\sigma))}\chi d\mu\leq
C\lim\limits_{\sigma \rightarrow  0}\int_{
\bar{\pi}^{-1}(B_{g_0}(S,\sigma))}f
(\bar{\pi}^{*}\omega_{0})^{n}
\]
\[
\leq C \lim\limits_{\sigma \rightarrow 0}{\rm Vol}_{g_{0}} (B_{g_{0}} (S,\sigma))^{\frac{\varepsilon}{1+\varepsilon}}=0,
\]
where $B_{g_{0}}(S,\sigma)=\{x\in M_{0}| d_{g_{0}}(x,
S)<\sigma\}$. Hence

\[
\int_{\bar{M}_{0}}\chi d\mu =
\lim\limits_{\sigma \rightarrow 0} \left(\int_{\bar{M}_{0}\backslash \bar{\pi}^{-1} (B_{g_0}(S,\sigma))}\chi d\mu + \int_{\bar{\pi}^{-1}(B_{g_0} (S,\sigma))}\chi d\mu\right)
\]
\[
= \int_{\bar{M}_{0}\backslash \bar{\pi}^{-1}(S)}\chi d\mu = \int_{\bar{M}_{0}\backslash \bar{\pi}^{-1}(S)}\chi
(\bar{\pi}^{*}\omega_{0}+ \sqrt{-1}\partial
\overline{\partial}\bar{\varphi}_{0}  )^{n} \leq
\int_{\bar{M}_{0}}\chi (\bar{\pi}^{*}\omega_{0}+ \sqrt{-1}\partial
\overline{\partial}\bar{\varphi}_{0}  )^{n}.
\]
Hence $d\mu\leq (\bar{\pi}^{*}\omega_{0}+ \sqrt{-1}\partial
\overline{\partial}\bar{\varphi}_{0}  )^{n}$ on $M_0$ in the distribution
sense. Since $$\int_{\bar{M}_{0}} (\bar{\pi}^{*}\omega_{0}+
\sqrt{-1}\partial \overline{\partial}\bar{\varphi}_{0}
)^{n}=\int_{\bar{M}_{0}} \bar{\pi}^{*}\omega_{0}^{n}=1=
\int_{\bar{M}_{0}} d\mu,$$ we obtain
\begin{equation}\label{6.20} \frac{(-1)^{\frac{n^2}{2}}}{\mathcal{V}_{0}}\bar{\pi}^{*}\Omega_{0}\wedge
\bar{\pi}^{*}\overline{\Omega}_{0}=d\mu= (\bar{\pi}^{*}\omega_{0}+
\sqrt{-1}\partial \overline{\partial}\bar{\varphi}_{0}
)^{n}\end{equation}in the distribution sense.   From  the following
  theorem,   $\bar{\varphi}_{0}$ is the unique solution of
(\ref{6.20}).

\begin{theorem}[Proposition 1.4 and Proposition 3.1 in \cite{EGZ}]\label{t5.06}
Let $\omega$ be a semi-positive $(1,1)$-form on a compact K\"{a}hler
 $n$-manifold $X$, and $f \in L^{1+\varepsilon}(\omega^{n})$,
$\varepsilon >0$.  Then there is a unique  function $\varphi \in
L^{\infty}(X)$ such that $$(\omega + \sqrt{-1}\partial
\overline{\partial}\varphi)^{n}= f \omega^{n}, \ \ \ \sup_{X}
\varphi=0.  $$
\end{theorem}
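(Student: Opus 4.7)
The plan is to prove existence and uniqueness separately, adapting Yau's method and Ko\l odziej's capacity estimates to the degenerate situation where $\omega$ is only semi-positive.

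For existence, I would use an approximation scheme. Fix a K\"{a}hler form $\omega_0$ on $X$ and set $\omega_{\varepsilon} = \omega + \varepsilon \omega_0$, which is a genuine K\"{a}hler form. Also approximate $f$ by smooth bounded functions $f_\varepsilon \to f$ in $L^{1+\varepsilon}$ with $\int_X f_\varepsilon \omega_\varepsilon^n = \int_X \omega_\varepsilon^n$. By Yau's theorem (Theorem \ref{201} in the smooth case), there is a unique smooth $\varphi_\varepsilon$ with $\sup_X \varphi_\varepsilon = 0$ solving $(\omega_\varepsilon + \sqrt{-1}\partial\overline{\partial}\varphi_\varepsilon)^n = f_\varepsilon \omega_\varepsilon^n$. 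The crucial step is a uniform $L^\infty$ bound $\|\varphi_\varepsilon\|_{L^\infty} \leq C$ independent of $\varepsilon$. This is obtained exactly as in Theorem \ref{t2.3.2}: the density $f_\varepsilon$ belongs to $L^{1+\varepsilon}$ uniformly, which implies the capacity condition (\ref{2.3.2}) with $h(y) = (1+\log(1+y))^{2n}$ by the argument of \S2.5 in \cite{Ko}, and then Lemma \ref{2.1} gives the uniform bound via Proposition \ref{t2.3.3}(iii).

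Once the uniform bound is in hand, $\{\varphi_\varepsilon\}$ is a bounded family of $\omega_\varepsilon$-plurisubharmonic functions, hence after passing to a subsequence converges in $L^1(X)$ to some $\varphi \in L^\infty(X) \cap \mathrm{PSH}_{\omega}(X)$. Using Bedford--Taylor continuity of the Monge--Amp\`ere operator along monotone or uniformly bounded $L^1$-convergent sequences of psh functions, together with the convergence $f_\varepsilon \omega_\varepsilon^n \to f\omega^n$ as measures, one obtains $(\omega + \sqrt{-1}\partial\overline{\partial}\varphi)^n = f\omega^n$ in the pluripotential sense. Renormalizing by a constant gives $\sup_X \varphi = 0$.

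For uniqueness, suppose $\varphi_1, \varphi_2 \in L^\infty(X) \cap \mathrm{PSH}_\omega(X)$ both solve the equation. The comparison principle of Bedford--Taylor (Theorem \ref{t2.3.1}), applied in coordinate charts after writing $\omega = \sqrt{-1}\partial\overline{\partial} v_\alpha$ locally and patching via the global capacity ${\rm Cap}_\omega$ of \cite{GZ1}, forces $(\omega + \sqrt{-1}\partial\overline{\partial}\max(\varphi_1,\varphi_2))^n = f\omega^n$ as well. The hard part, and the main obstacle, is to promote this to $\varphi_1 = \varphi_2 + \text{const}$ in the genuinely degenerate regime where $\omega^n$ may vanish on large sets; here one invokes the Dinew-type stability / uniqueness argument, exploiting the strict $L^{1+\varepsilon}$-integrability of $f$ to deduce that the MA measure $f\omega^n$ puts no mass on pluripolar sets and to run a Ko\l odziej-style comparison on the sublevel sets $\{\varphi_1 < \varphi_2 - s\}$ to show they have zero capacity, forcing $\varphi_1 \geq \varphi_2$ and symmetrically $\varphi_2 \geq \varphi_1$. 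The normalization $\sup_X \varphi_i = 0$ then eliminates the additive constant.
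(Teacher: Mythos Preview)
The paper does not prove this statement at all: it is quoted verbatim as Proposition~1.4 and Proposition~3.1 of \cite{EGZ} and used as a black box inside the proof of Theorem~\ref{t5.5}. So there is no ``paper's own proof'' to compare against.

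That said, your sketch is broadly the strategy of \cite{EGZ}, with two caveats worth flagging. First, in the existence half, the passage to the limit is more delicate than you indicate: Bedford--Taylor continuity of the Monge--Amp\`ere operator does \emph{not} hold along general uniformly bounded $L^1$-convergent sequences of quasi-psh functions; one must arrange monotone convergence (typically by replacing $\varphi_\varepsilon$ with $(\sup_{\varepsilon'<\varepsilon}\varphi_{\varepsilon'})^*$ or by a careful subsequence argument) before invoking the convergence theorems. Your phrasing ``monotone or uniformly bounded $L^1$-convergent'' conflates two different hypotheses, only one of which suffices. Second, your uniqueness argument is vague at the key step: the original \cite{EGZ} uniqueness (their Proposition~1.4) does not use a Dinew-type stability result but rather a direct comparison argument exploiting that the right-hand side is a fixed non-pluripolar measure, combined with a domination principle for bounded $\omega$-psh functions. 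Invoking ``Dinew-type stability'' is anachronistic relative to \cite{EGZ} and in any case you would need to say precisely which inequality you are using and why the degenerate $\omega$ does not obstruct it.

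Finally, note that the statement as written in the paper is slightly imprecise: one needs $\omega$ to be big (i.e.\ $\int_X\omega^n>0$), not merely semi-positive, for the equation and the capacity estimates to have content. In the paper's application $\omega=\bar{\pi}^*\omega_0$ is automatically big, so this is harmless there, but your proof should state the hypothesis.
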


 Furthermore, from  \cite{EGZ},
$\bar{\varphi}_{0}$ is a continues function, and $\varphi_{0}$ can
be extended to a continues function on $M_{0}$, denoted by
$\varphi_{0}$ also, such that
$\bar{\varphi}_{0}=\bar{\pi}^{*}\varphi_{0}$.  Then $\varphi_{0}$ is
a solution of (\ref{6.22}). By the uniqueness of the solution of
(\ref{6.22}), $\varphi_{0}=\hat{\varphi}_{0}$, and
$F_{K_{i},k}^{*}\varphi_{t_{k}}$ $C^{\infty}$-converges to a smooth
function  $\hat{\varphi}_{0}$ on $K_{i}$, i.e. we do not need to
take  a subsequence of $F_{K_{i},k}^{*}\varphi_{t_{k}}$.   We obtain
the first part of the theorem.

It remains to show the uniform diameter bound. Note that, by Lemma
\ref{t5.4}, there are $C',C'_K >0$ independent of $t$ such that $C'g_t \leq \tilde{g}_t \leq (C'_K)^{-1} g_t$ on $K$. Then there is $0< r\leq 1$ independent of $t$ such that $B_{g_{t}}(p_{t}, C'_K r)\subset B_{\tilde{g}_{t}}(p_{t}, r)\subset K \subset\subset
\mathcal{M}\backslash S$ for certain $p_t \in K \cap M_t$. Thus
\[
{\rm Vol}_{\tilde{g}_{t}}(B_{\tilde{g}_{t}}(p_{t}, r))\geq
{\rm Vol}_{\tilde{g}_{t}}(B_{g_{t}}(p_{t}, C'_K r))\geq (C')^n
{\rm Vol}_{g_{t}}(B_{g_{t}}(p_{t}, C'_K r))> C
\]
for a constant $C>0$
independent of $t$. Thus
\[
{\rm Vol}_{\tilde{g}_{t}}(B_{\tilde{g}_{t}}(p_{t}, 1))\geq C, \quad \mbox{ and } \quad {\rm diam}_{\tilde{g}_{t}}(M_{t})< \bar{C}<\infty
\]
by Lemma \ref{t3.03} and the same arguments as in the proof of
Theorem \ref{t3.1}.
\hfill $\Box$\\

By (\ref{6.29}), and  Gromov's precompactness   theorem (c.f.  \cite{G1}), for any $t_{k}\rightarrow 0$ with $\{t_{k}\}\subset \Lambda $,  by passing to a
  subsequence, $\{(M, \tilde{g}_{t_{k}})\}$ converges to a compact length  metric
  space $(Y, d_{Y})$ in the Gromov-Hausdorff topology.   By the same arguments in
the proof of Lemma \ref{t4.2},  we obtain an embedding $f:
(M_{0}\backslash S, \tilde{g}_{0}) \rightarrow (Y, d_{Y})$,
which is a local isometry.

\begin{con}\label{t5.11} There is a homeomorphism $\tilde{f}:
M_{0}\rightarrow Y$ such that $\tilde{f}|_{M_{0}\backslash
S}=f$.
\end{con}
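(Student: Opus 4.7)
The plan is to extend the local isometric embedding $f: (M_0\setminus S, \tilde g_0) \to (Y, d_Y)$ to a map $\tilde f: M_0 \to Y$ by setting $\tilde f(p) := \lim_{j\to \infty} f(x_j)$ for any sequence $\{x_j\} \subset M_0\setminus S$ converging to $p$ in $M_0$, and then to verify that this extension is a homeomorphism. The argument would parallel that of Lemmas \ref{t4.3} and \ref{t4.5}, but without assuming $\dim_{\mathcal{H}} S \leq 2n-4$ or geodesic convexity.

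First I would establish the key \emph{diameter collapse}: for each $p \in S$ and each $\varepsilon>0$ there is a neighborhood $U$ of $p$ in $M_0$ such that $\mathrm{diam}_{d_Y}(f(U\setminus S))< \varepsilon$. By re-expressing $f$ through Hausdorff approximations $\psi_k:(M_{t_k},\tilde g_{t_k}) \to (Y,d_Y)$ as in the proof of Lemma \ref{t4.2}, this reduces to showing $\sup_k \mathrm{diam}_{\tilde g_{t_k}}(U\cap M_{t_k}) \to 0$ as $U$ shrinks to $p$. The uniform estimates on the smoothing---Theorem \ref{t5.5} (smooth convergence $F_{t_k}^* \tilde g_{t_k} \to \tilde g_0$ and uniform diameter bound), the capacity argument of Proposition \ref{5.7}, and the local branched covering $\mathfrak{p}_\alpha: U_\alpha \cap M_{t_k} \to B_1(0)$ from Lemma \ref{5.1}---would be combined to control $\tilde g_{t_k}$ on shrinking neighborhoods of $p$ by a model metric pulled back from $B_1(0)$.

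Once the diameter collapse is established, $\tilde f$ is automatically well-defined and continuous. Surjectivity follows from the volume convergence theorem (Theorem \ref{t4.05}): the identity $\mathcal{H}^{2n}(Y) = \lim_k \mathrm{Vol}_{\tilde g_{t_k}}(M_{t_k}) = \mathrm{Vol}_{\tilde g_0}(M_0\setminus S) = \mathcal{H}^{2n}(f(M_0\setminus S))$, together with closedness of $\tilde f(M_0)$ (from compactness of $M_0$), excludes any $y\in Y$ lying outside $\tilde f(M_0)$, exactly as in the proof of Lemma \ref{t4.3}. For injectivity, $f$ is already an embedding on $M_0\setminus S$; distinct points $p\neq q$ in $S$ are separated by applying the diameter collapse to disjoint shrinking neighborhoods, or more concretely by pulling back coordinate functions from a local embedding of $\mathcal{M}$ via $F_{t_k}$ and pushing them forward through $\psi_k$ to produce continuous functions on $Y$ that take different values at $\tilde f(p)$ and $\tilde f(q)$. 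Since $M_0$ is compact and $Y$ is Hausdorff, a continuous bijection is automatically a homeomorphism.

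The main obstacle is the diameter collapse step. The singular Ricci-flat metric $\tilde g_0$ near $p\in S$ is only controlled by the $C^0$-bound in Proposition \ref{5.7}, and without a quantitative description of how $\tilde g_{t_k}$ degenerates along the vanishing cycles of the smoothing there is no a priori way to exclude the formation of long thin necks in $(M_{t_k},\tilde g_{t_k})$ around $S$. In the conifold case, and more generally when $(\mathcal{M},\pi)$ satisfies the homogeneity condition~(1.2) of Theorem \ref{t6}, the $\mathbb{C}^*$-scaling structure developed in Section~3.3 furnishes the missing model metric and presumably yields Corollary \ref{t4}; a proof in full generality would likely require directly analyzing the singular stratum of $(Y, d_Y)$ through Cheeger-Colding regularity theory and showing that it coincides set-theoretically with the image of $S$.
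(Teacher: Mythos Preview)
The paper does not prove this statement: it is explicitly recorded as Conjecture~\ref{t5.11}, and the only justification offered is the remark immediately following it, namely that the case $n=2$ follows from the arguments of Section~4 because $M_0$ is then a K3 orbifold (so the hypotheses of Theorem~\ref{t4.1}---in particular the Hausdorff-dimension bound on $S$ and geodesic convexity of $M_0\setminus S$---are available). For $n\geq 3$ the paper leaves the statement open.

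Your outline is therefore not in competition with any argument in the paper; it is a proposed attack on an open problem. You have correctly isolated the genuine obstruction: the ``diameter collapse'' near a singular point $p\in S$, i.e.\ $\sup_k \mathrm{diam}_{\tilde g_{t_k}}(U\cap M_{t_k})\to 0$ as $U$ shrinks to $p$. Nothing in the paper---neither the $C^0$-bound of Proposition~\ref{5.7}, the Schwarz-lemma estimate of Lemma~\ref{t5.4}, nor the capacity and branched-covering machinery of \S3.3---gives this. Those estimates control $\tilde g_t$ on compact subsets of $\mathcal{M}\setminus S$, but say nothing about intrinsic diameter near $S$; a long thin neck of small volume would pass all the available bounds. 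Your suggestion that the $\mathbb{C}^*$-scaling structure ``presumably yields'' the conifold case is optimistic: Corollary~\ref{t4} in the paper concerns smooth convergence away from $S$ and the diameter bound~(\ref{6.29}), not the homeomorphism of Conjecture~\ref{t5.11}. The remaining steps in your plan (well-definedness of $\tilde f$, surjectivity via volume convergence, injectivity) are reasonable once diameter collapse is in hand, and indeed mirror Lemmas~\ref{t4.3}--\ref{t4.5}; but absent that step the proposal is a strategy, not a proof, and you are right to flag it as such.
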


\begin{remark}
If $n=2$, this conjecture is true by the same arguments as in
Section 4, since $M_{0}$ is a K3 orbifold.
\end{remark}

For conifold singularity, locally $M_t = \{\pi(z) = z_0^2 + \cdots + z_n^2 =t\} \subset \mathbb{C}^{n+1}$, take $\mathfrak{p}(z) = (z_1, \cdots, z_n)$, $f = z_0$. condition (\ref{5.6}) can be verified directly, therefore, we have a direct proof of corollary \ref{t4}.\\

\noindent{\bf Direct proof of Corollary \ref{t4}:} $M_{0}$ has only finite many ordinary double points as singular points. Since the local smoothing of an ordinary double point is unique, when $x_{\alpha}\in S$ is an ordinary double point, by possibly taking $U_{\alpha}$ smaller at the beginning, there is coordinate $z = (z_0, \cdots, z_n)$ on the neighborhood $U_{\alpha}$ of $x_{\alpha}$ such that $x_{\alpha}=(0,\cdots , 0)$, and  $\pi (z)= z_0^{2}+ \cdots + z_n^{2}$.

\[
\int_{U_\alpha \cap M_t} |f_\alpha|^{-2c} d\mu_t = \int_{\mathfrak{p} (U_\alpha \cap M_t)} |f_\alpha|^{-2(1+c)} d\mu_{\mathbb{C}^n} \leq \int_{B_1} \frac{d\mu_{\mathbb{C}^n}}{|t-(z_1^2 + \cdots + z_n^2)|^{1+c}}
\]
It is straightforward to verify that this integral is bounded independent of $t \in \Delta$.\\

\[
\int_{B_1} \frac{d\mu_{\mathbb{C}^n}}{|t-(z_1^2 + \cdots + z_n^2)|^{1+c}} = \int_{B_{\frac{1}{\sqrt{|t|}}}} \frac{|t|^{n-1-c}d\mu_{\mathbb{C}^n}}{|1-(z_1^2 + \cdots + z_n^2)|^{1+c}}
\]
\[
\leq \left( \int_{B_R} + \sum_{i=1}^n \int_{D_i} \right) \frac{|t|^{n-1-c}d\mu_{\mathbb{C}^n}}{|1-(z_1^2 + \cdots + z_n^2)|^{1+c}} = I_0 + \sum_{i=1}^n I_i
\]
where $D_i = \{z'\in B_{\frac{1}{\sqrt{|t|}}} \setminus B_R: n|z_i| \geq |z'|\}$. Clearly, $I_0 \leq C$. On $D_i$, change the coordinate from $z' = (z_i, z'_i)$ to $(z_0, z'_i)$ by $\pi(z) =1$, we get $|z_0|^2 \leq 1+ |z'|^2 \leq 1 + 1/t$. For $c>0$ small,\\

\[
I_i \leq \int_{B_{\frac{1}{\sqrt{|t|}}}} \frac{|t|^{n-1-c}d\mu(z_0) d\mu(z'_i)}{|z_0|^{2c}\max(R^2, |z'|^2)} \leq |t|^{n-1-c} \int_{B_{\frac{2}{\sqrt{|t|}}}} \frac{d\mu(z_0)}{|z_0|^{2c}} \int_{B_{\frac{1}{\sqrt{|t|}}}} \frac{d\mu(z'_i)}{\max(R^2, |z'_i|^2)}
\]
\[
\leq C |t|^{n-1-c} |t|^{c-1} |t|^{-(n-2)} = C
\]
This verifies the condition (\ref{5.6}) for all $t\in \Delta$. Then Theorem \ref{t5.5} implies the Corollary \ref{t4}.
\hfill $\Box$\\

\noindent{\bf Proof of Theorem \ref{t1.1}:} It is straightforward to see that under the condition (\ref{1.1}) for $\Lambda = \Delta$, proposition \ref{5.7} can be proved with the condition (\ref{5.6}) satisfied for all $t\in \Delta$. Then Theorem \ref{t5.5} implies the Theorem \ref{t1.1}.
\hfill $\Box$\\

\begin{lm}
\label{5.8}
If $\mathcal{M}$ is locally homogeneous, proposition \ref{5.7} can be strengthened so that there exists $c_1, C_2>0$ such that for $c\in [0,c_1]$,
\[
\int_{U_\alpha \cap M_{\Delta (\sigma)}} \frac{d\mu}{|f_\alpha|^{2c}} \leq C_2 |\Delta (\sigma)|.
\]
Then for any $\epsilon>0$ and $c\in [0, c_1]$, there is $C_1>0$ such that $\Lambda = \Lambda (c,C_1)$ satisfies $|\Lambda \cap \Delta (\sigma)| \geq  (1-\epsilon) |\Delta (\sigma)|$ for $\sigma>0$ small. In particular, 0 is an accumulating point of $\Lambda$.
\end{lm}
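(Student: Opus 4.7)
The plan is to combine Theorem \ref{cy2} (which supplies the desired pointwise-in-$\sigma$ integral bound in the locally homogeneous setting) with a Fubini--Chebyshev argument on the base parameter $t$.

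First, I would verify that Theorem \ref{cy2} is applicable on each $U_\alpha$ with $x_\alpha \in S$. Using the local embedding from Lemma \ref{5.1} and the hypothesis that $\mathcal{M}$ is locally homogeneous, one may, after possibly shrinking $U_\alpha$, identify $U_\alpha$ with an open neighborhood of the origin in a homogeneous subvariety $M^{(\alpha)} \subset \mathbb{C}^{n_\alpha}$ that is locally homogeneous, in such a way that $\pi|_{U_\alpha}$ becomes the restriction of a holomorphic function on $M^{(\alpha)}$. Since $M_0$ is an irreducible Calabi-Yau $n$-variety, $M_0 \cap U_\alpha$ is, after possibly further shrinking (using that $M_0$ is normal and hence unibranch), irreducible with only canonical singularities. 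The function $f_\alpha$ determined by $f_\alpha \Omega_t = \mathfrak{p}_\alpha^* \Omega_{\mathbb{C}^n}$ is holomorphic on $U_\alpha$ and is not identically zero on $M_0 \cap U_\alpha$, because $\mathfrak{p}_\alpha$ restricts to a finite branched covering on $M_0 \cap U_\alpha$ and both $\Omega_0$ and $\Omega_{\mathbb{C}^n}$ are nowhere-vanishing top holomorphic forms on the respective smooth loci. Applying Theorem \ref{cy2} to each of the finitely many such $\alpha$ and taking a common $c_1>0$ and $C_2>0$ (noting $|\Delta(\sigma)| = \pi\sigma^2$) yields the first displayed inequality.

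Second, I would use the fibration identity
\[
\int_{U_\alpha \cap M_{\Delta(\sigma)}} \frac{d\mu}{|f_\alpha|^{2c}} = \int_{\Delta(\sigma)} \left( \int_{U_\alpha \cap M_t} \frac{d\mu_t}{|f_\alpha|^{2c}} \right) dt\, d\bar{t},
\]
which is the same fibrewise integration used in the proof of Proposition \ref{cy9}, to rewrite the first estimate as an average-in-$t$ bound. Chebyshev's inequality then gives, for any threshold $C_1>0$,
\[
\left|\left\{ t \in \Delta(\sigma) : \int_{U_\alpha \cap M_t} \frac{d\mu_t}{|f_\alpha|^{2c}} > C_1 \right\}\right| \leq \frac{C_2}{C_1}\,|\Delta(\sigma)|.
\]
Letting $N_S$ denote the number of indices $\alpha$ with $x_\alpha \in S$ and choosing $C_1 = N_S C_2/\epsilon$, the union of the bad sets over these $\alpha$ has measure $\leq \epsilon |\Delta(\sigma)|$. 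Since $\{U_\alpha\}$ is a finite open cover of the compact set $M_0$, there exists $\sigma_0 > 0$ such that $M_t$ is covered by $\{U_\alpha\}$ for all $t \in \Delta(\sigma_0)$. Hence for $\sigma < \sigma_0$, both defining conditions of $\Lambda(c,C_1)$ are satisfied on a subset of $\Delta(\sigma)$ of measure at least $(1-\epsilon)|\Delta(\sigma)|$, and in particular $\Lambda$ has $0$ as an accumulation point.

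The principal obstacle is the first step: translating the local picture around $x_\alpha$ provided by Lemma \ref{5.1} into the precise hypotheses of Theorem \ref{cy2} — namely realizing $U_\alpha$ as a neighborhood of the origin in a homogeneous, locally homogeneous subvariety of some $\mathbb{C}^{n_\alpha}$, ensuring irreducibility and canonical singularities of the central fibre, and confirming the non-vanishing of $f_\alpha|_{M_0 \cap U_\alpha}$. Once that identification is secure, the remainder is a direct combination of Theorem \ref{cy2}, Fubini, and Chebyshev.
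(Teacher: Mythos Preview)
Your proposal is correct and follows essentially the same route as the paper: apply Theorem~\ref{cy2} on each $U_\alpha$ (after shrinking) to obtain the $\sigma^2$-bound, then use the Fubini fibration identity and a Chebyshev argument to extract a density-$\geq 1-\epsilon$ subset of $\Delta(\sigma)$ satisfying~(\ref{5.6}). Your version is in fact slightly more careful than the paper's, which writes the Chebyshev step for a single $\alpha$ and takes $C_1=C_2/\epsilon$; your union bound over the $N_S$ singular charts and your explicit check that $\{U_\alpha\}$ covers $M_t$ for $\sigma<\sigma_0$ fill in details the paper leaves implicit.
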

\noindent{\bf Proof:} When $\mathcal{M}$ is locally homogeneous, by possibly taking $U_{\alpha}$ smaller at the beginning, Theorem \ref{cy2} can be applied to $M=U_\alpha$ and $\psi = f_\alpha$ to show that there exists $c_1, C_2>0$ such that for $c\in [0,c_1]$,

\[
\int_{\Delta (\sigma)} d\mu_{\mathbb{C}} \int_{U_\alpha \cap M_t} \frac{d\mu_t}{|f_\alpha|^{2c}} = \int_{U_\alpha \cap M_{\Delta (\sigma)}} \frac{d\mu}{|f_\alpha|^{2c}} \leq C_2 |\Delta (\sigma)|.
\]
According to the definition of $\Lambda$,

\[
C_1 |\Delta (\sigma) \setminus \Lambda| \leq \int_{\Delta (\sigma)} d\mu_{\mathbb{C}} \int_{U_\alpha \cap M_t} \frac{d\mu_t}{|f_\alpha|^{2c}} \leq C_2 |\Delta (\sigma)|.
\]
Hence, it is sufficient to take $C_1 = C_2/\epsilon$.
\hfill $\Box$\\

\noindent\noindent{\bf Proof of Theorem \ref{t3}:} By Lemma \ref{5.8}, $0$ is an accumulating point of $\Lambda$, there exists sequence $t_k \rightarrow 0$ in $\Lambda$. Then Theorem \ref{t5.5} implies the Theorem \ref{t3}.
\hfill $\Box$\\

\begin{lm}
\label{5.4}
If $(\mathcal{M}, \pi)$satisfies the condition (1.2), proposition \ref{5.7} can be strengthened so that there exists $c_1, C_1>0$ such that for $c\in [0,c_1]$ and $t\in \Delta$,
\[
\int_{U_\alpha \cap M_t} \frac{d\mu_t}{|f_\alpha|^{2c}} \leq C_1.
\]
In another word, $\Lambda = \Lambda_{c,C_1} = \Delta$.
\end{lm}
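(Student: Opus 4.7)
The plan is to upgrade the bulk integrated estimate underlying Lemma \ref{5.8} to a per-fiber uniform estimate by invoking the $\mathbb{C}^*$-rescaling supplied by the (quasi-)homogeneity. The two key tools are already assembled in \S3.3: Theorem \ref{cy2} produces an integrated bound in a tubular neighborhood of $M_0$, and Proposition \ref{cy9} converts such an integrated bound into a uniform fiberwise bound via scaling along the $\mathbb{C}^*$-action.

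First I would verify that under either alternative of condition (1.2), each $U_\alpha$ (for $x_\alpha \in S$) can be put into a standard $w$-homogeneous local model in which $\pi|_{U_\alpha}$ is itself a $w$-homogeneous holomorphic function. In case (i) this is essentially the definition: local homogeneity of $\mathcal{M}$ provides a local embedding $U_\alpha \hookrightarrow \mathbb{C}^m_w$ realizing $U_\alpha$ as a $w$-homogeneous subvariety, and local homogeneity of $\pi$ gives $\pi|_{U_\alpha}$ as a $w$-homogeneous function. In case (ii), $\mathcal{M}$ is smooth and $\pi$ is only locally quasi-homogeneous; writing $\pi = hg$ with $h$ nowhere vanishing and $g$ a genuinely $w$-equivariant holomorphic function, Lemma \ref{cy11} produces a local biholomorphism $F$ with $\pi \circ F = g$, so after pulling back by $F$ we may assume $\pi$ itself is $w$-homogeneous.

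Next I would arrange $\mathfrak{p}_\alpha$ to respect the $\mathbb{C}^*$-action. Corollary \ref{cy15} (globally from the projective embedding) and Corollary \ref{cy14} (locally, for case (ii)) allow us to choose the branched covering $\mathfrak{p}_\alpha: U_\alpha \to B_1 \subset \mathbb{C}^n$ to be $w$-homogeneous (linear when the weights are all one). Since the trivializing section $\Omega$ of $\mathcal{K}_{\mathcal{M}}$ has a well-defined weight under the $\mathbb{C}^*$-action and $\mathfrak{p}_\alpha^*\Omega_{\mathbb{C}^n}$ is $w$-homogeneous, the function $f_\alpha$ defined by $f_\alpha\,\Omega_t = \mathfrak{p}_\alpha^*\Omega_{\mathbb{C}^n}$ is itself $w$-homogeneous on $U_\alpha$. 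With this setup, Theorem \ref{cy2} applied to $M=U_\alpha$ and $\psi = f_\alpha$ gives, for some $c_1>0$ small and every $c \in [0,c_1]$,
\[
\int_{U_\alpha \cap \pi^{-1}(\Delta(\rho))} \frac{d\mu}{|f_\alpha|^{2c}} \leq C\rho^2, \qquad 0\leq \rho \leq 1,
\]
which is precisely the hypothesis of Proposition \ref{cy9}. That proposition then delivers
\[
\int_{U_\alpha \cap M_t} \frac{d\mu_t}{|f_\alpha|^{2c}} \leq C_1, \qquad |t|\leq 1,
\]
with $C_1$ independent of $t$. Taking the maximum of the resulting $C_1$ over the finitely many $\alpha$ with $x_\alpha \in S$ yields $\Lambda_{c,C_1} = \Delta$.

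The main obstacle I anticipate is the bookkeeping of weights and equivariance in case (ii): after normalizing $\pi$ to $g$ via Lemma \ref{cy11}, one must check that $\Omega$ and the choice of $\mathfrak{p}_\alpha$ can simultaneously be made $w$-homogeneous with weights compatible with $\pi$, so that the input of Proposition \ref{cy9} is genuinely satisfied. The ingredients for this compatibility are already present in \S3.3, but marshaling them correctly in each local chart is where the careful work lies.
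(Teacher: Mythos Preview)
Your proposal is correct and follows essentially the same route as the paper: apply Theorem~\ref{cy2} to $M=U_\alpha$, $\psi=f_\alpha$ to get the integrated bound $\int_{U_\alpha\cap M_{\Delta(\rho)}}|f_\alpha|^{-2c}d\mu\le C\rho^2$, then feed this into Proposition~\ref{cy9} to obtain the uniform fiberwise bound. The paper's own proof is a two-sentence sketch saying exactly this; you have supplied the setup details (arranging the local model so that $\pi$, $\mathfrak{p}_\alpha$, and hence $f_\alpha$ are $w$-homogeneous, invoking Lemma~\ref{cy11} in case (ii)) that the paper leaves implicit in the phrase ``by possibly taking $U_\alpha$ smaller at the beginning.''
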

\noindent{\bf Proof:} When $(\mathcal{M}, \pi)$satisfies the condition (1.2), by possibly taking $U_{\alpha}$ smaller at the beginning, Theorem \ref{cy2} and Proposition \ref{cy9} can be applied to $M=U_\alpha$ and $\psi = f_\alpha$ to show that there exists $c_1, C_1>0$ such that for $c\in [0,c_1]$ and $t\in \Delta$,

\[
\int_{U_\alpha \cap M_t} \frac{d\mu_t}{|f_\alpha|^{2c}} \leq C_1.
\]
According to the definition of $\Lambda$, this means $\Lambda = \Lambda_{c,C_1} = \Delta$.
\hfill $\Box$\\

\noindent{\bf Proof of Theorem \ref{t6}:} Lemma \ref{5.4} and Theorem \ref{t5.5} implies the Theorem \ref{t6}.
\hfill $\Box$\\

\noindent\noindent{\bf Proof of Corollary   \ref{t5}:} Note that
\[
Vol_{\tilde{g}_{t}}(M_{t})=\frac{1}{n!}\int_{M_{t}}\tilde{\omega}_{t}^{n}=
\frac{(-1)^{\frac{n^2}{2}}}{n!\mathcal{V}_{t}}\int_{M_{t}}\Omega_{t}\wedge
\overline{\Omega}_{t}
\]
converges to
\[
\frac{(-1)^{\frac{n^2}{2}}}{n!\mathcal{V}_{0}}\int_{M_{0}\backslash
S}\Omega_{0}\wedge
\overline{\Omega}_{0}=Vol_{\tilde{g}_{0}}(M_{0}\backslash S),
 \]
 when $t\rightarrow 0$.
  By  (\ref{6.29}) and
  Bishop-Gromov comparison theorem, we obtain that,  for any metric ball
  $B_{\tilde{g}_{t}}(r)$, $t\neq 0$,  \begin{equation}\label{6.30}Vol_{\tilde{g}_{t}}(B_{\tilde{g}_{t}}(r))\geq
  \frac{Vol_{\tilde{g}_{t}}(M)}{diam_{\tilde{g}_{t}}^{2n}(M)}r^{2n}\geq  Cr^{2n},  \end{equation}
  where $C$ is a   constant independent of $t$.  Since    $\dim_{\mathcal{H}}S < 2n$,  $Vol_{\tilde{g}_{0}}(M_{0}\backslash
  S)=\mathcal{H}^{2n} (M_{0})$. We obtain the conclusion from
  Theorem \ref{t4.1},  Theorem \ref{t3} and Theorem \ref{t4}.
\hfill $\Box$\\

\setcounter{equation}{0}\section{Collapsing of a  Calabi-Yau
threefold  } The purpose of this section is to prove Theorem
\ref{t6.1}.\\

\noindent{\bf Proof of Theorem    \ref{t6.1}:} Let $W_{i}=\mathbb{CP}^{2}\times \mathbb{C}$,
$i=1,2$, and $W=W_{0}\cup W_{1}$ by identifying
$([x_{0},y_{0},z_{0}],u_{0})\in W_{0}$ with
$([x_{1},y_{1},z_{1}],u_{1})\in W_{1}$ if and only if
$u_{0}u_{1}=1$, $u^{4}_{0}x_{1}=x_{0}$, $u^{6}_{0}y_{1}=y_{0}$ and
$z_{1}=z_{0}$. Note that $\mathbb{CP}^{1}=\mathbb{C}\cup \mathbb{C}$
by identifying $u_{0}\in \mathbb{C}$ with $u_{1}\in \mathbb{C}$ if
and only if $u_{0}u_{1}=1$. There is a holomorphic map $\Psi:
W\rightarrow \mathbb{CP}^{1}$ given by $\Psi:
([x_{i},y_{i},z_{i}],u_{i})\mapsto u_{i}$. For a point
$\tau=(\tau_{1}, \cdots, \tau_{8}, \sigma_{1}, \cdots,
\sigma_{12})\in \mathbb{R}^{20}$, define $\mathfrak{g}(u)=
\prod_{\nu=1}^{8}(u-\tau_{\nu})$, and
$\mathfrak{h}(u)=\prod_{\nu=1}^{12}(u-\sigma_{\nu})$. Let $X_{\tau}$
be the
 algebraic surface given by
\begin{eqnarray*} & &
f_{0}=y_{0}^{2}z_{0}-4x_{0}^{3}+\mathfrak{g}(u_{0})x_{0}z_{0}^{2}+\mathfrak{h}(u_{0})z_{0}^{3}=0,\
\ \ \ \ {\rm and }
\\ & &f_{1}= y_{1}^{2}z_{1}-4x_{1}^{3}+u_{1}^{8}\mathfrak{g}(u_{1}^{-1})x_{1}z_{1}^{2}+
u_{1}^{12}\mathfrak{h}(u_{1}^{-1})z_{1}^{3}=0.
\end{eqnarray*}
 By Section 5 in  \cite{Kod}, $(X_{\tau}, \Psi|_{X_{\tau}})$ is an elliptic K3  surface, and there is
a holomorphic section $\sigma:
  \mathbb{CP}^{1}\rightarrow X_{\tau}$ given by  $u_{0}\mapsto ([0, u_{0}^{6},0],u_{0})\in
  W_{0}$ and  $u_{1}\mapsto ([0, 1,0],u_{1})\in
  W_{1}$.
  Note that conjugate maps $\iota_{1}: W_{i}\rightarrow W_{i}$
given by $([x_{i},y_{i},z_{i}],u_{i})\mapsto
([\bar{x}_{i},\bar{y}_{i},\bar{z}_{i}],\bar{u}_{i})$, and
$\iota_{2}: \mathbb{C}\rightarrow \mathbb{C}$ given by
$u_{i}\mapsto \bar{u}_{i}$ preserve $X_{\tau}$, $\Psi$ and $\sigma$.
Hence $\iota=( \iota_{1}, \iota_{2}) $ induces an anti-holomorphic
involution on $(X_{\tau}, \Psi|_{X_{\tau}})$. We denote  $I$  the
complex structure of $X_{\tau}$. There is a holomorphic  volume form
$$\Omega_{I}=du_{0}\wedge
(z_{0}dx_{0}-x_{0}dz_{0})/\partial_{y_{0}}f_{0}=du_{1}\wedge
(z_{1}dx_{1}-x_{1}dz_{1})/\partial_{y_{1}}f_{1},$$ on $X_{\tau}$,
which satisfies that $
\iota_{1}^{*}\Omega_{I}=\overline{\Omega}_{I}$ (c.f. Section 5 in
\cite{Kod}).

\begin{lm}\label{t6.2}  There is a sequence of Ricci-flat K\"{a}hler forms
$\omega_{k}$ on $X_{\tau}$ such that
$\iota_{1}^{*}\omega_{k}=-\omega_{k}$, $2\omega_{k}^{2}=
\Omega_{I}\wedge \overline{\Omega}_{I} $ and, for any $y\in
\mathbb{CP}^{1}$,
$$\epsilon_{k}=\int_{\Psi|_{X_{\tau}}^{-1}(y)}\omega_{k}\rightarrow
0,$$ when $k\rightarrow \infty$.
\end{lm}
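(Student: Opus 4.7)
The plan is to take each $\omega_{k}$ to be the Yau-Ricci-flat representative of a Kähler class in the $(-1)$-eigenspace of $\iota_{1}^{*}$ on $H^{1,1}(X_{\tau},\mathbb{R})$, chosen so that the class approaches the nef boundary of the Kähler cone along the elliptic fiber direction and the fiber volumes tend to zero.

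First I will analyze the action of $\iota_{1}^{*}$ on cohomology. Since $\iota_{1}$ is anti-holomorphic on the complex surface $X_{\tau}$ (complex dimension $2$), it is orientation-preserving on $X_{\tau}$, whereas $\iota_{2}$ is orientation-reversing on $\mathbb{CP}^{1}$ (complex dimension $1$), so $\iota_{2}^{*}$ acts as $-1$ on $H^{2}(\mathbb{CP}^{1})$. The equivariance $\Psi\circ\iota_{1}=\iota_{2}\circ\Psi$ then gives $\iota_{1}^{*}F = \Psi^{*}\iota_{2}^{*}[\mathrm{pt}] = -F$ for the fiber class $F=\Psi^{*}[\mathrm{pt}]$. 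Next, for any Kähler class $[\alpha]$ on $X_{\tau}$, a local computation in coordinates where $\iota_{1}$ is complex conjugation shows that $\iota_{1}^{*}\omega$ is a negative $(1,1)$-form whenever $\omega$ is positive, so $-\iota_{1}^{*}[\alpha]$ is again a Kähler class. Consequently $H := [\alpha] + (-\iota_{1}^{*}[\alpha])$ is a Kähler class satisfying $\iota_{1}^{*}H = -H$, and $H\cdot F > 0$ since $H$ is Kähler and $F$ is an effective curve.

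Set $V_{0} := \tfrac{1}{2}\int_{X_{\tau}}\Omega_{I}\wedge\overline{\Omega}_{I}$. For a sequence $\epsilon_{k}\downarrow 0^{+}$, put $t_{k} := \epsilon_{k}/(H\cdot F)$ and $s_{k} := (V_{0} - t_{k}^{2}H^{2})/(2\epsilon_{k})$, and define
\[
[\omega_{k}^{(0)}] := s_{k}F + t_{k}H.
\]
Using $F^{2}=0$, a direct computation yields $[\omega_{k}^{(0)}]^{2} = V_{0}$ and $[\omega_{k}^{(0)}]\cdot F = \epsilon_{k}$. The class $[\omega_{k}^{(0)}]$ lies in the $(-1)$-eigenspace of $\iota_{1}^{*}$ by construction, and is Kähler for every large $k$ because it equals $t_{k}(H + (s_{k}/t_{k})F)$, a positive multiple of the sum of a Kähler class and a nef class. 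By Yau's theorem applied to the class $[\omega_{k}^{(0)}]$ with prescribed volume form $\tfrac{1}{2}\Omega_{I}\wedge\overline{\Omega}_{I}$ (of total mass $V_{0}$), there is a unique Ricci-flat Kähler form $\omega_{k}\in[\omega_{k}^{(0)}]$ satisfying $2\omega_{k}^{2}=\Omega_{I}\wedge\overline{\Omega}_{I}$.

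To finish, I will invoke uniqueness in Yau's theorem. The form $-\iota_{1}^{*}\omega_{k}$ is a positive $(1,1)$-form (by the sign computation above), represents the class $-\iota_{1}^{*}[\omega_{k}^{(0)}] = [\omega_{k}^{(0)}]$, and satisfies $(-\iota_{1}^{*}\omega_{k})^{2} = \iota_{1}^{*}(\omega_{k}^{2}) = \tfrac{1}{2}\iota_{1}^{*}(\Omega_{I}\wedge\overline{\Omega}_{I}) = \tfrac{1}{2}\Omega_{I}\wedge\overline{\Omega}_{I}$, where the last equality uses $\iota_{1}^{*}\Omega_{I}=\overline{\Omega}_{I}$ together with the fact that $\overline{\Omega}_{I}\wedge\Omega_{I}=\Omega_{I}\wedge\overline{\Omega}_{I}$ on the complex surface. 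By uniqueness, $-\iota_{1}^{*}\omega_{k} = \omega_{k}$, i.e.\ $\iota_{1}^{*}\omega_{k}=-\omega_{k}$. The fiber integral is then $\int_{\Psi^{-1}(y)}\omega_{k} = [\omega_{k}]\cdot F = \epsilon_{k}\to 0$ for every $y\in\mathbb{CP}^{1}$. The main delicate point in the argument is the sign bookkeeping for the action of $\iota_{1}^{*}$ on $[F]$ and on Kähler forms under the anti-holomorphic involution; once these signs are correctly identified, the construction is a routine combination of Yau's theorem with its uniqueness clause, so I do not expect a substantial analytic obstacle.
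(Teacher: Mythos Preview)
Your proof is correct and reaches the same endpoint as the paper---taking Ricci-flat representatives of a degenerating family of K\"ahler classes in the $(-1)$-eigenspace of $\iota_1^*$ and invoking uniqueness in Yau's theorem---but the route to producing those classes is different. The paper works on the ambient threefold $W$ (the $\mathbb{CP}^2$-bundle over $\mathbb{CP}^1$): it computes $H^2(W,\mathbb{R})\cong\mathbb{R}^2$ via Mayer--Vietoris, shows $\iota_1^*=-\mathrm{id}$ on all of $H^2(W,\mathbb{R})$, takes the explicit generators $\alpha=[\Psi^*\omega_{FS}']$ and $\beta$, and then restricts the classes $\alpha+s\beta$ to $X_\tau$ and rescales. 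You instead work intrinsically on $X_\tau$: the orientation argument gives $\iota_1^*F=-F$, and the observation that anti-holomorphic pullback negates positivity of $(1,1)$-forms produces a $(-1)$-invariant K\"ahler class $H=[\alpha]-\iota_1^*[\alpha]$ directly, without any ambient computation. Your approach is shorter and more conceptual; the paper's is more explicit (the classes literally come from an embedding). One small point: when you assert that $t_k(H+(s_k/t_k)F)$ is K\"ahler because $F$ is nef, what you actually use is that $F=[\Psi^*\omega_{FS}']$ has a smooth semi-positive representative, so K\"ahler plus any non-negative multiple of $F$ is K\"ahler; ``nef'' alone would not suffice on a general compact K\"ahler manifold.
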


\begin{proof} Note that $H^{2}(W_{i}, \mathbb{R})\cong H^{2}(\mathbb{CP}^{2},
\mathbb{R})$, $H^{1}(W_{0}\cap W_{1}, \mathbb{R})\cong
H^{1}(\mathbb{C}^{*}, \mathbb{R})$, and they  are  generated by the
Fubini-Study metric $\omega_{FS}$ on $\mathbb{CP}^{2}$ and
Im$\frac{dz}{z}$ on $\mathbb{C}^{*}=\mathbb{C}\backslash \{0\}$
respectively.
 Thus   $\iota_{1}^{*}: H^{j}(W_{i}, \mathbb{R})\rightarrow  H^{j}(W_{i}, \mathbb{R})$, $j=1,2$, is
 $\iota_{1}^{*}\gamma=-\gamma$, for any  $\gamma  \in H^{j}(W_{i}, \mathbb{R})$.  By Mayer-Vietoris exact
 sequence, the following diagram commutes
  $$\rightarrow H^{1}(W_{0}\cap W_{1},
 \mathbb{R})\stackrel{h_{1}}\rightarrow
 H^{2}(W, \mathbb{R})\stackrel{h_{2}}\rightarrow H^{2}(W_{0}, \mathbb{R})\oplus H^{2}(W_{1},
 \mathbb{R})\stackrel{h_{3}}\rightarrow H^{2}(W_{0}\cap W_{1}, \mathbb{R})
  $$ $$\iota_{1}^{*}=-{\rm id}\downarrow \ \ \ \ \ \ \ \ \ \ \ \ \ \ \ \ \ \ \ \iota_{1}^{*}\downarrow  \ \ \ \ \ \ \ \
   \ \  \ \ \   \ \ \iota_{1}^{*}=-{\rm id}\downarrow \ \ \
 \  \ \ \ \ \ \  \ \ \ \ \ \ \ \ \  \  \ \ \
  \iota_{1}^{*}\downarrow   \ \ \ \  \ \ \ \ \ \ \ $$  $$\rightarrow H^{1}(W_{0}\cap W_{1}, \mathbb{R})\stackrel{h_{1}}\rightarrow
 H^{2}(W, \mathbb{R})\stackrel{h_{2}}\rightarrow H^{2}(W_{0}, \mathbb{R})\oplus H^{2}(W_{1},
 \mathbb{R})\stackrel{h_{3}}\rightarrow H^{2}(W_{0}\cap W_{1}, \mathbb{R})
  .$$ Thus  we have that $\iota_{1}^{*}: H^{2}(W, \mathbb{R})\rightarrow  H^{2}(W, \mathbb{R})$
  is   given by
 $\iota_{1}^{*}=-{\rm id}$. Note that  $H^{1}(W_{i},
 \mathbb{R})=\{0\}$, and $h_{3}([\omega_{0}],
 [\omega_{1}])=[\omega_{0}-\omega_{1}]$. Thus,
   $\rm Im \it h_{2}= \rm Ker \it h_{3}= \mathbb{R}\cdot ([\omega_{FS}], [\omega_{FS}])
 $,   $h_{1}$ is injective, and $ H^{2}(W,
 \mathbb{R})\cong \rm Im \it h_{1} \oplus \rm Im \it h_{2}\cong
 \mathbb{R}^{2}$.  As $W$ admits K\"{a}hler metrics, we have $2=\dim
 H^{2}(W,
 \mathbb{R})= 2h^{2,0}+h^{1,1}$. Thus $h^{2,0}=0$, and   $H^{2}(W,
 \mathbb{R})=H^{1,1}(W,
 \mathbb{R}) $.  Furthermore, we have two generators of $H^{1,1}(W,
 \mathbb{R}) $, $\alpha= [\Psi^{*}\omega'_{FS}]$, where  $ \omega'_{FS}$ is the Fubini-Study metric on $\mathbb{CP}^{1} $,  and $\beta $, which
satisfies that, for any $y\in \mathbb{CP}^{1} $,
$i_{y}^{*}\beta=[\omega_{FS}]\in H^{2}(\mathbb{CP}^{2},
 \mathbb{R})$ where  $i_{y}: \mathbb{CP}^{2}=\Psi^{-1}(y)
\hookrightarrow W$ is the inclusion. Since $\Psi^{*}\omega'_{FS} $
is a semi-positive form, the K\"{a}hler cone of $W$ is
$\mathbb{K}_{W}=\{a\alpha+b\beta| b>0, a>k_{0}b\}$ for a constant
$k_{0}$.   By  $\alpha^{2}=0$,
$$C_{\alpha\beta}=\langle \alpha\wedge \beta, [X_{\tau}]\rangle=\langle \alpha\wedge (2k_{0}\alpha+\beta),
[X_{\tau}]\rangle=\int_{X_{\tau}}\Psi^{*}\omega'_{FS}\wedge
\omega'=\int_{X_{\tau}}|d\Psi|_{X_{\tau}}|^{2}\omega'^{2}>0,$$ where
$\omega'$ is a K\"{a}hler form representing  $2k_{0}\alpha+\beta$.

  If  $\omega_{s}$ are  the
K\"{a}hler forms such that $[\omega_{s}]=\alpha+s\beta$, $s\in (0,
\frac{1}{2|k_{0}|}]$, then we have
$$\mu(s)=\int_{X_{\tau}}\omega_{s}^{2}=2sC_{\alpha\beta}+s^{2}\langle\beta^{2}, [X_{\tau}] \rangle, \ \ \ {\rm and}
\ \ $$ $$ \ \int_{
\Psi|_{X_{\tau}}^{-1}(y)}\omega_{s}=s\langle\beta,
[\Psi|_{X_{\tau}}^{-1}(y)] \rangle =s\int_{
\Psi|_{X_{\tau}}^{-1}(y)}\omega_{FS}. $$ If
$\bar{\omega}_{s}=\mu(s)^{-\frac{1}{2}}\omega_{s}$, then
$\iota_{1}^{*}[\bar{\omega}_{s}]=-[\bar{\omega}_{s}]$,
\begin{equation}\label{6.1}\int_{X_{\tau}}\bar{\omega}_{s}^{2}=1, \ \ \  {\rm and} \ \ \ \int_{
\Psi|_{X_{\tau}}^{-1}(y)}\bar{\omega}_{s} =\mu(s)^{-\frac{1}{2}}
s\int_{ \Psi|_{X_{\tau}}^{-1}(y)}\omega_{FS}\rightarrow 0,
\end{equation}when $s\rightarrow 0$. Hence
$\iota_{1}^{*}[\bar{\omega}_{s}|_{X_{\tau}}]=-[\bar{\omega}_{s}|_{X_{\tau}}]$
in $H^{1,1}(X_{\tau}, \mathbb{R}) $.  Let $s_{k}\rightarrow 0$,
and $\omega_{k}$ be the Ricci-flat K\"{a}hler forms representing
$[\bar{\omega}_{s_{k}}|_{X_{\tau}}]$.  By the uniqueness of the
Ricci-flat K\"{a}hler form in a K\"{a}hler class, we obtain that
$\iota_{1}^{*}\omega_{k}=-\omega_{k}$. By  (\ref{6.1}), and
re-scaling $\omega_{k}$ if necessary,  we obtain the conclusion.
\end{proof}

Note that, for any $k$, $(X_{\tau}, \omega_{k},  \Omega_{I})$ is a
hyper-K\"{a}hler manifold. By re-scaling $ \Omega_{I}$ if necessary,
$ \omega_{k}^{2}= ({\rm Re}\Omega_{I})^{2}=({\rm
Im}\Omega_{I})^{2}$. By using  hyper-K\"{a}hler rotation,  we can
find a new complex structure $J_{k}$ with a holomorphic volume form
$$\Omega_{J_{k}}={\rm Im}\Omega_{I}+\sqrt{-1}\omega_{k}, \ \ \ {\rm
and \ \ a} \ \ {\rm  K\ddot{a}hler \ \ form } \ \
\omega_{J_{k}}={\rm Re}\Omega_{I}. $$  Since
$\iota_{1}^{*}\omega_{J_{k}}=\omega_{J_{k}}$ and
$\iota_{1}^{*}\Omega_{J_{k}}=-\Omega_{J_{k}}$, $\iota_{1}$ is a
holomorphic involution of $(X_{\tau}, J_{k})$.  Let
$T^{2}_{k}=\mathbb{C}/(\epsilon_{k}^{-\frac{1}{2}} \mathbb{Z}+
\sqrt{-1}\epsilon_{k}^{\frac{1}{2}}\mathbb{Z})$, and
 $\iota_{3} $ be  the  holomorphic involution on $ T^{2}_{k}$ given by
 $z\mapsto -z$.  The  holomorphic involution $\iota=(\iota_{1},\iota_{3}) $ on $X_{\tau}\times  T^{2}_{k} $
  preserves  the K\"{a}hler form $\hat{\omega}_{k}=\omega_{J_{k}}+\sqrt{-1}dz\wedge d\bar{z} $ and the  holomorphic volume
  form $\hat{\Omega}_{k}=\Omega_{J_{k}}\wedge dz $, i.e.
  $$\iota^{*}\hat{\omega}_{k}=\hat{\omega}_{k}, \ \ {\rm and}  \ \ \iota^{*}\hat{\Omega}_{k}=\hat{\Omega}_{k}.
  $$ Hence $(X_{\tau}\times  T^{2}_{k}) /\langle \iota \rangle$ is a
  Calabi-Yau orbifold with  $H^{2,0}((X_{\tau}\times  T^{2}_{k}) /\langle \iota \rangle)={0}
  $, the K\"{a}hler form $\hat{\omega}_{k} $ (resp. the  holomorphic volume
  form $\hat{\Omega}_{k}$) induces an orbifold K\"{a}hler  form $\hat{\omega}_{k} $ (resp. a  holomorphic volume
  form $\hat{\Omega}_{k}$) on $(X_{\tau}\times  T^{2}_{k}) /\langle \iota
  \rangle$, denoted still by $\hat{\omega}_{k} $ and
  $\hat{\Omega}_{k}$. For any $k$, let   $M_{k}$ be  a crepant resolution of
 $(X_{\tau}\times  T^{2}_{k}) /\langle
\iota
  \rangle $. Note that the homeomorphism type of   $M_{k}$ is
  indpendent of $k$, however, the complex structures on  $M_{k}$
  are different for different $k$.

 Now we follow the arguments in Section 5 of \cite{Kod}, and take
$(\tau_{1}, \cdots, \tau_{8}, \sigma_{1}, \cdots, \sigma_{12})$
satisfy  that $\tau_{\lambda}\neq\tau_{\nu}$, $\tau_{\lambda}\neq
\sigma_{\nu}$, and $\sigma_{\lambda}\neq \sigma_{\nu}$,
$\mathfrak{f}(u)=\frac{\mathfrak{g}(u)^{3}}{\mathfrak{g}(u)^{3}-27\mathfrak{h}(u)^{2}}
$ has no multiple pole, where
$\mathfrak{g}(u)=\prod_{\nu=1}^{8}(u-\tau_{\nu})$ and
$\mathfrak{h}(u)=\prod_{\nu=1}^{12}(u-\sigma_{\nu})$. Then
 all singular fibers of $\Psi|_{X_{\tau}}: X_{\tau}\rightarrow \mathbb{CP}^{1}$ are
  type $I_{1}$ (c.f. Section 5 in \cite{Kod}), which implies that
 $(X_{\tau}, \Psi|_{X_{\tau}})$ is an elliptic K3 surface with all singular fibers
 of   type $I_{1}$, and a holomorphic section $\sigma$.

  Let $\omega_{k}$ be a sequence of Ricci-flat
K\"{a}hler forms on $X_{\tau}$ given in Lemma \ref{t6.1}, and
$\hat{g}_{k}$ be the corresponding K\"{a}hler metrics.   By
\cite{GW2},  a subsequence of   $(X_{\tau}, \epsilon_{k}\hat{g}_{k})
$ converges to $(\mathbb{CP}^{1}, h)$ in the Gromov-Hausdorff
topology, where $h$ is   a singular Riemannian metric $h$  on
$\mathbb{CP}^{1}$ with $24$ singular points $\{q_{i}, i=1, \cdots ,
24\} $.  Furthermore, $ \Psi|_{X_{\tau}}$ and $\sigma$ are Hausdorff
approximations from the proof of Theorem 6.4 in \cite{GW2}.  Since
$\iota_{1}^{*}\hat{g}_{k}=\hat{g}_{k}$, $ \Psi|_{X_{\tau}}\circ
\iota_{1}=\iota_{2} \circ \Psi|_{X_{\tau}}$ and $\sigma \circ
\iota_{2}=\iota_{1} \circ \sigma$, we obtain $\iota_{2}^{*}h=h$.
  Note that, under  the hyperK\"{a}hler rotation, for any $k$,
 $\hat{g}_{k}$ is still a K\"{a}hler metric corresponding to the complex
 structure $J_{k}$, whose  K\"{a}hler form is $\omega_{J_{k}} $.  Thus $(X_{\tau}\times T^{2}_{k},
  \epsilon_{k}(\hat{g}_{k}+dz \otimes d\overline{z})) $ converges
to $(\mathbb{CP}^{1}\times S^{1},h+d\theta^{2})$ in the
$\mathbb{Z}_{2}$-equivariant Gromov-Hausdorff topology, where
$S^{1}=\mathbb{R}/\mathbb{Z}$,  $\mathbb{Z}_{2}$ acts on
$X_{\tau}\times T^{2}_{k}$ by the involution $\iota=(\iota_{1},
\iota_{3})$,   acts on $\mathbb{CP}^{1}\times S^{1}$ by the
involution $\iota'=(\iota_{1}, \iota_{4})$, and
$\iota_{4}:S^{1}\rightarrow S^{1}$ is given  by $\theta \mapsto
-\theta$. If $\check{g}_{k}$ (resp. $\check{h}$) is the induced
Ricci-flat orbifold  K\"{a}hler metrics on $X_{\tau}\times
T^{2}_{k}/\langle \iota \rangle$ (resp. $\mathbb{CP}^{1}\times
S^{1}/\langle \iota' \rangle$ ) by $\epsilon_{k}(\hat{g}_{k}+dz
\otimes d\overline{z})$ (resp. $h+d\theta^{2}$), then
$(X_{\tau}\times T^{2}_{k}/\langle \iota \rangle, \check{g}_{k})$
converges to $(B, d_{B})$ in the Gromov-Hausdorff topology, where
$B=\mathbb{CP}^{1}\times S^{1}/\langle\iota' \rangle$, and $d_{B}$
is the distance function induced by $\check{h}$.  Let $ \Pi$ be the
union  of the singularity set of the
 orbifold $B $, and the    image of $\{q_{i}, i=1, \cdots , 24\} \times S^{1} $
  under the quotient map $\mathbb{CP}^{1}\times S^{1}\rightarrow B $.
 We denote $g_{B}=\check{h}|_{B\backslash \Pi}$ on
$B\backslash \Pi  $. By \cite{GW1}, $B$ is homeomorphic to $S^{3}$.
By Corollary  \ref{t2}, for any $k$, we have a Ricci-flat K\"{a}hler
metric $g_{k}$ on  $M_{k}$ such that
$$d_{GH}((X_{\tau}\times T^{2}_{k}/\langle \iota  \rangle,
\check{g}_{k} ), (M_{k}, g_{k} ))< \frac{1}{k}.$$ We obtain the
conclusion by the diagonal   arguments. $\Box$\\

 \vspace{0.7cm}

\end{document}